\numberwithin{equation}{section}	
\def\ps@headings{\ps@empty
  \def\@evenhead{%
    \setTrue{runhead}%
    \normalfont\scriptsize
    \hfil
    \def\thanks{\protect\thanks@warning}%
    \leftmark{}{}\hfil}%
  \def\@oddhead{%
    \setTrue{runhead}%
    \normalfont\scriptsize \hfil
    \def\thanks{\protect\thanks@warning}%
    \rightmark{}{}\hfil}%
  \def\@oddfoot{\normalfont\scriptsize \hfil\thepage\hfil}%
  \let\@evenfoot\@oddfoot
  \let\@mkboth\markboth
}
\newtheorem{lem}{Lemma}[section]
\newtheorem{tw}[lem]{Theorem}
\newtheorem{cor}[lem]{Corollary}
\newtheorem{prop}[lem]{Proposition}
\newcommand {\E}{\mathbb{E}_{\Delta}^2}
\newcommand {\trol}{L}
\newcommand{\wis}[1]{W(#1)}
\newcommand{\splus}{\hspace{0.05em} {+} \hspace{0.05em}}
\newcommand {\eg}{\underline{E}G}
\newcommand {\eeg}{  \underline{\underline{E}}G  }
\newcommand {\ov}{\overline}
\newcommand {\trl}[1]{L(#1)}
\newcommand	{\cen}[1]{C_G(#1)}
\newcommand {\minset}[1]{\mathrm{Min}#1}
\newcommand {\im}[1]{{#1}}
\newcommand {\g}{\gamma}
\newcommand {\frc}[1]{S^{#1}\mathrm{FRC}}
\newcommand{\rpm}{\sbox0{$1$}\sbox2{$\scriptstyle\pm$}
  \raise\dimexpr(\ht0-\ht2)/2\relax\box2 }
\newcommand {\thi}[1]{Th(#1)}
\newtheorem*{twa}{Theorem A}
\newtheorem*{twb}{Theorem B}
\newtheorem*{twc}{Theorem C}
\newtheorem*{twd}{Theorem D}
\newtheorem*{twe}{Theorem E}
\newtheorem*{twf}{Theorem F}
\newtheorem*{twg}{Theorem G}
\theoremstyle{definition}
\newtheorem{de}[lem]{Definition}
\newtheorem{convent}[lem]{Convention}
\newtheorem{rem}[lem]{Remark}
\newtheorem{ex}[lem]{Example}
\begin{document}

\title[Classifying spaces for systolic groups]{Classifying spaces for families of subgroups for systolic groups}
\author[D.\ Osajda]{Damian Osajda}
\address{Instytut Matematyczny,
Uniwersytet Wroc{\l}awski\\
pl.\ Grunwaldzki 2/4, 
50--384 Wroc\-{\l}aw, Poland}
\address{Institute of Mathematics, Polish Academy of Sciences\\ \'Sniadeckich 8, 
00--656 War\-sza\-wa, Poland}
\email{dosaj@math.uni.wroc.pl}
\author[T.\ Prytu{\l}a]{Tomasz Prytu{\l}a}
\address{School of Mathematics, University of Southampton, Southampton SO17 1BJ, UK}
\email{t.p.prytula@soton.ac.uk}
\date{\today}

\begin{abstract}
We determine the large scale geometry of the minimal displacement set of a hyperbolic isometry of 
a systolic complex. As a consequence, we describe the centraliser of such an isometry in a systolic group.
Using these results, we construct a low-dimensional classifying space for the family of virtually cyclic subgroups of a group acting properly on a systolic complex. Its dimension coincides with the topological dimension of the complex if the latter is at least four.
We show that graphical small cancellation complexes are classifying spaces for proper actions and that the groups acting on them properly admit three-dimensional classifying spaces with virtually cyclic stabilisers.
This is achieved by constructing a systolic complex equivariantly homotopy equivalent to a graphical
small cancellation complex. On the way we develop a systematic approach to graphical small
 cancellation complexes. Finally, we construct low-dimensional models for the family of virtually abelian
subgroups for systolic, graphical small cancellation, and some $\mathrm{CAT}(0)$ groups.
\end{abstract}

\subjclass[2010]{20F65, 55R35 (Primary), 20F67, 20F06 (Secondary)}
\keywords{Classifying space, systolic complex, small cancellation}
\maketitle

\tableofcontents

\section{Introduction}
\label{intro}

Let $G$ be a group and let $\mathcal F$ be a family of subgroups of $G$, that is, a collection of subgroups which is closed under taking subgroups and conjugation by elements of $G$. A \emph{classifying space for the family $\mathcal{F}$ is a $G$--CW--complex $E_{\mathcal F}G$} with stabilisers in $\mathcal F$, such that for any subgroup $F \in \mathcal{F}$ the fixed point set $(E_{\mathcal F}G)^F$ is contractible.
When the family $\mathcal{F}$ consists of just a trivial subgroup, the classifying space $E_{\mathcal{F}}G$ is the universal free $G$--space $EG$, and if $\mathcal{F}$ consists of all finite subgroups of $G$ then  $E_{\mathcal{F}}G$ is the so-called \emph{classifying space for proper actions}, commonly denoted by $\underline EG$.
Recently, much attention has been attracted by the classifying space $\underline{\underline{E}}G$
for the family of all virtually cyclic subgroups. One reason for studying $\underline{\underline{E}}G$ is its appearance in the formulation of the Farrell-Jones conjecture concerning algebraic $K$- and $L$-theory (see e.g.\ \cite{Luck-surv}).

It can be shown that the classifying space $E_{\mathcal{F}}G$ always exists and that it is unique up to a $G$--homotopy equivalence \cite{Luck-surv}. A concern is to provide specific models that are as ``simple" as possible. One, widely used, measure of such simplicity is the (topological) dimension.
For example, having a low-dimensional model for $\underline{\underline{E}}G$ enables one to better understand its homology
that appears in the left-hand side of the assembly map in the formulation of the Farrell-Jones conjecture.
Low-dimensional models for $\underline{\underline{E}}G$ were constructed for many classes of groups including hyperbolic groups \cite{LePi}, groups acting properly on $\mathrm{CAT}(0)$ spaces \cite{Lu09}, and many
two-dimensional groups \cite{Die15}. In all of these constructions the minimal dimension of $\underline{\underline{E}}G$ is related to the minimal dimension of $\underline{E}G$. However, the discrepancy between these two may be arbitrarily large \cite{DD}.
Finally, it is an open question whether finite-dimensional models for $\underline{\underline{E}}G$ exist for all groups admitting a finite dimensional model for $\underline{E}G$.

The main purpose of the current article is to construct low-dimensional models for $\underline{\underline{E}}G$ in the case of groups
acting properly on systolic complexes.\medskip

A simply connected simplicial complex is \emph{systolic} if it is a flag complex, and if every embedded cycle of length $4$ or $5$ has a diagonal.
This condition may be treated as an upper curvature bound, and therefore systolic complexes are also called ``complexes of simplicial non-positive curvature''. 
They were first introduced by Chepoi \cite{Ch4} under the name \emph{bridged complexes}. Their $1$--skeleta, \emph{bridged graphs}, were studied earlier in the frame of Metric Graph Theory \cites{SC,FJ}. Januszkiewicz-{\' S}wi{\c a}tkowski \cite{JS2} and Haglund \cite{H}
rediscovered, independently,  systolic complexes and initiated the exploration of groups acting on them.
Since then the theory of automorphisms groups of systolic complexes has been a powerful tool for providing examples of groups
with often unexpected properties, see e.g.\ \cites{JS3,OS}.\medskip

An exotic large-scale geometric feature of systolic complexes is their ``asymptotic asphericity" -- asymptotically, they do not contain essential spheres. Such a behaviour is typical for complexes of asymptotic dimension one or two, but systolic
complexes exist in arbitrarily high dimensions.
The asphericity property is the crucial phenomenon used in our approach in the current article.
First, we use it to determine the large-scale geometry of the minimal displacement set of a hyperbolic isometry of a systolic complex.

Recall that an isometry (i.e.\ a simplicial automorphism) $h$ of a systolic complex $X$ is called \emph{hyperbolic} if it does not fix any simplex of $X$. For such an isometry one defines the \emph{minimal displacement set} $\minset(h)$ to be the subcomplex of $X$ spanned by the vertices which are moved by $h$ the minimal combinatorial distance. 

\begin{twa}[Theorem~\ref{coarsesplit} and Corollary~\ref{s1frcquasitreecoro}]
The minimal displacement set of a hyperbolic isometry of a uniformly locally finite systolic complex is quasi-isometric to the product $T\times \mathbb R$
of a tree and the line.
\end{twa}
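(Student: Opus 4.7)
The plan is to split the statement into the two pieces indicated by the cross-references: a coarse product decomposition of $\minset(h)$ into a transverse factor $\trol$ times a line, and the identification of $\trol$ as a quasi-tree. Throughout I assume the standard facts about systolic geometry that presumably appear earlier in the paper: hyperbolic isometries of systolic complexes admit $\langle h \rangle$--invariant combinatorial axes, $\minset(h)$ is itself a systolic subcomplex on which $\langle h\rangle$ acts cocompactly, and axes of $h$ through different vertices of $\minset(h)$ fellow-travel in a controlled way.

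First I would build the coarse projection to $\mathbb R$. Fix an axis $\ell_{0} \subset \minset(h)$ and parametrise it by $\mathbb Z$ via the translation of $h$. For an arbitrary vertex $v \in \minset(h)$, pick a closest vertex of $\ell_0$ to $v$ and record its $\mathbb Z$--coordinate; standard fellow-traveling of axes in systolic complexes makes this well-defined up to a uniform additive error and gives a coarsely $\langle h\rangle$--equivariant, $1$--Lipschitz map $p \colon \minset(h) \to \mathbb R$. Its fibres are the natural candidates for the slices of the product. The factor $\trol$ should then be modelled by the quotient $\minset(h)/\langle h\rangle$, which is compact up to coarse equivalence with any single fibre of $p$ by a short pigeonhole argument using uniform local finiteness and the translation length of $h$.

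Next I would show that $\minset(h)$ is quasi-isometric to the direct product of a fibre and $\mathbb{R}$. The key technical input is that any two axes of $h$ in $\minset(h)$ remain at bounded Hausdorff distance and, crucially, are pointwise synchronised by $h$ (i.e.\ the projection of $h\cdot v$ agrees with $p(v)+\tau$ up to bounded error, where $\tau$ is the translation length). This yields a bi-Lipschitz identification, up to additive error, of $\minset(h)$ with the metric product of a fibre of $p$ and $\mathbb R$, proving Theorem~\ref{coarsesplit}.

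Finally, to identify the fibre as a tree in the corollary I would invoke the asymptotic asphericity of systolic complexes. The quotient $\minset(h)/\langle h\rangle$ should satisfy the $S^1$--$\mathrm{FRC}$ condition (that every sufficiently large $S^1$--shaped subcomplex bounds) because any large essential loop in the quotient lifts to an essential $\langle h\rangle$--invariant cylinder in $\minset(h) \subset X$, and gluing in the corresponding filling disk contradicts asymptotic asphericity at large scale. A separate lemma (Corollary~\ref{s1frcquasitreecoro}) then promotes the $S^1$--$\mathrm{FRC}$ property, in the presence of uniform local finiteness, to being quasi-isometric to a tree, completing the proof.

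The main obstacle I anticipate is the synchronisation step: producing a genuine product structure rather than a mere coarse fibration requires showing that $h$ acts on every fibre of $p$ by the same translation amount, up to uniformly bounded error. This is where systolic-specific convexity of $\minset(h)$ and tightness of fellow-traveling of axes have to be combined carefully, and it is also where the hypothesis of uniform local finiteness is essential, because without it the constants controlling axis fellow-travel may degenerate.
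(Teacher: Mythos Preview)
Your proposal contains a genuine misconception that undermines the argument. You assert that any two axes of $h$ in $\minset(h)$ remain at bounded Hausdorff distance, and that $\minset(h)/\langle h\rangle$ is compact (or coarsely so). Both are false in general: the whole content of the theorem is that the transverse factor can be an \emph{unbounded} tree, so axes can be arbitrarily far apart and $\langle h\rangle$ does \emph{not} act cocompactly on $\minset(h)$. What is true is only that \emph{adjacent} axes (in the sense of the graph of axes) fellow-travel with controlled constants; axes that are far apart in $Y(h)$ are genuinely far apart in $X$. Your closest-point projection $p$ to a single axis $\ell_0$ therefore will not have uniformly controlled fibres, and the pigeonhole step you sketch cannot work.

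The paper's route to Theorem~\ref{coarsesplit} is structurally different. The transverse factor is neither a fibre of a projection to $\mathbb R$ nor the quotient by $\langle h\rangle$, but the \emph{graph of axes} $Y(h)$: its vertices are the $h$--invariant geodesics themselves, with an edge when two are at $d_{\min}\leqslant 1$. The map $Y(h)\times\mathbb Z\to\minset(h)$ is $(\gamma,t)\mapsto\gamma(t)$, and the synchronisation problem you correctly flag --- defining $\gamma(0)$ coherently across all axes --- is solved not by nearest-point projection but by a horofunction-type construction: $\gamma(0)$ is the $\prec$--maximum of $\gamma\cap\bigcup_n B_n(-n,X)$ for an exhausting family of balls centred along a fixed axis $\gamma_0$. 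The metric estimates that make this a quasi-isometry (Lemmas~\ref{sysstripes} and~\ref{lemupperbound}) rest on Elsner's flat-strip maps embedding pairs of axes into $\mathbb E^2_\Delta$, which in turn need $L(h)>3$; this hypothesis is arranged by passing to a power of $h$.

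For Corollary~\ref{s1frcquasitreecoro} your cylinder-lifting heuristic points in the right direction, but the paper's argument is indirect: $\minset(h)$ is systolic hence $S^2\mathrm{FRC}$; transporting this through the quasi-isometry makes $Y(h)\times\mathbb Z$ also $S^2\mathrm{FRC}$; a product lemma (Lemma~\ref{sfrcproducts}) then forces $Y(h)$ to be $S^1\mathrm{FRC}$; finally a bottleneck criterion (Proposition~\ref{s1frcquasitree}) converts $S^1\mathrm{FRC}$ plus a simple-connectivity-at-scale condition into being a quasi-tree.
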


This theorem can be viewed as a systolic analogue of the so-called Product Decomposition Theorem for $\mathrm{CAT}(0)$ spaces \cite[Theorem II.6.8]{BrHa}. Unlike the $\mathrm{CAT}(0)$ case where the splitting is isometric and it is realised within the ambient space, we provide only an abstract quasi-isometric splitting. This is mainly due to the lack of a good notion of products in the category of simplicial complexes. However, our theorem may be seen as more restrictive, since in the $\mathrm{CAT}(0)$ case instead of a tree one can have an arbitrary $\mathrm{CAT}(0)$ space.

This restriction is used to determine the structure of certain groups acting on the minimal displacement set. 
If a group $G$ acts properly on a systolic complex, then one can easily see that the centraliser of a hyperbolic isometry acts properly on the minimal displacement set. If the action of $G$ is additionally cocompact, i.e.\ $G$ is a \emph{systolic group}, Theorem A allows
us to describe the structure of such centraliser. This establishes a conjecture by D.\ Wise \cite[Conjecture 11.6]{Wise}.

\begin{twb}[Corollary~\ref{wisecoro}]
The centraliser of an infinite order element in a systolic group is commensurable with $F_n \times \mathbb{Z}$, where $F_n$ is the free group on $n$ generators for some $n \geqslant 0$.
\end{twb}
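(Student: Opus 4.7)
The plan is to use Theorem A to obtain a quasi-isometry $C_G(g) \simeq T \times \mathbb{R}$ and then extract the required algebraic structure by exploiting the centrality of $g$. First, one reduces to the case where $g$ is a hyperbolic isometry of $X$: an infinite-order element of a systolic group has a power that fixes no simplex, and passing from $g$ to such a power replaces $C_G(g)$ by a finite-index subgroup of a (potentially larger) centraliser, preserving commensurability. Next, I would verify that $C_G(g)$ acts properly \emph{and cocompactly} on $\minset(g)$; properness is inherited from the proper action of $G$ on $X$, while cocompactness follows from a pigeonhole argument using cocompactness of the $G$-action on $X$ together with translations along the axis of $g$ (a fact presumably established in the body of the paper).

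Applying Theorem A then yields that $C_G(g)$ is quasi-isometric to $T \times \mathbb{R}$ for some tree $T$. The element $g$ acts on $\minset(g)$ as an isometry of translation length equal to its minimal displacement, realised along an axis, so under the quasi-isometry to $T \times \mathbb{R}$ it corresponds, up to bounded error, to a translation purely in the $\mathbb{R}$ direction. Since $g$ is central in $C_G(g)$, the quotient $C_G(g)/\langle g \rangle$ acts properly and cocompactly on $\minset(g)/\langle g \rangle$, and this quotient space is quasi-isometric to $T$. Hence $C_G(g)/\langle g \rangle$ is quasi-isometric to a tree, and by the classical theorem of Stallings--Dunwoody a finitely generated group quasi-isometric to a locally finite tree is virtually free.

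Choose a finite-index subgroup $H \leq C_G(g)$ with $H \cap \langle g \rangle = \langle g^N \rangle$ and $H/\langle g^N \rangle \cong F_n$ free. Since $\langle g^N \rangle$ is central and $H^2(F_n, \mathbb{Z}) = 0$, the central extension $1 \to \langle g^N \rangle \to H \to F_n \to 1$ splits, giving $H \cong F_n \times \mathbb{Z}$. As $H$ has finite index in $C_G(g)$, the centraliser is commensurable with $F_n \times \mathbb{Z}$. The main obstacle is the geometric input in the middle paragraph: pinning down that $\langle g \rangle$ acts on the quasi-isometric product $T \times \mathbb{R}$ coarsely as a translation of the $\mathbb{R}$ factor, so that quotienting produces a space quasi-isometric to $T$. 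This requires careful inspection of the quasi-isometry produced in Theorem A, in particular that the $\mathbb{R}$ factor can be identified with an axis of $g$ rather than an arbitrary direction in the product; once this is in place, the algebraic conclusion is routine.
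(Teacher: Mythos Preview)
Your overall strategy is correct and runs parallel to the paper's, but there are two points where your account diverges from what is actually done, and one claimed fact that is not in the paper.

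\textbf{Finite generation.} You obtain finite generation of $C_G(g)$ from a proper \emph{cocompact} action on $\minset(g)$, and you say this cocompactness is ``presumably established in the body of the paper''. It is not. The paper never proves that $C_G(g)$ acts cocompactly on $\minset(g)$; instead it invokes biautomaticity of systolic groups \cite{JS2} together with the fact that centralisers in biautomatic groups are themselves biautomatic (hence finitely generated) \cite{GeSh}. Your cocompactness claim is true and follows from the standard Ruane-type pigeonhole argument (finitely many $G$--orbits of vertices, finitely many conjugates of $g$ whose minimal set meets a fixed orbit representative), so your route is a legitimate alternative --- arguably more self-contained --- but you should prove it rather than attribute it to the paper.

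\textbf{Passing to a power.} Your reduction in the first paragraph is garbled. Since the action is proper, any infinite-order element already acts hyperbolically (an element fixing a simplex lies in a finite stabiliser), so no power is needed for that. A power \emph{is} needed to meet the hypotheses of Theorem~A ($L(h)>3$ and $\minset(h)$ a union of axes, cf.\ Proposition~\ref{alltogetherlemma}). The correct order of operations is: keep $g$, show $C_G(g)$ is finitely generated, and only then choose $m$ so that $h:=g^{m}$ satisfies the hypotheses of Theorem~A. Since $h$ is still central in $C_G(g)$, one has $C_G(g)$ acting on $\minset(h)$ and on $Y(h)$, and $C_G(g)/\langle h\rangle$ acts properly on the quasi-tree $Y(h)$ (this is exactly Lemma~\ref{quotientacts} and Corollary~\ref{s1frcquasitreecoro}). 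Your attempt to first replace $g$ by a power and then compare the two centralisers is unnecessary and, as you half-notice, not obviously commensurability-preserving.

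\textbf{The ``main obstacle''.} What you flag as the delicate point --- that $g$ acts coarsely as a translation in the $\mathbb{R}$ factor --- is immediate from the construction of the quasi-isometry $c\colon Y(h)\times\mathbb{Z}\to\minset(h)$ in Theorem~\ref{coarsesplit}: by definition $c(\gamma,t)=\gamma(t)$ and $h\cdot\gamma(t)=\gamma(t+L(h))$, so $h$ acts on the source as $(\gamma,t)\mapsto(\gamma,t+L(h))$. The paper in fact bypasses the quotient $\minset(h)/\langle h\rangle$ entirely and works directly with the induced $K/\langle h\rangle$--action on the graph of axes $Y(h)$; this is cleaner than passing through a quotient of the product.

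Your final paragraph (splitting the central extension over a free quotient) matches the paper's Proposition~\ref{prop:wiseconj} verbatim.
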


Theorem B extends also some results from \cites{JS3,OS,Osge} concerning normal subgroups of systolic groups. Theorem A is the key result in our approach to constructing low-dimensional models for  $\underline{\underline{E}}G$ for groups acting properly on systolic complexes.
We follow a ``pushout method'' to construct the desired complex using low-dimensional models for $\underline{{E}}G$.
In \cite{OCh} it is shown that if a group acts properly on a $d$--dimensional systolic complex $X$ then the barycentric subdivision
of $X$ is a model for $\underline{{E}}G$. We then follow the strategy of W.\ L{\" u}ck \cite{Lu09} used for
constructing models for $\underline{\underline{E}}G$ for $\mathrm{CAT}(0)$ groups. The key point there is, roughly, to determine the structure of the quotient $N_G(h)/\langle h \rangle $ of the normaliser of a hyperbolic isometry $h$.
Using similar arguments as in the proof of Theorem B, we show that this quotient is locally virtually free. This is a strong restriction on  $N_G(h)/\langle h \rangle $ which leads to better dimension bounds, when compared with the $\mathrm{CAT}(0)$ case.

\begin{twc}[Theorem~\ref{mainthm}]
Let $G$ be a group acting properly on a uniformly locally finite $d$--dimensional systolic complex. Then there exists a model for $\underline{\underline{E}}G$ of dimension

\[\mathrm{dim}\underline{\underline{E}}G = \left\{ \begin{array}{cl}

      d+1 & \text{ if } d \leqslant 3, \\
      d & \text{ if } d \geqslant 4.  \\

      \end{array} \right.
 \] 
\end{twc}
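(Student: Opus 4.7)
The plan is to adapt L\"uck's pushout construction for $\mathrm{CAT}(0)$ groups \cite{Lu09} to the systolic setting, using Theorems A and B as the key inputs. The starting point is the $d$--dimensional model for $\eg$ given by the barycentric subdivision of $X$, see \cite{OCh}. Then $\eeg$ will be constructed as a pushout (in fact, as a double mapping cylinder) of $\eg$ with, for each conjugacy class of maximal infinite virtually cyclic subgroup $V \leq G$, the space $G \times_{N_G(V)} \underline{\underline{E}}(N_G(V))$, glued along $G \times_{N_G(V)} \underline{E}(N_G(V))$.

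The heart of the argument is to build low-dimensional models for the normaliser classifying spaces $\underline{E}(N_G(V))$ and $\underline{\underline{E}}(N_G(V))$. Writing $V = \langle h \rangle$ up to finite index, where $h \in G$ is a hyperbolic isometry of $X$, Theorem~A tells us that $\minset(h)$ is quasi-isometric to $T \times \mathbb{R}$. Following the arguments that prove Theorem~B, one then shows that the quotient $N_G(V)/V$ is (locally) virtually free, and in particular acts properly on a simplicial tree $T'$. This yields a $1$--dimensional model for $\underline{E}(N_G(V)/V)$; pulling back along the projection $N_G(V) \to N_G(V)/V$ and combining with the real line carrying the translation action of $V$, one obtains a $2$--dimensional model for $\underline{E}(N_G(V))$, namely $T' \times \mathbb{R}$. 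Moreover, the maximality of $V$ forces every infinite virtually cyclic subgroup of $N_G(V)$ to be commensurable with $V$, so the tree $T'$ (with $V$ acting trivially) serves as a $1$--dimensional model for $\underline{\underline{E}}(N_G(V))$.

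Inserting these ingredients into the standard dimension estimate for the pushout yields the stated bounds. I anticipate two main obstacles. First, one has to upgrade the purely coarse splitting from Theorem~A to a genuine simplicial tree on which $N_G(V)/V$ acts properly; this likely proceeds via Bass--Serre theory applied to the action on the $\mathbb{R}$--factor of $\minset(h)$, or via Dunwoody-style tracks on the quotient $\minset(h)/\langle h \rangle$. Second, and more importantly, one must carefully bookkeep dimensions in the pushout: for $d \geq 4$ the attached cells for the normalisers should be absorbed into the existing simplicial structure of the barycentric subdivision of $X$ so that the total dimension stays at $d$, whereas for $d \leq 3$ the mapping cylinder needed to turn the attaching maps into equivariant cofibrations forces an additional $+1$, yielding dimension $d+1$. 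Verifying the absorption step in high dimensions --- essentially showing that equivariant $2$--complexes can be pushed into the $d$--skeleton of the systolic complex without losing contractibility of the relevant fixed point sets --- is the subtlest part of the construction.
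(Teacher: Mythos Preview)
Your overall strategy matches the paper's: use the L\"uck--Weiermann pushout, take $X$ (dimension $d$) as the model for $\eg$, and build low-dimensional commensurator models from the tree structure arising from Theorem~A. However, two of your steps are off.

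First, the group you must analyse in the L\"uck--Weiermann framework is the \emph{commensurator} $N_G[H]$ of a commensurability class, not the normaliser of a chosen maximal $V$; more importantly, since the action is only proper (not cocompact), $N_G[H]$ need not be finitely generated. Consequently the quotient $N_G[H]/\langle h\rangle$ is only \emph{locally} virtually free, and you cannot directly obtain a proper action of the whole quotient on a tree. The paper resolves this by writing $N_G[H]$ as the directed colimit of its finitely generated subgroups $K$, getting for each $K$ a $1$--dimensional $E_{\mathcal G[H]\cap K}K$ and a $2$--dimensional $\underline{E}K$ exactly as you describe, and then invoking the colimit result of L\"uck--Weiermann (which costs $+1$). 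The upshot is a $2$--dimensional model for $E_{\mathcal G[H]}N_G[H]$ and a $3$--dimensional model for $\underline{E}N_G[H]$, not $1$ and $2$.

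Second, your ``absorption'' step does not exist. The dimension bound is pure arithmetic in the pushout estimate
\[
\dim\eeg \leqslant \max\bigl\{\dim\eg,\ \sup_{[H]}\dim\underline{E}N_G[H]+1,\ \sup_{[H]}\dim E_{\mathcal G[H]}N_G[H]\bigr\}.
\]
With the numbers above this reads $\max\{d,\,4,\,2\}$, which equals $d$ whenever $d\geqslant 4$ with no geometric work required. For $d\leqslant 3$ one simply replaces the $3$--dimensional $\underline{E}N_G[H]$ by the restriction of the $d$--dimensional $\eg$, yielding $\max\{d,\,d+1,\,2\}=d+1$. There is no need to push equivariant $2$--complexes into the skeleton of $X$.
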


In Section~\ref{sec:ex} we provide several classes of examples to which our construction applies.

As a follow-up, we consider the family ${\mathcal{VAB}}$ of all virtually abelian subgroups. 
To the best of our knowledge there have been no known general constructions of low-dimensional classifying spaces
for this family, except for cases reducing to studying the family of virtually cyclic groups (as in the case of hyperbolic groups). In the realm of systolic groups we are able to provide such 
constructions in the full generality.

\begin{twd}[Theorem~\ref{twvab2}] Let $G$ be a group acting properly and cocompactly on a $d$--dimensional systolic complex. Then there exists a model for $E_{\mathcal{VAB}}G$ of dimension 
	$\mathrm{max} \{4, d\}$.
\end{twd}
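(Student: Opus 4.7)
The plan is to start from Theorem~C, which provides an $\max\{4,d\}$-dimensional model for $\underline{\underline{E}}G$, and to upgrade it to a model for $E_{\mathcal{VAB}}G$ by the L\"uck--Weiermann pushout construction applied to the pair of families $(\mathcal{VC},\mathcal{VAB})$, equipped with the commensurability equivalence relation on $\mathcal{VAB}\setminus\mathcal{VC}$. The first step is to identify this difference explicitly. If $H\leqslant G$ is virtually abelian but not virtually cyclic, then $H$ contains a copy of $\mathbb{Z}^2$; picking an infinite-order $h$ in this $\mathbb{Z}^2$, a finite-index subgroup of $H$ lies in $C_G(h)$, which by Theorem~B is commensurable with $F_n\times\mathbb{Z}$. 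Since the abelian subgroups of $F_n\times\mathbb{Z}$ have rank at most $2$, it follows that $H$ is virtually $\mathbb{Z}^2$. Thus $\mathcal{VAB}\setminus\mathcal{VC}$ consists precisely of the virtually $\mathbb{Z}^2$ subgroups of $G$.

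The second step is to show that, for such an $H$, the commensurator $N_G[H]$ is itself virtually $\mathbb{Z}^2$. Fix a finite-index $H_0\cong\mathbb{Z}^2\leqslant H$ with commuting infinite-order generators $h,k$. The centraliser $C_G(H_0)=C_G(h)\cap C_G(k)$ lies inside the virtually $\mathbb{Z}^2$ group $C_G(h)$ supplied by Theorem~B and contains $H_0$, so it is itself virtually $\mathbb{Z}^2$. To pass from $C_G(H_0)$ to $N_G[H]$ we examine the induced action on $\minset(h)$: by Theorem~A this minset is quasi-isometric to $T\times\mathbb{R}$, on which $k$ acts cocompactly along a quasi-line in the $T$-direction. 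Any element of $N_G[H]$ must permute this structure compatibly with a proper, cocompact action of a virtually $\mathbb{Z}^2$ subgroup, which leaves only finitely many possibilities modulo $C_G(H_0)$. Consequently, the image of the natural map $N_G[H]/C_G(H_0)\hookrightarrow GL_2(\mathbb{Q})$ is finite, and $N_G[H]$ is virtually $\mathbb{Z}^2$.

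For the final step we apply the L\"uck--Weiermann pushout: choosing one representative $H$ in each $G$-conjugacy class of maximal commensurability classes of virtually $\mathbb{Z}^2$ subgroups, a model for $E_{\mathcal{VAB}}G$ is the homotopy pushout of
\[
E_{\mathcal{VC}}G \;\longleftarrow\; \coprod_{[H]} G\times_{N_G[H]} E_{\mathcal{VC}}N_G[H] \;\longrightarrow\; \coprod_{[H]} G\times_{N_G[H]} E_{\mathcal{VAB}}N_G[H].
\]
Since $N_G[H]$ is virtually $\mathbb{Z}^2$, we may take $E_{\mathcal{VAB}}N_G[H]=\{\mathrm{pt}\}$ and $E_{\mathcal{VC}}N_G[H]$ a standard $3$-dimensional model (as for a $2$-dimensional crystallographic group). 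Combined with Theorem~C for $E_{\mathcal{VC}}G$, the resulting homotopy pushout has dimension at most $\max\{\max\{4,d\},\,3+1,\,0\}=\max\{4,d\}$, as required.

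The principal obstacle is the finiteness statement in Step~2. Centralisers are handled uniformly by Theorem~B, but controlling the commensurator requires ruling out commensurating elements that act on $H_0\otimes\mathbb{Q}$ by an infinite-order linear transformation. Such an element would correspond to a shear-type automorphism of the quasi-flat determined by $H$, and excluding it demands delicate use of the rigidity of the $T\times\mathbb{R}$ splitting from Theorem~A together with properness and cocompactness of the $G$-action on the systolic complex.
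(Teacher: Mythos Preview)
Your overall strategy---use the L\"uck--Weiermann pushout for the pair $(\mathcal{VCY},\mathcal{VAB})$, with Theorem~C supplying the $\max\{4,d\}$--dimensional model for $\underline{\underline{E}}G$---is exactly what the paper does. The essential divergence is in the tool you use to control $\mathcal{VAB}\setminus\mathcal{VCY}$: you try to push everything through Theorems~A and~B (the $T\times\mathbb{R}$ splitting of $\minset(h)$ and the centraliser structure), whereas the paper relies on Elsner's systolic \emph{Flat Torus Theorem}. This difference matters, and it is precisely where your Step~2 breaks.

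The concrete gap is this: elements of the commensurator $N_G[H]$ have no reason to preserve $\minset(h)$, since $g\in N_G[H]$ need only conjugate $h$ into something \emph{commensurable} with $\langle h\rangle$, not to a power of $h$. So there is no ``induced action on $\minset(h)$'' to examine, and the sketch involving a $T\times\mathbb{R}$ quasi-line, shear-type automorphisms, and an embedding of $N_G[H]/C_G(H_0)$ into $GL_2(\mathbb{Q})$ does not get off the ground. The Flat Torus Theorem fixes this: to a free abelian $A\cong\mathbb{Z}^2$ it associates an $A$--invariant flat $F$, unique up to finite Hausdorff distance, with $\minset(A)=Th(F)$. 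Uniqueness of the flat is what gives rigidity: any group normalising (or commensurating) $A$ must preserve $Th(F)$, on which $A$ already acts cocompactly, and properness then forces the quotient to be finite. In the paper this is packaged as the verification of conditions (NM1) and (NM2)---each $H\in\mathcal{VAB}\setminus\mathcal{VCY}$ lies in a unique maximal $M$, and $N_G(M)=M$---which allows the simpler form of the pushout \cite[Corollary~2.8]{LuWe} (maximal subgroups and normalisers rather than commensurability classes and commensurators). The rest of your Step~3 then goes through verbatim with $M$ in place of $N_G[H]$.

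One further point you glossed over: to conclude that every $H\in\mathcal{VAB}\setminus\mathcal{VCY}$ is virtually~$\mathbb{Z}^2$ you implicitly assume $H$ is finitely generated. The paper proves this separately (Proposition~\ref{prop:sysfingen}), using translation-length estimates in rank~$1$ and the Flat Torus Theorem again in rank~$2$; without it one must worry about groups like~$\mathbb{Q}$.
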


The most important tool used in the latter construction is the systolic Flat Torus Theorem \cite{E1}.
As an immediate consequence of the methods developed for proving Theorem D, we obtain the following.

\begin{twe}[Corollary~\ref{cat0coro}]Let $G$ be a group acting properly and 
	cocompactly by isometries on a 
	$\mathrm{CAT}(0)$ space $X$ of topological dimension $d>0$. Furthermore, assume that for $n >2$ there is no isometric embedding $\mathbb{E}^n \to X$ where $\mathbb{E}^n$ is the Euclidean space.
	Then there exists a model for $E_{\mathcal{VAB}}G$ of dimension $\mathrm{max} \{4, d+1\}$.
\end{twe}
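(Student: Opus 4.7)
The plan is to mirror the pushout argument from the proof of Theorem~D, replacing the systolic ingredients by their CAT(0) counterparts and using the hypothesis on Euclidean flats to control the maximal virtually abelian subgroups of $G$.

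First I would invoke the Flat Torus Theorem for CAT(0) spaces \cite[II.7.1]{BrHa}: every virtually abelian subgroup $A\leqslant G$ of $\bZ$--rank $r$ acts geometrically on an isometrically embedded copy of $\mathbb E^r\subseteq X$. By hypothesis no such embedding exists for $r>2$, so every virtually abelian subgroup of $G$ has rank at most~$2$. Consequently $\mathcal{VAB}$ is exhausted by virtually cyclic subgroups together with maximal virtually $\bZ^2$ subgroups and their subgroups. Since $X$ itself is a proper $d$--dimensional model for $\eg$, L\"uck's construction \cite{Lu09} yields a model for $\underline{\underline{E}}G$ of dimension at most $\max\{3,d+1\}$.

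To pass from $\underline{\underline{E}}G$ to $E_{\mathcal{VAB}}G$ I would apply the same equivariant pushout used in the proof of Theorem~D: for each conjugacy class of maximal virtually $\bZ^2$ subgroup $V\leqslant G$, attach $G\times_{N_G(V)} E_{\mathcal{VAB}}(N_G(V))$ to $\underline{\underline{E}}G$ along $G\times_{N_G(V)} \underline{\underline{E}}(N_G(V))$. The key input is the structure of $N_G(V)$. By the Flat Torus Theorem, the union of $2$--flats stabilised by $V$ splits isometrically as $\mathbb E^2 \times Y$ for some closed convex CAT(0) subspace $Y\subseteq X$, and $N_G(V)$ preserves this product decomposition. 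Cocompactness of the $G$--action combined with the absence of $\mathbb E^n$ with $n>2$ prevents $Y$ from containing a geodesic line, and the Bruhat--Tits fixed point argument (applied after passing to the circumcentre of a bounded orbit) shows that a finite index subgroup of $N_G(V)$ stabilises a single $2$--flat. Hence $N_G(V)$ is itself virtually $\bZ^2$, so $E_{\mathcal{VAB}}(N_G(V))$ is a point while $\underline{\underline{E}}(N_G(V))$ admits a standard $3$--dimensional model.

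Assembling the pushout gives
\begin{equation*}
\dim E_{\mathcal{VAB}}G \ \leqslant\ \max\bigl\{\dim \underline{\underline{E}}G,\ \dim \underline{\underline{E}}(N_G(V))+1\bigr\} \ \leqslant\ \max\{d+1,\,4\},
\end{equation*}
which is the required bound. The principal obstacle is the normaliser analysis: the hypothesis ruling out $\mathbb E^n$ with $n>2$ is used precisely to exclude a transversal direction in $Y$ that $N_G(V)$ could exploit; once this is in place, the rest is a faithful translation of the systolic pushout of Theorem~D into the CAT(0) setting.
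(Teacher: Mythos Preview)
Your overall strategy is the same as the paper's: take a $(d{+}1)$--dimensional model for $\eeg$ from \cite{Lu09}, verify conditions (NM1) and (NM2) for the pair $\mathcal{VCY}\subset\mathcal{VAB}$, and run the L\"uck--Weiermann pushout exactly as in the proof of Theorem~\ref{twvab2}. The paper dispatches (NM1), (NM2), and the finite generation of all virtually abelian subgroups by citing \cite[II.7.2, II.7.5, II.7.6]{BrHa}. You instead attempt a direct argument for (NM2) and omit the other checks entirely---but you do need (NM1) for the pushout to have the shape you describe, and you need finite generation (not just the Flat Torus Theorem) to know that $\mathcal{VAB}=\mathcal{VAB}_2$.

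Your direct argument for (NM2) also has a genuine gap. The Flat Torus Theorem applies to free abelian subgroups, so you must first pass to a characteristic $\mathbb{Z}^2\cong A\lhd V$ (as in the proof of Lemma~\ref{lemmanm}) before you can write $\minset(A)\cong\mathbb{E}^2\times Y$ and claim that $N_G(V)$ preserves this set and its splitting. More seriously, while it is correct that $Y$ contains no geodesic line (otherwise $\mathbb{E}^2\times\mathbb{R}\cong\mathbb{E}^3$ would embed in $X$), this only tells you that every element of $N_G(V)$ projects to an \emph{elliptic} isometry of $Y$. A group all of whose elements are elliptic need not have a bounded orbit, so the Bruhat--Tits theorem does not apply as you state it; some further input (for instance the discreteness results coming from cocompactness in \cite[II.7]{BrHa}) is needed to force $N_G(V)$ to be virtually $\mathbb{Z}^2$.
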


In particular, this result applies to lattices in rank--$2$  symmetric spaces thus answering a special case
of a question by J.-F.\ Lafont \cite[Problem 46.7]{Gui}.
\medskip

Classical examples of groups acting properly on systolic complexes are small cancellation groups \cite{Wise}. Note that small cancellation
groups are not always hyperbolic and only for some of them a  $\mathrm{CAT}(0)$ structure is provided. There is a natural construction
by D.\ Wise of a systolic complex associated to a small cancellation complex. Therefore, Theorem C and Theorem D apply in the small cancellation
setting. 

In the current article we explore the more general and more powerful theory of \emph{graphical small cancellation}, attributed to Gromov \cite{Gro}. Furthermore, instead of studying small cancellation 
presentations, we consider a slightly more general situation of graphical small cancellation complexes and their automorphism groups. Our approach is analogous to the one by McCammond-Wise \cite{McWi} in the classical small cancellation theory.
We initiate the systematic study of graphical small cancellation complexes, in particular in  Section~\ref{sec:graphical} we prove their basic geometric properties. The theory of groups acting properly on graphical small cancellation complexes provides a powerful tool for constructing groups with prescribed features. Examples include finitely generated groups containing expanders, and non-exact groups with the Haagerup property, both constructed in \cite{O-sc}.

Towards our main application, which is constructing low-dimensional models for classifying spaces,
we define the \emph{Wise complex} of a graphical small cancellation complex. It is the nerve of a particular cover of the graphical complex. We show that the Wise complex of a simply connected $C(p)$ graphical complex is $p$--systolic
(Theorem~\ref{t:ss}), and that the two complexes are equivariantly homotopy equivalent in the presence of
a group action (Theorem~\ref{t:Ghe}). The latter result is achieved by the use of an equivariant version of
the Borsuk Nerve Theorem, which we prove on the way (Theorem~\ref{t:nerve}). As a corollary we obtain the following. 

\begin{twf}[Corollary~\ref{c:gscsys}]
	Let $G$ be a group acting properly and cocompactly on a simply connected 
	$C(p)$ graphical complex for $p\geqslant 6$. Then $G$ acts properly and cocompactly on a 
	$p$--systolic complex, i.e.\ $G$ is a $p$--systolic group.
\end{twf}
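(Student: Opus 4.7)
The plan is to apply the two preceding Wise--complex results in sequence and then check that properness and cocompactness transfer. Let $X$ be the given simply connected $C(p)$ graphical complex with a proper cocompact $G$--action, and write $\wis{X}$ for its Wise complex. Theorem~\ref{t:ss} gives immediately that $\wis{X}$ is $p$--systolic, so the only thing left to establish is that the induced $G$--action on $\wis{X}$ is proper and cocompact.

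Since $\wis{X}$ is by definition the nerve of a canonical cover of $X$, that cover is preserved by the automorphism group of $X$, and in particular by $G$; hence the $G$--action on $X$ induces a simplicial $G$--action on $\wis{X}$ by permuting the cover elements which form its vertex set. For cocompactness I would argue that cocompactness of $G \curvearrowright X$ forces the cover to have only finitely many $G$--orbits, so the vertex set of $\wis{X}$ splits into finitely many orbits; combined with local finiteness of the nerve (inherited from uniform local finiteness of $X$, which itself follows from properness plus cocompactness on a locally finite complex) this gives finitely many $G$--orbits of simplices. For properness, each cover element is a bounded subcomplex of $X$, so its setwise stabiliser is a finite subgroup of $G$; stabilisers of higher simplices of $\wis{X}$ are intersections of such finite subgroups and hence also finite.

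In principle one could instead invoke Theorem~\ref{t:Ghe} and try to transport properness and cocompactness across the equivariant homotopy equivalence between $X$ and $\wis{X}$, but neither property is a $G$--homotopy invariant in general, so the direct check described above is cleaner. Accordingly I do not expect any serious obstacle here; the only point requiring genuine care will be verifying that the cover used in the definition of $\wis{X}$ really is $G$--invariant with uniformly bounded pieces. Both of these are built into the construction of the Wise complex recalled earlier in the paper, and once they are in hand the corollary reduces to combining Theorem~\ref{t:ss} with the orbit-counting and stabiliser arguments sketched above.
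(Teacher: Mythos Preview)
Your approach is correct and runs parallel to the paper's, which is essentially the short paragraph preceding Corollary~\ref{c:gscsys}: invoke Theorem~\ref{t:ss} for $p$--systolicity, observe that uniform local finiteness passes between $X$ and $\wis{X}$ directly from the construction, and then check that properness and cocompactness transfer.

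The one substantive difference is that the paper \emph{does} route properness through the $G$--homotopy equivalence of Theorem~\ref{t:Ghe}, which you explicitly decided against. Your caution is reasonable in general, but here the argument goes through: once both $X$ and $\wis{X}$ are uniformly locally finite, properness is equivalent to all point stabilisers being finite; and if $H\leq G$ stabilises a vertex of $\wis{X}$ then $\wis{X}^H\neq\emptyset$, hence $X^H\neq\emptyset$ by the $G$--homotopy equivalence, so $H$ lies in a point stabiliser of $X$ and is finite. So the paper's route is legitimate, if terse. Your direct stabiliser check is more elementary and avoids Theorem~\ref{t:Ghe} altogether, which is a genuine simplification for this corollary.

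One small gap in your sketch: the claim that ``each cover element is a bounded subcomplex of $X$'' needs justification, since the relators $\Gamma_i$ are not assumed finite in Definition~\ref{def:graphcomplex}. Under proper plus cocompact this does follow, but the cleaner observation is that the setwise stabiliser of a cone-cell $C(\Gamma_i)$ equals the stabiliser of its apex, which is a vertex of the triangulated complex of Definition~\ref{de:graphufl} and hence has finite stabiliser by properness. This bypasses boundedness entirely.
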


The result above allows one to conclude many strong features of groups acting geometrically on
graphical small cancellation complexes. Among them is biautomaticity, proved for classical small cancellation 
groups in \cite{GeSh2}, and for systolic groups in \cite{JS2}.

Using the above techniques we are able to construct low-dimensional models for classifying spaces for
groups acting properly on graphical small cancellation complexes.

\begin{twg}[Theorem~\ref{twclasssmall}]
Let a group $G$ act properly on a simply connected uniformly locally finite $C(6)$ graphical complex $X$. Then: 
\begin{enumerate}
	\item the complex $X$ is a $2$--dimensional model for $\eg$, 
	\item there exists a $3$--dimensional model for $\eeg$,
	\item there exists a $4$--dimensional model for $E_{\mathcal{VAB}}G$, provided the action is additionally cocompact.
        \end{enumerate}

\end{twg}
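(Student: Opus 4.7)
The plan is to reduce all three claims to the systolic setting by passing to the Wise complex $\wis{X}$ and then to invoke the results already assembled in the paper. Set $W := \wis{X}$. By Theorem~\ref{t:ss}, $W$ is a $6$--systolic simplicial complex, and from the nerve construction $W$ inherits $2$--dimensionality from the $2$--dimensional complex $X$. Theorem~\ref{t:Ghe} provides a $G$--equivariant homotopy equivalence $X \simeq_{G} W$; in particular, properness of the action on $X$ descends to a proper action on $W$. A short check, which I expect to be the main technical obstacle, shows that the uniform local finiteness of $X$ together with the $C(6)$ condition force $W$ to be uniformly locally finite as well---this is exactly the hypothesis needed to plug $W$ into Theorems~C and D.

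For part (1), a systolic complex is a model for $\eg$ whenever a group acts on it properly, by \cite{OCh}. Hence $W$ is such a model. The equivariant equivalence $X \simeq_{G} W$ gives, for every finite subgroup $F \leqslant G$, that $X^F$ is homotopy equivalent to $W^F$, and the latter is contractible. Since $X$ is itself a $2$--dimensional simplicial complex, this shows that $X$ is a $2$--dimensional model for $\eg$.

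For part (2), Theorem~C applied to the $2$--dimensional uniformly locally finite systolic complex $W$ yields a model for $\eeg$ of dimension $d+1 = 3$, since here $d = 2 \leqslant 3$. For part (3), the additional cocompactness of the action on $X$ combined with Theorem~F guarantees that $G$ acts properly and cocompactly on a $p$--systolic complex; running the same argument through $W$ produces a proper cocompact action on the $2$--dimensional systolic complex $W$. Theorem~D applied with $d = 2$ then delivers a model for $E_{\mathcal{VAB}}G$ of dimension $\max\{4, 2\} = 4$. Once the technical point about uniform local finiteness of $W$ is handled, all three conclusions follow directly from the already-proven Theorems~C, D, F, \ref{t:ss}, \ref{t:Ghe} and the result of \cite{OCh}.
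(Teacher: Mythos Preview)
Your overall strategy of passing to the Wise complex $W = W(X)$ and invoking the systolic machinery is exactly right, and your argument for part~(1) is correct and matches the paper. The uniform local finiteness of $W$ is not a real obstacle: the paper observes, immediately after Definition~\ref{de:graphufl}, that $X$ is uniformly locally finite if and only if $W(X)$ is.

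The genuine gap is the claim that ``from the nerve construction $W$ inherits $2$--dimensionality from the $2$--dimensional complex $X$''. This is false in general. The Wise complex is the nerve of the cover by cone-cells, so an $n$--simplex of $W$ corresponds to $n+1$ cone-cells with non-empty common intersection. By Lemma~\ref{lem:flag}, any family of pairwise intersecting relators intersects jointly, so whenever $n+1$ relators pass through a common vertex or edge of $\Theta$, the nerve $W$ contains an $n$--simplex. For typical small cancellation complexes this $n$ can be arbitrarily large (bounded only by the local finiteness constant). Hence $W$ is \emph{not} $2$--dimensional, and your invocations of Theorem~C with $d=2$ and Theorem~D with $d=2$ are unjustified.

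The paper circumvents this as follows. For part~(2) it does not apply Theorem~\ref{mainthm} to $W$ directly. Instead it returns to the pushout of Corollary~\ref{commenough} and feeds in low-dimensional pieces individually: the $2$--dimensional complex $X$ itself serves both as a model for $\underline{E}G$ and, restricted to $N_G[H]$, as a $2$--dimensional model for $\underline{E}N_G[H]$; the $2$--dimensional model for $E_{\mathcal{G}[H]}N_G[H]$ comes from Lemma~\ref{commmodels}\ref{jedencommmodels}, whose proof uses only that $G$ acts properly on \emph{some} uniformly locally finite systolic complex (namely $W$), with no dimension hypothesis on $W$. Corollary~\ref{commenough} with $n=3$ then yields the $3$--dimensional $\underline{\underline{E}}G$. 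Part~(3) similarly reopens the proof of Theorem~\ref{twvab2}: conditions (NM1) and (NM2) hold because $G$ is systolic, and the pushout there needs only a low-dimensional $\underline{\underline{E}}G$, which part~(2) has just supplied. The point in both cases is to exploit the $2$--dimensionality of $X$, not of $W$.
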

 
\noindent
\textbf{Organisation.} The article consists of two main parts. The first part (Sections~\ref{sec:preliminaries}--\ref{sec:mainthm}) is concerned mostly with geometry of systolic complexes. In Section~\ref{sec:preliminaries} we give a background on systolic complexes and on classifying spaces for families of subgroups. We also recall a general method of constructing classifying spaces for the family of virtually cyclic subgroups developed in \cite{LuWe}. 
In Section~\ref{sec:minset} we show that the minimal displacement set of a hyperbolic isometry of a systolic complex splits up to quasi-isometry as a product of a real line and a certain simplicial graph. Section~\ref{sec:fillingradius} is devoted to proving that this graph is quasi-isometric to a simplicial tree. The proof relies on the aforementioned asymptotic asphericity properties of systolic complexes. Finally in Section~\ref{sec:mainthm}, using the method described in Section~\ref{sec:preliminaries} we construct models for $\eeg$ and  $E_{\mathcal{VAB}}G$ for groups acting properly on systolic complexes.

In the second part (Sections~\ref{sec:graphical}--\ref{sec:dualisisystolic}) we study graphical small cancellation theory. In Section~~\ref{sec:graphical} we initiate systematic studies of small cancellation complexes and show their basic geometric properties. In Section~\ref{sec:dualisisystolic} we prove that the dual complex of a graphical small cancellation complex is systolic. Then we use this fact to construct models for $\eg$,  $\eeg$ and  $E_{\mathcal{VAB}}G$ for groups acting properly on graphical small cancellation complexes.

We conclude with Section~\ref{sec:ex} where we provide various examples of groups to which our theory applies.\medskip

\noindent
\textbf{Acknowledgements.} We would like to thank Dieter Degrijse for many helpful discussions, in particular pointing us towards the proof of Theorem~\ref{twvab2}, and for a careful proofreading of the manuscript.
We thank Jacek \'Swi\polhk{a}tkowski and the anonymous referee for many valuable remarks.

D.O. was supported by (Polish) Narodowe Centrum Nauki, grant no.\ UMO-2015/\-18/\-M/\-ST1/\-00050.
T.P. was supported by the Danish National Research Foundation through the Centre for Symmetry and Deformation (DNRF92).

\section{Preliminaries}\label{sec:preliminaries}
\subsection{Classifying spaces with finite or virtually cyclic stabilisers}\label{sec:classifying}
The main goal of this section is, given a group $G$, to describe a method of constructing a model for a classifying space $\eeg$ out of a model for $\eg$. The presented method is due to W.\ L\"uck and M.\ Weiermann \cite{LuWe}. After giving the necessary definitions we describe the steps of the construction, some of which we adjust to our purposes.\medskip

A collection of subgroups $\mathcal{F}$ of a group $G$ is called a \emph{family} if it is closed under taking subgroups and conjugation by elements of $G$. Three examples which will be of interest to us are the family $\mathcal{FIN}$ of all finite subgroups, the family $\mathcal{VCY}$ of all virtually cyclic subgroups, and the family $\mathcal{VAB}$ of all virtually abelian subgroups. Let us define the main object of our study.

\begin{de}
Given a group $G$ and a family of its subgroups $\mathcal{F}$, a \emph{model for the classifying space} $E_{\mathcal{F}}G$ is a $G$--CW--complex $X$ such that for any subgroup $H \subset G$ the fixed point set $X^H$ is contractible if $H \in \mathcal{F}$, and empty otherwise. 
\end{de}

In order to simplify the notation, throughout the article let $\underline{E}G$ denote $E_{\mathcal{FIN}}G$ and let $\underline{\underline{E}}G$ denote $E_{\mathcal{VCY}}G$. This is a standard, commonly used notation. \medskip

A model for $E_{\mathcal{F}}G$ exists for any group and any family; moreover, any two models for $E_{\mathcal{F}}G$ are $G$--homotopy equivalent. For the proofs of these facts see~\cite{Luck-surv}. However, general constructions always produce infinite dimensional models. We will now describe the aforementioned method of constructing a finite dimensional model for $\eeg$ out of a model for $\eg$ and appropriate models associated to infinite virtually cyclic subgroups of $G$. Before doing so, we need one more piece of notation. If $H \subset G$ is a subgroup and $\mathcal{F}$ is a family of subgroups of $G$, let $\mathcal{F} \cap H$ denote the family of all subgroups of $H$ which belong to the family $\mathcal{F}$. More generally, if $\psi \colon H \to G$ is a homomorphism, let $\psi^{\ast}\mathcal{F}$ denote the smallest family of subgroups of $H$ that contains $\psi^{-1}(F)$ for all $ F \in \mathcal{F}$.  \medskip

Consider the collection $\mathcal{VCY} \setminus \mathcal{FIN}$ of infinite virtually cyclic subgroups of $G$. It is not a family since it does not contain the trivial subgroup. Define an equivalence relation on $\mathcal{VCY} \setminus \mathcal{FIN}$ 
by 
\[H_1 \sim H_2 \iff |H_1 \cap H_2| = \infty. \]
Let $[H]$ denote the equivalence class of $H$, and let $[\mathcal{VCY} \setminus \mathcal{FIN}]$ denote the set of equivalence classes. The group $G$ acts on $[\mathcal{VCY} \setminus \mathcal{FIN}]$ by conjugation, and for a class $[H] \in \mathcal{VCY} \setminus \mathcal{FIN}$ define the subgroup $N_G[H] \subseteq G$ to be the stabiliser of $[H]$ under this action, i.e.
\[N_G[H]=\{g \in G \mid |g^{-1}Hg \cap H|= \infty\}.\]
The subgroup $N_G[H]$ is called the \emph{commensurator of} $H$, since its elements conjugate $H$ to the subgroup commensurable with $H$.
For $[H] \in [\mathcal{VCY} \setminus \mathcal{FIN}]$ define the family $\mathcal{G}[H]$  of subgroups of $N_G[H]$ as follows \[\mathcal{G}[H]=\{ K \subset G \mid K \in [\mathcal{VCY} \setminus \mathcal{FIN}], [K]=[H] \} \cup \{ K \in \mathcal{FIN} \cap N_G[H]\}.\]
In other words $\mathcal{G}[H]$ consists of all infinite virtually cyclic subgroups of $G$ which have infinite intersection with $H$ and of all finite subgroups of $N_G[H]$. In this setting we have  the following. 

\begin{tw}\cite[Theorem 2.3]{LuWe} 
\label{luwepushout} Let $I$ be a complete set of representatives $[H]$ of $G$--orbits of $[\mathcal{VCY} \setminus \mathcal{FIN}]$ under the action of $G$ by conjugation. Choose arbitrary 
models for $\underline{E} N_G[H]$ and $E_{\mathcal{G}[H]} N_G[H]$ and an arbitrary 
model for $\underline{E}G$. Let the space $X$ be defined as a cellular $G$--pushout
\[
\begin{CD}
\coprod_{[H] \in I}  G \times_{N_G[H]} \underline{E} N_G[H]     @>i>>  \underline{E}G \\
@VV{\coprod_{[H] \in I} \mathrm{id}_G \times_{N_G[H]} f_{[H]}}V         @VVV\\
\coprod_{[H] \in I}  G \times_{N_G[H]} E_{\mathcal{G}[H]} N_G[H]     @>{\phantom{\text{label}}}>>   X \smallskip
\end{CD}
\]
such that $f_{[H]}$ is a cellular $N_G[H]$--map for every $[H] \in I$, and $i$ is an inclusion of $G$--CW--complexes. Then $X$ is a 
model for $\eeg$.
\end{tw}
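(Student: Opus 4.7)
The plan is to verify the defining properties of $\underline{\underline{E}}G$ for the $G$--CW--complex $X$: every isotropy group is virtually cyclic, $X^V$ is contractible for every $V\in\mathcal{VCY}$, and $X^V=\emptyset$ otherwise. The isotropy check is immediate from the pushout, since cells in the image of $i$ have finite (hence virtually cyclic) stabilisers while cells in the image of the right-hand vertical map have stabilisers conjugate to members of some $\mathcal{G}[H]$, which are either finite or infinite virtually cyclic by definition. Because $i$ is an inclusion of $G$--CW--complexes, for any subgroup $V\subseteq G$ the square induced on $V$--fixed points is again a pushout whose top horizontal arrow is a cofibration, and hence a homotopy pushout. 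Throughout I will use the standard identification
\[
(G\times_{N_G[H]}Y)^V \;=\; \coprod\nolimits_{VgN_G[H]\colon g^{-1}Vg\subseteq N_G[H]} Y^{g^{-1}Vg}.
\]

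The argument then splits into three cases according to the type of $V$. If $V\notin\mathcal{VCY}$, then $V$ is neither finite nor virtually cyclic, so no conjugate of $V$ can lie in $\mathcal{FIN}$ or in any $\mathcal{G}[H]$; all three corners of the pushout restrict to the empty set and $X^V=\emptyset$. If $V$ is finite, then $V\in\mathcal{FIN}$ is contained in every $\mathcal{G}[H]$, so each non-empty summand in each corner is contractible. The map $f_{[H]}$, being an $N_G[H]$--map between two classifying spaces whose fixed-point sets under finite subgroups are both contractible, restricts to weak equivalences on all relevant fixed-point sets; the homotopy pushout $X^V$ of contractible spaces along such a weak equivalence is then contractible.

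The crux of the argument is the case when $V$ is infinite virtually cyclic. Then $(\underline{E}G)^V=\emptyset$ because $V$ is infinite, and every summand of $\bigl(\coprod_{[H]\in I}G\times_{N_G[H]}\underline{E}N_G[H]\bigr)^V$ vanishes for the same reason. Hence $X^V$ reduces, up to homotopy, to the $V$--fixed points of $\coprod_{[H]\in I}G\times_{N_G[H]}E_{\mathcal{G}[H]}N_G[H]$, and I must show this space is contractible. Let $[H]\in I$ be the unique $G$--orbit representative with $[g_0^{-1}Vg_0]=[H]$ for some $g_0\in G$. For $[H']\neq[H]$ no conjugate $g^{-1}Vg$ lies in $\mathcal{G}[H']$, so that summand is empty. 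For $[H']=[H]$ it remains to show that only one double coset $VgN_G[H]$ contributes, namely $Vg_0N_G[H]$, after which the surviving summand is $(E_{\mathcal{G}[H]}N_G[H])^{g_0^{-1}Vg_0}$, contractible since $g_0^{-1}Vg_0\in\mathcal{G}[H]$.

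The main obstacle is this final bookkeeping: one must verify that whenever $g^{-1}Vg\subseteq N_G[H]$ and $[g^{-1}Vg]=[H]$, the element $g_0^{-1}g$ already lies in $N_G[H]$. This rests on the facts that $\sim$ is genuinely an equivalence relation on $\mathcal{VCY}\setminus\mathcal{FIN}$, so commensurability with $H$ is transitive through the intermediate class $[g_0^{-1}Vg_0]$, and that within a virtually cyclic group any two infinite subgroups are commensurable, so the class $[\,\cdot\,]$ is determined by any infinite cyclic subgroup. Together these guarantee $g_0^{-1}g\in N_G[H]$ and collapse the a priori infinite disjoint union into the single desired summand, completing the proof that $X^V$ is contractible.
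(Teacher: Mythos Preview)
The paper does not give its own proof of this statement; it is quoted verbatim from \cite[Theorem 2.3]{LuWe}, with only the remark afterwards that such a pushout exists (using the mapping cylinder to make $i$ injective). So there is no in-paper argument to compare against.

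Your proof is correct and is essentially the standard one from \cite{LuWe}: pass to $V$--fixed points, use that taking fixed points commutes with pushouts along inclusions of $G$--CW--complexes, and then do the three-case analysis. The crucial bookkeeping in the infinite virtually cyclic case---that exactly one double coset $VgN_G[H]$ contributes, for exactly one $[H]\in I$---is handled correctly: transitivity of $\sim$ forces $g_0^{-1}g\in N_G[H]$.

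One small imprecision worth tightening: in the finite case you write ``the homotopy pushout $X^V$ of contractible spaces along such a weak equivalence is then contractible.'' The spaces on the left-hand side are not contractible but disjoint unions of contractibles (indexed by double cosets). What you actually use is that the left vertical map $A^V\to C^V$ is a homotopy equivalence (component by component, over the \emph{same} index set), and pushing out a homotopy equivalence along the cofibration $i^V$ yields a homotopy equivalence $(\underline{E}G)^V\to X^V$. You clearly have this in mind, but the sentence as written slightly misstates it.
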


Existence of such pushout follows from universal properties of appropriate classifying spaces, and the fact that if the map $i$ fails to be injective, one can replace it with an inclusion into the mapping cylinder. For details see \cite[Remark 2.5]{LuWe}. This observation leads to the following corollary.
\begin{cor}
\label{commenough}\cite[Remark 2.5]{LuWe} Assume that there exists an $n$--dimensional 
model for $\eg$, and for every $[H] \in [\mathcal{VCY} \setminus \mathcal{FIN}]$ there exists an $n$--dimensional 
model for $E_{\mathcal{G}[H]}N_G[H]$, and an $(n-1)$--dimensional 
model for $\underline{E} N_G[H]$. Then there exists an $n$--dimensional 
model for $\underline{\underline{E}}G$.
\end{cor}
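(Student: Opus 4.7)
The plan is to plug the hypothesised low-dimensional models into the $G$-CW-pushout of Theorem~\ref{luwepushout} and bound the dimension of the result. First, for each orbit representative $[H]\in I$ I would select the given $(n-1)$-dimensional model for $\underline{E}N_G[H]$ and the $n$-dimensional model for $E_{\mathcal{G}[H]}N_G[H]$, together with an $n$-dimensional model for $\underline{E}G$. The induction functor $G\times_{N_G[H]}(-)$ from $N_G[H]$-CW-complexes to $G$-CW-complexes preserves dimension (each $N_G[H]$-equivariant $k$-cell yields a single $G$-equivariant $k$-cell), so the induced complexes $G\times_{N_G[H]}\underline{E}N_G[H]$ and $G\times_{N_G[H]}E_{\mathcal{G}[H]}N_G[H]$ have dimensions $n-1$ and $n$, respectively.

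The only technical point is that the map $i$ need not be injective, so the pushout square is not automatically cellular in the required sense. Following Remark~2.5 of~\cite{LuWe}, I would replace $\underline{E}G$ by the $G$-equivariant mapping cylinder $M_i$ of $i$; the canonical map $\coprod_{[H]\in I}G\times_{N_G[H]}\underline{E}N_G[H]\hookrightarrow M_i$ is then a cellular $G$-inclusion, and the collapse $M_i\to \underline{E}G$ is a $G$-homotopy equivalence, so $M_i$ still serves as a model for $\underline{E}G$. Crucially, the mapping cylinder of a cellular map from an $(n-1)$-dimensional to an $n$-dimensional CW complex has dimension $\max\{n,(n-1)+1\}=n$; this is precisely where the $(n-1)$-dimensional hypothesis on $\underline{E}N_G[H]$ is needed, as a looser bound would force the mapping cylinder to have dimension $n+1$.

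With $i$ replaced by a cellular $G$-inclusion, the modified pushout square produces a $G$-CW-complex $X$ whose cells are inherited from the three corner complexes, and $X$ is $G$-homotopy equivalent to the pushout of the original diagram by the gluing lemma along cofibrations. Its dimension is therefore bounded by the maximum of the dimensions of the corners, namely $\max\{n,n,n-1\}=n$. By Theorem~\ref{luwepushout}, $X$ is a model for $\eeg$, and the dimension estimate finishes the proof. I do not anticipate any substantive obstacle beyond the mapping cylinder bookkeeping; the conceptual content lies entirely in Theorem~\ref{luwepushout}, and the corollary is essentially a dimension count read off from the pushout.
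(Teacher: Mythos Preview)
Your proof is correct and follows exactly the approach indicated in the paper: it applies the pushout of Theorem~\ref{luwepushout}, replaces $i$ by the inclusion into its mapping cylinder (which is where the $(n-1)$--dimensional hypothesis on $\underline{E}N_G[H]$ is used), and reads off the dimension bound. The paper itself does not spell out these details but simply refers to \cite[Remark 2.5]{LuWe}, so your write-up is in fact more explicit than the paper's treatment.
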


In what follows we need our groups to be finitely generated. The commensurator $N_G[H]$ in general does not have to be finitely generated. The following proposition allows us to reduce the problem of finding various models for $E_{\mathcal{F}}N_G[H]$ to the study of its finitely generated subgroups.

\begin{prop}
\label{limitcoro}
If for every finitely generated subgroup $K\subset N_G[H]$ there exists a model for $E_{\mathcal{G}[H]\cap K}K$ with $\mathrm{dim}E_{\mathcal{G}[H]\cap K}K \leqslant n$, then there exists an $(n+1)$--dimensional model for $E_{\mathcal{G}[H]}N_G[H]$. 
The same holds for models for $\underline{E}N_G[H]$.
\end{prop}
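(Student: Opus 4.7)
The plan is to write $N_G[H]$ as a directed union of finitely generated subgroups and to build $E_{\mathcal{G}[H]}N_G[H]$ as an (equivariant) mapping telescope of the induced classifying spaces, paying the price of one extra dimension for passing from direct limit to homotopy colimit.

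Concretely, first I would fix an exhaustive increasing chain $K_0 \subset K_1 \subset K_2 \subset \cdots$ of finitely generated subgroups of $N_G[H]$ with $\bigcup_i K_i = N_G[H]$ (this exists since $N_G[H]$ is a union of its finitely generated subgroups, which form a directed system; any countable cofinal chain suffices, but if $N_G[H]$ is uncountable one can work with the full directed system and a transfinite telescope, or more cleanly with the homotopy colimit over the poset of finitely generated subgroups). For each $i$, let $X_i$ denote the hypothesised model for $E_{\mathcal{G}[H]\cap K_i}K_i$ of dimension $\leqslant n$, and form the induced $N_G[H]$--CW--complex $Y_i = N_G[H]\times_{K_i} X_i$. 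Each inclusion $K_i\hookrightarrow K_{i+1}$ pulls the family $\mathcal{G}[H]\cap K_{i+1}$ back into a family inside $\mathcal{G}[H]\cap K_i$, so by the universal property of classifying spaces there is a $K_i$--equivariant cellular map $X_i \to X_{i+1}|_{K_i}$, unique up to $K_i$--homotopy, which induces an $N_G[H]$--equivariant cellular map $Y_i \to Y_{i+1}$. After replacing these by inclusions via mapping cylinders, define $Y$ to be the resulting $N_G[H]$--equivariant mapping telescope.

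Next I would verify the three required properties of $Y$. \textbf{Dimension:} a mapping telescope of $n$--dimensional CW--complexes has dimension $n+1$, so $\dim Y \leqslant n+1$. \textbf{Stabilisers:} every cell of $Y_i$ has stabiliser conjugate (in $N_G[H]$) to a subgroup of $K_i$ lying in $\mathcal{G}[H]\cap K_i \subseteq \mathcal{G}[H]$, hence stabilisers of $Y$ lie in $\mathcal{G}[H]$. \textbf{Fixed points:} for $F \in \mathcal{G}[H]$, the subgroup $F$ is either finite or infinite virtually cyclic commensurable with $H$; in both cases $F$ is finitely generated, so $F\subset K_i$ for some $i$, and from that index onwards $F \in \mathcal{G}[H]\cap K_j$. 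The fixed point set $Y_j^F$ contains a copy of $X_j^F$ as one summand of the induced space and is $K_j$--equivariantly (hence just equivariantly) the disjoint union of such contributions; the telescope filters it as an ascending union of contractible subspaces along (co)fibrations, so $Y^F$ is contractible. For $F\notin \mathcal{G}[H]$ the fixed point set is empty because the stabiliser of any cell of $Y$ lies in $\mathcal{G}[H]$.

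The main obstacle, and the only genuinely delicate point, is the fixed point analysis: one must control the $F$--fixed set of the induced complex $N_G[H]\times_{K_i}X_i$ in terms of fixed sets of conjugates of $F$ in $K_i$, and then show that passing to the telescope preserves contractibility of these possibly non-connected fixed sets. This is handled by the standard description of fixed points in induced $G$--CW--complexes (a disjoint union indexed by double cosets $F\backslash N_G[H]/K_i$, with each piece of the form $(X_i)^{g^{-1}Fg}$) together with the fact that a telescope of contractible spaces along cellular maps is contractible; the only subtle issue is making sure that once $F\subset K_i$ the inductive maps $(X_j)^F \to (X_{j+1})^F$ are defined and weak equivalences, which follows from uniqueness up to $K_j$--homotopy of maps into classifying spaces for the family $\mathcal{G}[H]\cap K_{j+1}$. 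The argument for $\underline{E}N_G[H]$ is identical with $\mathcal{G}[H]$ replaced throughout by $\mathcal{FIN}$.
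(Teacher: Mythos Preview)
Your approach is essentially the same as the paper's: both write $N_G[H]$ as the directed union of its finitely generated subgroups and build the classifying space as a homotopy colimit of induced spaces, gaining one dimension.  The paper simply cites \cite[Theorem 4.3]{LuWe} as a black box, whereas you are reconstructing that theorem's proof by hand via the mapping telescope.  One incidental point: the paper observes that $N_G[H]$ is countable (since $G$ acts properly on a locally finite complex), so a cofinal chain exists and your parenthetical about the uncountable case is unnecessary.

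There is, however, a genuine gap in your fixed-point analysis.  You correctly note that $Y_j^F \cong \coprod_{gK_j\in (N_G[H]/K_j)^F}(X_j)^{g^{-1}Fg}$ is a disjoint union of contractible pieces.  But then you claim the telescope ``filters it as an ascending union of contractible subspaces'', which is false: each stage $Y_j^F$ is disconnected, and so is each finite partial telescope.  What you actually need is that the index sets $(N_G[H]/K_j)^F$ have colimit a single point.  This holds because for any two cosets $gK_j,\,hK_j$ one eventually has $g^{-1}h\in K_{j'}$ for some $j'\geqslant j$, so $gK_{j'}=hK_{j'}$.  Once you know this, the telescope of the $Y_j^F$ is homotopy equivalent to a forest whose set of ends is a single point, i.e.\ a tree, hence contractible.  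Your final paragraph tracks only the component through the identity coset and does not address the merging of the other components, so as written the argument is incomplete; with the observation above it goes through.
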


\begin{proof}The proof is a straightforward application of Theorem 4.3 in~\cite{LuWe}, which treats colimits of groups. 
The group $N_G[H]$ can be written as a colimit $N_G[H]={\mathrm{colim}}_{i \in I}K_i$ of a directed system $\{K_i\}_{i \in I}$ of all of its finitely generated subgroups (since $N_G[H]$ is countable, this system is countable as well). Since the structure maps are injective and since every subgroup $F \in \mathcal{G}[H]$ is finitely generated, it is contained in the image of some $\psi_i\colon K_i \hookrightarrow G$. Again by the injectivity of $\psi_i$, we have $\psi^{\ast}_i
\mathcal{G}[H]= \mathcal{G}[H]\cap K_i$. The claim follows from Theorem 4.3 in~\cite{LuWe}.
\end{proof}

The following condition will allow us to find infinite cyclic subgroups which are normal in $K.$

\begin{de}\cite[Condition 4.1]{Lu09}
\label{condition}
 A group $G$ satisfies condition (C) if for every $g, h \in G$ with $|h|=\infty$ and any $k,l \in \mathbb{Z}$ we have \[gh^kg^{-1}=h^l \implies |k|=|l|.\] 
\end{de}

\begin{lem}
\label{normalsubgroup} Let $K \subset N_G[H]$ be a finitely generated subgroup that contains some representative of $[H]$ and assume that the group $G$ satisfies condition $\mathrm{(C)}$. Choose an element $h \in H$ such that $[\langle h \rangle] = [H]$ (any element of infinite order has this property). Then there exists $k \geqslant 1$, such that $\langle h^k \rangle$ is normal in $K$.
\end{lem}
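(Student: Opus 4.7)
\bigskip

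\noindent\textbf{Proof plan.} The plan is to reduce the problem to a statement about a finite generating set of $K$, and then to extract a power of $h$ which is normalised by each generator, invoking condition (C) to control the exponents.

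First I would arrange $h\in K$. By hypothesis $K$ contains some representative $H'\in [H]$, so $|H'\cap \langle h\rangle |=\infty$; in particular some positive power $h^{m}$ lies in $H'\subseteq K$. Replacing $h$ by $h^{m}$ (note $[\langle h^{m}\rangle]=[\langle h\rangle]=[H]$) we may assume from the start that $h\in K$. Since $K\subseteq N_G[H]=N_G[\langle h\rangle]$, every element $g\in K$ satisfies $|g^{-1}\langle h\rangle g \cap \langle h\rangle|=\infty$, so there exist nonzero integers $a,b$ with $g^{-1}h^{a}g=h^{b}$.

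Next I would fix a finite generating set $\{g_{1},\ldots ,g_{n}\}$ of $K$ and, for each $i$, choose nonzero integers $a_{i},b_{i}$ with
\[
g_{i}^{-1}h^{a_{i}}g_{i}=h^{b_{i}}.
\]
Condition (C) applied to $g=g_{i}$, $h$, $k=a_{i}$, $l=b_{i}$ yields $|a_{i}|=|b_{i}|$, so $b_{i}=\varepsilon_{i}a_{i}$ for some $\varepsilon_{i}\in\{\pm 1\}$. Replacing $a_{i}$ by $|a_{i}|$ we may assume $a_{i}\geqslant 1$, and we still have $g_{i}^{-1}h^{a_{i}}g_{i}=h^{\varepsilon_{i}a_{i}}\in\langle h^{a_{i}}\rangle$. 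Inverting this relation gives $g_{i}h^{\varepsilon_{i}a_{i}}g_{i}^{-1}=h^{a_{i}}$, equivalently $g_{i}h^{a_{i}}g_{i}^{-1}=h^{\varepsilon_{i}a_{i}}$, so $\langle h^{a_{i}}\rangle$ is invariant under conjugation by both $g_{i}$ and $g_{i}^{-1}$.

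Finally I would set $k:=\mathrm{lcm}(a_{1},\ldots ,a_{n})$. For each $i$, since $a_{i}\mid k$ and $g_{i}^{\pm 1}\langle h^{a_{i}}\rangle g_{i}^{\mp 1}=\langle h^{a_{i}}\rangle$, raising the basic relation to the power $k/a_{i}$ shows $g_{i}^{-1}h^{k}g_{i}=h^{\pm k}\in\langle h^{k}\rangle$ and similarly $g_{i}h^{k}g_{i}^{-1}\in\langle h^{k}\rangle$. Thus every generator of $K$, and hence every element of $K$, normalises $\langle h^{k}\rangle$, which proves that $\langle h^{k}\rangle \trianglelefteq K$.

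The only mildly delicate points I anticipate are the initial reduction $h\in K$ (which requires commensurability of $\langle h\rangle$ with the representative of $[H]$ contained in $K$) and the correct use of condition (C) to pass from a single pair $(a_{i},b_{i})$ to control over all powers simultaneously via the lcm; once these are in place, the argument is purely formal.
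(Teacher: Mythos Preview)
Your proof is correct and follows essentially the same approach as the paper: pick a finite generating set, use membership in $N_G[H]$ to obtain relations $g_i^{-1}h^{a_i}g_i=h^{b_i}$, apply condition~(C) to force $b_i=\pm a_i$, and take a common multiple of the $a_i$. The paper uses the product $\prod a_i$ rather than the lcm, and it omits your preliminary reduction ensuring $h\in K$ (which is needed for $\langle h^k\rangle$ to actually be a subgroup of $K$), so your write-up is in fact slightly more careful.
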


\begin{proof}Let $s_1, \ldots, s_m$ be generators of $K$. Since $K \subset N_G[H]$, for any $s_i$ we have $s_i^{-1}h^{k_i}s_i=h^{l_i}$ for some $k_i, l_i \in \mathbb{Z} \setminus \{0\}$. Then the condition (C) implies that $l_i= \rpm k_i$ for all $i \in \{1, \ldots, m\}$. Thus $k$ defined as $\prod_{i=1}^{n} k_i$ has the desired property. \end{proof}

In order to treat short exact sequences of groups we need the following result.

\begin{prop}\cite[Corollary 2.3]{DD}
\label{corodieter}   Consider the short exact sequence of groups \[0 \longrightarrow N \longrightarrow G \overset{\pi} \longrightarrow Q \longrightarrow 0.\]

Let $\mathcal{F}$ be a family of subgroups of $G$ and $\mathcal{H}$ be a family of subgroups of $Q$ such that $\pi(\mathcal{F}) \subseteq \mathcal{H}$. Suppose that there exists a integer $k \geqslant 0$, such that for every subgroup $H \in \mathcal{H}$ there exists a $k$--dimensional model for $E_{\mathcal{F} \cap \pi^{-1} (H)}  {\pi}^{-1}(H)$. Then given a model for $E_{\mathcal{H}}Q$, there exists a model for $E_{\mathcal{F}}G$ of dimension $k+\mathrm{dim}E_{\mathcal{H}}Q$.
\end{prop}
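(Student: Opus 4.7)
The plan is to construct the desired model for $E_{\mathcal{F}}G$ as an equivariant ``twisted product'' fibering over $Y := E_{\mathcal{H}}Q$, where the fiber over a cell with $Q$--stabilizer $H \in \mathcal{H}$ is a copy of the $k$--dimensional model $Z_H := E_{\mathcal{F} \cap \pi^{-1}(H)}\pi^{-1}(H)$. First I would view $Y$ as a $G$--CW--complex $\pi^{*}Y$ via $\pi$, so that each equivariant cell acquires stabilizer $\pi^{-1}(H)$ with $H \in \mathcal{H}$; these stabilizers generally fail to lie in $\mathcal{F}$, which is why $\pi^{*}Y$ by itself does not suffice and one must ``thicken'' each cell by the appropriate $Z_H$.

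Next, proceeding by induction on the $G$--skeleta of $\pi^{*}Y$, I would replace each equivariant cell $G \times_{\pi^{-1}(H)} D^n$ by $G \times_{\pi^{-1}(H)} (Z_H \times D^n)$. The gluing along a face orbit of smaller stabilizer $\pi^{-1}(H')$ (with $H' \subseteq H$) requires a $\pi^{-1}(H')$--equivariant map $Z_{H'} \to Z_H$. Such a map exists by the defining universal property: restricted along $\pi^{-1}(H') \hookrightarrow \pi^{-1}(H)$, the space $Z_H$ has all isotropy in $\mathcal{F} \cap \pi^{-1}(H')$, so equivariant maps from $Z_{H'}$ into it exist and are unique up to equivariant homotopy. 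The outcome is a $G$--CW--complex $X$ with a $G$--equivariant projection $p \colon X \to \pi^{*}Y$ of dimension at most $\dim Y + k$, since each $n$--cell of $Y$ contributes cells of dimension at most $n + k$ to $X$.

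It then remains to verify that $X$ is a model for $E_{\mathcal{F}}G$. Stabilizers are easy: a point in $X$ lying over a cell $\sigma$ of $Y$ has stabilizer equal to its stabilizer inside $Z_{H_\sigma}$, hence it lies in $\mathcal{F} \cap \pi^{-1}(H_\sigma) \subseteq \mathcal{F}$. For contractibility of $X^F$ with $F \in \mathcal{F}$, restricting $p$ gives a map $X^F \to Y^{\pi(F)}$ whose base is contractible (since $\pi(F) \in \mathcal{H}$) and whose fiber over each $F$--fixed cell $\sigma$ is $(Z_{H_\sigma})^F$, which is contractible because $F \in \mathcal{F} \cap \pi^{-1}(H_\sigma)$. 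A cell--by--cell induction over the skeleta of $Y^{\pi(F)}$, together with the standard fact that a $G$--map with contractible fibers over a contractible base is a homotopy equivalence, then shows $X^F$ is contractible.

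The main obstacle, I expect, is arranging the equivariant gluings coherently across the entire skeleton tower: a priori one must make infinitely many compatible choices. The resolution is precisely the universal property of the $Z_H$: the relevant equivariant mapping complexes $\mathrm{map}^{\pi^{-1}(H')}(Z_{H'}, Z_H)$ are contractible, so the extension problems at each stage of the induction are solvable and independent of prior choices up to equivariant homotopy. This same contractibility ensures that the resulting $G$--homotopy type of $X$ does not depend on any of the models or gluings chosen along the way, which is expected since $E_{\mathcal{F}}G$ is unique up to $G$--homotopy equivalence.
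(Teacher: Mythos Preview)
The paper does not give its own proof of this proposition: it simply imports the result from \cite[Corollary~2.3]{DD} and uses it as a black box. So there is nothing to compare your argument against within this paper.

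That said, your outline is essentially the standard proof of this type of result (and is in the spirit of the argument in the cited reference). The construction of $X$ by inductively thickening the skeleta of $\pi^{*}Y$ with the fiberwise models $Z_H$, using the universal property to supply and make coherent the gluing maps, is exactly right, and your dimension count and isotropy check are correct. One small point worth tightening: in the contractibility step you appeal to ``a $G$--map with contractible fibers over a contractible base is a homotopy equivalence'', but what you actually have is not a fiber bundle---the fibers $(Z_{H_\sigma})^F$ vary from cell to cell. The clean way to finish is the same skeleton-by-skeleton induction you used to build $X$: over each $F$--fixed cell of $Y^{\pi(F)}$ you are attaching a disc crossed with a contractible space, so the inclusion of each skeleton of $X^F$ into the next is a homotopy equivalence, and the colimit is contractible. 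With that adjustment your argument is complete.
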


\subsection{Systolic complexes}\label{subsec:systolic} In this section we establish the notation and define systolic complexes and groups. We do not discuss general properties of systolic complexes, the interested reader is referred to \cite{JS2}.
We also give basic definitions regarding metric on simplicial complexes, including notation which is slightly different from the one usually used. \medskip

Let $X$ be a simplicial complex. We assume that $X$ is finite dimensional and uniformly locally finite.
For a subset of vertices $S \subseteq X^{(0)}$ define the subcomplex \emph{spanned by} $S$ to be the maximal subcomplex of $X$ having $S$ as its vertex set. We denote this subcomplex by $\mathrm{span}\, S$. We say that $X$ is \emph{flag} if every set of pairwise adjacent vertices spans a simplex of $X$. For a simplex $\sigma \in X$, define the \emph{link} of $\sigma$ as the subcomplex of $X$ that consists of all simplices of $X$ which do not intersect $\sigma$, but together with $\sigma$ span a simplex of $X$. A \emph{cycle} in $X$ is a subcomplex $\gamma \subset X$ homeomorphic to the $1$--sphere. The length $|\gamma|$ of the cycle $\gamma$ is the number of its edges.  A \emph{diagonal} of a cycle is an edge connecting two of its nonconsecutive vertices.

\begin{de}\cite[Definition 1.1]{JS2} Let $k \geqslant 6$ be a positive integer. A simplicial complex $X$ is $k$\emph{--large} if it is flag and every cycle $\gamma$ of length  $4 \leqslant |\gamma| < k$ has a diagonal.

We say that $X$ is $k$\emph{--systolic} if it is connected, simply connected and the link of every simplex of $X$ is $k$--large.
\end{de}

One can show that $k$--systolic complexes are in fact both $k$--large and flag.
In the case when $k=6$, which is the most interesting to us, we abbreviate $6$\emph{--systolic} to \emph{systolic}. Note that if $m > k$ then $m$--systolicity implies $k$--systolicity. \medskip

Now we introduce the convention used throughout this article regarding the metric on simplicial complexes. Some of our definitions are slightly different from the usual ones, however they seem to be more convenient here in order to avoid technical complications.

\begin{convent}\label{metricconvention}(Metric on simplicial complexes). Let $X$ be a simplicial complex. Unless otherwise stated, when we refer to the \emph{metric} on $X$, we mean its $0$--skeleton $X^{(0)}$, where the distance between two vertices is the minimal number of edges of an edge-path joining these two vertices. Notice that for flag complexes, the $0$--skeleton together with the above metric entirely determines the complex.
By an \emph{isometry} we mean a simplicial map $f \colon Y \to X$, which restricted to $0$--skeleta is an isometry with respect to the metric defined above. In particular, any simplicial isomorphism is an isometry. 
A \emph{geodesic} in a simplicial complex is defined as a sequence of vertices $(v_i)_{i \in I}$ such that $d(v_i, v_j)= |i-j|$ for all $i,j \in I$, where $I\subseteq \mathbb Z$ is a subinterval in integers. Note that we allow $I = \mathbb{Z}$, i.e.\ a geodesic can be infinite in both directions.
A \emph{graph} is a $1$--dimensional simplicial complex. In particular, graphs do not contain loops and multiple edges. 
\end{convent}

\begin{rem}\label{rem:twometrics}For a graph the usually considered metric is the geodesic metric where every edge is assigned length $1$.
If $X$ is a simplicial complex, then the restriction of the geodesic metric on the graph $X^{(1)}$ to $X^{(0)}$ is precisely the metric we defined above.
\end{rem}

Let $v_0$ be a vertex of $X$. Define the \emph{combinatorial ball} of radius $r \in \mathbb{N}$, centred at $v_0$ as a subcomplex
\[B_n(v_0, X)=\mathrm{span}\{v \in X \mid d(v_0, v) \leqslant n\},\] 
and a \emph{combinatorial sphere} as 
\[S_n(v_0, X)=\mathrm{span}\{v \in X \mid d(v_0, v) =n\}.\]

We finish this section with basic definitions regarding group actions on simplicial complexes. Unless stated otherwise,  all groups are assumed to be discrete and all actions are assumed to be \emph{simplicial}, i.e.\ groups act by simplicial automorphisms. 
We say that the action of $G$ on a simplicial complex $X$ is \emph{proper} if for every vertex $v \in X$ and every integer $n \geqslant 0$ the set \[\{g \in G \mid gB_n(v,X) \cap B_n(v,X) \neq  \emptyset\}\] is finite. When $X$ is uniformly locally finite this definition is equivalent to all vertex stabilisers being finite.
We say that the action is \emph{cocompact} if there is a compact subset $K \subset X$ that intersects every orbit of the action.

A group is called \emph{systolic} if it acts properly and cocompactly on a systolic complex. 
However, most of the time we are concerned with proper actions that are not necessarily cocompact.

\subsection{Quasi-isometry}

Let $(X, d_X)$ and $(Y, d_Y)$ be metric spaces. A (not necessarily continuous) map $f \colon X \to Y$ is a \emph{coarse embedding} if there exist real non-decreasing functions $\rho_1, \rho_2$ with $\mathrm{lim}_{t \to +\infty}\rho_1(t)= +\infty$, such that the inequality
\[ \rho_1(d_X(x_1, x_2))  \leqslant d_Y(f(x_1), f(x_2)) \leqslant \rho_2(d_X(x_1, x_2))  
\]
holds for all $x_1, x_2 \in X$. If both functions $\rho_1, \rho_2$ are affine, we call $f$ a \emph{quasi-isometric embedding}.
Given two maps $f,g \colon X \to Y$ we say that $f$ and $g$ are \emph{close} if there exists $R \geqslant 0$ such that the inequality $d_Y(f(x), g(x)) \leqslant R$ holds for all $x \in X$. We say that the coarse embedding $f \colon X \to Y $ is a \emph{coarse equivalence}, if there exists a coarse embedding $g \colon Y \to X$ such that the composite $g \circ f$ is close to the identity map on $X$ and $g \circ f$ is close to the identity map on $Y$.
Analogously, a quasi-isometric embedding $f  \colon X \to Y$ is called a \emph{quasi-isometry}, if there exists a quasi-isometric embedding $g \colon Y \to X$ such that the appropriate composites are close to identity maps.

The following criterion will be very useful to us: a coarse embedding (quasi-isometric embedding) $f \colon X \to Y$ is a coarse equivalence (quasi-isometry) if and only if it is \emph{quasi-onto}, i.e.\ there exists $R \geqslant 0$ such that for any $y \in Y$ there exists $x \in X$ with $d_Y(y, f(x)) \leqslant R$.

\section{Quasi-product structure of the minimal displacement set}\label{sec:minset}

In this section we describe the structure of the minimal displacement set associated to a hyperbolic isometry of $X$.
We prove that this subcomplex of $X$ is quasi-isometric to the product of the so-called \emph{graph of axes} and the real line.
 This may be viewed as a coarse version of the Product Decomposition Theorem for $\mathrm{CAT}(0)$ spaces (see \cite{BrHa}). Our arguments rely on the work of T.\ Elsner in \cite{E1} and \cite{E2}. \medskip

Let $h$ be an isometry of a simplicial complex $X$. Define the \emph{displacement function} $d_h\colon X^{(0)} \to \mathbb{N}$ by the formula $d_h(x)=d_X(x,h(x))$. The \emph{translation length} $\trl{h}$ is defined as
    \[ \trl{h}=\underset{x \in X^{(0)}}{\mathrm{min}}d_h(x).\smallskip\]
If $h$ does not fix any simplex of $X$, then $h$ is called \emph{hyperbolic}. In such case one has $\trl{h}>0$.
For a hyperbolic isometry $h$, define the \emph{minimal displacement set} $\minset(h)$ as the subcomplex of $X$ spanned by the set of vertices where $d_h$ attains its minimum. Clearly $\minset(h)$ is invariant under the action of $h$. If $X$ is a systolic complex, we have the following.

\begin{lem}\cite[Propositions 3.3 and 3.4]{E2}
\label{minissystolic}
Let $h$ be a hyperbolic isometry of a systolic complex $X$. Then the subcomplex $\minset(h)$ is a systolic complex,  isometrically embedded into $X$.

\end{lem}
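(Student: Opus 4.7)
The plan is to deduce both assertions from a single \emph{convexity} property of the displacement function, which is the combinatorial analogue of the fact that in the CAT(0) setting $d_h$ is a convex function whose minimum set is a convex subspace. Concretely, the first step is to establish that for any geodesic $(x_0, x_1, \ldots, x_n)$ in $X$, one has $d_h(x_i) \leqslant \max\{d_h(x_0), d_h(x_n)\}$ for every $i$. Granting this, if both endpoints lie in $\minset(h)$ then $d_h(x_i) \leqslant \trl{h}$, which by the definition of the translation length forces $d_h(x_i)=\trl{h}$, so $x_i \in \minset(h)$. Hence $\minset(h)$ is \emph{convex} in $X$, which immediately yields the isometric embedding, the connectedness of $\minset(h)$, and its fullness as a subcomplex of $X$ (any single edge between two of its vertices is itself a geodesic of $X$).

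To prove the convexity of $d_h$ along geodesics, I would apply $h$ to the geodesic $\gamma=(x_0,\ldots,x_n)$ to obtain a parallel geodesic $h(\gamma)=(h(x_0),\ldots,h(x_n))$, close these two geodesics into a loop by shortest edge-paths joining $x_0$ to $h(x_0)$ and $x_n$ to $h(x_n)$, and fill the resulting loop by a minimal simplicial disc diagram $D\to X$, which exists since $X$ is simply connected. The systolic hypothesis on $X$ forces $D$ to be itself systolic (in the sense of having no interior vertices of small defect), and the standard analysis of such strip diagrams carried out by Elsner in \cite{E2} shows that the $i$-th vertex of $\gamma$ can be connected to $h(x_i)$ by an edge-path in $D$ of length at most $\max\{d_h(x_0),d_h(x_n)\}$. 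Projecting this path to $X$ bounds $d(x_i,h(x_i))$ by the same quantity, which is precisely the desired convexity.

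For the second assertion, I would verify the three defining conditions of a systolic complex for $Y:=\minset(h)$. Connectedness and simple connectedness of $Y$ follow from convexity together with the contractibility of $X$, since a convex subcomplex of a systolic complex inherits contractibility (one can construct an equivariant deformation retraction of $X$ onto $Y$ through the levels of $d_h$, again via the disc-diagram machinery above, and in any case convex subcomplexes of systolic complexes are known to be systolic themselves). For the local condition, fullness of $Y$ implies that for a simplex $\sigma\subset Y$ the link $\mathrm{Lk}(\sigma,Y)$ equals $\mathrm{Lk}(\sigma,X)\cap Y$, which is a full subcomplex of the $6$--large link $\mathrm{Lk}(\sigma,X)$; convexity of $Y$ then rules out any $4$-- or $5$--cycle in $\mathrm{Lk}(\sigma,Y)$ without a diagonal, because a diagonal existing in the ambient link gives an edge of $X$ whose endpoints lie in $Y$, hence lies in $Y$ by fullness.

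The main obstacle, and the genuine geometric content of the lemma, is the convexity of $d_h$ along geodesics; once this is in place the rest of the argument is essentially formal. Elsner's disc-diagram techniques from \cite{E1,E2} are tailor-made for this kind of strip analysis, so my preferred route would be to quote the relevant propositions from those papers rather than rebuild the combinatorics from scratch.
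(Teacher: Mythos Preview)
The paper does not prove this lemma at all: it is stated with a bare citation to \cite[Propositions~3.3 and~3.4]{E2} and no argument is given. Your final sentence---that you would ``quote the relevant propositions from those papers rather than rebuild the combinatorics from scratch''---is therefore exactly what the paper does, and is the correct disposition of the lemma in this context.

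Your sketch of the underlying mechanism is broadly faithful to Elsner's argument, but one step is more delicate than you indicate. The claim that a minimal disc diagram $D$ filling the loop built from $\gamma$, $h(\gamma)$, and the two connecting geodesics yields a path in $D$ from $x_i$ to $h(x_i)$ of length at most $\max\{d_h(x_0),d_h(x_n)\}$ is not a formality: the boundary arcs corresponding to $\gamma$ and $h(\gamma)$ need not be geodesic in $D$, and the ``standard strip analysis'' you invoke is precisely the non-trivial content of Elsner's Propositions~3.3--3.4. What Elsner actually establishes is $3$--convexity of $\minset(h)$ (closure under middle vertices of length--$2$ geodesics), which in a systolic complex upgrades to full convexity; your max-inequality for $d_h$ along arbitrary geodesics is a consequence rather than the primitive statement. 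So your outline is sound as a roadmap, but the load-bearing combinatorics live entirely inside the citation, which is why the paper simply cites it.
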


An $h$--invariant geodesic in $X$ is called an \emph{axis} of $h$. We say that $\minset(h)$ is the \emph{union of axes}, if for every vertex $x \in \minset(h)$ there is an $h$--invariant geodesic passing through $x$, i.e.\ $\minset(h)$ can be written as follows
\begin{equation} 
\label{minaxes}
\minset(h)= \mathrm{span}\{\bigcup \g \mid \gamma \text{ is an $h$--invariant geodesic}\}. 
\end{equation}
In this case, the isometry $h$ acts on $\minset(h)$ as a translation along the axes by the number $\trl{h}$.

\begin{prop}\label{alltogetherlemma} Let $h$ be a hyperbolic isometry of a systolic complex $X$. Then the following hold:

\begin{enumerate}[label=(\roman*)]

	\item\cite[Proposition 3.1]{E2} For any $n \in \mathbb{Z} \setminus \{0\}$, the isometry $h^n$ is hyperbolic.

	\item \cite[Theorem 3.5]{E2}\label{mostisom} There exists an $n\geqslant 1$ such that there is an $h^n$--invariant geodesic in $X$.

	\item \cite[Remark, p.\ 48]{E2} \label{axialisometry} If there exists an $h$--invariant geodesic then for any vertex $x \in \minset(h)$ there is an $h$--invariant geodesic passing through $x$, i.e.\ the isometry $h$ satisfies \eqref{minaxes}.

	\item \label{axialminsetpower} If $h$ satisfies \eqref{minaxes} then so does $h^n$ for any $n \in \mathbb{Z} \setminus \{0\}$.

\end{enumerate}
\end{prop}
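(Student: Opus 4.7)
The plan is to reduce the statement to part (iii) applied to $h^n$, using the elementary observation that every $h$-invariant geodesic is automatically $h^n$-invariant. Once the hypothesis of (iii) is verified for $h^n$, the desired conclusion will be immediate.

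First, I would invoke part (i) to note that $h^n$ is hyperbolic for every $n \in \mathbb{Z} \setminus \{0\}$, so $\minset(h^n)$ is a well-defined non-empty subcomplex of $X$ and $\trl{h^n} > 0$. Next, since $h$ satisfies \eqref{minaxes}, the set $\minset(h)$ is non-empty and there exists at least one $h$-invariant bi-infinite geodesic $\gamma \subset \minset(h)$. Along such $\gamma$ the isometry $h$ acts as translation by $\trl{h}$, so iterating this action (and inverting it when $n<0$) shows that $h^n$ acts on $\gamma$ as translation by $|n|\trl{h}$; in particular $h^n(\gamma) = \gamma$, so $\gamma$ is an $h^n$-invariant geodesic of $X$.

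Having produced such a geodesic, I would then apply part (iii) to the hyperbolic isometry $h^n$. Its hypothesis is precisely the existence of an $h^n$-invariant geodesic, and its conclusion states that for every vertex $y \in \minset(h^n)$ there is an $h^n$-invariant geodesic passing through $y$. This is exactly condition \eqref{minaxes} for $h^n$, completing the proof.

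There is essentially no serious obstacle in this argument; it is a short bookkeeping assembly of parts (i) and (iii). The only point requiring any verification is that an $h$-axis remains a geodesic (trivial, as it is already a geodesic in $X$ by hypothesis) and is preserved by $h^n$ (immediate from iterating $h(\gamma)=\gamma$). All the substantive content is carried out in the results of Elsner cited in (i) and (iii); note in particular that one does \emph{not} need to establish the equality $\trl{h^n} = |n|\trl{h}$ independently, nor compare $\minset(h^n)$ with $\minset(h)$, since (iii) automatically applies to the whole of $\minset(h^n)$.
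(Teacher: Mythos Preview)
Your proposal is correct and follows exactly the approach in the paper: the authors simply observe that (iv) follows from (iii) together with the fact that an $h$-invariant geodesic is $h^n$-invariant. Your write-up is more detailed (explicitly invoking (i) and spelling out why $h^n(\gamma)=\gamma$), but the argument is the same.
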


\begin{proof}Observe that \ref{axialminsetpower} follows from \ref{axialisometry} and the fact that an $h$--invariant geodesic is {$h^n$--invariant}.
\end{proof}

For two subcomplexes $X_1, X_2 \subset X$, the distance $d_{min}(X_1, X_2)$ is defined to be \[d_{min}(X_1, X_2)= \min \{d_X(x_1,x_2) \mid x_1 \in X_1, x_2 \in X_2\}.\] Note that in general $d_{min}$ is not even a pseudometric. We are ready now to define the graph of axes.

\begin{de}(Graph of axes). For a hyperbolic isometry $h$ satisfying \eqref{minaxes}, define the simplicial graph $Y(h)$ as follows
\begin{align*} Y(h)^{(0)} &= \{\gamma \mid \gamma \text{ is an $h$--invariant geodesic in } \minset(h)\},\\
  Y(h)^{(1)}     &=   \{  \{\gamma_1, \gamma_2 \} \mid d_{min}(\gamma_1, \gamma_2) \leqslant 1\}. \end{align*}

Let $d_{Y(h)}$\ denote the associated metric on $Y(h)^{(0)}$ (see Convention~\ref{metricconvention}). 
\end{de}

The main goal of this section is to prove the following theorem.

\begin{tw}
\label{coarsesplit}

Let $h$ be a hyperbolic isometry of a uniformly locally finite systolic complex $X$, such that the translation length $\trl{h}> 3$, and the subcomplex $\minset(h)$ is the union of axes. Then there is a quasi-isometry
\begin{equation} c \colon (Y(h) \times \mathbb{Z}, d_h) \to (\minset(h), d_X),\end{equation}
where the metric $d_h$ is defined as  \[d_h((\im{\g_1},t_1),(\im{\g_2},t_2))=  d_{Y(h)}(\im{\g_1},\im{\g_2})+ |t_1-t_2|,\] and $d_X$ is the metric induced from $X$.

\end{tw}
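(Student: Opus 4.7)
The plan is to construct an explicit map $c$ by coherently parametrizing each $h$-invariant geodesic, and then to verify the quasi-isometry estimates via a \emph{synchronization lemma} for parametrizations of nearby axes. Surjectivity on vertices is automatic from the hypothesis that $\minset(h)$ is the union of axes.

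First, for each $\gamma \in Y(h)^{(0)}$ I would pick an isometric parametrization $\gamma \colon \mathbb{Z} \to \gamma^{(0)}$ satisfying $h(\gamma(n)) = \gamma(n + \trl{h})$, and then set $c(\gamma, t) := \gamma(t)$. The choice of base vertex $\gamma(0)$ is the crucial synchronization step: I would fix a reference axis $\gamma_0$ together with a parametrization, and for each other axis $\gamma$ choose $\gamma(0)$ to minimize the distance to $\gamma_0(0)$ among vertices in a fundamental domain for $\langle h \rangle$ acting on $\gamma$. The key technical claim, which I expect to be the main obstacle, is the following: there exists a constant $K$ depending only on $\trl{h}$ such that whenever $\gamma_1, \gamma_2$ are $h$-invariant geodesics with $d_{min}(\gamma_1, \gamma_2) \leqslant 1$, one has $d_X(\gamma_1(t), \gamma_2(t)) \leqslant K$ for every $t \in \mathbb{Z}$. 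This is where I would invoke Elsner's analysis of parallel geodesics and flat subcomplexes in systolic complexes from \cite{E1,E2}; intuitively, two axes at minimal distance at most $1$ should bound a thin $h$-invariant strip on which $h$ translates by a common amount, forcing the two parametrizations to track each other to bounded error. The hypothesis $\trl{h} > 3$ is likely needed precisely to ensure that this strip structure is well-behaved and that short-range anomalies between nearby axes cannot accumulate.

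Given the synchronization lemma, the upper bound follows by iterating along a path in $Y(h)$: if $d_{Y(h)}(\gamma_1, \gamma_2) = m$, choose a path $\gamma_1 = \alpha_0, \alpha_1, \dots, \alpha_m = \gamma_2$ realizing $m$, apply the synchronization lemma $m$ times to get $d_X(\alpha_0(t_1), \alpha_m(t_1)) \leqslant Km$, and combine with $d_X(\gamma_2(t_1), \gamma_2(t_2)) = |t_1 - t_2|$ via the triangle inequality. For the lower bound I would argue in $\minset(h)$, which is isometrically embedded in $X$ by Lemma~\ref{minissystolic}: any geodesic from $c(\gamma_1, t_1)$ to $c(\gamma_2, t_2)$ inside $\minset(h)$ has every vertex lying on some axis, and consecutive vertices lie on axes with $d_{min} \leqslant 1$, hence adjacent in $Y(h)$; this yields $d_{Y(h)}(\gamma_1, \gamma_2) \leqslant d_X(c(\gamma_1, t_1), c(\gamma_2, t_2))$. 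The remaining estimate on $|t_1 - t_2|$ is obtained from the triangle identity
\[
|t_1 - t_2| = d_X(\gamma_1(t_1), \gamma_1(t_2)) \leqslant d_X(\gamma_1(t_1), \gamma_2(t_2)) + d_X(\gamma_2(t_2), \gamma_1(t_2)),
\]
whose second term is bounded by $K \cdot d_{Y(h)}(\gamma_1, \gamma_2) \leqslant K \cdot d_X(c(\gamma_1, t_1), c(\gamma_2, t_2))$ by the synchronization lemma. Adding the two estimates gives a linear lower bound of the required form, so $c$ is a quasi-isometric embedding; together with its surjectivity on the vertex set of $\minset(h)$ this yields the desired quasi-isometry. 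The core difficulty throughout is the synchronization lemma: without it the individual pieces are routine, and with it both the upper and lower bounds assemble mechanically.
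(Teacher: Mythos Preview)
Your overall architecture is close to the paper's: define a map $c(\gamma,t)=\gamma(t)$ by choosing a coherent origin on each axis, then prove two-sided linear estimates. The lower-bound argument you give (bounding $d_{Y(h)}$ by $d_X$ via Lemma~\ref{estim1}, and recovering $|t_1-t_2|$ by the triangle inequality together with synchronization) is essentially how the paper proceeds. The genuine gap is the synchronization lemma itself, and it is tied to your choice of origin.

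Your origin ``minimize $d_X(\gamma(0),\gamma_0(0))$'' is not the one the paper uses, and you have not shown it yields the claim that $d_X(\gamma_1(t),\gamma_2(t))\leqslant K$ whenever $d_{\min}(\gamma_1,\gamma_2)\leqslant 1$. Elsner's strip construction tells you that two adjacent axes fellow-travel as \emph{sets}, but it does not by itself align the two parametrizations, because both origins are defined by reference to $\gamma_0$, not to each other. What you would actually need is a coarse Lipschitz property of closest-point projection onto an axis in a systolic complex: if $p_i$ is a vertex of $\gamma_i$ nearest $\gamma_0(0)$ and $\gamma_1,\gamma_2$ are adjacent, then $d_X(p_1,p_2)$ is bounded independently of how far $\gamma_1,\gamma_2$ are from $\gamma_0$. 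This is plausible but it is not in \cite{E1,E2}, and without it your inductive step along a path in $Y(h)$ does not start. The difficulty is real: nearest points can jump when the axis is far from $\gamma_0(0)$ and the minimum is nearly attained on a long segment, which is exactly the regime where the constant $K$ must be uniform.

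The paper sidesteps this by using a Busemann-type origin rather than a nearest-point origin: one fixes $\gamma_0$, considers the ascending family of balls $B_n(-n,X)$ centred along the negative ray of $\gamma_0$, and sets $\gamma(0)$ to be the $\prec$-maximal vertex of $\gamma$ lying in $\bigcup_n B_n(-n,X)$ (Definition~\ref{thedeforigin}). The key technical input is then Lemma~\ref{sysstripes}, which uses Elsner's $h$-equivariant strip $P\hookrightarrow\mathbb{E}^2_\Delta$ between $\gamma_0$ and $\gamma$ (this is where $L(h)>3$ is used, and also why one needs $d_{Y(h)}(\gamma_0,\gamma)\geqslant 2L(h)+4$) to show $d_X(-n,\gamma(t))=n+t\pm 1$ for large $n$. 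From this one proves directly (Lemma~\ref{lemupperbound}) a linear bound $d_X(\gamma_1(0),\gamma_2(0))\leqslant 2(L+1)\,d_{Y(h)}(\gamma_1,\gamma_2)+\text{const}$, comparing $\gamma_1$ and $\gamma_2$ \emph{through} $\gamma_0$ rather than iterating along adjacent axes. So the paper never needs your local synchronization lemma; it proves the global estimate in one shot, and both quasi-isometry inequalities follow. If you want to salvage your route, you must either prove the projection-Lipschitz statement for systolic complexes or switch to the horoball-based origin.
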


In the remainder of this section let $h$ be a hyperbolic isometry of $X$ such that $\minset(h)$ is the union of axes. By Lemma~\ref{minissystolic}  it is enough to prove Theorem~\ref{coarsesplit} in case where $\minset(h)=X$. In order to define the map $c$ we parameterise geodesics in $Y(h)$, i.e.\ to each $\gamma \in Y(h)$ we assign an origin $\gamma(0)$ and a direction. After this is done, the geodesic $\gamma$ can be viewed as an isometry $\gamma\colon \mathbb{Z} \to X$, and the map $c$ can be defined as $c(\im{\g}, t) = \g(0+t)$.
Before we describe the procedure of parameterising $\gamma$, we need to establish the following metric estimate between $d_{Y(h)}$ and $d_X$.

\begin{lem}
\label{estim1} Let $\im{\g_1}$ and $\im{\g_2}$ be $h$--invariant geodesics. Then:
\begin{enumerate}[label=(\roman*)]
 \item \label{11est} For any vertices $x_1 \in \im{\g_1}$ and $x_2 \in \im{\g_2}$ we have $d_{Y(h)}(\im{\g_1},\im{\g_2}) \leqslant d_X(x_1, x_2)+1$. 
\item \label{22est} For any vertex $x_1 \in \g_1$ there exists a vertex $x_2 \in \g_2$ with $d_X(x_1,x_2) \leqslant {(\trl{h} +1)} d_{Y(h)} (\im{\g_1},\im{\g_2})$.
\end{enumerate}
\end{lem}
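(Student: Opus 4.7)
Both bounds come from translating a path in one metric space into a path in the other; the additive constants in (i) and the factor $L(h)+1$ in (ii) are accounted for by the fact that neighbouring axes in $Y(h)$ are at $X$-distance $\leqslant 1$ at a single pair of points, so one needs to slide along an axis (by powers of $h$) to reach such a pair. I will use throughout that every vertex of $\minset(h)$ lies on an $h$-invariant geodesic, and that $h$ acts on every such geodesic as a translation by $L(h)$ (combining Proposition~\ref{alltogetherlemma}\ref{axialisometry} with the definition of $L(h)$).

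\textbf{Proof of (i).} I would take an $X$-geodesic $x_1 = y_0, y_1, \dots, y_n = x_2$ in $\minset(h)$, with $n = d_X(x_1, x_2)$. For each $i$, choose an $h$-invariant geodesic $\delta_i$ passing through $y_i$ (using the union-of-axes hypothesis), with the canonical choices $\delta_0 = \gamma_1$ and $\delta_n = \gamma_2$. Since $y_i$ and $y_{i+1}$ are adjacent in $X$, we have $d_{min}(\delta_i, \delta_{i+1}) \leqslant 1$, so either $\delta_i = \delta_{i+1}$ or $\{\delta_i, \delta_{i+1}\}$ is an edge of $Y(h)$. Hence $\gamma_1 = \delta_0, \delta_1, \dots, \delta_n = \gamma_2$ determines a walk in $Y(h)$ of length at most $n$, which yields $d_{Y(h)}(\gamma_1, \gamma_2) \leqslant n \leqslant d_X(x_1, x_2) + 1$.

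\textbf{Proof of (ii).} Here I would argue inductively on $k := d_{Y(h)}(\gamma_1, \gamma_2)$. The base case $k = 0$ is trivial (take $x_2 = x_1$). For $k \geqslant 1$, fix a $Y(h)$-geodesic $\gamma_1 = \delta_0, \delta_1, \dots, \delta_k = \gamma_2$. By the edge condition there exist $u \in \delta_0$ and $v \in \delta_1$ with $d_X(u, v) \leqslant 1$. Because $\delta_0$ is $h$-invariant and $h$ acts on $\delta_0$ as translation by $L(h)$, the orbit $\{h^j u : j \in \mathbb Z\}$ lies on $\delta_0$ at spacing $L(h)$, so for a suitable $j$ the point $h^j u \in \delta_0$ satisfies
\[
d_X(x_1, h^j u) \;\leqslant\; \lfloor L(h)/2 \rfloor.
\]
Since $\delta_1$ is also $h$-invariant, $h^j v \in \delta_1$, and $d_X(h^j u, h^j v) = d_X(u, v) \leqslant 1$. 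Setting $x_1' := h^j v \in \delta_1$, the triangle inequality yields
\[
d_X(x_1, x_1') \;\leqslant\; \lfloor L(h)/2 \rfloor + 1 \;\leqslant\; L(h) + 1.
\]
Now $d_{Y(h)}(\delta_1, \gamma_2) \leqslant k-1$, so by the inductive hypothesis there exists $x_2 \in \gamma_2$ with $d_X(x_1', x_2) \leqslant (L(h)+1)(k-1)$. Adding the two estimates gives the required bound.

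\textbf{Main obstacle.} The only non-cosmetic subtlety is verifying that $h$ restricts to a translation of length \emph{exactly} $L(h)$ on every $h$-invariant geodesic in $\minset(h)$; for otherwise the orbit $\{h^j u\}$ on $\delta_0$ need not be $L(h)$-spaced, and the estimate $\lfloor L(h)/2 \rfloor$ above fails. This is precisely the content of the discussion following the definition of the union-of-axes condition, and it hinges on Proposition~\ref{alltogetherlemma}\ref{axialisometry}. Everything else is just careful bookkeeping.
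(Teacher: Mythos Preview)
Your proof is correct and matches the paper's approach: part~(i) builds a $Y(h)$-walk along axes through consecutive vertices of an $X$-geodesic (the paper phrases the same idea as an induction on $d_X(x_1,x_2)$), and part~(ii) inducts on $d_{Y(h)}$, sliding by a power of $h$ along an axis to reach a close pair of points on adjacent axes, exactly as the paper does. The only cosmetic point is that in~(i) the case $n=0$ with $\gamma_1\neq\gamma_2$ needs a one-line separate check (the two axes intersect, hence are adjacent in $Y(h)$), which is precisely what forces the ``$+1$'' in the bound.
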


\begin{proof}(i) We proceed by induction on $d_X(x_1,x_2)$. If $d_X(x_1, x_2)=0$ then $\im{\g_1}$ and $\im{\g_2}$ intersect, hence $d_{Y(h)}(\im{\g_1},\im{\g_2}) \leqslant 1$. Assume the claim is true for $d_X(x_i,x_j) \leqslant n$, and let $d_X(x_1,x_2)= n+1$. Let $x_n$ be the vertex on a geodesic in $X$ between $x_1$ and $x_2$, with $d_X(x_1,x_n)=n$.
Choose a geodesic $\im{\g_n}\in Y(h)$ passing through $x_n$ (such a geodesic exists since $h$ satisfies (\ref{minaxes})). By inductive hypothesis we have $d_{Y(h)}(\im{\g_1},\im{\g_n})\leqslant n+1$ and clearly $d_{Y(h)}(\im{\g_n},\im{\g_2}) \leqslant 1$, hence the claim follows from the triangle inequality.

(ii) It suffices to prove the claim in the case where $d_{Y(h)}(\im{\g_1},\im{\g_2})=1$. Let $x_1 \in \im{\g_1}$ be any vertex. The vertex $x_2$ is chosen as follows. Let $x_1' \in \im{\g_1}$ be the vertex realising the distance between $\im{\g_1}$ and $\im{\g_2}$  (i.e.\ it is either the vertex of intersection, or the vertex on the edge joining these two geodesics). Choose $x_1'$ to be the closest vertex to $x_1$ with this property. Due to $h$--invariance of $\im{\g_1}$ and $\im{\g_2}$, the distance $d_X(x_1, x_1')$ is not greater than  $\trl{h}$  (even $\frac{\trl{h}}{2}$ in fact).
If $\im{\g_1}$ and $\im{\g_2}$ intersect, define $x_2$ to be $x_1'$. If not then $\g_1$ and $\g_2$ are connected by an edge, one of whose endpoints is $x_1'$. Define the vertex $x_2 \in \g_2$ to be the other endpoint of that edge.\end{proof}

For an $h$--invariant geodesic $\im{\g} \subset X$ define the linear order $\prec$ on the set of vertices of $\im{\gamma}$, by setting $x \prec h(x)$ for some (and hence all) $x\in \im{\g}$. Fix a geodesic $\im{\g_0}$ and identify the set of its vertices with $\mathbb{Z}$, such that the order $\prec$ agrees with the natural order on $\mathbb{Z}$.

Consider the family of combinatorial balls $\{B_{n}(-n,X)\}_{n \in \mathbb{N}}$, where $-n \in \im{\g_0}$. Notice that we have $B_{n}(-n,X) \subseteq B_{n+1}(-(n+1),X)$, i.e.\ the family $\{B_{n}(-n,X)\}_{n \in \mathbb{N}}$ is ascending. The following lemma is crucial in order to define the origin $\g(0)$ of $\gamma$.

\begin{lem}\label{lem:originwelldefined} Let $\gamma$ be an $h$--invariant geodesic. Then there exists a vertex $v \in \gamma$ such that for any vertex $w$ contained in the intersection \[ (\underset{n \geqslant 0}{\bigcup}B_{n}(-n,X)) \cap \im{\gamma}\] we have $w \prec v$. 
\end{lem}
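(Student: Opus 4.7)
The plan is to introduce a Busemann-type function $b \colon X^{(0)} \to \mathbb{Z}$ associated to the sequence $\{-n\}_{n \geq 0} \subset \gamma_0$, and to exploit its $h$-equivariance in order to show that the intersection is $\prec$-bounded above along $\gamma$.

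First, I would set $f_n(x) := d_X(x,-n) - n$ and verify two elementary properties. The sequence $\{f_n(x)\}_{n \geq 0}$ is non-increasing in $n$, because $-n$ and $-(n{+}1)$ are adjacent vertices of $\gamma_0$ and so $d_X(x,-(n{+}1)) \geq d_X(x,-n)-1$. Moreover, $f_n(x) \geq -d_X(x,0)$ for all $n$, since the triangle inequality gives $d_X(x,-n) \geq d_X(-n,0)-d_X(x,0) = n - d_X(x,0)$. Hence $b(x) := \lim_{n\to\infty} f_n(x)$ is a well-defined integer. Since $\{f_n(x)\}$ is a non-increasing sequence of integers, and since a vertex $y$ lies in $B_n(-n,X)$ precisely when $f_n(y)\leq 0$, one obtains the equivalence
\[x \in \bigcup_{n\geq 0} B_n(-n,X) \iff f_n(x) \leq 0 \text{ for some } n \iff b(x) \leq 0.\]

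The key computation is the $h$-equivariance $b(h(x)) = b(x) + \trl{h}$. Since $h$ acts on $\gamma_0 \cong \mathbb{Z}$ as the shift $k \mapsto k + \trl{h}$ (so that $h^{-1}(-n)=-(n+\trl{h})$), and since $h$ is an isometry of $X$,
\[f_n(h(x)) = d_X(x, h^{-1}(-n)) - n = d_X(x,-(n+\trl{h})) - n = f_{n+\trl{h}}(x) + \trl{h},\]
and passing to the limit in $n$ yields $b(h(x)) = b(x) + \trl{h}$.

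To conclude, I would choose $\trl{h}$ consecutive vertices $w_0 \prec w_1 \prec \cdots \prec w_{\trl{h}-1}$ of $\gamma$ as a fundamental domain for the $\langle h \rangle$-action on $\gamma$, so that every $w \in \gamma$ may be written uniquely as $w = h^k(w_r)$, and $b(w)=b(w_r)+k\trl{h}$. If $w$ lies in the intersection, then $b(w) \leq 0$, which forces $k \leq M := \max_r \lceil -b(w_r)/\trl{h}\rceil$; this maximum is finite as it is taken over finitely many vertices. Hence any vertex $v \in \gamma$ with $v \succ h^{M+1}(w_0)$ serves as the desired upper $\prec$-bound. The only mild subtlety lies in verifying that $b$ is finite, which is routine from the two estimates above.
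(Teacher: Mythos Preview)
Your proof is correct and takes a genuinely different route from the paper's. The paper argues by contradiction: assuming the supremum along $\gamma$ is not attained, it uses Lemma~\ref{estim1}\ref{22est} to find a vertex $x\in\gamma$ at distance at most $K=(\trl{h}+1)d_{Y(h)}(\gamma_0,\gamma)$ from $0\in\gamma_0$, then translates by $h^m$ and compares $d_X(-n,h^m(0))\leqslant n+K$ (triangle inequality) against $d_X(-n,h^m(0))=n+\trl{h^m}$ (since $\gamma_0$ is a geodesic) to reach a contradiction once $\trl{h^m}>K$.

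Your Busemann-function argument is cleaner and more self-contained: it does not invoke Lemma~\ref{estim1} or the graph-of-axes distance $d_{Y(h)}$, relying only on the $h$--invariance of $\gamma_0$ and $\gamma$ and the fact that $h$ acts on each axis as translation by $\trl{h}$. The equivariance identity $b(h(x))=b(x)+\trl{h}$ packages the contradiction step into a single structural observation, and the fundamental-domain bound replaces the paper's ``choose $m$ large'' argument. The paper's approach, on the other hand, stays within the quasi-metric framework already developed for $Y(h)$ and avoids introducing an auxiliary function; it also makes explicit the quantitative bound $K$ that reappears in the later estimates of Section~\ref{sec:minset}. Either argument is perfectly adequate here.
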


\begin{proof}Observe first that for any $n \geqslant 0$ we have 
\[\underset{\prec}{\mathrm{sup}} \{ B_{n}(-n,X) \cap \im{\gamma_0} \}= 0 \in \gamma_0,\]
therefore taking $0$ as $v$ proves the lemma for $\gamma=\gamma_0$. For an arbitrary $\gamma$, we proceed by contradiction. Assume conversely, that the supremum $\underset{\prec}{\mathrm{sup}} \{ B_{n}(-n,X) \cap \im{\gamma}\}$ is not attained at any vertex of $\gamma$. 

Let $x \in \g$ be a vertex which is at distance at most $ K= (\trl{h} +1)d_{Y(h)}(\im{\g_0},\im{\g})$ from $0 \in \g_0$. Lemma~\ref{estim1}.\ref{22est} assures that such a vertex exists. 
 For any $m >0$ consider vertices $h^m(x)$ and $h^m(0)$. By our assumption there exists $n>0$ such that $h^m(x)$ is contained in $B_{n}(-n,X)$ (see Figure~\ref{fig:origindefined}). Therefore by the triangle inequality we get \[d_X(-n, h^m(0)) \leqslant n+ K.\] On the other hand we have \[d_X(-n, h^m(0)) = n+ \trl{h^m},\] since $\g_0$ is a geodesic. For $\trl{h^m} >K$ this gives a contradiction.
\end{proof}

\begin{figure}[!h]
\centering
\begin{tikzpicture}[scale=1]


\draw (-6,0.75) to (6,0.75);
\node [above] at (-2.25, 0.75) {$n$};
\node [above] at (2.25, 0.75) {$\trl{h^m}$};
\node [above] at (5.5,0.75) {$\g_0$};

\draw[fill] (-5,0.75) circle [radius=0.075];
\node [below left] at (-5,0.75) {$-n$};

\draw[fill] (0.5,0.75) circle [radius=0.075];
\node [below right] at (0.5,0.75) {$0$};

\draw[fill] (4, 0.75) circle [radius=0.075];
\node [below right] at (4,0.75) {$h^m(0)$};

\draw (-6,-1.5) to (6,-1.5);
\node [above] at (5.5, 2*-0.75) {$\g$};

\node [below] at (1.75, -2*0.75) {$\trl{h^m}$};

\draw[fill] (0, 2*-0.75) circle [radius=0.075];
\node [below right] at (0, 2*-0.75) {$x$};

\draw[fill] (3.5, 2*-0.75) circle [radius=0.075];
\node [below right] at (3.5,2*-0.75) {$h^m(x)$};


\draw [very thin] (-5, 0.75) to (3.5, 2*-0.75);
\node [below] at (-1, -0.3) {$\leqslant n$};

\draw [very thin] (0.5, 0.75) to (0, 2*-0.75);
\node [right] at (0.25, -0.25) {$K$};

\draw [very thin] (4, 0.75) to (3.5, 2*-0.75);
\node [right] at (3.75, -0.25) {$K$};

\end{tikzpicture}
\caption{The vertex $h^m(x)$ cannot belong to $B_{n}(-n,X)$ and be arbitrarily far from $x$ at the same time.}
\label{fig:origindefined}
\end{figure}
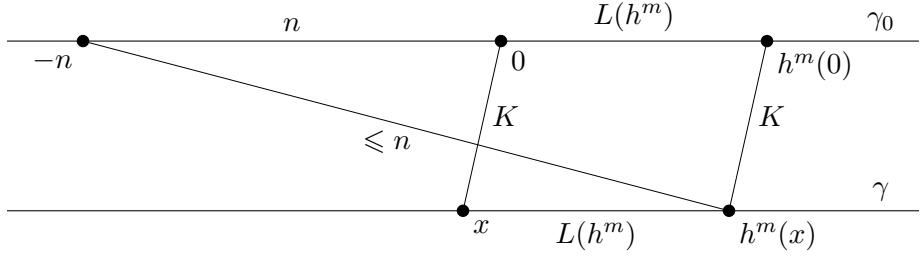

\begin{de}
\label{thedeforigin} Let $\im{\gamma} \subset X$ be an $h$--invariant geodesic. Define the vertex $\gamma(0)$ as 

\begin{equation}
\label{deforigin}
\gamma(0)= \underset{\prec}{\mathrm{max}} (\underset{n \geqslant 0}{\bigcup}B_{n}(-n,X) \cap \im{\gamma}).\smallskip
\end{equation}
The set of vertices of $(\underset{n \geqslant 0}{\bigcup}B_{n}(-n,X) \cap \im{\gamma})$ is bounded from above by Lemma~\ref{lem:originwelldefined} hence the maximal vertex exists. Observe that for $\gamma_0$ we have $\gamma_0(0)=0$.
\end{de}

Having geodesics parameterised, we need the following two technical lemmas that describe certain metric inequalities, which are needed to prove Theorem~\ref{coarsesplit}. 

\begin{lem}
\label{sysstripes} Let $\im{\gamma_0}, \im{\gamma}$ be as in Definition \ref{thedeforigin} and assume that $\trl{h}>3$ and $d_{Y(h)}(\im{\gamma_0}, \im{\gamma}) \geqslant 2 \trl{h}+4$. Then there exists an $n_0 \in \mathbb{N}$, such that for all $n\geqslant n_0$ and for all $t \geqslant 0$ we have 
\begin{equation}    n+t-1 \leqslant d_X(-n,\gamma(t)) \leqslant n+t+1.
\end{equation}
\end{lem}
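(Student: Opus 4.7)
The upper bound is essentially automatic from the definition of $\gamma(0)$. By Definition~\ref{thedeforigin}, $\gamma(0)$ lies in some $B_n(-n,X)$, and because the balls are ascending ($B_n(-n,X)\subseteq B_{n+1}(-(n+1),X)$) it lies in $B_n(-n,X)$ for all $n\geqslant n_0$, where $n_0$ is the smallest such index. Thus $d_X(-n,\gamma(0))\leqslant n$, and combining with $d_X(\gamma(0),\gamma(t))=t$ via the triangle inequality gives $d_X(-n,\gamma(t))\leqslant n+t\leqslant n+t+1$.

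For the lower bound I would begin with the maximality built into the definition of $\gamma(0)$: for $t\geqslant 1$ the vertex $\gamma(t)$ does not belong to any $B_n(-n,X)$, so $d_X(-n,\gamma(t))\geqslant n+1$. Coupling this with the $1$-Lipschitz behavior of $v\mapsto d_X(-n,v)$ along $\gamma$ and with the upper bound $d_X(-n,\gamma(0))\leqslant n$, I obtain the exact values $d_X(-n,\gamma(0))=n$ and $d_X(-n,\gamma(1))=n+1$ for $n\geqslant n_0$. The next step is to propagate this to all $t\geqslant 0$ using $h$-equivariance: since $h$ acts as translation by $\trl{h}$ along both $\gamma_0$ and $\gamma$, applying $h^m$ to the above identities produces the analogous equalities $d_X(-n+m\trl{h},\gamma(m\trl{h}))=n$ and $d_X(-n+m\trl{h},\gamma(m\trl{h}+1))=n+1$. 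Rewriting these with basepoint $-n'$ on $\gamma_0$ and using the triangle inequality reduces the lemma to bounding the ``slip'' between consecutive values of $t$ by a single unit.

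To close this last gap I would use the two remaining hypotheses. The assumption $\trl{h}>3$ ensures that within a single fundamental domain of $h$ there is enough room for the local systolic geometry (in particular, the flagness and $6$--largeness of links) to force the distance function $t\mapsto d_X(-n,\gamma(t))$ to increase by essentially one at every step rather than stagnate; and the quantitative separation $d_{Y(h)}(\gamma_0,\gamma)\geqslant 2\trl{h}+4$ ensures, via Lemma~\ref{estim1}, that $\gamma$ is too far from $\gamma_0$ for any ``short-cut'' through a neighboring axis to contract the distance by more than one unit between adjacent $t$. The main obstacle is precisely this last step: in the $\mathrm{CAT}(0)$ analogue one would have a flat strip bounded by $\gamma_0$ and $\gamma$ together with convexity of the distance function, and the conclusion would be immediate, but in the systolic world neither tool is available, so one has to argue more delicately using the projection properties of convex subcomplexes and the $h$-equivariant structure established in Elsner's work.
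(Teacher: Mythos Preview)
Your final paragraph is not a proof: you never actually show how $6$--largeness of links or the separation hypothesis would force $t\mapsto d_X(-n,\gamma(t))$ to increase at every step, and the appeal to ``projection properties of convex subcomplexes'' is a gesture rather than an argument. As written, the lower bound remains unproved for $t\notin\{0,1,\trl{h},\trl{h}+1,\ldots\}$.

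The irony is that you have already done the real work, and the triangle inequality you invoke finishes the job with no residual ``slip''. From $d_X(-n',\gamma(0))=n'$ for $n'\geqslant n_0$, applying $h^m$ with $n'=n+m\trl{h}$ gives $d_X(-n,\gamma(m\trl{h}))=n+m\trl{h}$ for all $n\geqslant n_0$ and $m\geqslant 0$. Given $t\geqslant 0$, choose $m$ with $m\trl{h}\geqslant t$ and apply the triangle inequality to $-n$, $\gamma(t)$, $\gamma(m\trl{h})$:
\[
n+m\trl{h}\leqslant d_X(-n,\gamma(t))+\bigl(m\trl{h}-t\bigr),
\]
hence $d_X(-n,\gamma(t))\geqslant n+t$. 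With your upper bound this yields the exact equality $d_X(-n,\gamma(t))=n+t$, and neither hypothesis $\trl{h}>3$ nor $d_{Y(h)}(\gamma_0,\gamma)\geqslant 2\trl{h}+4$ is used.

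The paper takes an entirely different route: it cites Elsner's results \cite{E2} to build an $h$--equivariant map $f\colon P\to X$ from a strip $P\subset\mathbb{E}_{\Delta}^2$ onto $\gamma_0\cup\gamma$ satisfying $d_P(u,v)-1\leqslant d_X(f(u),f(v))\leqslant d_P(u,v)$; the two hypotheses are exactly the input this construction requires. The inequality is then reduced to an explicit coordinate calculation with combinatorial spheres in the triangulated Euclidean plane. Your approach, once completed as above, is shorter, more elementary, and hypothesis-free; the paper's buys an explicit Euclidean model of the strip between the two axes.
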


\begin{proof}The idea is to reduce the problem to the study of $\mathbb{E}_{\Delta}^2$, the equilaterally triangulated Euclidean plane. To do so, we construct an $h$--equivariant simplicial map $f\colon P \to X$, which satisfies the following properties:
\begin{enumerate}

	\item \label{item1flat} the complex $P$ is an $h$--invariant triangulation of a strip $\mathbb{R}\times I$, and $P$ can be isometrically embedded into $\mathbb{E}_{\Delta}^2$,
	\item \label{item2flat} the boundary $\partial P$ is mapped by $f$ onto the disjoint union $\im{\gamma_0} \sqcup \im{\gamma}$ such that the restriction of $f$ to each boundary component is an isometry,
	\item \label{item3flat} for every pair of vertices $u,v \in P$ there is an inequality
	\[d_P(u,v)-1\leqslant d_X(f(u), f(v)) \leqslant d_P(u,v).\]

\end{enumerate} 

The construction of such a map is given in the proofs of Theorems 2.6 and 3.5 (case 1) in \cite{E2}. It requires that $\trl{h}>3$ and $d_{Y(h)}(\im{\gamma_0}, \im{\gamma}) \geqslant 2 \trl{h}+4$, and this is the only place where we need these assumptions.

Fix an embedding of $P$ into $\E$. Since the restriction of $f$ to each boundary component is an isometry, let us keep the same notation for the preimages under $f$ of $\im{\g_0}$ and $\im{\g}$. For a vertex $v \in \partial P$ let $\angle(v)$ denote the number of triangles of $P$ which contain $v$.
We have the following two cases to consider:

\begin{enumerate}
\item[(i)] for every vertex $v$ of $\im{\g}$ we have $\angle(v)=3$,
\item[(ii)] there exists a vertex $v$ of $\im{\g}$ with $\angle(v) \neq 3$.
\end{enumerate}
We treat the case (ii) first. Steps of the proof are indicated in Figure~\ref{fig:strip}. Let $v_0$ be a vertex of $\im{\g}$ such that $\angle(v_0)=2$ and $v_0 \prec \g(0)$. 
Denote by $v_{-1}$ the vertex of $\g$ that is adjacent to $v_0$ and $v_{-1} \prec v_0$. Introduce a coordinate system on $\E$ by setting $v_{-1}$ to be the base point and letting $e_1=\overrightarrow{v_{-1} v_0}$ and $e_2=\overrightarrow{v_{-1} w}$, where $w$ is the unique vertex which lies inside $P$ and is adjacent to both $v_{-1}$ and $v_0$. We will write $v=(x_v, y_v)$ for the coordinates of a vertex $v$ in basis ${e_1, e_2}$. 
It follows from the choice of the coordinate system and the fact that the strip $P$ is $h$--invariant, that for all $k\in \mathbb{Z}$ we have 
\begin{equation}
\label{increasinggeo}
\g(k+1) = \g(k) + e_1 \text{ or } \g(k+1) = \g(k) + e_2,
\end{equation}
and both possibilities occur an infinite number of times. In particular, the second coordinate $y_{\g(k)}$ of $\g(k)$ is a non-decreasing function of $k$ such that \begin{equation}\label{increasinggeo2}y_{\g(k)}  \rightarrow -\infty \text{ as } k \rightarrow -\infty.\end{equation}
By property (3) of the map $f$, the distance $d_{\E}(\im{\g_0}, \im{\g})$ is bounded from above by $d_X(\im{\g_0}, \im{\g})+1$. 
This implies that the geodesic $\g_0$ also satisfies \eqref{increasinggeo} and \eqref{increasinggeo2} (i.e.\ it runs parallel to $\g$, see Figure~\ref{fig:strip}).
Hence, there exists $k_0$, such that for $k \geqslant k_0$ the coordinate $y_{\g_0(-k)}$ is strictly less than $y_{v_0}$, which is in turn less than $y_{\gamma(0)}$, since $v_0 \prec \g(0)$. Therefore for each $r \geqslant d_P(\g_0(-k),v_0)$ the combinatorial sphere $S(\g_0(-k),r)$ intersects $\im{\g}|_{[0, \infty)}$ exactly once, where $\im{\g}|_{[0, \infty)}$ denotes the geodesic ray obtained by restricting the domain of $\g$ to non-negative integers. 
In particular, if $S(\g_0(-k),r) \cap \im{\g}|_{[0, \infty)}=\gamma(s)$ for some $s\geqslant 0$, then $S(\g_0(-k),r+1) \cap \im{\g}|_{[0, \infty)}=\gamma(s+1)$. 

Take $\widetilde{n_0}$ such that for any $n \geqslant \widetilde{n_0}$ we have $\gamma(0)=\underset{\prec}{\mathrm{max}} (B(-n,n) \cap \im{\gamma})^{(0)}$. By property (3) of the map $f$, for such $n$ we have $d_{\E}(-n, \gamma(0)) \in \{n, n+1 \}$. Set $n_0 =\mathrm{max}\{\widetilde{n_0}, k_0\}$. For $n\geqslant n_0$ and for $t \geqslant 0$ we have $d_{\E}(-n, \gamma(t))= d_{\E}(-n, \gamma(0))+t \in \{n+t, n+1+t\}$. Therefore the claim follows from property (3).
\vspace{0.25cm}

Case (i) is proven analogously. Using the notation of case (ii) we introduce the coordinate system as follows. Put $w=\g(0)$, $ v_{-1}=\g(-1)$ and let $v_0$ be the unique vertex which lies outside of $P$ and is adjacent to both $v_{-1}$ and $w$. The rest of the proof is the same as in case (ii).
\end{proof}

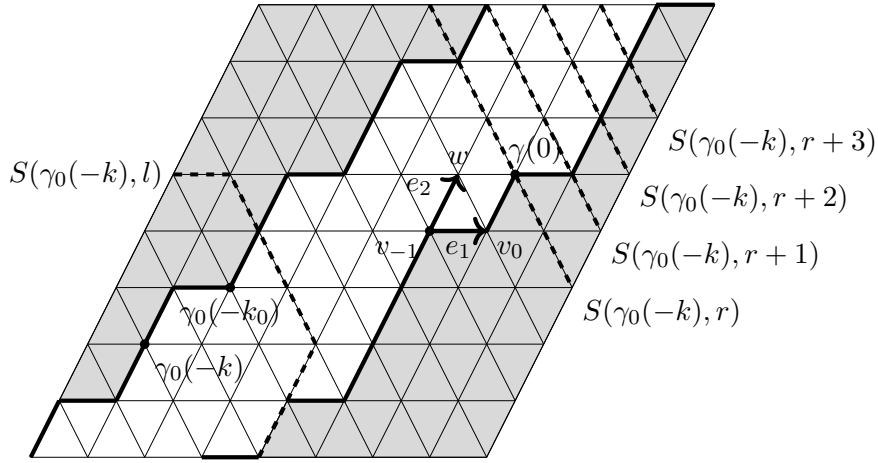
\begin{figure}[!h]
\centering

 \begin{tikzpicture}[scale=0.75]

 \definecolor{lgray}{rgb} {0.850,0.850,0.850}
\draw [ultra thin, fill=lgray] (-2,-4) -- (-1.5,-3) -- (-0.5, -3) -- (0,-2) -- (1,0) -- (2,0) -- (2.5,1) -- (3.5,1) -- (4,2) --(5,4) -- (6,4)-- (2,-4)-- (-2,-4);

\draw [ultra thin, fill=lgray]  (-6, -4)--(-5.5,-3) --(-4.5,-3) -- (-3.5, -1)-- (-2.5, -1) -- (-1.5, 1) -- (-0.5, 1) -- (0.5, 3) -- (1.5, 3) -- (2, 4)-- (-2,4)--(-6,-4);

\node at (1.235,0){
\begin{tikzpicture}[scale=0.75]

\node [below right] at (2,0) {$v_0$};

\draw[fill] (1,0) circle [radius=0.075];
\node [below left] at (1,0) {$v_{-1}$};

\node [above ] at (1.5,1) {$w$};

\draw[fill] (2.5,1) circle [radius=0.075];
\node [ above right] at (2.2,1) {$\g(0)$};


\draw [ultra thick, ->] (1,0) to (2,0);
\draw [ultra thick, ->] (1,0) to (1 +0.5, 1);
\node [below] at (1.5,0) {$e_1$};
\node [above left] at (1.225,0.45)  {$e_2$};

\draw[fill] (-2.5,-1) circle [radius=0.075];
\node [below ] at (-2.5,-1) {$\gamma_0(-k_0)$};

\draw[fill] (-4,-2) circle [radius=0.075];
\node [below right] at (-4,-2) {$\gamma_0(-k)$};


\draw [line width=1.5, dashed] (-2+4,4) to (4,0);
\node [below right] at (4,0) {$S(\gamma_0(-k), r+1)$};

\draw [line width=1.5, dashed] (3,4) to (+4.5,1);
\node [below right] at (4.5,1) {$S(\gamma_0(-k), r+2)$};

\draw [line width=1.5, dashed] (4,4) to (+5,2);
\node [below right] at (5,2) {$S(\gamma_0(-k), r+3)$};

\draw [line width=1.5, dashed] (-2+3,4) to (3.5,-1);
\node [below right] at (3.5,-1) {$S(\gamma_0(-k), r)$};

\draw [line width=1.5, dashed] (-1.5,-3) to (-1,-2);
\draw [line width=1.5, dashed] (-1,-2) to (-2.5,1);
\draw [line width=1.5, dashed] (-2.5,1) to (-3.5,1);
\node [left] at (-3.5,1) {$S(\gamma_0(-k), l)$};


\draw [ultra thick] (-3,-4) -- (-2,-4);

\draw [ultra thick, dashed] (-2,-4) -- (-1.5,-3);

 \draw [ultra thick] (-1.5,-3) -- (-0.5, -3) -- (0,-2) -- (1,0) -- (2,0) -- (2.5,1) -- (3.5,1) -- (4,2) --(5,4) -- (6,4);

\draw [ultra thick] (-6, -4) to (-5.5,-3);
\draw [ultra thick] (-5.5,-3) to (-4.5, -3);
\draw [ultra thick] (-4.5,-3) to (-3.5, -1);
\draw [ultra thick] (-3.5,-1) to (-2.5, -1);
\draw [ultra thick] (-2.5,-1) to (-1.5, 1);
\draw [ultra thick] (-1.5,1) to (-0.5, 1);
\draw [ultra thick] (-0.5,1) to (0.5, 3);
\draw [ultra thick] (0.5,3) to (1.5, 3);
\draw [ultra thick] (1.5,3) to (2, 4);


\draw [ultra thin] (-1.5,-3) to (1.5,3);
\draw [ultra thin] (-1.5-1,-3) to (1.5-1,3);
\draw [ultra thin] (-1.5-2,-3) to (1.5-2,3);
\draw [ultra thin] (-1.5-3,-3) to (1.5-3,3);
\draw [ultra thin] (-1.5+1,-3) to (1,0);
\draw [ultra thin] (1.5,1) to (1.5+1,3);

\draw [ultra thin] (-1.5+2,-3) to (1.5+2,3);
\draw [ultra thin] (-1.5+3,-3) to (1.5+3,3);
\draw [ultra thin] (-1.5+4,-3) to (1.5+4,3);
\draw [ultra thin] (-1.5-4,-3) to (1.5-4,3);

\draw [ultra thin] (-1.5,3) to (1.5,-3);
\draw [ultra thin] (-4,0) to (1.5-4,-3);
\draw [ultra thin] (-3.5,1) to (1.5-3,-3);
\draw [ultra thin] (-3,2) to (1.5-2,-3);
\draw [ultra thin] (-1.5-1,3) to (1.5-1,-3);
\draw [ultra thin] (-1.5+1,3) to (1.5+1,-3);
\draw [ultra thin] (-1.5+2,3) to (3,-2);
\draw [ultra thin] (-1.5+3,3) to (3.5,-1);
\draw [ultra thin] (-1.5+4,3) to (4,0);
\draw [ultra thin] (3.5,3) to (+4.5,1);
\draw [ultra thin] (4.5,3) to (+5,2);

\draw [ultra thin] (-3.5,-3) to (-4.5,-1);
\draw [ultra thin] (-4.5,-3) to (-5,-2);

\draw [ultra thin] (-4,0) to (+1,0);
\draw [ultra thin] (2,0) to (4,0);

\draw [ultra thin] (-3.5,0+1) to (+4.5,0+1);
\draw [ultra thin] (-3,2) to (+5,2);
\draw [ultra thin] (-2.5,3) to (+5.5,3);
\draw [ultra thin] (-4.5,-1) to (3.5,-1);
\draw [ultra thin] (-5,-2) to (+3,-2);
\draw [ultra thin] (-5.5,-3) to (+2.5,-3);



\draw [ultra thin] (-2,4) to (2, 4);

\draw [ultra thin] (-2.5, 3) to (-2,4);
\draw [ultra thin] (-2, 4) to (-1.5,3);
 \draw [ultra thin] (-1.5,3) to (-1, 4);
 \draw [ultra thin] (-1,4) to (-0.5, 3);
 \draw [ultra thin] (-0.5,3) to (0,4);
 \draw [ultra thin] (0,4) to (0.5, 3);
 \draw [ultra thin] (0.5,3) to (1,4);
 \draw [ultra thin] (1,4) to (1.5,3);
 \draw [ultra thin] (1.5,3) to (2,4);

\draw [ultra thin] (5, 4) to (5.5,3);
\draw [ultra thin] (5.5, 3) to (6,4);


\draw [ultra thin] (5-3.5, 4-7) to (5.5-3.5,3-7);
\draw [ultra thin] (5.5-3.5, 3-7) to (6-3.5,4-7);

\draw [ultra thin] (2-4,4-8) to (6-4, 4-8);

\draw [ultra thin] (2-3.5,4-7) to (2.5-3.5, 3-7);
\draw [ultra thin] (2.5-3.5,3-7) to (3-3.5,4-7);
\draw [ultra thin] (3-3.5,4-7) to (3.5-3.5, 3-7);
\draw [ultra thin] (3.5-3.5,3-7) to (4-3.5,4-7);
\draw [ultra thin] (4-3.5,4-7) to (4.5-3.5, 3-7);
\draw [ultra thin] (4.5-3.5, 3-7) to (5-3.5,4-7);

\draw [ultra thin] (-6,4-8) to (-3, 4-8);


\draw [ultra thin] (2,4) to (6, 4);

\draw [ultra thin] (2,4) to (2.5, 3);
\draw [ultra thin] (2.5,3) to (3,4);
\draw [ultra thin] (3,4) to (3.5, 3);
\draw [ultra thin] (3.5,3) to (4,4);
\draw [ultra thin] (4,4) to (4.5, 3);
\draw [ultra thin] (4.5, 3) to (5,4);


\draw [ultra thin] (-2.5-3.5, 3-7) to (-2-3.5,4-7);
\draw [ultra thin] (-2-3.5, 4-7) to (-1.5-3.5,3-7);
 \draw [ultra thin] (-1.5-3.5,3-7) to (-1-3.5, 4-7);
 \draw [ultra thin] (-1-3.5,4-7) to (-0.5-3.5, 3-7);
 \draw [ultra thin] (-0.5-3.5,3-7) to (0-3.5,4-7);
 \draw [ultra thin] (0-3.5,4-7) to (0.5-3.5, 3-7);
 \draw [ultra thin] (0.5-3.5,3-7) to (1-3.5,4-7);
 \draw [ultra thin] (1-3.5,4-7) to (1.5-3.5,3-7);
 \draw [ultra thin] (1.5-3.5,3-7) to (2-3.5,4-7);

\end{tikzpicture}
};

\end{tikzpicture}

\caption{Geodesics and combinatorial spheres in the coordinate system.}
\label{fig:strip}
\end{figure}

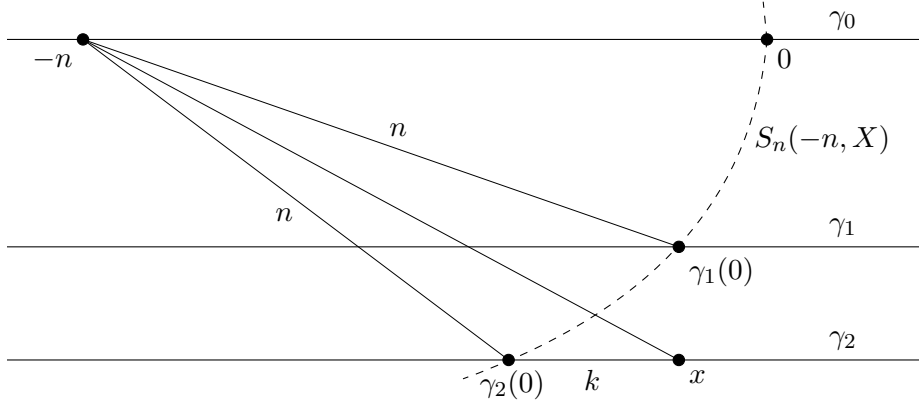
\begin{figure}[!h]
\centering
\begin{tikzpicture}[scale=1]


\draw (-6,2) to (6,2);
\node [above] at (5,2) {$\g_0$};

\draw[fill] (-5,2) circle [radius=0.075];
\node [below left] at (-5,2) {$-n$};

\draw[fill] (4,2) circle [radius=0.075];
\node [below right] at (4,2) {$0$};

\draw (-6,-0.75) to (6,-0.75);
\node [above] at (5,-0.75) {$\g_1$};

\draw[fill] (2.84,-0.75) circle [radius=0.075];
\node [below right] at (2.84,-0.75) {$\g_1(0)$};

\draw (-6,-2.25) to (6,-2.25);
\node [above] at (5,-2.25) {$\g_2$};

\draw[fill] (0.6,-2.25) circle [radius=0.075];
\node [below] at (0.65,-2.25) {$\g_2(0)$};

\draw[fill] (2.84,-2.25) circle [radius=0.075];
\node [below right] at (2.84,-2.25) {$x$};

\draw[dashed] (4,2) to [out=265,in=20] (0,-2.5);

\draw[dashed] (4,2) to  (3.95,2.5);

\node [below right] at (3.70, 1) {$S_n(-n,X)$};

\draw[very thin] (-5,2) to (2.84,-0.75);
\node [below] at (-2.35, -0.1) {$n$};

\draw[very thin] (-5,2) to (2.84,-2.25);
\node [below] at (1.7,-2.25) {$k$};

\draw[very thin] (-5,2) to (0.6,-2.25);
\node at (-0.85, 0.8) {$n$};

\end{tikzpicture}
\caption{Geodesics and their origins.}
\label{fig:comparingorigins}
\end{figure}

We need one more metric estimate.

\begin{lem}
\label{lemupperbound}

Let $\im{\g_1}, \im{\g_2}$ be $h$--invariant geodesics in $X$. Assume additionally, that the translation length $\trl{h} > 3$. Then we have the following inequality 
\[d_X (\g_1(0), \g_2(0)) \leqslant 
2(\trl{h} +1)d_{Y(h)}(\im{\g_1} ,\im{\g_2}) + 4(2\trl{h}+4)(\trl{h} +1).
\]
\end{lem}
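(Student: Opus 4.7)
The plan is to reduce the inequality to a displacement estimate along $\gamma_2$ and then apply Lemma~\ref{sysstripes}. By Lemma~\ref{estim1}\ref{22est} there is a vertex $y = \gamma_2(s)$ with
\[
d_X(\gamma_1(0), y) \leqslant \rho := (\trl{h}+1)\, d_{Y(h)}(\gamma_1, \gamma_2).
\]
Since $\gamma_2$ is isometrically embedded, $d_X(y, \gamma_2(0)) = |s|$, so the triangle inequality gives $d_X(\gamma_1(0), \gamma_2(0)) \leqslant \rho + |s|$, and it remains to bound $|s|$.

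The key tool is Lemma~\ref{sysstripes}: if $d_{Y(h)}(\gamma_0, \gamma) \geqslant 2\trl{h}+4$, then for $n$ sufficiently large and $t \geqslant 0$ one has $n + t - 1 \leqslant d_X(-n, \gamma(t)) \leqslant n + t + 1$. I would first observe that this estimate extends symmetrically to $t \leqslant 0$ (provided $n + t \geqslant 0$), since $\gamma$ is bi-infinite and the strip model in the proof is symmetric about $\gamma(0)$. Now assume both $d_{Y(h)}(\gamma_0, \gamma_1) \geqslant 2\trl{h}+4$ and $d_{Y(h)}(\gamma_0, \gamma_2) \geqslant 2\trl{h}+4$. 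Applying the extended estimate to $\gamma_1$ at $t=0$ and to $\gamma_2$ at $t=s$, and combining with the reverse triangle inequality
\[
\bigl|\,d_X(-n, \gamma_1(0)) - d_X(-n, \gamma_2(s))\,\bigr| \leqslant d_X(\gamma_1(0),\gamma_2(s)) \leqslant \rho,
\]
yields $|s| \leqslant \rho + 2$, and therefore $d_X(\gamma_1(0), \gamma_2(0)) \leqslant 2\rho + 2$, which is well within the stated bound.

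For the edge case where at least one of the $\gamma_i$ satisfies $d_{Y(h)}(\gamma_0, \gamma_i) < 2\trl{h}+4$, the idea is to substitute such $\gamma_i$ by an auxiliary $h$-invariant geodesic $\widetilde{\gamma}_i$ lying at $Y(h)$-distance exactly $2\trl{h}+4$ from $\gamma_0$, chosen on a $Y(h)$-geodesic ray from $\gamma_0$ through $\gamma_i$ (the graph $Y(h)$ is connected since $\minset(h)$ is; if $\mathrm{diam}(Y(h)) < 2\trl{h}+4$ then $d_{Y(h)}(\gamma_1, \gamma_2)$ is uniformly bounded and the statement follows at once from Lemma~\ref{estim1}\ref{22est} applied to the pair of origins). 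Applying the main-case argument to $(\widetilde{\gamma}_1, \widetilde{\gamma}_2)$ and bounding the replacement cost $d_X(\gamma_i(0), \widetilde{\gamma}_i(0))$ by a variant of the strip argument with a shifted reference geodesic then produces the total estimate; the additive constant $4(2\trl{h}+4)(\trl{h}+1)$ records the two replacement costs, each bounded by a term of order $(\trl{h}+1)(2\trl{h}+4)$.

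The principal technical obstacle is the edge case: Lemma~\ref{sysstripes} is silent when $\gamma_i$ is close to $\gamma_0$, so the position of $\gamma_i(0)$ on $\gamma_i$ is not directly constrained by the strip estimate. The substitution to a farther auxiliary geodesic leverages the main case, but forces one to bound $d_X(\gamma_i(0),\widetilde{\gamma}_i(0))$ by changing the reference geodesic or refining the strip construction. This bookkeeping is precisely what produces the additive constant $4(2\trl{h}+4)(\trl{h}+1)$.
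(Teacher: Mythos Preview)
Your overall strategy---project $\gamma_1(0)$ to a nearby point $y=\gamma_2(s)$, bound $|s|$ via Lemma~\ref{sysstripes}, and conclude by the triangle inequality---matches the paper's. The gap is your claimed symmetric extension of Lemma~\ref{sysstripes} to $t\leqslant 0$. The strip is \emph{not} symmetric about $\gamma(0)$: the origin $\gamma(0)$ is defined as the $\prec$--maximal vertex of $\gamma$ lying in $\bigcup_{n}B_n(-n,X)$, so for $t<0$ one only knows $d_X(-n,\gamma(t))\leqslant n$, not that it equals $n+t$ up to~$1$. Geometrically, as $t$ decreases from $0$ the spheres $S_r(-n,X)$ need not advance along $\gamma|_{(-\infty,0]}$ one vertex per unit radius; if the strip has width $w$, there can be a plateau of length comparable to $w$ where $d_X(-n,\gamma(t))$ stays near $n$. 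So the inequality $|s|\leqslant\rho+2$ that you derive from the two-sided estimate is unjustified.

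The paper sidesteps this issue by arranging that the comparison point lies on the \emph{positive} side of $\gamma_2(0)$, so that only the $t\geqslant 0$ case of Lemma~\ref{sysstripes} is needed. Concretely: after finding $x\in\gamma_2$ with $d_X(\gamma_1(0),x)\leqslant(\trl{h}+1)D$, if $\gamma_2(0)\prec x$ fails one translates by a suitable power $h^m$ and swaps the roles of $\gamma_1$ and $\gamma_2$; this costs an extra additive $\trl{h}$. With $x=\gamma_2(k)$, $k\geqslant 0$, Lemma~\ref{sysstripes} gives $d_X(-n,x)\geqslant n+k-1$, while $d_X(-n,\gamma_1(0))\leqslant n$ follows directly from the \emph{definition} of $\gamma_1(0)$ (no need to invoke Lemma~\ref{sysstripes} for $\gamma_1$). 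The triangle inequality then yields $k\leqslant d_X(\gamma_1(0),x)+1$. This also simplifies the edge case: one needs Lemma~\ref{sysstripes} only for $\gamma_2$, so the exceptional situation is when \emph{both} $\gamma_i$ satisfy $d_{Y(h)}(\gamma_0,\gamma_i)<2\trl{h}+4$, and then comparing each $\gamma_i$ with $\gamma_0$ directly gives the constant $4(2\trl{h}+4)(\trl{h}+1)$. Your auxiliary-geodesic substitution is unnecessary once the swap trick is in place.
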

\begin{proof}

Let $\trol$ denote the translation length $\trl{h}$ and let $D$ denote the distance $d_{Y(h)}(\im{\g_1} ,\im{\g_2})$. We claim that there exists a vertex $x \in \im{\g_2}$ such that \[d_X(x, \g_1(0)) \leqslant (\trol +1)D +\trol\] and $\g_2(0)\prec x$. By Lemma~\ref{estim1}.\ref{22est} there is a vertex $x \in \im{\g_2}$ which is at distance at most $(\trol +1)D$ from $\g_1(0)$. If $\g_2(0)\prec x$ does not hold, we do the following. Let $\alpha$ be a geodesic joining $\g_1(0)$ and $x$. Apply the isometry $h^m$ to $\alpha$, where $m$ is the smallest integer such that $h^m(x) \succ \g_2(0)$. Then the concatenation of a geodesic segment joining $\g_2(0)$ and $h^m(x)$ with geodesic $h^m(\alpha)$ is a path joining $\g_2(0)$ and $h^m(\g_1(0))$ of length at most $(\trol +1)D +\trol$. Therefore we can switch roles of $\im{\g_1}$ and $\im{\g_2}$ and set $x$ to be $h^m(\gamma_1(0))$. This proves the claim.

Let $k$ denote the distance $d_X(\g_2(0), x)$. We will show that $k \leqslant d_X(x, \g_1(0)) +1$ (see Figure~\ref{fig:comparingorigins}). In order to apply Lemma~\ref{sysstripes}, assume that $d_Y(\im{\gamma_0}, \im{\gamma_2}) \geqslant 2 \trol+4$, and choose $n$ large enough, such that $\g_i(0)= \underset{\prec}{\mathrm{max}} ( B_n(-n,X) \cap \im{\g_i})^{(0)}$ for $i \in \{1,2\}$ and $n \geqslant n_0$, where $n_0$ is the constant appearing in the formulation of Lemma~\ref{sysstripes} (clearly the same holds for any $n_1 \geqslant n$). By Lemma \ref{sysstripes} we have \[d_X(-n,x) \geqslant n+k-1.\] 
By the triangle inequality applied to the vertices $-n, \g_1(0)$ and $x$ we get \[d_X(-n,x) \leqslant n+ d_X(\gamma_1(0), x).\]
Combining the two above inequalities gives us
\[k \leqslant d_X(\gamma_1(0), x) +1.\] By the triangle inequality the distance $d_X(\g_1(0), \g_2(0))$ is at most $d_X(\g_1(0),x)+k$ hence we have \begin{equation*}d_X(\g_1(0), \g_2(0)) \leqslant 2d_X(\g_1(0),x)+1 \leqslant  2(\trol +1)D +2 \trol +1.\end{equation*}
This proves the lemma under the assumption that $d_Y(\im{\gamma_0}, \im{\gamma_2}) \geqslant 2 \trol+4$.

If for both $\g_1$ and $\g_2$ we have $d_{Y(h)}(\im{\gamma_0}, \im{\gamma_i}) \leqslant 2 \trol+4$, then by directly comparing $\g_i$ with $\g_0$, one can show that \[d_X (\g_0(0), \g_i(0)) \leqslant 2   d_{Y(h)}(\im{\g_0} ,\im{\g_i})   (\trol +1)\] for $i \in \{ 1,2 \}$. Using the  triangle inequality one gets \[d_X (\g_1(0), \g_2(0)) \leqslant 2   d_{Y(h)}(\im{\g_0} ,\im{\g_1})   (\trol +1) + 2 d_{Y(h)}(\im{\g_0} ,\im{\g_2})   (\trol +1) \leqslant 4(2\trol+4)(\trol +1). \qedhere\]
\end{proof}
We are now ready to prove Theorem~\ref{coarsesplit}.

\begin{proof}[Proof of Theorem~\ref{coarsesplit}] 

Define the map $c \colon (Y(h) \times \mathbb{Z}, d_h) \to (\minset(h), d_X)$ as
\[ c(\gamma, t) = \gamma(0+t),   \]
where $\gamma(0+t)$ is the unique vertex of $\gamma$ satisfying $ \gamma(0) \prec \gamma(0+t)$ and $d_X(\gamma(0),\gamma(0+t))=t$. We will show that for every two points $(\g_1, t_1)$ and $(\g_2, t_2)$ of $Y(h) \times \mathbb{Z}$ we have the following inequality
\[\rho_1(d_h((\g_1,t_1), (\g_2, t_2))) \leqslant d_X(c(\g_1, t_1), c(\g_2, t_2)) \leqslant \rho_2(d_h((\g_1,t_1), (\g_2, t_2))),\]
where $\rho_1$ and $\rho_2$ are non-decreasing linear functions. \medskip

We first find the function $\rho_2$. 
Without loss of generality we can assume that $|t_1| \leqslant |t_2|$ and let $\alpha$ be a  geodesic joining $\g_1(t_1)$ and $\g_2(t_2)$. Denote $\trl{h}$ by $\trol$ and apply the isometry $h^m$ to $\alpha$, where $m$ is chosen such that $d_X(\g_1(0), h^m(\g_1(t_1))) \leqslant \trol$, and $m$ has the smallest absolute value among such numbers. We then have $d_X(h^m(\g_2(t_2)), \g_2(0) ) \leqslant |t_1 - t_2| + \trol$.  Hence, by the triangle inequality we get
\[d_X(\g_1(t_1), \g_2(t_2))= d_X(h^m(\g_1(t_1)), h^m(\g_2(t_2))) \leqslant |t_1-t_2| + \trol +\trol + d_X(\g_1(0), \g_2(0)).\]
By Lemma~\ref{lemupperbound} we obtain 
\begin{equation*}
\begin{split}
			d_X(\g_1(t_1), \g_2(t_2))  & \leqslant |t_1-t_2| + 2\trol+ 2(\trol +1)d_{Y(h)}(\im{\g_1} ,\im{\g_2}) + 4(2\trol+4)(\trol +1) \\
&  \leqslant 2(\trol +1)( d_{Y(h)}(\im{\g_1} ,\im{\g_2}) +|t_1-t_2| ) + 4(2\trol+4)(\trol +1) +2\trol\\
& = 2(\trol +1)d_h((\g_1,t_1), (\g_2, t_2)) + 4(2\trol+4)(\trol +1) +2\trol.
\end{split}
\end{equation*}

We are left with finding $\rho_1$.
Let $K$ denote the distance $d_{Y(h)}(\im{\g_1} ,\im{\g_2}) +|t_1-t_2|$. 
If $d_{Y(h)}(\im{\g_1} ,\im{\g_2}) \geqslant \frac{1}{10(2(\trol +1))}K$, then by Lemma~\ref{estim1}.\ref{11est} we have 
\begin{equation}
\label{eqstep21}
d_X(\g_1(t_1), \g_2(t_2)) \geqslant d_{Y(h)}(\im{\g_2} ,\im{\g_2})-1 \geqslant \frac{1}{10(2(\trol +1))}K-1.
\end{equation}
If $d_{Y(h)}(\im{\g_2} ,\im{\g_2}) < \frac{1}{10(2(\trol +1))}K$ then one has $|t_1-t_2| \geqslant \frac{1}{2}K$.
In this case, using the same argument with translation by $h^m$ and the reverse triangle inequality, we obtain
\begin{equation}
\label{eqstep2}
d_X(\im{\g_1}(t_1) ,\im{\g_2}(t_2)) +\trol \geqslant |t_1-t_2| -\trol - d_X(\g_1(0), \g_2(0)).  
\end{equation}
By Lemma~\ref{lemupperbound} we have \[d_X(\g_1(0), \g_2(0)) \leqslant 2(\trol +1)d_{Y(h)}(\im{\g_2} ,\im{\g_2}) + 4(2\trol+4)(\trol +1),\] 
which combined with our assumption gives 
\[d_X(\g_1(0), \g_2(0)) \leqslant 2(\trol +1)\frac{1}{10(2(\trol \splus  1))}K + 4(2\trol+4)(\trol +1)= \frac{1}{10}K + 4(2\trol+4)(\trol +1). \]
Putting this in the inequality~\eqref{eqstep2} yields
\begin{equation*}
\label{eqstep22}
d_X(\im{\g_1}(t_1), \im{\g_2}(t_2)) +\trol \geqslant \frac{1}{2}K -\trol - \frac{1}{10}K- 4(2\trol+4)(\trol +1),
\end{equation*}
hence finally
\begin{equation}
\label{eqstep23}
d_X(\im{\g_1}(t_1), \im{\g_2}(t_2))  \geqslant \frac{9}{20}K -2\trol - 4(2\trol+4)(\trol +1).
\end{equation}
In both \eqref{eqstep21} and \eqref{eqstep23}, the following inequality holds
\begin{equation*} 
d_X(\im{\g_1}(t_1), \im{\g_2}(t_2))  \geqslant \frac{1}{20(L+1)}K -2\trol - 4(2\trol+4)(\trol +1).
\end{equation*}

This proves that $c$ is a quasi-isometric embedding. We note that by Proposition~\ref{alltogetherlemma}.\ref{axialisometry} the map $c$ is surjective on the vertex sets, hence a quasi-isometry. 
\end{proof}

\section{Filling radius for spherical cycles}\label{sec:fillingradius}

The purpose of this section is to prove that the graph of axes defined in Section~\ref{sec:minset} is quasi-isometric to a simplicial tree. Our main tool is the $\frc{k}$ property of systolic complexes introduced in \cite{JS3}. It is a coarse (and hence quasi-isometry invariant) property that, intuitively, describes ``asymptotic thinness of spheres'' in a given metric space.
We use numerous features of $\frc{k}$ spaces established in \cite{JS3}, some of which we adjust to our setting. 
The crucial observation is Proposition~\ref{s1frcquasitree}, which says that an $\frc{1}$ space satisfying certain homological condition is quasi-isometric to a tree. This extends a result of \cite{JS3}, which treats only the case of finitely presented groups.\medskip

Let $(X, d)$ be a metric space. Given $r >0$, the \emph{Rips complex} $P_r(X)$ is a simplicial complex defined as follows. The vertex set of $P_r(X)$ is the set of all points in $X$. The subset $\{x_1, \ldots, x_n\}$ spans a simplex of $P_r(X)$ if and only if $d(x_i,x_j) \leqslant r$ for all $i,j\in \{1, \ldots, n\}$. Notice that if $R \geqslant r$ then $P_r(X)$ is naturally a subcomplex of $P_R(X)$.\medskip

In what follows we consider simplicial chains with arbitrary coefficients. For detailed definitions see~\cite{JS3}. \medskip

A $k$\emph{--spherical cycle} in a simplicial complex $X$ is a simplicial map $f \colon S^k \to X$ from an oriented simplicial $k$--sphere to $X$. Let $C_f$ denote the image through $f$ of the fundamental (simplicial) $k$--cycle in $S^k$. A \emph{filling} of a $k$--spherical cycle $f$ is a simplicial {$(k+1)$--chain} $D$ such that $\partial D= C_f$. Let $\mathrm{supp}(f)$ denote the image through $f$ of the vertex set of $S^k$, and let $\mathrm{supp}(D)$ denote the set of vertices of all underlying simplices of $D$.

\begin{de}A metric space $(X,d)$ \emph{has filling radius for spherical cycles constant} (or $(X,d)$ is $\frc{k}$) if for every $r >0$ there exists $R \geqslant r$ such that any $k$--spherical cycle $f$ which is null-homologous in $P_r(X)$ has a filling $D$ in $P_R(X)$ satisfying $\mathrm{supp}(D) \subset \mathrm{supp}(f)$.
\end{de}

\begin{prop}\cite[p.\ 16]{JS3}
\label{frcgeometricproperty} Let $(X, d_X)$ be $\frc{k}$ and let $f\colon (Y, d_Y) \to (X, d_X)$ be a coarse embedding. Then $(Y, d_Y)$ is $\frc{k}$.
\end{prop}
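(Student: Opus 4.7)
The plan is to transport the spherical cycle from $Y$ to $X$ using $f$, invoke $\frc{k}$ for $X$ to obtain a filling with controlled support there, and then pull this filling back to $Y$ via a set-theoretic section of $f$. A prism argument corrects the failure of such a section to be a genuine inverse of $f$ on $\mathrm{supp}(\phi)$.

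Let $\rho_1,\rho_2$ be the control functions of the coarse embedding and set $\bar{\rho}(t):=\sup\{u\geqslant 0:\rho_1(u)\leqslant t\}$, which is finite for every $t\geqslant 0$ since $\rho_1(u)\to\infty$. Given $r>0$, the upper control implies that $f$ induces a simplicial map $P_r(Y)\to P_{\rho_2(r)}(X)$. Hence if $\phi\colon S^k\to P_r(Y)$ is a null-homologous $k$-spherical cycle, then $f\circ\phi$ is a null-homologous $k$-spherical cycle in $P_{\rho_2(r)}(X)$. Applying $\frc{k}$ for $X$ yields a constant $R'$ depending only on $r$ and a filling $\widetilde{D}$ of $f\circ\phi$ in $P_{R'}(X)$ with $\mathrm{supp}(\widetilde{D})\subseteq f(\mathrm{supp}(\phi))$. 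Fix a section $s\colon f(\mathrm{supp}(\phi))\to\mathrm{supp}(\phi)$ of $f$. The lower control gives $d_Y(s(x_1),s(x_2))\leqslant\bar{\rho}(R')$ whenever $d_X(x_1,x_2)\leqslant R'$, so $s$ extends to a simplicial map from the full subcomplex of $P_{R'}(X)$ spanned by $f(\mathrm{supp}(\phi))$ into $P_{\bar{\rho}(R')}(Y)$. The push-forward $s_*\widetilde{D}$ is a chain supported in $\mathrm{supp}(\phi)$, whose boundary is the fundamental cycle $C_{s\circ f\circ\phi}$ rather than $C_\phi$.

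The remaining discrepancy is handled by a standard prism argument. For every vertex $w\in S^k$ the points $\phi(w)$ and $s\circ f\circ\phi(w)$ have the same image under $f$, so $d_Y(\phi(w),s\circ f\circ\phi(w))\leqslant\bar{\rho}(0)$. Setting $R:=\max\{r,\bar{\rho}(R'),r+\bar{\rho}(0)\}$, both $\phi$ and $s\circ f\circ\phi$ are simplicial maps $S^k\to P_R(Y)$ that are vertex-wise at distance at most $\bar{\rho}(0)$, and the usual triangulation of $S^k\times I$ produces a simplicial map into $P_R(Y)$ interpolating between them. The push-forward of its fundamental chain is a $(k+1)$-chain $E$ in $P_R(Y)$ with $\partial E=C_\phi-C_{s\circ f\circ\phi}$ and $\mathrm{supp}(E)\subseteq\mathrm{supp}(\phi)\cup s(f(\mathrm{supp}(\phi)))\subseteq\mathrm{supp}(\phi)$. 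Then $D:=s_*\widetilde{D}+E$ is a filling of $\phi$ in $P_R(Y)$ with $\mathrm{supp}(D)\subseteq\mathrm{supp}(\phi)$ and $R$ depending only on $r$, which establishes $\frc{k}$ for $Y$.

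The main technical subtlety is that a set-theoretic section $s$ need not literally invert $f$ on $\mathrm{supp}(\phi)$ when $f$ fails to be injective there, so that $s_*\widetilde{D}$ fills the wrong cycle; the prism chain $E$ repairs this uniformly. Verifying that each constant introduced ($\rho_2(r)$, $R'$, $\bar{\rho}(R')$, $\bar{\rho}(0)$) depends only on $r$ completes the argument.
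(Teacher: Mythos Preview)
The paper does not supply its own proof of this proposition; it is quoted directly from \cite[p.~16]{JS3} and used as a black box. Your argument is the standard one and is correct: push the cycle forward along $f$, fill in $X$ using $\frc{k}$ with support inside $f(\mathrm{supp}(\phi))$, pull the filling back along a set-theoretic section $s$, and repair the mismatch $C_\phi - C_{s\circ f\circ\phi}$ with a prism chain built from the contiguity of $\phi$ and $s\circ f\circ\phi$. The only point worth double-checking in your write-up is that the constant $R$ also dominates the pairwise distances arising inside each prism simplex; in particular one needs $d_Y\bigl(s(f(\phi(w))),\,s(f(\phi(w')))\bigr)\leqslant \bar\rho(\rho_2(r))$ for adjacent $w,w'$, but since $R'\geqslant \rho_2(r)$ this is already bounded by $\bar\rho(R')$, so your choice of $R$ is adequate.
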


\begin{lem}
\cite[Theorem 4.1, Lemma 5.3]{JS3}
\label{systolicisfrc} Let $X$ be a systolic complex. Then $X$ is $\frc{k}$ for any $k\geqslant 2$.
\end{lem}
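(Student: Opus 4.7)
The plan is to reduce the claim to a local filling property in systolic complexes and then exploit their combinatorial non-positive curvature to control the support of the filling.

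First, I would convert the Rips-complex data into the systolic setting. Given a $k$-spherical cycle $f$ in $P_r(X)$, every simplex of $P_r(X)$ on $\mathrm{supp}(f)$ corresponds to a finite set of vertices of $X$ of pairwise distance at most $r$. By convexity of combinatorial balls in systolic complexes, the combinatorial convex hull of such a set has diameter controlled by $r$. This allows one, after enlarging the Rips scale, to replace $f$ by a genuine simplicial $k$-cycle lying in the subcomplex of $X$ spanned by a bounded neighbourhood of $\mathrm{supp}(f)$.

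Second, I would use the contractibility of $X$ to obtain an abstract simplicial filling $D \subset X$ of this $k$-cycle, and then push $D$ onto $\mathrm{supp}(f)$ simplex by simplex. The $6$-largeness of links of simplices guarantees the local simple connectivity needed at each pushing step: each interior vertex of $D$ can be replaced by a nearby vertex of $\mathrm{supp}(f)$, and the resulting simplex remains valid in $P_R(X)$ once $R$ is chosen sufficiently large in terms of $r$ and $k$. An induction on $k$, using the fact that links of simplices in a systolic complex are themselves systolic (so the inductive hypothesis applies inside the links), allows the filling-plus-pushing procedure to be iterated through dimensions.

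The main obstacle is obtaining a bound on $R$ depending only on $r$ and $k$, uniformly in $f$. This requires a quantitative analysis of how the Rips scale grows during the straightening procedure, including careful bookkeeping of the diameter of the convex hulls that appear in each pushing step. The hypothesis $k \geqslant 2$ is essential here, because the $(k-1)$-connectivity of links used to carry out the local replacement follows from $6$-largeness only for $k \geqslant 2$; for $k = 1$ the argument breaks down, reflecting the fact that systolic complexes need not be quasi-trees and that there is genuine obstruction to filling loops within the support of the loop alone.
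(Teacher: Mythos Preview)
The paper does not prove this lemma; it is quoted from \cite{JS3}, so there is no in-paper argument to compare against beyond the citation. I can only assess whether your sketch stands on its own and how it relates to the approach of \cite{JS3}.

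There is a genuine gap at the core of your second step. You take an arbitrary filling $D\subset X$ furnished by contractibility and propose to ``push $D$ onto $\mathrm{supp}(f)$'' by replacing each interior vertex of $D$ with a nearby vertex of $\mathrm{supp}(f)$. But nothing in your argument bounds the distance from interior vertices of $D$ to $\mathrm{supp}(f)$: contractibility gives \emph{some} filling, not one that stays close to the original sphere. If an interior vertex sits at distance $M$ from $\mathrm{supp}(f)$, the replacement forces $R\gtrsim M$, and $M$ depends on $f$. The $6$--largeness of links gives local $1$--connectedness, which lets you homotope small configurations, but it does not manufacture a retraction onto an arbitrary finite vertex set. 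Your induction through links has the same defect: an $\frc{k-1}$ statement inside a link does not assemble into a global pushing map without precisely the control you lack. You correctly flag uniformity of $R$ as the obstacle, but ``careful bookkeeping of diameters of convex hulls'' is not a mechanism---the hulls you would track are hulls of simplices of $D$, and those can wander arbitrarily far from $\mathrm{supp}(f)$.

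The route in \cite{JS3} is organised differently and avoids ever producing an uncontrolled filling. One first exploits the specific combinatorial contraction available in systolic complexes---the simplicial projections onto balls $B_n(v,X)$, which behave well because balls are convex---to build fillings with a priori controlled support (this is the content behind their Theorem~4.1). A separate general lemma (their Lemma~5.3) then passes from that stronger filling control to the $\frc{k}$ conclusion. The point is that the filling is constructed \emph{from} the systolic geometry rather than obtained abstractly and then repaired; that is exactly the missing idea in your sketch.
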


The following lemma describes the behaviour of property $\frc{k}$ with respect to products.
It was originally proved in \cite{JS3} only for products of finitely generated groups. However, it is straightforward to check that the lemma holds for arbitrary geodesic metric spaces. The metric on a product is chosen to be the sum of metrics on the factors. 

\begin{lem}\cite[Proposition 7.2]{JS3}
\label{sfrcproducts} Let $k \in \{0,1\}$. Assume that $(X, d_X)$ is not $\frc{k}$ and that there is $r>0$
such that every $k$--spherical cycle $f\colon S^k \to X$ is null-homotopic in $P_r(X)$. If $(Y, d_Y)$ is unbounded then the product $(X, d_X) \times (Y, d_Y)$ is not $\frc{k+1}$.
\end{lem}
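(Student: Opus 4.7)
The strategy is to directly negate $\frc{k+1}$ for $X \times Y$. For every $R \geq r$, I produce a $(k+1)$--spherical cycle $g$ in $P_r(X \times Y)$ that is null-homologous there but admits no filling in $P_R(X \times Y)$ with support contained in $\mathrm{supp}(g)$. The input is a failure witness for $\frc{k}$: after possibly enlarging $r$, there exists a $k$--spherical cycle $f \colon S^k \to X$ in $P_r(X)$ which is null-homologous -- hence null-homotopic by hypothesis -- but has no filling in $P_R(X)$ with support in $\mathrm{supp}(f)$. Fix a null-homotopy $F \colon D^{k+1} \to P_r(X)$ and denote by $C_F$ the associated chain. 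Because $Y$ is unbounded and geodesic, I select $y_0, y_1 \in Y$ with $d_Y(y_0, y_1)$ as large as needed and a discretised geodesic $\beta \colon \{0, \ldots, m\} \to Y$ whose consecutive vertices lie at $Y$--distance exactly $r$; the induced chain is $C_\beta$.

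Define $g \colon S^{k+1} \to X \times Y$ as the \emph{suspension of $f$ along $\beta$}: two disk hemispheres $F \times \{y_0\}$ and $F \times \{y_1\}$, joined by the cylinder $f \times \beta$. Using the sum metric on $X \times Y$ one has $g \subset P_r(X \times Y)$ and
\[
\mathrm{supp}(g) \;=\; \bigl(\mathrm{supp}(F) \times \{y_0, y_1\}\bigr) \,\cup\, \bigl(\mathrm{supp}(f) \times \beta\bigr).
\]
The Leibniz rule applied to $\partial(C_F \times C_\beta) = C_f \times C_\beta + (-1)^{k+1} C_F \times ([y_1]-[y_0])$ shows that $C_F \times C_\beta$ fills $g$ in $P_r(X \times Y)$, so $g$ is null-homologous there.

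Suppose toward a contradiction that $D$ is a filling of $g$ in $P_R(X \times Y)$ with $\mathrm{supp}(D) \subset \mathrm{supp}(g)$. The heart of the argument is a slicing construction. Because consecutive $\beta$--points are exactly $r$ apart, every simplex of $P_R(X \times Y)$ spans at most $R/r$ consecutive $\beta$--indices. Choose a middle index $j_c$ with $R/r < j_c < m - R/r$, possible once $d_Y(y_0, y_1)$ is chosen large enough. Split $D = D^{\leq j_c} + D^{> j_c}$ according to the maximal $\beta$--height of vertices, and set
\[
E \;:=\; \bigl(\partial D^{> j_c}\bigr)\big|_{\text{max height} \leq j_c}.
\]
A chain-level bookkeeping using $\partial D = g$ and $\partial^2 = 0$, combined with the product-formula computation of $\partial(C_F \times C_{\beta|_{[0,j_c]}})$, yields $\partial E = \pm\, C_f \times [\beta(j_c)]$. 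The spanning bound $R/r$ forces every simplex contributing to $E$ to have vertices in $\beta$--index range $(j_c - R/r,\, j_c]$, so $\mathrm{supp}(E) \subset \mathrm{supp}(f) \times \beta|_{(j_c - R/r,\, j_c]}$; in particular $E$ avoids the caps $\mathrm{supp}(F) \times \{y_0, y_1\}$. Pushing $E$ forward along the projection $\pi_X \colon X \times Y \to X$ produces a $(k+1)$--chain in $P_R(X)$ with boundary $\pm C_f$ and support contained in $\mathrm{supp}(f)$. This is a filling of $f$ in $P_R(X)$ supported in $\mathrm{supp}(f)$, contradicting the defining property of $f$.

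The main obstacle is the chain-level slicing step: ensuring both the exact boundary identity $\partial E = \pm\, C_f \times [\beta(j_c)]$ and the support bound on $E$. The former requires carefully tracking the signs in the Leibniz expansion of $\partial(C_F \times C_{\beta|_{[0, j_c]}})$ and noting the cancellations with $\partial D^{\leq j_c}$; the latter is precisely where the unboundedness of $(Y, d_Y)$ is used, so that $j_c$ can be separated from the endpoints by more than $R/r$ in $\beta$--index and the caps are out of reach of any simplex contributing to $E$.
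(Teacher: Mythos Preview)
The paper does not supply its own proof of this lemma; it is quoted from \cite[Proposition~7.2]{JS3} with only the remark that the original argument (stated there for finitely generated groups) carries over to geodesic metric spaces. So there is nothing in the paper to compare against beyond that citation, and your suspension-plus-slicing argument is the standard route for this kind of statement.

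Your proof is correct, with one small slip that is easily repaired. The claim ``$g \subset P_r(X\times Y)$'' fails for $k=1$: the prism triangulation of $S^1 \times [j,j+1]$ produces diagonal simplices with vertices $(f(v),\beta(j))$ and $(f(v'),\beta(j+1))$, whose sum-metric distance is $d_X(f(v),f(v')) + r \leq 2r$, not $r$. The same happens inside the filling $C_F \times C_\beta$. This does not damage the argument: simply take $r' = 2r$ as the witnessing scale for the failure of $\frc{k+1}$ in the product. Then $g$ and its filling $C_F \times C_\beta$ lie in $P_{r'}(X\times Y)$, and for any $R' \geq r' \geq r$ the bad cycle $f = f_{R'}$ in $X$ is available, so the slicing at height $j_c$ and the projection $(\pi_X)_\ast E$ give the contradiction exactly as you wrote. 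Everything else---the height splitting $D = D^{\leq j_c} + D^{>j_c}$, the identity $\partial E = \partial C_g^{\leq j_c} = (-1)^k C_f \times [\beta(j_c)]$, and the support bound forcing $E$ to miss the caps---is correct.
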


The following criterion is the key tool that we use in the proof of Proposition~\ref{s1frcquasitree}.

\begin{prop}\cite[Theorem 4.6]{MJ} 
\label{bottleneck}
Let $(X,d)$ be a geodesic metric space. Then the following are equivalent:
\begin{enumerate} 
\item $X$ is quasi-isometric to a simplicial tree, 
\item (bottleneck property) there exists $\delta >0$, such that for any two points $x,y\in X$ there is a midpoint $m=m(x,y)$ with $d(x,m)=d(m,y)=\frac{1}{2}d(x,y)$, and such that any path from $x$ to $y$ in $X$ contains a point within distance at most $\delta$ from $m$.

\end{enumerate}
\end{prop}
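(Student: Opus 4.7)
The proposition asserts the equivalence of two coarse-geometric characterisations of quasi-trees. The direction $(1) \Rightarrow (2)$ transfers an evident tree property across a quasi-isometry, while $(2) \Rightarrow (1)$ is the substantive part, requiring the construction of an approximating tree from the bottleneck data.

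For $(1) \Rightarrow (2)$, suppose $f \colon X \to T$ is a $(K,C)$-quasi-isometry to a simplicial tree $T$. Given $x,y \in X$ with midpoint $m$, the point $f(m)$ lies at quasi-distance $\tfrac{1}{2}d(x,y)$ from both $f(x)$ and $f(y)$, hence within a constant $C' = C'(K,C)$ of the unique tree midpoint $m_T$ of $f(x),f(y)$. For any path $\gamma$ from $x$ to $y$ in $X$, sampling at unit spacing yields a sequence in $T$ whose consecutive terms are at most $K+C$ apart and whose first and last terms lie on opposite sides of the cut point $m_T$. Some consecutive pair of samples must therefore straddle $m_T$, forcing one of them to lie within $K+C$ of $m_T$ and hence within $K+C+C'$ of $f(m)$; pulling this back via a coarse inverse produces a point on $\gamma$ within a uniform distance of $m$, yielding the bottleneck constant $\delta$.

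For $(2) \Rightarrow (1)$, the plan is first to upgrade the bottleneck property to Gromov hyperbolicity and then to extract a tree from the hyperbolic structure by invoking the bottleneck a second time. For hyperbolicity, consider a geodesic triangle $xyz$: the concatenation $[x,z] \cup [z,y]$ is a path from $x$ to $y$, so it contains a point within $\delta$ of the midpoint of $[x,y]$; iterating the bottleneck at dyadic subdivisions of $[x,y]$ yields thin triangles with constant $\delta'$ depending only on $\delta$. For the tree, fix a basepoint $x_0$ and a maximal $1$-separated net $N \ni x_0$, and for each $v \in N$ fix a geodesic $\alpha_v$ from $x_0$ to $v$. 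For any pair $v,w \in N$, applying the bottleneck to the pair $(v,w)$ and to the path $\overline{\alpha_v} \cup \alpha_w$ forces this concatenation to pass near a common initial segment of $\alpha_v$ and $\alpha_w$, producing a well-defined ``branching time'' that agrees with the Gromov product $(v|w)_{x_0}$ up to bounded additive error. This implies that the induced metric on $N$ satisfies the exact four-point tree condition up to bounded additive error, from which standard arguments produce a simplicial tree $T$ together with a quasi-isometric embedding $N \hookrightarrow T$; since $N$ is coarsely dense in $X$, composing with a nearest-point map yields the desired quasi-isometry $X \to T$.

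The principal obstacle is the second half of $(2) \Rightarrow (1)$: Gromov hyperbolicity alone does not imply quasi-isometry with a tree, since a general hyperbolic group need not be virtually free. The bottleneck property must therefore be invoked a second time, precisely to promote ``thin triangles'' to ``genuinely tree-like Gromov products.'' The technical heart is the verification that the bottleneck applied to $v,w$ with the path $\overline{\alpha_v} \cup \alpha_w$ forces the two geodesics from $x_0$ to share a common initial prefix up to bounded error, yielding an exact tree condition on $N$. All other ingredients — the direction $(1) \Rightarrow (2)$, the initial hyperbolicity derivation, and the passage from a four-point tree metric on a net to a simplicial tree — are routine applications of the bottleneck axiom and standard Gromov-hyperbolic machinery.
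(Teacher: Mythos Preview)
The paper does not prove this proposition; it is quoted verbatim from Manning \cite{MJ} and used as a black box in the proof of Proposition~\ref{s1frcquasitree}. Your sketch therefore goes well beyond what the paper itself does.

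On the substance: the direction $(1)\Rightarrow(2)$ is fine. For $(2)\Rightarrow(1)$, the dyadic--subdivision argument for hyperbolicity has a gap as written. Na\"ively the constant at depth $n+1$ is $\delta$ plus the depth--$n$ constant, which blows up. The fix is to note that the ``bad'' case (the point guaranteed by the bottleneck landing on a short connector $[a,a']$ rather than on the original two sides) forces $d(m',a)\leqslant \delta+C_n$, which is impossible while the dyadic subinterval has length exceeding $2(\delta+C_n)$; hence $C_n=\delta$ persists until the subintervals are themselves $O(\delta)$, and then every point is automatically within $O(\delta)$ of an endpoint already controlled. This gives a uniform thinness constant, but the argument does need this extra sentence.

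The second half --- upgrading hyperbolicity to a quasi-tree via a further use of the bottleneck --- is where the genuine content lies, and your paragraph is too compressed to count as a proof. The specific claim that $\overline{\alpha_v}\cup\alpha_w$ ``passes near a common initial segment of $\alpha_v$ and $\alpha_w$'' is not what the bottleneck gives: the midpoint of $[v,w]$ is near the tripod's branch point only when $d(x_0,v)\approx d(x_0,w)$. What one must actually establish is that the Gromov products on the net satisfy the four--point tree inequality up to a \emph{uniform} additive constant (not merely the $\delta'$--hyperbolic inequality, which every hyperbolic space satisfies), and then build the tree from that. Manning's own proof constructs the approximating tree rather more directly by analysing how the chosen geodesics $\alpha_v$ fellow--travel before diverging, and declaring edges accordingly. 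Either route can be completed, but neither is as short as you suggest, and the phrase ``standard arguments produce a simplicial tree'' is hiding exactly the step that distinguishes quasi-trees from arbitrary hyperbolic spaces.
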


\begin{prop}
\label{s1frcquasitree}
Let $X$ be a graph which is $\frc{1}$ and assume that there exists $r>0$, such that any $1$--spherical cycle in $P_1(X^{(0)})$ is {null-homologous} in $P_r(X^{(0)})$. Then $X$ is quasi-isometric to a simplicial tree.
\end{prop}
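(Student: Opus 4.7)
The plan is to apply the bottleneck criterion of Proposition~\ref{bottleneck}, so it suffices to exhibit a constant $\delta>0$ such that for every $x,y\in X^{(0)}$ some geodesic $\gamma$ between them has a midpoint $m$ that every edge-path from $x$ to $y$ approaches within distance $\delta$. Let $R=R(r)\geqslant r$ be the filling scale produced by the $\frc{1}$ hypothesis applied to $r$; I claim that $\delta$ can be chosen to depend only on $R$. Since $X$ is a graph, the $1$-skeleton of $P_1(X^{(0)})$ coincides with $X$, so every edge-loop in $X$ is a $1$-spherical cycle in $P_1(X^{(0)})$, and by hypothesis such a cycle is null-homologous in $P_r(X^{(0)})$.

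Fix vertices $x,y$, a geodesic $\gamma=(v_0,\ldots,v_D)$, and set $m=v_{\lfloor D/2\rfloor}$. For an arbitrary edge-path $\alpha$ from $x$ to $y$, form the $1$-cycle $\ell=\gamma-\alpha$ in $C_1(P_1(X^{(0)}))$. By the hypothesis together with $\frc{1}$, there is a $2$-chain $F\in C_2(P_R(X^{(0)}))$ with $\partial F=\ell$ and $\mathrm{supp}(F)\subseteq V(\gamma)\cup V(\alpha)$. I would use $F$ to show that $\alpha$ must enter $B_\delta(m,X)$ for a suitable $\delta=\delta(R)$. Arguing by contradiction, suppose $\alpha\cap B_\delta(m,X)=\emptyset$ with $\delta\geqslant R$. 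Every $2$-simplex of $F$ containing $m$ has its two other vertices within distance $R$ of $m$, and the assumption on $\alpha$ forces these vertices onto $\gamma$; hence the ``star of $m$ in $F$'' is supported on the sub-arc $\{v_{k-R},\ldots,v_{k+R}\}$ of $\gamma$, where $k=\lfloor D/2\rfloor$.

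The argument then propagates this observation outwards through a disk-diagram realisation of $F$: a $2$-simplex at combinatorial depth $n$ from $m$ (through adjacent $2$-simplices of $F$) has vertices within $nR$ of $m$, and so under the assumption $\delta>nR$ the first $n$ layers of $F$ stay supported on $\gamma$. Since $\partial F$ contains the edges of $\alpha$, the support of $F$ is forced to meet $V(\alpha)$, which yields the desired contradiction once $\delta$ exceeds an appropriate multiple of $R$. Manning's criterion then identifies $X$ as quasi-isometric to a simplicial tree.

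The main technical obstacle is that the combinatorial depth of $F$ is not a priori bounded in terms of $R$, so iterating the local argument could in principle require arbitrarily many steps. I would circumvent this by choosing $F$ of minimal support and exploiting that $P_R$ restricted to any sub-arc of $\gamma$ is contractible, so that any purely-$\gamma$ subchain of $F$ can be cancelled off without losing the filling property. After such simplification the essential combinatorial depth of $F$ is controlled by $R$ alone, and the propagation terminates in finitely many (bounded by a function of $R$) steps, giving the required bound on $\delta$.
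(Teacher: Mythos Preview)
Your setup is correct and matches the paper's: apply Proposition~\ref{bottleneck}, fix $R\geqslant r$ from the $\frc{1}$ hypothesis, and for a putative counterexample produce a filling $F$ of the concatenated cycle $\overline{\alpha}\,\overline{\gamma}$ in $P_R$ supported on $V(\gamma)\cup V(\alpha)$. The divergence, and the gap, is in how you extract the contradiction from $F$.

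Your propagation argument does not close. Combinatorial depth through adjacent $2$--simplices of a chain is not bounded by any function of $R$, as you acknowledge, and the proposed remedy is not an argument: choosing $F$ of minimal support does not control depth, and ``cancelling purely-$\gamma$ subchains'' only lets you replace the $\gamma$-supported part of $F$ by another $2$--chain with the same boundary (using contractibility of $P_R$ of a geodesic segment), which leaves the mixed part unchanged and does not bound anything. In short, nothing in your last paragraph yields a uniform $\delta$.

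The paper finishes instead with a one-line homological obstruction. With $\delta>5R$, set $a=\lceil 3\delta/4\rceil$, $b=\lfloor \delta/2\rfloor$, and define
\[
A=P_R\bigl(\gamma\cap B_a(m,X)\bigr),\qquad B=P_R\bigl(\alpha\cup(\gamma\setminus B_b(m,X))\bigr).
\]
Then $A\cup B=P_R(\alpha\cup\gamma)$ (any $P_R$--edge joining $A\setminus B$ to $B\setminus A$ would have length $\geqslant a-b>R$), and $A\cap B=P_R\bigl(\gamma\cap(B_a\setminus B_b)\bigr)$ is the Rips complex of two geodesic arcs separated by $2b>R$, hence has the homotopy type of two points. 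Since $F$ lives in $A\cup B$, the class $[\overline{\alpha}\,\overline{\gamma}]$ vanishes in $H_1(A\cup B)$; but the Mayer-Vietoris boundary $H_1(A\cup B)\to H_0(A\cap B)$ sends it to a nonzero element (the endpoints of $\overline{\gamma}\cap A$ lie in different components of $A\cap B$, while $\overline{\alpha}$ misses $A$). This is the missing idea: rather than tracking the chain $F$ simplex by simplex, one uses the support restriction to place $F$ in a complex with a visible $H_0$ obstruction.
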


Note that to use Proposition~\ref{bottleneck} formally we need to consider a geodesic metric $d_g$ on $X$ -- see Remark~\ref{rem:twometrics}. Proposition~\ref{s1frcquasitree} will be true for our standard metric $d$ as well, since clearly $(X^{(0)}, d)$ is quasi--isometric to $(X, d_g)$.

\begin{proof}
Let $R\geqslant r$ be such that every $1$--spherical cycle $f \colon S^1 \to X$ that is null-homologous in $P_r(X)$ has a filling $D$ in  $P_R(X)$ with $\mathrm{supp}(D) \subset \mathrm{supp}(f)$. 

We proceed by contradiction. Suppose that $X$ is not quasi-isometric to a tree. Let $\delta$ be a natural number larger than $5R$. Then, by the bottleneck property  (Proposition~\ref{bottleneck}), there exist two vertices $v $ and $w$, a midpoint $m$ between them, and a path $\alpha$ between $v$ and $w$ omitting $B_{\delta}(m,X)$. Without loss of generality we  can assume that $m$ is a vertex. Let $\gamma$ denote a geodesic between $v$ and $w$ that contains $m$.

Let $a=\lceil 3\delta/4 \rceil$ and $b=\lfloor \delta/2 \rfloor$. We define subcomplexes $A=P_R(\gamma \cap B_a(m,X))$, and $B=P_R(\alpha \cup (\gamma \setminus B_b(m,X))$ of $P_R(X)$. We claim that the following hold:

\begin{itemize}
\item[(1)] $A$ and $B$ are path-connected,
\item[(2)] $A \cap B$ has the homotopy type of two points,
\item[(3)] $A \cup B = P_R(\alpha \cup \gamma )$.
\end{itemize}
Assertion (1) is straightforward. For (2) observe that \[A \cap B = P_R(\gamma \cap (B_a(m,X) \setminus B_b(m,X)))\] and that $\gamma \cap (B_a(m,X) \setminus B_b(m,X))$ consists of two geodesic segments that are separated by at least $2b > R$. The Rips complex of a geodesic segment is easily seen to be contractible. For (3) we clearly have $A \cup B \subset P_R(\alpha \cup \gamma )$. To prove the other inclusion we need to show that for any edge in $P_R(\alpha \cup \gamma )$ both of its endpoints are either in $A$ or in $B$. This follows from the definition of $A$ and $B$, as for any two vertices $x$ and $y$ with $x \in A\setminus B$ and $y \in B\setminus A$ we have $d(x,y) \geqslant a-b>R$.



Now let $\ov{\alpha}$ and $ \ov{\gamma}$ be the continuous paths obtained from $\alpha$ and $\gamma$ by connecting
consecutive vertices by edges. Let $\ov{\alpha} \ov{\gamma}$ be the $1$--spherical cycle in $P_1(X^{(0)})$ obtained by their concatenation. By our assumption the cycle $\ov{\alpha}\ov{\gamma}$ has a filling $D$ in $P_R(\mathrm{supp}(\ov{\alpha}\ov{\gamma}))=P_R(\alpha \cup \gamma)= A \cup B$ and thus $[\ov{\alpha}\ov{\gamma}]=0$ in $H_1(A\cup B)$.
However, in the Mayer-Vietoris sequence for the pair $A, B$ the boundary map 
\[H_1(A\cup B) \to H_0(A\cap B)\]
 sends $[\ov{\alpha}\ov{\gamma}]$ to a non-zero element. This gives a contradiction and hence finishes the proof of the proposition.
\end{proof}

Finally we are ready to prove the main result of this section.

\begin{cor}
\label{s1frcquasitreecoro} For a hyperbolic isometry $h$ whose minimal displacement set is a union of axes
(that is, for $h$ satisfying \eqref{minaxes}) and $L(h)>3$, the graph of axes $(Y(h), d_{Y(h)})$ is quasi-isometric to a simplicial tree. 
\end{cor}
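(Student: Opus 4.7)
The overall strategy is to invoke Proposition~\ref{s1frcquasitree} on the graph $Y(h)$. This requires verifying (i) that $Y(h)$ is $\frc{1}$, and (ii) that every $1$--spherical cycle in $P_1(Y(h)^{(0)})$ is null-homologous in $P_r(Y(h)^{(0)})$ for some uniform $r$. Both conditions will be derived from geometric features of $\minset(h)$ via the quasi-isometry $c \colon Y(h) \times \mathbb{Z} \to \minset(h)$ from Theorem~\ref{coarsesplit}, together with the systolic structure on $\minset(h)$.

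I would first handle (ii) by transferring simple connectedness at bounded scale. By Lemma~\ref{minissystolic}, $\minset(h)$ is systolic, hence flag and simply connected, so $\minset(h) = P_1(\minset(h)^{(0)})$ and every $1$--spherical cycle in $P_1(\minset(h)^{(0)})$ is already null-homotopic there. Standard Rips-complex bookkeeping, using $c$ and a quasi-inverse to $c$ (and the fact that vertices within bounded distance become joined by edges in a sufficiently large Rips scale), then upgrades this to the following statement: there exists $r_0 > 0$ such that every $1$--spherical cycle in $P_1(Y(h) \times \mathbb{Z})$ is null-homotopic in $P_{r_0}(Y(h) \times \mathbb{Z})$. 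Since the projection $Y(h) \times \mathbb{Z} \to Y(h)$ is $1$--Lipschitz, it induces a simplicial map $P_{r_0}(Y(h) \times \mathbb{Z}) \to P_{r_0}(Y(h)^{(0)})$ fixing $Y(h) \times \{0\}$ pointwise, under which the null-homotopy of any cycle lying in $Y(h) \times \{0\}$ pushes down to a null-homotopy in $P_{r_0}(Y(h)^{(0)})$. In particular the cycle is null-homologous there, giving (ii).

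For (i), the plan is to argue by contradiction using Lemma~\ref{sfrcproducts}. By Lemma~\ref{systolicisfrc}, $\minset(h)$ is $\frc{2}$, and since $c$ is in particular a coarse embedding, Proposition~\ref{frcgeometricproperty} transfers this to give that $Y(h) \times \mathbb{Z}$ is $\frc{2}$. Suppose $Y(h)$ were not $\frc{1}$; combined with the uniform null-homotopy property established in the previous step and the unboundedness of $\mathbb{Z}$, Lemma~\ref{sfrcproducts} would then force $Y(h) \times \mathbb{Z}$ to fail $\frc{2}$, a contradiction. Hence $Y(h)$ is $\frc{1}$.

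With both hypotheses of Proposition~\ref{s1frcquasitree} verified, it follows immediately that $Y(h)$ is quasi-isometric to a simplicial tree. The main technical nuisance I anticipate is the Rips-complex bookkeeping in the transfer of the uniform null-homotopy property: one must track several scale constants simultaneously while pushing loops through $c$, back through a quasi-inverse, and then through the projection onto $Y(h)$. Apart from this, the proof is a direct combination of the product decomposition of Theorem~\ref{coarsesplit} with the $\frc{k}$ machinery of Section~\ref{sec:fillingradius}.
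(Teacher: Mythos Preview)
Your proposal is correct and follows the same global architecture as the paper: verify the two hypotheses of Proposition~\ref{s1frcquasitree}, with the $\frc{1}$ property coming from Lemma~\ref{sfrcproducts} applied to the product $Y(h)\times\mathbb{Z}$, which is $\frc{2}$ via Theorem~\ref{coarsesplit} and Lemma~\ref{systolicisfrc}. The verification of the $\frc{1}$ hypothesis is essentially identical to the paper's.

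The only substantive difference is in how you establish the uniform null-homotopy of $1$--cycles in $Y(h)$. You push cycles through the quasi-isometry $c$ of Theorem~\ref{coarsesplit}, use simple connectedness of $\minset(h)$ at all Rips scales, pull the filling back through a quasi-inverse, and then project along $Y(h)\times\mathbb{Z}\to Y(h)$. The paper instead builds an explicit simplicial map $p\colon \minset(h)\to P_1(Y(h)^{(0)})$ by sending each vertex to some axis through it, lifts a given cycle $f$ in $Y(h)$ to a concrete cycle $\tilde f$ in $\minset(h)$ by stringing together segments of axes and connecting edges, and then observes that $p\circ\tilde f$ and $f$ are homotopic already in $P_2(Y(h)^{(0)})$. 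The paper's route is shorter and yields a specific constant $r=2$; your route is more conceptual and reuses Theorem~\ref{coarsesplit}, at the cost of the bookkeeping you flag (and a constant depending on the quasi-isometry constants, hence on $L(h)$). Both are valid; if you write yours out, be sure to include the step that cycles in $P_K(\minset(h)^{(0)})$ for $K>1$ are still null-homotopic there, via geodesic subdivision back to $P_1$.
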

\begin{proof}We will show that $(Y(h), d_{Y(h)})$ satisfies the assumptions of Proposition~\ref{s1frcquasitree}. 
	
First, we show that there exists an $r>0$, such that any $1$--spherical cycle in $P_1(Y(h)^{(0)})$ is null-homotopic in $P_r(Y(h)^{(0)})$. Let $f \colon S^1 \to P_1(Y(h)^{(0)})$ be such a cycle. We will show that $f$ is null-homotopic in $P_2(Y(h)^{(0)})$ by constructing a simplicial map $p \colon \minset(h) \to P_1(Y(h)^{(0)})$ and a $1$--spherical cycle $\tilde{f} \colon S^1 \to \minset(h)$, which is null-homotopic in $\minset(h)$, and such that $p \circ \tilde{f}$ is homotopic to $f$ in $P_2(Y(h)^{(0)})$.

Let $\g_0, \ldots, \g_m$ be vertices of the image of $f$ appearing in this order (i.e.\ $\g_i$ and $\g_{i+1}$ are adjacent and $\g_0=\g_m$). For every $i \in \{0, \ldots, m-1 \}$ pick a vertex $x_i \in \minset(h)$, such that $x_i \in \g_i$ and $x_i \neq x_j$ if $i \neq j$. Since $\g_i$ and $\g_{i+1}$ are adjacent in $Y(h)$, by definition of $Y(h)$ there exist vertices $y_i $ and $ z_{i+1}$ in $\minset(h)$ such that $y_i \in \g_i$ and $z_{i+1} \in \g_{i+1}$ and $y_i$, and $z_{i+1}$ are adjacent in $\minset(h)$ (this can always be done, even if adjacency of $\g_i$ and $\g_{i+1}$ in $Y(h)$ follows from the fact that they intersect in $\minset(h)$). Let $\alpha_i$ be the path defined as the concatenation of the segment $[x_i,y_i]$ of $\g_i$, the edge $\{y_i, z_{i+1}\}$ and the segment $[z_{i+1},x_{i+1}]$ of $\g_{i+1}$. Define $\tilde{f}$ as the concatenation of paths $\alpha_i$ for all $i \in \{0, \ldots, m-1 \}$ (see Figure~\ref{fig:twocycles}).

\begin{figure}[!h]
\label{fig:twocycles}
\centering
\begin{tikzpicture}[scale=0.5]

\node [above] at (1,3.5) {$\minset(h)$};

\node [above] at (14,3.5) {$P_1(Y(h)^{(0)})$};


\draw[fill] (14,2)  circle [radius=0.075]; 
\node [above] at (14,2) {$\g_0$};

\draw[fill] (15,1)  circle [radius=0.075]; 
\node [above right] at (14.75,1) {$\g_1$};

\draw[fill] (15,0)  circle [radius=0.075];

\draw[fill] (14,-1)  circle [radius=0.075]; 
\node [below] at (14,-1) {$\g_i$};

\draw[fill] (13,-0.5)  circle [radius=0.075];

\draw[fill] (12.5,0.5)  circle [radius=0.075];

\draw[fill] (13,1.5)  circle [radius=0.075]; 


\draw[] (14,2)--  (15,1) --(15,0)--(14,-1)  --(13,-0.5) --(12.5,0.5)  --(13,1.5)--(14,2);


\node [above] at (10.5,0.25) {$\overset{p}{\longrightarrow}$};


\draw (-6,2) to (8,2);
\node [above] at (7.5,2) {$\g_0$};

\draw (-3,1) to (1,1);
\draw [out=0, in=180] (1,1) to (4,0);
\draw (4,0) to (9,0);
\node [above] at (8.5,1) {$\g_1$};

\draw (-3,0) to (1,0);
\draw [out=0, in=180] (1,0) to (4,1);
\draw (4,1) to (9,1);

\draw (-4,-1) to (8,-1);

\draw (-9,-0.5) to (1,-0.5);

\draw (-9.5,0.5) to (0,0.5);

\draw (-9,1.5) to (1,1.5);

\draw [very thick] (6,2) -- (6,1) --(4,1)--(4,0) -- (7, 0) --(7,-1) -- (0,-1)-- (0,-0.5)--(-6, -0.5)--(-6, 0.5)--(-8, 0.5)-- (-8, 1.5)--(-5, 1.5)--(-5, 2)--(6,2);


\draw[fill] (6,2)  circle [radius=0.075]; 
\node [above] at (6,2) {$y_0$};

\draw[fill] (6,1) circle [radius=0.075]; 
\node [below] at (6,1) {$z_1$};

\draw[fill] (5,1) circle [radius=0.075]; 

\draw[fill] (4,1) circle [radius=0.075]; 

\draw[fill] (4,0) circle [radius=0.075]; 
\draw[fill] (5,0) circle [radius=0.075]; 
\draw[fill] (6,0) circle [radius=0.075]; 
\draw[fill] (7, 0) circle [radius=0.075]; 
\draw[fill] (7,-1) circle [radius=0.075]; 
\draw[fill] (6,-1) circle [radius=0.075];
\node [below] at (7,-1) {$z_i$};

\draw[fill] (5,-1) circle [radius=0.075]; 
\draw[fill] (4,-1) circle [radius=0.075]; 
\draw[fill] (3,-1) circle [radius=0.075]; 

\draw[fill] (2,-1) circle [radius=0.075]; 
\draw[fill] (1,-1) circle [radius=0.075]; 
 \draw[fill]  (0,-1) circle [radius=0.075]; 
 \node [below] at (0,-1) {$y_i$};
  \draw[fill] (0,-0.5) circle [radius=0.075];

\draw[fill] (-1,-0.5) circle [radius=0.075];
\draw[fill] (-2,-0.5) circle [radius=0.075];
\draw[fill] (-3,-0.5) circle [radius=0.075];
\draw[fill] (-4,-0.5) circle [radius=0.075];
\draw[fill] (-5,-0.5) circle [radius=0.075];
  \draw[fill] (-6, -0.5) circle [radius=0.075]; 
  \draw[fill] (-6, 0.5) circle [radius=0.075];
  \draw[fill] (-7,0.5) circle [radius=0.075]; 
  \draw[fill] (-8, 0.5) circle [radius=0.075]; 
  \draw[fill]  (-8, 1.5) circle [radius=0.075]; 
  \draw[fill]  (-7, 1.5) circle [radius=0.075]; 
 \draw[fill]  (-6, 1.5) circle [radius=0.075]; 

   \draw[fill] (-5, 1.5) circle [radius=0.075]; 
   \node [below right] at (-5.5,1.5) {$y_{m-1}$};
  \draw[fill]  (-5, 2) circle [radius=0.075]; 
  \node [above] at (-5,2) {$z_0$};
  \draw[fill]  (-4, 2) circle [radius=0.075]; 
  \draw[fill]  (-3, 2) circle [radius=0.075];
    \draw[fill]  (-2, 2) circle [radius=0.075]; 
  \draw[fill]  (-1, 2) circle [radius=0.075]; 
  \draw[fill]  (0, 2) circle [radius=0.075]; 
  \draw[fill]  (1, 2) circle [radius=0.075]; 
  \draw[fill]  (2, 2) circle [radius=0.075]; 
\node [above] at (2,2) {$x_0$};

  \draw[fill]  (3, 2) circle [radius=0.075]; 
  \draw[fill]  (4, 2) circle [radius=0.075]; 
  \draw[fill]  (5, 2) circle [radius=0.075]; 
  \draw[fill]  (6,2) circle [radius=0.075]; 


\draw[fill] (-2,-1) circle [radius=0.075]; 
\draw[fill] (-1,-1) circle [radius=0.075]; 

\draw[very thick] (-2, -1) to (0,-1);

  \node [below] at (-2,-1) {$x_i$};

\end{tikzpicture}
\caption{Cycles $\tilde{f}$ and $f$.}
\end{figure}
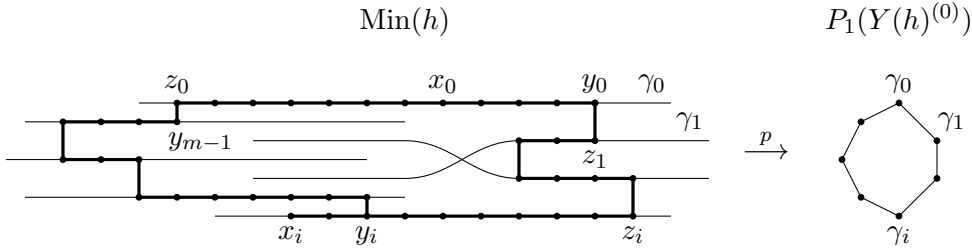

Define the map $p \colon \minset(h) \to P_1(Y(h)^{(0)})$ on vertices by $x \mapsto \g_x$, where $\g_x$ is an $h$--invariant geodesic passing through $x$ (such a geodesic in general is not unique, we choose one for each vertex). We claim that $p$ is a simplicial map. Indeed, if vertices $x$ and $y$ form an edge in $\minset(h)$, then this edge connects geodesics $\g_x$ and $\g_y$, hence by definition of the graph $Y(h)$ vertices $\g_x $ and $ \g_y$ are adjacent. Since both complexes are flag, the claim follows.

The complex $\minset(h)$ is systolic, hence in particular it is simply connected. Thus the cycle $\tilde{f}$ is null-homotopic and so is $p \circ \tilde{f}$, since $p$ is simplicial. It remains to prove that cycles $f$ and $p \circ \tilde{f}$ are homotopic in $P_2(Y(h)^{(0)})$. To see that, notice that if $v$ is a vertex in the image of $\tilde{f}$ and $v \in \gamma_i$ then its image $p(v)$ is at distance at most $1$ from $\gamma_i$.\medskip 

We are left with showing that the graph of axes $(Y(h), d_{Y(h)})$ is $\frc{1}$. We proceed as follows. By Lemma~\ref{minissystolic} the minimal displacement set $\minset(h)$ is systolic, hence $\frc{2}$ by Lemma~\ref{systolicisfrc}. By Theorem~\ref{coarsesplit} we have a quasi-isometry
\[c \colon (Y(h) \times \mathbb{Z}, d_h) \to (\minset(h), d_X).\]

Therefore by Proposition~\ref{frcgeometricproperty} we conclude that the product $(Y(h) \times \mathbb{Z}, d_h)$ is $\frc{2}$. Finally, since $\mathbb{Z}$ is unbounded and since any $1$--spherical cycle in $Y(h)^{(0)}$ is null-homotopic in $P_2(Y(h)^{(0)})$,  Lemma~\ref{sfrcproducts} implies that $(Y(h), d_{Y(h)})$ is $\frc{1}$.\end{proof}	

For the sake of completeness we include the following well-known result.

\begin{lem}\label{noquasi}Let $G$ be a finitely generated group which acts properly by isometries on a quasi-tree $(Q, d_Q)$. Then $G$ is virtually free. 
\end{lem}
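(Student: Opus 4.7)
My plan is to coarsely embed $G$ into a genuine simplicial tree and then invoke the structural classification of finitely generated groups of low asymptotic dimension.

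First, I would fix a basepoint $x_{0}\in Q$ and examine the orbit map $\varphi\colon G\to Q$, $g\mapsto g\cdot x_{0}$. Equipping $G$ with the word metric $d_{G}$ associated to a finite generating set $S$, the map $\varphi$ is $C$--Lipschitz for $C=\max_{s\in S}d_{Q}(x_{0},s\cdot x_{0})$, since each generator displaces $x_{0}$ by a uniformly bounded amount. Properness of the action shows that the set $\{g\in G : d_{Q}(g\cdot x_{0},x_{0})\leqslant R\}$ is finite for every $R\geqslant 0$, so $\varphi$ is metrically proper. Combining these two properties, $\varphi$ is a coarse embedding. Since $Q$ is a quasi-tree, by definition there is a quasi-isometry $q\colon Q\to T$ onto a simplicial tree $T$, and the composition $q\circ\varphi$ is then a coarse embedding of $G$ into $T$.

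Next, I would use that simplicial trees have asymptotic dimension at most one and that asymptotic dimension is monotone under coarse embeddings to conclude $\mathrm{asdim}(G)\leqslant 1$. To finish, I would invoke the theorem that a finitely generated group with $\mathrm{asdim}\leqslant 1$ is virtually free; for finitely presented groups this is due to Gentimis via Stallings' theorem on ends and Dunwoody's accessibility. In the case at hand, the stronger fact that $G$ coarsely embeds into a genuine simplicial tree (rather than merely having low asymptotic dimension) suffices to extract a Bass--Serre decomposition of $G$ with finite vertex stabilizers, whence the conclusion follows from the classical structure theorem identifying finitely generated fundamental groups of graphs of finite groups with virtually free groups.

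The main obstacle is precisely this last step: producing, from the coarse embedding into $T$, an honest action of $G$ on a simplicial tree with finite vertex stabilizers, or equivalently deducing the Bass--Serre decomposition. Everything upstream---the coarse embedding $\varphi$ and the asymptotic dimension bound---is formal given that $G$ is finitely generated and acts properly on $Q$.
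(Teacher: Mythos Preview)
Your argument coincides with the paper's through the first step: the orbit map is a coarse embedding, and composing with a quasi-isometry $Q\to T$ yields a coarse embedding of $G$ into a simplicial tree $T$. The divergence is in how you finish.

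The paper avoids your acknowledged obstacle by upgrading the coarse embedding to a quasi-isometry. Let $X\subset T$ be the image of the composed map; since $G$ is finitely generated, $X$ is quasi-connected, so a suitable neighbourhood $N_R(X)\subset T$ is connected and hence itself a tree. The map $G\to N_R(X)$ is then a coarse equivalence between geodesic spaces, hence a quasi-isometry \cite[Lemma~1.10]{Roe}. Thus $G$ is not merely coarsely embedded in a tree but quasi-isometric to one, so $G$ is $\delta$--hyperbolic with $0$--dimensional Gromov boundary, and therefore virtually free. The crucial gain is that hyperbolicity yields finite presentation for free, so the classical Stallings--Dunwoody machinery applies without additional hypotheses.

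Your asymptotic-dimension route is not incorrect in spirit, but the implication ``$\mathrm{asdim}(G)\leqslant 1 \Rightarrow G$ virtually free'' for merely finitely generated $G$ is a genuinely harder theorem than the finitely-presented version you attribute to Gentimis, and you neither supply a reference nor an argument covering that case. Your alternative---extracting a Bass--Serre decomposition directly from the coarse embedding---is, as you rightly identify, the substantive step, and it is left open in your proposal. The paper's thickening trick is exactly what closes this gap: by converting the coarse embedding into a quasi-isometry with a tree, it obtains hyperbolicity (and hence finite presentation) automatically, landing in the well-understood territory of hyperbolic groups with totally disconnected boundary.
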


\begin{proof} 
Fix a finite generating set $S$ of $G$ and let $d_S$ denote the associated word metric. Since the action of $G$ is proper, the orbit map \[(G, d_S) \to (Q, d_Q)\] is a coarse embedding. Composing it with a quasi-isometry \[(Q, d_Q) \to (T, d_T)\] gives a coarse embedding of $G$ into a tree $T$. Let $X$ denote the image of this embedding with the metric restricted from $T$. The subspace $X$ is quasi-connected, thus an appropriate thickening $N_R(X)$ is a connected subset of a tree, hence a tree. Clearly $N_R(X)$ is quasi-isometric to $X$. The composition \[G \to X \to N_R(X)\] is a coarse equivalence of geodesic metric spaces, hence a quasi-isometry; see e.g.\ \cite[Lemma 1.10]{Roe}. 
This implies that $G$ is $\delta$--hyperbolic, and its Gromov boundary is $0$--dimensional. It follows that $G$ is virtually free.
\end{proof}

\section{Classifying spaces for systolic groups}\label{sec:mainthm}

\subsection{Classifying space with virtually cyclic stabilisers}

In this section we gather results from Sections \ref{sec:preliminaries}, \ref{sec:minset} and \ref{sec:fillingradius} in order to prove Theorem~\ref{mainthm}.
\begin{tw}
\label{mainthm}
Let $G$ be a group acting properly 
on a uniformly locally finite systolic complex $X$ of dimension $d$. Then there exists a 
model for $\underline{\underline{E}}G$ of dimension 
\[ \mathrm{dim}\underline{\underline{E}}G = \left\{ \begin{array}{cl}

      d+1 & \text{ if } d \leqslant 3, \\
      d & \text{ if } d \geqslant 4.  \\

      \end{array} \right.
 \] 

\end{tw}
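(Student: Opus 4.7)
The plan is to build $\eeg$ via the Lück--Weiermann pushout recalled in Theorem~\ref{luwepushout} and Corollary~\ref{commenough}. For the top-right corner I would take the barycentric subdivision of $X$ as a $d$-dimensional model for $\eg$ (by \cite{OCh}), so the remaining task is to bound the dimensions of $\underline{E}N_G[H]$ and $E_{\mathcal{G}[H]}N_G[H]$ for each class $[H]\in[\mathcal{VCY}\setminus\mathcal{FIN}]$.

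Fix such an $[H]$ and, invoking Proposition~\ref{alltogetherlemma}, replace a generator $h$ of some infinite cyclic subgroup of $H$ by a power so that $h$ is hyperbolic and $\minset(h)$ is a union of axes. Because the $G$-action is only assumed proper, $N_G[H]$ need not be finitely generated, so I would use Proposition~\ref{limitcoro} to reduce the problem to finitely generated subgroups $K\subseteq N_G[H]$ containing $h$. The heart of the argument is the structural claim that every such $K$ sits in a short exact sequence
\[
1 \longrightarrow \langle h^k \rangle \longrightarrow K \longrightarrow Q \longrightarrow 1
\]
with $Q$ virtually free. After verifying condition~(C) in the systolic setting---which follows from the translation length being a positive integer-valued conjugation invariant satisfying $\trl{h^k}=|k|\trl{h}$ on axial isometries---Lemma~\ref{normalsubgroup} produces the normal subgroup $\langle h^k\rangle\triangleleft K$. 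The quotient $Q=K/\langle h^k\rangle$ then acts properly on the graph of axes $Y(h)$, which is a quasi-tree by Corollary~\ref{s1frcquasitreecoro}, so Lemma~\ref{noquasi} forces $Q$ to be virtually free and hence to admit a $1$-dimensional model for its $\underline{E}$.

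With this structure the dimension count is a routine application of Proposition~\ref{corodieter} to the sequence above. Taking $\mathcal{F}=\mathcal{FIN}\cap K$ and $\mathcal{H}=\mathcal{FIN}\cap Q$: the preimage of any finite subgroup of $Q$ is virtually cyclic and thus has a $1$-dimensional $\underline{E}$, giving $\dim\underline{E}K\leqslant 1+\dim\underline{E}Q\leqslant 2$. Taking $\mathcal{F}=\mathcal{G}[H]\cap K$ with the same $\mathcal{H}$: the preimage of any finite subgroup of $Q$ is virtually cyclic with infinite intersection with $\langle h^k\rangle$, so it and all its subgroups belong to $\mathcal{F}$; the associated relative classifying space is then a point, and we obtain $\dim E_{\mathcal{G}[H]\cap K}K\leqslant\dim\underline{E}Q\leqslant 1$. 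Feeding these estimates through Proposition~\ref{limitcoro} yields $\dim\underline{E}N_G[H]\leqslant 3$ and $\dim E_{\mathcal{G}[H]}N_G[H]\leqslant 2$, and Corollary~\ref{commenough} delivers a model for $\eeg$ of dimension $\max\{d,4\}$. This matches the theorem for $d\geqslant 3$.

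The main obstacle is the ``$+1$'' introduced by the colimit step in Proposition~\ref{limitcoro}: it is precisely what prevents the bound $\max\{d,4\}$ from automatically collapsing to $d+1$ in the low-dimensional range. The case $d=1$ is immediate, since a $1$-dimensional systolic complex is a tree and $G$ is virtually free, hence admits a $2$-dimensional $\eeg$; the case $d=2$ requires a finer estimate on $\underline{E}N_G[H]$, obtained for instance by constructing an equivariant $2$-dimensional model directly from $\minset(h)$ rather than through the colimit. Modulo these refinements the argument is a direct synthesis of the Product Decomposition Theorem~\ref{coarsesplit}, the quasi-tree statement Corollary~\ref{s1frcquasitreecoro}, and the pushout machinery of Section~\ref{sec:classifying}.
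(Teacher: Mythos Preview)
Your argument tracks the paper's proof closely and is correct for $d\geqslant 3$: the pushout machinery, the reduction to finitely generated $K\subseteq N_G[H]$, the normal cyclic subgroup via condition~(C), the virtually free quotient via the quasi-tree $Y(h)$, and the resulting bounds $\dim E_{\mathcal{G}[H]}N_G[H]\leqslant 2$ and $\dim\underline{E}N_G[H]\leqslant 3$ are exactly what the paper does.

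The gap is in the low-dimensional range, and the fix is much simpler than you suggest. You do not need a special argument for $d=1$ or a delicate construction from $\minset(h)$ for $d=2$. The point is that $N_G[H]$ is a subgroup of $G$, so it already acts properly on $X$; hence $X$ itself is a $d$-dimensional model for $\underline{E}N_G[H]$. For $d\leqslant 3$ you simply discard the $3$-dimensional model coming from Proposition~\ref{limitcoro} and use $X$ instead. Corollary~\ref{commenough} then gives
\[
\dim\eeg \leqslant \max\{d,\ d+1,\ 2\} = d+1,
\]
uniformly for all $d\leqslant 3$. Your proposed route through $\minset(h)$ would not obviously work anyway, since the full commensurator $N_G[H]$ need not act on $\minset(h)$---only finitely generated subgroups in which a power of $h$ is normal do.

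One small remark on condition~(C): your sketch assumes $\trl{h^k}=|k|\,\trl{h}$, but this need not hold unless $h$ is axial. The paper first passes to a power $h^n$ admitting an invariant geodesic (Proposition~\ref{alltogetherlemma}\ref{mostisom}) and runs the computation there; you should do the same.
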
 
In the remainder of this section, let $G$ be as in the statement of the above theorem. The model for $\eeg$ we construct is given by the cellular $G$--pushout of Theorem~\ref{luwepushout}. Therefore we need to construct a model for $\eg$ and for every commensurability class of infinite virtually cyclic subgroups $[H]$, models for $\underline{E}N_G[H]$ and $E_{\mathcal{G}[H]}N_G[H]$. The first model was constructed by P.\ Przytycki \cite[Theorem 2.1]{PP}, and later ``refined'' by V.\ Chepoi and the first author.

\begin{tw}\cite[Theorem E]{OCh}
\label{modelpiotra} 
The systolic complex $X$ is a model for $\underline{E}G$.
\end{tw}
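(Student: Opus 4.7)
The plan is to verify that $X$ satisfies the three conditions defining a model for $\underline{E}G$: (i) it is a $G$--CW--complex, (ii) all stabilisers are finite, and (iii) for every finite subgroup $F \leqslant G$, the fixed point set $X^F$ is contractible. Condition (ii) follows immediately from properness of the $G$--action, since $X$ is uniformly locally finite and our definition of properness in Section~\ref{subsec:systolic} then forces every simplex stabiliser to be finite. For (i), the simplicial structure on $X$ gives a natural CW structure, which is promoted to a $G$--CW structure once the action is rigid on each simplex; this can be arranged by passing once to the barycentric subdivision or by working with a canonical rigidification, and does not affect the dimension estimates used later in the paper.

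The substantive content is (iii), which I would establish in two stages. First, I would show that $X^F$ is non-empty by means of a systolic analogue of Cartan's fixed point theorem. Given any vertex $v \in X$, the orbit $F \cdot v$ is finite, and its combinatorial convex hull $C$ is a finite $F$--invariant systolic subcomplex of $X$ (here I would invoke the fact that in a systolic complex the convex hull of a finite set of vertices is itself finite and systolic). A centring argument within $C$, iteratively replacing $C$ by the set of vertices realising the minimum eccentricity, strictly decreases diameter and terminates at a single simplex fixed setwise by $F$; rigidity then upgrades this to a pointwise fixed simplex.

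The second stage is to show that $X^F$ is actually contractible. The strategy is to verify that $X^F$ inherits the structure of a systolic complex, after which contractibility is a direct consequence of the contractibility theorem for systolic complexes of Januszkiewicz--\'Swi\polhk{a}tkowski. The key technical check is the link condition: for each simplex $\sigma \subset X^F$, the link of $\sigma$ in $X^F$ coincides with the $F$--fixed subcomplex of the link of $\sigma$ in $X$, and one must verify that flagness and $6$--largeness are preserved under passage to such $F$--fixed subcomplexes of links. Simple connectedness of $X^F$ follows by applying the fixed-point argument above to the $F$--invariant convex hull of any loop in $X^F$, which bounds a disc in $X$ that can be equivariantly averaged into $X^F$.

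The main obstacle is the verification that links of fixed simplices inherit the $6$--large, flag structure under the action of $F$. This is the technical heart of the Chepoi--Osajda refinement \cite{OCh} of the earlier argument of Przytycki, and it requires a careful analysis of how finite groups act on links in a systolic complex together with the interplay between equivariance and combinatorial convex hulls. Once systolicity of $X^F$ is established, both condition (iii) and the theorem follow.
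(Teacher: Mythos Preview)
The paper does not prove this statement; it is quoted from \cite{OCh} and used as a black box, so there is no internal argument to compare against. That said, your proposal misidentifies the method of \cite{OCh}. You assert that the ``technical heart of the Chepoi--Osajda refinement'' is to show that $X^F$ is itself systolic by checking that $6$--largeness and flagness pass to $F$--fixed subcomplexes of links. This is not what \cite{OCh} does, and there is no reason to expect it to hold: the $6$--large link condition is a local curvature bound, and fixed-point subcomplexes of group actions do not in general inherit such bounds. Your sketch also glosses over simple connectedness of $X^F$ with an ``equivariant averaging'' of a disc, which has no evident meaning in the simplicial/combinatorial setting.

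The actual argument in \cite{OCh} proceeds via \emph{dismantlability}. One shows that the $1$--skeleton of a systolic (more generally, weakly systolic) complex admits an ordering of its vertices in which each vertex is dominated, within its residual neighbourhood, by a later vertex. Such an ordering yields a combinatorial deformation retraction of the flag complex to a point, and this retraction can be made $F$--equivariant for any finite group $F$ acting on $X$. Restricting the equivariant retraction to $X^F$ gives contractibility of the fixed-point set directly, with no need to verify systolicity of $X^F$. Your first-stage argument (non-emptiness of $X^F$ via a centring procedure on a finite orbit) is essentially sound and is indeed part of the story, but the second stage should be replaced by the dismantlability machinery if you wish to reconstruct the cited result.
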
 

In order to construct models for the commensurators $N_G[H]$ we need a little preparation. First we show that the group $G$ satisfies Condition (C) of Definition~\ref{condition}. Using this, in every finitely generated subgroup $K \subseteq N_G[H]$ that contains $H$ we find a suitable normal cyclic subgroup, and show that the quotient group acts properly on a quasi-tree. This together with Propositions~\ref{limitcoro} and~\ref{corodieter} allows us to construct the desired models.

\begin{lem}
The group $G$ satisfies condition $\mathrm{(C)}$ of Definition~\ref{condition}.\end{lem}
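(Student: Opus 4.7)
The plan is to reduce Condition~(C) to a translation-length computation, exploiting that hyperbolic isometries of systolic complexes have axes after taking a suitable power. First I would observe that since the action of $G$ on $X$ is proper, every simplex stabiliser is finite, so any element $h \in G$ of infinite order cannot fix a simplex of $X$ and is therefore a hyperbolic isometry in the sense introduced before Lemma~\ref{minissystolic}.

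Next, given the hypothesised relation $gh^k g^{-1}=h^l$, the natural invariant to compare on both sides is the translation length, since it is preserved by conjugation. Raising the relation to the $N$-th power gives $g h^{Nk} g^{-1}=h^{Nl}$, hence $\trl{h^{Nk}}=\trl{h^{Nl}}$. Using Proposition~\ref{alltogetherlemma}\ref{mostisom}, I would pick $N\geqslant 1$ so that $h^N$ admits an axis $\gamma$, and then argue (see below) that $\trl{h^{Nm}}=|m|\trl{h^N}$ for every $m\in\mathbb{Z}$. Substituting this into the previous equality yields $|k|\trl{h^N}=|l|\trl{h^N}$; because $h^N$ is hyperbolic by Proposition~\ref{alltogetherlemma}(i), its translation length is positive, and so $|k|=|l|$.

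The only nontrivial step is the multiplicativity $\trl{h^{Nm}}=|m|\trl{h^N}$ along an axis, and this is where I expect any real difficulty to lie. The upper bound is immediate: since $\gamma$ is $h^N$-invariant and $h^N$ acts on it as a translation by $\trl{h^N}$, any vertex $v\in\gamma$ satisfies $d_X(v,h^{Nm}(v))=|m|\trl{h^N}$, so $\trl{h^{Nm}}\leqslant|m|\trl{h^N}$. For the matching lower bound I would invoke the stable translation length $\|h^N\|:=\lim_{n\to\infty} d_X(v,h^{Nn}(v))/n$, which is well defined for any $v$, is always bounded above by $\trl{h^N}$, satisfies $\|h^{Nm}\|=|m|\|h^N\|$ by its very definition, and equals $\trl{h^N}$ as witnessed by taking $v\in\gamma$. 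Hence $\trl{h^{Nm}}\geqslant\|h^{Nm}\|=|m|\trl{h^N}$, closing the gap. This is a standard fact about semisimple isometries, but it is the point where the existence of an invariant geodesic enters the argument essentially.
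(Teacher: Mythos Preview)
Your proof is correct and follows essentially the same route as the paper: reduce to a translation-length identity by passing to a power of $h$ that admits an axis (Proposition~\ref{alltogetherlemma}\ref{mostisom}), then use conjugation-invariance of $L$ together with multiplicativity $\trl{h^{Nm}}=|m|\trl{h^N}$ along that axis. The only cosmetic difference is that the paper dispatches the multiplicativity in one line (``translation length can be measured on an invariant geodesic''), whereas you unpack it via the stable translation length; both arguments are valid and amount to the same observation.
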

\begin{proof} The proof is a slight modification of the one given in \cite[proof of Theorem 1.1]{Lu09}). Take arbitrary $g, h \in G$ such that $|h|=\infty$, and assume there are $k,l \in \mathbb{Z}$ such that $g^{-1}h^kg=h^l$. We have to show that $|k|=|l|$. Since the action of $G$ on $X$ is proper, the element $h$ acts as a hyperbolic isometry and by Proposition~\ref{alltogetherlemma}.\ref{mostisom} there is an $h^n$--invariant geodesic $\gamma \in X$ for some $n \geqslant 1$. We get the claim by considering the following sequence of equalities for the translation length:
\[|k|\trl{h^n}=\trl{h^{nk}}=\trl{g^{-1}h^{nk}g}=\trl{h^{\rpm nl}}=|l|\trl{h^n}.\]

The first and the last of the equalities follow from the fact, that the translation length of an element can be measured on an invariant geodesic, the second one is an easy calculation and the third one is straightforward.\end{proof}

\begin{lem}\label{quotientacts} Let $K$ be a finitely generated subgroup of $G$, and $h \in K$ a hyperbolic isometry satisfying $\eqref{minaxes}$, such that $\langle h \rangle$ is normal in $K$. Then the proper action of $G$ on $X$ induces a proper 
action of $K/\langle h \rangle$ on the graph of axes  $Y(h)$.
\end{lem}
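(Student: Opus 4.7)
The plan is to produce a simplicial action of $K$ on $Y(h)$ from the given $G$-action on $X$, check that $\langle h\rangle$ sits in the kernel so that the action descends to $K/\langle h\rangle$, and finally derive properness from properness of the ambient action together with the metric comparison in Lemma~\ref{estim1}.

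First I would exploit normality of $\langle h\rangle$ in $K$ together with Condition~(C), which was verified for $G$ in the preceding lemma, to pin down conjugation by elements of $K$. Normality gives $khk^{-1}=h^m$ for some $m=m(k)\in\mathbb Z$, and Condition~(C) applied with exponents $1$ and $m$ forces $|m|=1$, so $hk=kh^{\pm 1}$ for every $k\in K$. This is the key structural input. It implies at once that $K$ preserves $\minset(h)$, because for $v\in\minset(h)$ one has $d_X(kv,hkv)=d_X(kv,k(h^{\pm1}v))=d_X(v,h^{\pm1}v)=\trl{h}$, and that $K$ permutes the set of $h$-invariant geodesics, since $h(k\gamma)=(hk)\gamma=(kh^{\pm1})\gamma=k\gamma$ whenever $\gamma$ is $h$-invariant. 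Because each $k$ is an isometry, the condition $d_{min}(\gamma_1,\gamma_2)\leqslant 1$ defining edges of $Y(h)$ is preserved, so we obtain a simplicial action of $K$ on $Y(h)$. Every $h$-invariant geodesic is fixed setwise by $h$, hence $\langle h\rangle$ lies in the kernel and the action descends to $K/\langle h\rangle$.

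For properness, I would fix a basepoint $\gamma_0\in Y(h)$, a vertex $x_0\in\gamma_0$ and a radius $n\in\mathbb N$, and show that only finitely many cosets $k\langle h\rangle$ satisfy $d_{Y(h)}(k\gamma_0,\gamma_0)\leqslant 2n$. Lemma~\ref{estim1}(ii) supplies a vertex $x\in\gamma_0$ with $d_X(kx_0,x)\leqslant 2n(\trl{h}+1)$; since $h$ acts on $\gamma_0$ as translation by $\trl{h}$, some $i\in\mathbb Z$ satisfies $d_X(h^ix_0,x)\leqslant\trl{h}$, and combining these two bounds gives
\[
d_X(h^{-i}kx_0,x_0)\leqslant 2n(\trl{h}+1)+\trl{h}.
\]
Properness of the $G$-action on $X$ then forces the set of such $h^{-i}k$ to be finite, so the set of relevant cosets $k\langle h\rangle$ in $K/\langle h\rangle$ is finite, which is exactly properness of the induced action.

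The only subtle point, which I would treat as the main obstacle, is tracking the sign in the relation $hk=kh^{\pm1}$ correctly: Condition~(C) is precisely what rules out higher exponents, and once this is used correctly both to identify $k\gamma$ as $h$-invariant and to carry out the metric computation on $\minset(h)$, the remaining steps are routine.
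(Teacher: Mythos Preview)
Your argument is correct. The construction of the induced action is essentially the paper's: both show that $K$ preserves $\minset(h)$ and permutes $h$--invariant geodesics via the relation $khk^{-1}=h^{\pm 1}$. (You deduce $|m|=1$ from Condition~(C); the paper just uses that any automorphism of the infinite cyclic group $\langle h\rangle$ is $\pm 1$, which follows already from normality and $|h|=\infty$. Either way is fine.)

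Where you genuinely diverge is in the properness step. The paper argues structurally: for a vertex $\gamma\in Y(h)$ it shows the stabiliser $\mathrm{Stab}_{K/\langle h\rangle}(\gamma)$ is finite by mapping the setwise stabiliser of $\gamma$ in $K$ to $D_\infty\cong\mathrm{Isom}(\gamma)$, noting the kernel is contained in a point stabiliser in $X$ (hence finite), and concluding that this setwise stabiliser is virtually cyclic with $\langle h\rangle$ of finite index. Your route is metric: using Lemma~\ref{estim1}\ref{22est} you pull the bound $d_{Y(h)}(k\gamma_0,\gamma_0)\leqslant 2n$ back to a bound on $d_X(h^{-i}kx_0,x_0)$ and invoke properness of the $G$--action on $X$ directly. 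Your approach verifies the ball-intersection definition of properness without having to know that $Y(h)$ is (uniformly) locally finite, which the paper's stabiliser argument implicitly relies on to convert ``finite vertex stabilisers'' into properness; on the other hand, the paper's argument yields the extra structural fact that the setwise stabiliser of each axis in $K$ is virtually cyclic.
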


\begin{proof}
Since $\langle h\rangle$ is normal in $K$, the subcomplex $\minset(h)$ is invariant under $K$. Indeed, if $d_X(x, hx)= L(h)$, then for any $g \in K$ we have 
\[d_X(gx, hgx)=d_X(x, g^{-1}hgx)=d_X(x, h^{\rpm 1}x)=L(h).\] 

Since $h$ satisfies $\eqref{minaxes}$, the subcomplex $\minset(h)$ is spanned by the union of $h$--invariant geodesics. The group $K$ acts by simplicial isometries, hence it maps $h$--invariant geodesics to $h$--invariant geodesics. This gives an action of $K$ on the set of vertices of $Y(h)$. This action extends to the action on the graph $Y(h)$,
 because the adjacency relation between vertices of $Y(h)$ is preserved under simplicial isometries. The subgroup $\langle h \rangle$ acts trivially, hence there is an induced action of the quotient group $K/\langle h \rangle$. 

It is left to show that the latter action is proper. For any vertex $\im{\g} \in Y(h)$ we show that its stabiliser 
$\mathrm{Stab}_{K/\langle h \rangle}(\im{\g})$ is finite. Denote by $\pi$ the quotient map $K\to K/\langle h \rangle$, and consider the preimage $\pi^{-1}(\mathrm{Stab}_{K/\langle h \rangle}(\im{\g}))$. Elements of $\pi^{-1}(\mathrm{Stab}_{K/\langle h \rangle}(\im{\g}))$ are precisely these isometries, which map geodesic $\im{\g}$ to itself. Thus we can define a map $p: \pi^{-1}(\mathrm{Stab}_{K/\langle h \rangle}(\im{\g})) \to D_{\infty}$, where $D_{\infty}$ is the infinite dihedral group, interpreted as the group of simplicial isometries of the geodesic line $\im{\g}$. We claim that the kernel $\mathrm{ker}(p)$ is finite. Indeed, the kernel consists of elements which act trivially on the whole geodesic $\im{\g}$, hence it is contained in the stabiliser $\mathrm{Stab}_{G}(x)$ of any vertex $x\in \im{\g}$. The group $\mathrm{Stab}_{G}(x)$ is finite, since the action of $G$ on $X$ is proper.
Therefore the group $\pi^{-1}(\mathrm{Stab}_{K/\langle h \rangle}(\im{\g}))$ is virtually cyclic, as it maps into a virtually cyclic group $D_{\infty}$ with finite kernel. The infinite cyclic group $\langle h \rangle$ is contained in $\pi^{-1}(\mathrm{Stab}_{K/\langle h \rangle}(\im{\g}))$, hence the quotient group $\pi^{-1}(\mathrm{Stab}_{K/\langle h \rangle}(\im{\g}))/\langle h \rangle = \mathrm{Stab}_{K/\langle h \rangle}(\im{\g})$ is finite.
\end{proof}

\begin{lem}\label{shortexactsequence} Let $K$ be a finitely generated subgroup of $N_G[H]$ that contains $H$. Then there is a short exact sequence 
\[0 \longrightarrow \langle h \rangle \longrightarrow K \longrightarrow K/ \langle h \rangle \longrightarrow 0,\]
such that $h \in H$ is of infinite order and 
the group $K/ \langle h \rangle$ is virtually free.
\end{lem}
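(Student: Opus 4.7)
The plan is to construct an appropriate power of an infinite order element of $H$ that both generates a normal subgroup of $K$ and satisfies all the technical hypotheses required by the results of Sections~\ref{sec:minset} and~\ref{sec:fillingradius}, and then to read off virtual freeness of the quotient from its action on the associated graph of axes.

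First, since $H$ is infinite virtually cyclic, pick any element $h_0 \in H$ of infinite order, so that $[\langle h_0 \rangle]=[H]$. Properness of the action of $G$ on $X$ together with $|h_0|=\infty$ forces $h_0$ to act as a hyperbolic isometry of $X$ (any element fixing a simplex has some power in the finite pointwise stabiliser of its vertices). Since $H \subseteq K \subseteq N_G[H]=N_G[\langle h_0 \rangle]$ and $G$ satisfies Condition (C) by the previous lemma, Lemma~\ref{normalsubgroup} yields $k \geqslant 1$ with $\langle h_0^k \rangle$ normal in $K$.

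Next, I would upgrade $h_0^k$ to an element that also satisfies hypothesis \eqref{minaxes} and has large translation length. By Proposition~\ref{alltogetherlemma}\ref{mostisom} there is some $n\geqslant 1$ such that $h_0^{kn}$ has an invariant geodesic; by \ref{axialisometry} it then satisfies \eqref{minaxes}, and by \ref{axialminsetpower} so does every further power. Since $\trl{h_0^{knm}} = m\,\trl{h_0^{kn}}$, choosing $m$ sufficiently large also ensures $\trl{h}>3$ for $h:=h_0^{knm}$. Normality of $\langle h \rangle$ in $K$ is preserved when passing to powers: normality of $\langle h_0^k\rangle$ together with Condition (C) forces each conjugate $g h_0^k g^{-1}$ to equal $h_0^{\pm k}$, hence $g h g^{-1} = h^{\pm 1} \in \langle h \rangle$.

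With $h$ in hand, the short exact sequence
\[ 0 \longrightarrow \langle h \rangle \longrightarrow K \longrightarrow K/\langle h \rangle \longrightarrow 0 \]
is immediate, and $h \in H$ has infinite order. Lemma~\ref{quotientacts} provides a proper action of $K/\langle h \rangle$ on the graph of axes $Y(h)$; Corollary~\ref{s1frcquasitreecoro} asserts that $Y(h)$ is quasi-isometric to a simplicial tree, since $\trl{h}>3$ and $h$ satisfies \eqref{minaxes}. Because $K$ is finitely generated, so is $K/\langle h \rangle$, and Lemma~\ref{noquasi} then yields virtual freeness.

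The main obstacle is the coordination of the power-passing: one must simultaneously secure normality of $\langle h\rangle$, property \eqref{minaxes}, and $\trl{h}>3$, while still ending up with an element $h$ lying in $H$ (so the reader can identify the cyclic normal subgroup with the one in the statement). The subtlety is handled by the observation above that passing to higher powers of a normal cyclic subgroup preserves normality under Condition (C); once that is established, the rest is a straightforward assembly of the results already proved.
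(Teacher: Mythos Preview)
Your proof is correct and follows essentially the same route as the paper: produce a power of an infinite-order element of $H$ that is normal in $K$, satisfies \eqref{minaxes} and has $\trl{h}>3$, then invoke Lemma~\ref{quotientacts}, Corollary~\ref{s1frcquasitreecoro} and Lemma~\ref{noquasi}. The only cosmetic difference is the order of operations---the paper first passes to a power to secure \eqref{minaxes} and $\trl{\tilde h}>3$ and \emph{then} applies Lemma~\ref{normalsubgroup} (using Proposition~\ref{alltogetherlemma}\ref{axialminsetpower} to see these survive the further power), whereas you first secure normality and then take further powers, checking via Condition~(C) that normality survives; both bookkeeping arguments are straightforward.
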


\begin{proof}

Choose an element of infinite order $\tilde{h} \in H$ satisfying the following two conditions:

\begin{enumerate}
\item[(i)] the set $\minset(\tilde{h})$ is the union of axes (see \eqref{minaxes}),
\item[(ii)] the translation length $L(\tilde{h}) > 3$.
\end{enumerate}
Both (i) and (ii) can be ensured by rising $\tilde{h}$ to a sufficiently large power. Indeed, by Proposition~\ref{alltogetherlemma}.\ref{mostisom} there exists $n\geqslant 1$ such that $\tilde{h}^n$ satisfies condition (i). If $L(\tilde{h}^n) \leqslant 3$ then replace it with $\tilde{h}^{4n}$. The element $\tilde{h}^{4n}$ satisfies both conditions (see Proposition~\ref{alltogetherlemma}.\ref{axialminsetpower}). 
Notice that if an element satisfies conditions (i) and (ii) then, by Proposition~\ref{alltogetherlemma}.\ref{axialminsetpower} so does any of its powers.
Since $G$ satisfies Condition (C), by  Lemma~\ref{normalsubgroup} there exists an integer $k\geqslant 1$ such that $\langle \tilde{h}^k \rangle$ is normal in $K$. 

Put $h=\tilde{h}^k$. By Lemma~\ref{quotientacts} the group $K/\langle h \rangle$ acts properly by isometries on the graph of axes $(Y(h), d_{Y(h)})$, which is a quasi-tree by Corollary~\ref{s1frcquasitreecoro}. Finally, Lemma~\ref{noquasi} implies that the group $K/\langle h \rangle$ is virtually free.
\end{proof}

\begin{lem}\label{commmodels}For every $[H] \in [\mathcal{VCY} \setminus \mathcal{FIN}]$ there exist

\begin{enumerate}[label=(\roman*)]
\item \label{jedencommmodels} a $2$--dimensional model for $E_{\mathcal{G}[H]} N_G[H]$,

\item \label{dwacommmodels} a $3$--dimensional model for $\underline{E} N_G[H]$.
\end{enumerate}
\end{lem}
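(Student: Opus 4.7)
The plan is to combine Proposition~\ref{limitcoro} with Proposition~\ref{corodieter} applied to the short exact sequence produced by Lemma~\ref{shortexactsequence}. By Proposition~\ref{limitcoro}, for (i) it suffices to exhibit a $1$--dimensional model for $E_{\mathcal{G}[H]\cap K}K$ for every finitely generated subgroup $K \subseteq N_G[H]$, and for (ii) a $2$--dimensional model for $\underline{E} K$. Since $H$ is finitely generated, the subsystem of finitely generated subgroups of $N_G[H]$ containing a fixed element $h\in H$ chosen as in Lemma~\ref{shortexactsequence} is cofinal in the directed system of all finitely generated subgroups, so I may restrict to such $K$. Lemma~\ref{shortexactsequence} then supplies a short exact sequence
\[0 \longrightarrow \langle h\rangle \longrightarrow K \overset{\pi}{\longrightarrow} K/\langle h\rangle \longrightarrow 0\]
whose quotient is virtually free, hence admits a $1$--dimensional model for $\underline{E}(K/\langle h\rangle)$ (namely the Bass--Serre tree of a graph of finite groups decomposition).

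For part (ii), I apply Proposition~\ref{corodieter} with $\mathcal{F} = \mathcal{FIN}$ in $K$ and $\mathcal{H} = \mathcal{FIN}$ in $K/\langle h\rangle$. For any finite $F \leqslant K/\langle h\rangle$ the preimage $\pi^{-1}(F)$ is a finite-by-$\mathbb{Z}$ extension, so it is virtually cyclic and admits a $1$--dimensional model for $\underline{E}\pi^{-1}(F)$ given by its proper action on $\mathbb{R}$. Proposition~\ref{corodieter} then produces a $(1+1)=2$--dimensional model for $\underline{E}K$, and Proposition~\ref{limitcoro} upgrades this to the desired $3$--dimensional model for $\underline{E}N_G[H]$.

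For part (i), I apply Proposition~\ref{corodieter} to the same short exact sequence with $\mathcal{F} = \mathcal{G}[H]\cap K$ and $\mathcal{H} = \mathcal{FIN}$ in $K/\langle h\rangle$. The inclusion $\pi(\mathcal{F}) \subseteq \mathcal{H}$ is the main verification: finite subgroups clearly map to finite subgroups, while for an infinite virtually cyclic $V \in \mathcal{G}[H]\cap K$ the commensurability estimate $[H:\langle h\rangle]<\infty$ forces $V\cap\langle h\rangle$ to be an infinite subgroup of the virtually cyclic group $V$, hence of finite index, so that $\pi(V)\cong V/(V\cap\langle h\rangle)$ is finite. Conversely, for every finite $F\leqslant K/\langle h\rangle$ the preimage $\pi^{-1}(F)$ is itself virtually cyclic and contains $\langle h\rangle \subseteq H$, hence $\pi^{-1}(F)\in\mathcal{G}[H]\cap K$; a single point therefore serves as a $0$--dimensional model for $E_{\mathcal{F}\cap\pi^{-1}(F)}\pi^{-1}(F)$. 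Proposition~\ref{corodieter} then delivers a $(0+1)=1$--dimensional model for $E_{\mathcal{G}[H]\cap K}K$, and Proposition~\ref{limitcoro} gives the required $2$--dimensional model for $E_{\mathcal{G}[H]}N_G[H]$. The only non-routine step is the commensurability argument showing $\pi(\mathcal{G}[H]\cap K)\subseteq \mathcal{FIN}$; the rest is bookkeeping with the two pushout-style propositions from Section~\ref{sec:classifying}.
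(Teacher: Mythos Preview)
Your argument is correct and follows essentially the same route as the paper: reduce via Proposition~\ref{limitcoro} to finitely generated $K$, invoke Lemma~\ref{shortexactsequence} to get a virtually free quotient with a $1$--dimensional $\underline{E}$, and then apply Proposition~\ref{corodieter} with $k=0$ for (i) and $k=1$ for (ii). One small imprecision: the element $h$ produced by Lemma~\ref{shortexactsequence} depends on $K$ (it is a suitable power chosen so that $\langle h\rangle \trianglelefteq K$), so you cannot literally fix it in advance for the cofinality step; the paper instead passes to the cofinal system of finitely generated $K$ containing $H$ and lets $h$ vary with $K$, which is what your argument actually needs as well.
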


\begin{proof}
By Proposition~\ref{limitcoro} it is enough to construct for every finitely generated subgroup $K \subseteq N_G[H]$, a $1$--dimensional model for $E_{\mathcal{G}[H] \cap K} K$ and a $2$--dimensional model for $\underline{E} K$. Notice that every finitely generated subgroup $K'$ of $G$ is contained in the finitely generated subgroup $K$ that contains $H$ (take $K= \langle K',H \rangle $). Therefore it is enough to consider only finitely generated subgroups of $G$ that contain $H$.

 By Lemma~\ref{shortexactsequence} for any such $K$ there is a short exact sequence 
\[0 \longrightarrow\langle h \rangle \longrightarrow K \overset{\pi} \longrightarrow K/\langle h \rangle \longrightarrow 0,\]
where $K/ \langle h \rangle$ is virtually free. The key observation is that the group $K/ \langle h \rangle$ acts properly on a simplicial tree \cite[Theorem 1]{Karrass} and therefore a tree is a  $1$--dimensional model for $\underline{E}K/\langle h \rangle$.
The claim follows then from Proposition~\ref{corodieter} in the following way. First notice that for every subgroup $H \in \mathcal{G}[H]$ the image $\pi(H)$ is finite. 

The preimage under $\pi$ of any finite subgroup $F \in K/ \langle h \rangle$ is a virtually cyclic group containing $\langle h \rangle$. In this case the intersection $\pi^{-1}(F) \cap \langle h \rangle $ is infinite, hence by definition of $\mathcal{G}[H]$ the group $\pi^{-1}(F)$ belongs to the family $\pi^{-1}(F) \cap \mathcal{G}[H]$. Thus the one point space is a $0$--dimensional model for $E_{\mathcal{G}[H]  \cap\pi^{-1}(F)  } \pi^{-1}(F)$. This proves \ref{jedencommmodels}.

To prove \ref{dwacommmodels} notice that since $\pi^{-1}(F)$ is virtually cyclic, it acts on the real line with finite stabilisers \cite[Proposition 4]{LePi}. Therefore a line is a $1$--dimensional model for $\underline{E} \pi^{-1}(F)$. 
\end{proof}

\begin{proof}[Proof of Theorem~\ref{mainthm}] By Corollary~\ref{commenough} choosing a model for $\underline{E}G$,  and for every $[H] \in [\mathcal{VCY} \setminus \mathcal{FIN}]$  models for $\underline{E}N_G[H]$ and $E_{\mathcal{G}[H]} N_G[H]$, gives a model for $\eeg$ that satisfies the following inequality
\[\mathrm{dim}\eeg \leqslant \mathrm{max} \{ \mathrm{dim}\underline{E}G, \underset{[H]}{\mathrm{sup}} \{\mathrm{dim}\underline{E}N_G[H] \}+1  , \underset{[H]}{\mathrm{sup}} \{\mathrm{dim} E_{\mathcal{G}[H]} N_G[H]\}  \}.   \]
If $d \geqslant 4$, then by Theorem~\ref{modelpiotra} and Lemma~\ref{commmodels} we have 
\[\mathrm{dim}\eeg \leqslant \mathrm{max} \{ d, 4 , 2\}=d.   \]
If $d \leqslant 3$, we can take the model for $\underline{E}G$ as a model for $\underline{E} N_G[H]$ instead of the one provided by Lemma~\ref{commmodels}, and obtain 
\[\mathrm{dim}\eeg \leqslant \mathrm{max} \{ d, d+1 , 2\}=d+1.  \qedhere \]
\end{proof}

\subsection{Centralisers of cyclic subgroups} 

As a corollary of our results we give the description of centralisers of infinite order elements in \emph{systolic groups}, i.e.\ groups acting properly and cocompactly 
on systolic complexes, therefore confirming a conjecture of D.\ Wise. 

\begin{prop}\label{prop:wiseconj} Let $G$ be a group that acts properly on a systolic complex $X$ and let $h \in G$ be of infinite order.
Suppose that $K$ is a finitely generated subgroup of the centraliser $C_G(h)$ and $\langle h \rangle \subset K$. Then $K$ is commensurable with $F_n \times \mathbb{Z}$ where $F_n$ denotes the free group on $n$ generators for some $n \geqslant 0$.
\end{prop}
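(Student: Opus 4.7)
The plan is to combine the short exact sequence produced in Lemma~\ref{shortexactsequence} with the observation that since $K \subset C_G(h)$, the cyclic subgroup $\langle h \rangle$ sits \emph{centrally} in $K$, and then invoke the vanishing of $H^2(F_n; \mathbb{Z})$ to split the resulting central extension.

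First I would replace $h$ with a sufficiently large power $h^k$ so that $\minset(h^k)$ is the union of axes and $L(h^k) > 3$; this is possible by Proposition~\ref{alltogetherlemma}.\ref{mostisom} and Proposition~\ref{alltogetherlemma}.\ref{axialminsetpower}. Since $h$ is central in $K$, so is $h^k$, and in particular $\langle h^k \rangle$ is normal in $K$. Thus by Lemma~\ref{quotientacts} the finitely generated group $K/\langle h^k \rangle$ acts properly by isometries on the graph of axes $Y(h^k)$, which is a quasi-tree by Corollary~\ref{s1frcquasitreecoro}. By Lemma~\ref{noquasi}, $K/\langle h^k \rangle$ is virtually free, and we have a central extension
\[
0 \longrightarrow \langle h^k \rangle \longrightarrow K \overset{\pi}{\longrightarrow} K/\langle h^k \rangle \longrightarrow 0.
\]

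Next I would pass to a finite index subgroup that is honestly a direct product. Let $F \leq K/\langle h^k \rangle$ be a free subgroup of finite index; since $K/\langle h^k \rangle$ is finitely generated, $F \cong F_n$ for some $n \geqslant 0$. Set $K' := \pi^{-1}(F)$, a finite index subgroup of $K$ fitting into the central extension
\[
0 \longrightarrow \mathbb{Z} \longrightarrow K' \longrightarrow F_n \longrightarrow 0.
\]
Central extensions of $F_n$ by $\mathbb{Z}$ are classified by $H^2(F_n; \mathbb{Z})$, which vanishes because $F_n$ has cohomological dimension at most $1$. Hence the extension splits and $K' \cong F_n \times \mathbb{Z}$. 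Since $K'$ has finite index in $K$, this exhibits the commensurability claimed in the proposition.

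I do not expect a serious obstacle: all the geometric input (quasi-product decomposition of $\minset(h)$, quasi-tree structure of $Y(h)$, virtually free quotients) is already in place from Sections~\ref{sec:minset}--\ref{sec:fillingradius}. The only new ingredients are the trivial observation that $h \in C_G(h)$ makes $\langle h^k \rangle$ \emph{central} (not merely normal) in $K$, and the standard cohomological splitting argument for central extensions of free groups.
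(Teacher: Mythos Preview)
Your proof is correct and follows essentially the same route as the paper: invoke the short exact sequence of Lemma~\ref{shortexactsequence} (whose content you have inlined), pass to the preimage of a finite-index free subgroup, and split the resulting central extension of $F_n$ by $\mathbb Z$. The only cosmetic differences are that you use centrality of $h$ directly to get normality of $\langle h^k\rangle$ (bypassing condition~(C)), and you phrase the splitting via $H^2(F_n;\mathbb Z)=0$ whereas the paper says ``$F_n$ is free, so the sequence splits; centrality then forces a direct product.''
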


\begin{proof}
The group $\cen{h}$ is contained in the commensurator $N_G[\langle h \rangle]$, hence so is $K$. Thus by Lemma~\ref{shortexactsequence} there is a short exact sequence
\[0 \longrightarrow\langle h^m \rangle \longrightarrow K \overset{p} \longrightarrow VF_n \longrightarrow 0,\]
where $VF_n$ is a virtually free group and $m$ is some positive integer. Taking the free subgroup $F_n \subset VF_n$ gives rise to the following
\[0 \longrightarrow \mathbb Z \longrightarrow p^{-1}(F_n) \overset{p} \longrightarrow F_n \longrightarrow 0.\] 
Since $F_n$ is free, the above sequence splits. Therefore, as a central extension, $p^{-1}(F_n)$ is of the form $\mathbb Z \times F_n$.
This finishes the proof, as 
$[ K: p^{-1}(F_n)] \leqslant [VF_n : F_n] < \infty$.
\end{proof}

\begin{cor}\cite[Conjecture 11.6]{Wise}\label{wisecoro} Let $G$ be a systolic group. Then for any element $h \in G$ of infinite order, the centraliser $C_G(h)$ is commensurable with $F_n \times \mathbb{Z}$ for some $n \geqslant 0$.
\end{cor}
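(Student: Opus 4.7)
The strategy is to reduce Corollary~\ref{wisecoro} directly to Proposition~\ref{prop:wiseconj} by applying the latter with $K := C_G(h)$. Two things need to be verified: that $\langle h\rangle \subseteq C_G(h)$, which is immediate since $h$ commutes with itself, and, crucially, that $C_G(h)$ is itself finitely generated. Once these are in place, Proposition~\ref{prop:wiseconj} gives the commensurability of $C_G(h)$ with $F_n \times \mathbb{Z}$ verbatim.

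The only nontrivial step is therefore to show that $C_G(h)$ is finitely generated, and this is precisely the place where the cocompactness hypothesis (which distinguishes \emph{systolic groups} from groups merely acting properly on a systolic complex) enters. I would invoke the biautomaticity of systolic groups of Januszkiewicz-{\' S}wi{\c a}tkowski \cite{JS2} — a result already referenced elsewhere in the paper — together with the classical result of Gersten-Short that in biautomatic groups the centralizer of any element (indeed of any finite subset) is finitely generated. This yields finite generation of $C_G(h)$ in one line.

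A more hands-on geometric alternative, if one wished to avoid quoting biautomaticity, would proceed as follows. Choose $m \geqslant 1$ such that $h^m$ satisfies \eqref{minaxes} and has translation length $L(h^m) > 3$; by Proposition~\ref{alltogetherlemma} this is always possible. Then $C_G(h^m)$ acts properly on the systolic subcomplex $\minset(h^m)$, and one shows using cocompactness of $G$ on $X$ (by a standard orbit-counting argument: every vertex of $\minset(h^m)$ lies in one of finitely many $G$-orbits, and among the orbit representatives only finitely many axes pass through any fixed vertex) that this action is cocompact. Properness plus cocompactness on a connected simplicial complex gives finite generation of $C_G(h^m)$, and a routine application of Condition (C) shows that $C_G(h^m)$ has finite index in $C_G(h)$, whence $C_G(h)$ is finitely generated as well.

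The main obstacle is exactly the finite generation of $C_G(h)$; everything else is formal. With either route above this is short, so I expect the overall proof of the corollary to occupy only a few lines.
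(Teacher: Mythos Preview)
Your proposal is correct and matches the paper's proof essentially line for line: the paper invokes biautomaticity of systolic groups \cite[Theorem E]{JS2} and then \cite[Proposition 4.3]{GeSh} to conclude that $C_G(h)$ is biautomatic (hence finitely generated), and applies Proposition~\ref{prop:wiseconj} with $K=C_G(h)$. Your alternative geometric route via cocompactness of the action on $\minset(h^m)$ is not used in the paper, but your primary argument is exactly theirs.
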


\begin{proof} The group $G$ is biautomatic by \cite[Theorem E]{JS2}, and it follows that
the centraliser $\cen{h}$ is biautomatic as well \cite[Proposition 4.3]{GeSh}. In particular $\cen{h}$ is finitely generated. Thus the claim follows from Proposition~\ref{prop:wiseconj}.
\end{proof}

\subsection{Virtually abelian stabilisers}\label{s:vab2}

In this section we study the family of all virtually abelian subgroups of a group $G$. We show that if $G$ is systolic, then there exists a finite dimensional model for the classifying space for this family. This is due to a very special structure of abelian subgroups of systolic groups, which is in turn a consequence of the systolic Flat Torus Theorem. Our construction also carries through for certain $\mathrm{CAT}(0)$ groups.  \medskip

Given a group $G$, let $\mathcal{VAB}$ denote the family of all virtually abelian subgroups of $G$ and let  $\mathcal{VAB}_{fg}$ denote the family of all finitely generated virtually abelian subgroups of $G$. Every subgroup in the family $\mathcal{VAB}_{fg}$ 
 contains a finite-index free abelian subgroup of rank $n \geqslant 0$, therefore if we denote by $\mathcal{VAB}_n$ the family of all virtually abelian subgroups of rank at most $n$, we obtain the following filtration of the family $\mathcal{VAB}_{fg}$:
\[
\mathcal{VAB}_0 
\subset \mathcal{VAB}_1 
\subset \mathcal{VAB}_2 \subset \ldots \smallskip\]
Notice that  $\mathcal{VAB}_0 = \mathcal{FIN}$ and  $\mathcal{VAB}_1 = \mathcal{VCY}$. Moreover, if $G$ is a systolic group then by Theorem~\ref{ftt}.(\ref{ftt:rank2}) it does not contain free abelian groups of rank higher than $2$, and therefore the above filtration reduces to
\[\mathcal{FIN} \subset \mathcal{VCY} \subset \mathcal{VAB}_2=\mathcal{VAB}_{fg}.\]

Moreover, in Proposition~\ref{prop:sysfingen} we show that every virtually abelian subgroup of a systolic group is in fact finitely generated, and therefore for systolic groups we have $\mathcal{VAB}_{fg}=\mathcal{VAB}$.
The following is the main theorem of this section.

\begin{tw}\label{twvab2}Let $G$ be a group acting properly and cocompactly on a $d$--di\-men\-sion\-al systolic complex. Then there exists a model for $E_{\mathcal{VAB}}G$ of dimension 
$\mathrm{max} \{4, d\}$.
\end{tw}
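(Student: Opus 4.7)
The plan is to apply the Lück--Weiermann pushout construction of Theorem~\ref{luwepushout}, but now with respect to the pair of families $(\mathcal{VCY}, \mathcal{VAB})$ instead of $(\mathcal{FIN},\mathcal{VCY})$. The input will be the model for $\eeg$ provided by Theorem~\ref{mainthm}, of dimension $d+1$ when $d\leqslant 3$ and $d$ when $d\geqslant 4$. On $\mathcal{VAB}\setminus\mathcal{VCY}$ I use the commensurability equivalence relation $H_1\sim H_2\iff |H_1\cap H_2|=\infty$. By the systolic Flat Torus Theorem (Theorem~\ref{ftt}.(\ref{ftt:rank2})) $G$ contains no $\mathbb{Z}^3$, so $\mathcal{VAB}\setminus\mathcal{VCY}$ consists precisely of virtually $\mathbb{Z}^2$ subgroups, and two of them are equivalent iff their intersection is still virtually $\mathbb{Z}^2$. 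I also need to verify at the outset (as asserted in Proposition~\ref{prop:sysfingen}) that every virtually abelian subgroup is finitely generated, so that the Lück--Weiermann machinery applies to the full family $\mathcal{VAB}$.

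The crucial structural step is to show that for every virtually $\mathbb{Z}^2$ subgroup $H\subset G$, the commensurator $N_G[H]$ is itself virtually $\mathbb{Z}^2$. By the systolic Flat Torus Theorem, $H$ acts geometrically on a flat plane $F_H\subset X$. For any $g\in N_G[H]$, the intersection $H\cap gHg^{-1}$ is virtually $\mathbb{Z}^2$ and acts cocompactly on both $F_H$ and $gF_H$; the flat uniqueness statement built into the systolic Flat Torus Theorem then forces $F_H$ and $gF_H$ to coincide (or at least lie at finite Hausdorff distance). Hence $N_G[H]$ coarsely preserves $F_H$, and by properness of the $G$-action together with cocompactness on $X$, $N_G[H]$ is quasi-isometric to $\mathbb{E}^2$. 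Combined with the absence of $\mathbb{Z}^3$ in $G$, this forces $N_G[H]$ to be virtually $\mathbb{Z}^2$. This is the place I expect the main technical difficulty: making rigorous the ``flats commensurable at infinity must be parallel'' statement in the systolic, purely simplicial setting, which should reduce to a careful application of the results of Elsner cited in the Flat Torus Theorem.

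Once $N_G[H]\cong_{v}\mathbb{Z}^2$ is established, the family $\mathcal{G}[H]=\{K\in\mathcal{VAB}\setminus\mathcal{VCY}:[K]=[H]\}\cup\{\mathcal{VCY}\cap N_G[H]\}$ in fact contains every subgroup of $N_G[H]$: any infinite subgroup of a virtually $\mathbb{Z}^2$ group is either virtually cyclic or of finite index, and a finite-index subgroup of $N_G[H]$ is commensurable with $H$. Therefore $E_{\mathcal{G}[H]}N_G[H]$ is a point, and moreover $\dim\eeg N_G[H]\leqslant 3$ by the standard construction of $\underline{\underline{E}}$ for virtually $\mathbb{Z}^2$ groups (e.g.\ via the product of a line with $\underline{E}\mathbb{Z}^2$ after resolving a virtually cyclic commensurator).

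Finally, the dimension estimate from the pushout (the analogue of Corollary~\ref{commenough} applied to the pair $(\mathcal{VCY},\mathcal{VAB})$) yields
\[
\dim E_{\mathcal{VAB}}G\leqslant\max\bigl\{\dim\eeg G,\ \dim\eeg N_G[H]+1,\ \dim E_{\mathcal{G}[H]}N_G[H]\bigr\}.
\]
Plugging in gives $\max\{d+1,4,0\}=4$ when $d\leqslant 3$ and $\max\{d,4,0\}=d$ when $d\geqslant 4$, so in all cases $\dim E_{\mathcal{VAB}}G\leqslant\max\{4,d\}$, as required. The only non-bookkeeping ingredient is the virtual $\mathbb{Z}^2$-ness of the commensurator, which is exactly the analogue, in the systolic setting, of the standard $\mathrm{CAT}(0)$ fact used by Lück in the abelian case.
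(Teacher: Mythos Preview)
Your overall strategy is sound and close to the paper's, but there is a genuine error in the equivalence relation. You define $H_1\sim H_2\iff |H_1\cap H_2|=\infty$ and then assert that for virtually $\mathbb Z^2$ subgroups this is the same as $H_1\cap H_2$ being virtually $\mathbb Z^2$. This is false already in systolic groups: $F_2\times\mathbb Z$ is systolic (it acts geometrically on a CAT(0) $\mathcal{VH}$--complex, hence on a systolic complex by \cite{EP}), and the two $\mathbb Z^2$'s $\langle a,t\rangle$ and $\langle b,t\rangle$ meet only in $\langle t\rangle\cong\mathbb Z$. With your literal relation, since $t$ is central, every $g\in F_2\times\mathbb Z$ satisfies $|gHg^{-1}\cap H|=\infty$, so $N_G[H]=F_2\times\mathbb Z$, which is not virtually $\mathbb Z^2$. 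Your key step ``$H\cap gHg^{-1}$ is virtually $\mathbb Z^2$'' therefore fails, and the Flat Torus argument cannot even be started.

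The fix is to use genuine commensurability, $H_1\sim H_2\iff H_1\cap H_2$ is virtually $\mathbb Z^2$; this is an equivalence relation satisfying the L\"uck--Weiermann axioms, and with it your argument goes through. At that point your proof and the paper's coincide: the paper packages the same content via conditions (NM1) and (NM2) of \cite[Notation~2.7]{LuWe} (Lemma~\ref{lemmanm}) and applies \cite[Corollary~2.8]{LuWe}, which is exactly the statement that each commensurability class has a unique maximal element $M$ equal to its own normaliser. In both versions the substantive input is the systolic Flat Torus Theorem, giving that the stabiliser of $Th(F)$ is virtually $\mathbb Z^2$; then $E_{\mathcal G[H]}N_G[H]$ is a point, $\eeg N_G[H]$ is $3$--dimensional, and the pushout yields $\max\{4,d\}$.
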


The construction which we use is 
a pushout construction of \cite{LuWe} (cf.\ Section~\ref{sec:classifying}) applied to the inclusion of families $\mathcal{VCY} \subset \mathcal{VAB}$. More precisely we want to apply \cite[Corollary 2.8]{LuWe} which requires the collection of subgroups $\mathcal{VAB} \setminus \mathcal{VCY}$ of  $G$ to satisfy the following two conditions: 
\begin{description}
\item[(NM1)] 
any $H \in \mathcal{VAB} \setminus \mathcal{VCY}$ is contained in a unique maximal $M \in \mathcal{VAB} \setminus \mathcal{VCY}$,
\item[(NM2)] for any maximal subgroup $M$ of $\mathcal{VAB} \setminus \mathcal{VCY}$ we have $N_G(M)=M$.
\end{description}

These conditions correspond to conditions $M_{\mathcal{VCY} \subset \mathcal{VAB}}$ and $NM_{\mathcal{VCY} \subset \mathcal{VAB}}$ of \cite[Notation 2.7]{LuWe}. We will keep our notation for the sake of clarity.

\begin{lem}\label{lemmanm}
Let $G$ be a systolic group. Then $G$ satisfies conditions $\mathrm{(NM1)}$ and $\mathrm{(NM2)}$.
\end{lem}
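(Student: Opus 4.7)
The plan is to use the systolic Flat Torus Theorem (Theorem~\ref{ftt}) as the sole geometric input. Since $G$ is systolic, no finitely generated free abelian subgroup has rank exceeding two, so every $H \in \mathcal{VAB} \setminus \mathcal{VCY}$ is virtually $\mathbb{Z}^2$ and, by Theorem~\ref{ftt}, preserves a \emph{systolic flat} $F_H \subseteq X$, that is, a subcomplex isomorphic to the equilaterally triangulated plane $\E$, on which $H$ acts cocompactly. This is the starting point for both (NM1) and (NM2).

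For (NM1), given $H \in \mathcal{VAB} \setminus \mathcal{VCY}$, I would define
\[
M(H) \;=\; \mathrm{Stab}_G(F_H) \;=\; \{\, g \in G \,:\, gF_H = F_H \,\}.
\]
The group $M(H)$ acts on $F_H \cong \E$ properly (inherited from the proper action of $G$ on $X$) and cocompactly (since the finite-index subgroup $H$ already does so). By Bieberbach's theorem $M(H)$ is virtually $\mathbb{Z}^2$, and thus lies in $\mathcal{VAB} \setminus \mathcal{VCY}$. To show that $M(H)$ is the \emph{unique} maximal member of $\mathcal{VAB} \setminus \mathcal{VCY}$ containing $H$, I would take an arbitrary virtually $\mathbb{Z}^2$ subgroup $M' \supseteq H$ and apply Theorem~\ref{ftt} to it, producing an $M'$-invariant systolic flat $F_{M'}$. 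Since $H$ is of finite index in $M'$, the subgroup $H$ also acts cocompactly on $F_{M'}$. The uniqueness statement in the systolic Flat Torus Theorem then forces $F_{M'} = F_H$, yielding $M' \subseteq M(H)$.

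For (NM2), let $M$ be maximal in $\mathcal{VAB} \setminus \mathcal{VCY}$, so by the previous paragraph $M = \mathrm{Stab}_G(F_M)$. If $g \in N_G(M)$, then for every $m \in M$ we have $g^{-1} m g \in M$, so $g^{-1} m g$ preserves $F_M$, or equivalently $m$ preserves $g^{-1} F_M$. Consequently $g^{-1} F_M$ is an $M$-invariant flat on which $M$ acts cocompactly. Applying the same uniqueness as above yields $g^{-1} F_M = F_M$, i.e.\ $g \in \mathrm{Stab}_G(F_M) = M$, which is precisely (NM2).

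The main obstacle is the uniqueness of the $H$-invariant cocompact flat. In the $\mathrm{CAT}(0)$ world such a flat is not unique in general, although it is unique up to parallelism via the splitting of the minset. In the systolic setting the rigid combinatorics of $\E$ should make the flat genuinely unique; the precise formulation I would invoke is contained in (or is a direct consequence of) Elsner's systolic Flat Torus Theorem \cite{E1}. If that statement is not available in the exact form needed, I would instead work with any two such flats $F,F'$ and use cocompactness of the $H$-action together with the classification of subcomplexes of $X$ isomorphic to $\E$ preserved by $\mathbb{Z}^2$ to conclude $F=F'$.
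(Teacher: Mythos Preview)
Your overall strategy is the same as the paper's, but there is a genuine gap: in the systolic setting the $H$--invariant flat is \emph{not} unique. Theorem~\ref{ftt}.(\ref{ftt:existence}) only asserts uniqueness \emph{up to flat equivalence}, and by \cite[Theorem 5.4]{E1} two equivalent flats are at Hausdorff distance $1$, not $0$. Thus your definition $M(H)=\mathrm{Stab}_G(F_H)$ depends on the choice of $F_H$, and the key step in (NM1) fails: from $M'\supseteq H$ you only get that $F_{M'}$ is \emph{equivalent} to $F_H$, not equal, so you cannot conclude $M'\subseteq \mathrm{Stab}_G(F_H)$. The same problem breaks your (NM2): $g^{-1}F_M$ is equivalent to $F_M$, not necessarily equal, so $g$ need not stabilise $F_M$. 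Your fallback suggestion (``use cocompactness\dots together with the classification'') does not address this, since multiple equivalent $H$--cocompact flats genuinely can coexist inside the thickening.

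The fix is exactly what the paper does: replace the flat $F_H$ by its \emph{thickening} $Th(F_H)=\minset(A)$ (Theorem~\ref{ftt}.(\ref{ftt:minset})), which is canonical because equivalent flats have the same thickening. One then works with $\mathrm{Stab}_G(Th(F_H))$, citing \cite[Corollary 6.2]{E1} for the fact that this stabiliser is the maximal element containing $H$. For (NM2) the paper avoids the uniqueness issue altogether by passing to a characteristic $\mathbb{Z}^2$ subgroup $A\lhd H$, so that $N_G(H)$ preserves $\minset(A)=Th(F)$; since $A$ acts cocompactly on $Th(F)$, the quotient $N_G(H)/A$ acts properly on the compact space $Th(F)/A$ and is therefore finite, whence $N_G(H)\in\mathcal{VAB}\setminus\mathcal{VCY}$ and equals $H$ by maximality.
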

Assuming the lemma, we proceed with the construction of the desired model.

\begin{proof}[Proof of Theorem~\ref{twvab2}]
Let $\mathcal{M}$ denote the complete set of representatives of conjugacy classes in $G$ of subgroups which are maximal in $\mathcal{VAB} \setminus \mathcal{VCY}$. Since $G$ satisfies (NM1) and (NM2), it follows from \cite[Corollary 2.8]{LuWe} that a model for $E_{\mathcal{VAB}}G$ is given by
the cellular $G$--pushout
\[
\begin{CD}
\coprod_{M  \in \mathcal{M}}  G \times_{M}  \underline{\underline{E}}M    @>i>>  \underline{\underline{E}}G \\
@VV{\coprod_{M  \in \mathcal{M}} p_M}V         @VVV\\
\coprod_{M  \in \mathcal{M}}  G/M     @>{\phantom{\text{abel}}}>>   E_{\mathcal{VAB}}G,\smallskip
\end{CD}
\]
where $i$ is an inclusion of CW--complexes and $p_M$ is the canonical projection \[G \times_{M}  \underline{\underline{E}}M \to  G \times_{M} \ast \cong G/M.\]

By Theorem~\ref{mainthm} there exists a $d$--dimensional model for $\underline{\underline{E}}G$ as long as $d \geqslant4$. It follows from \cite[Theorem 5.13.(iii)]{LuWe} that there exists a $3$--dimensional model for $ \underline{\underline{E}}M  
$ (since $M$ contains a finite-index subgroup isomorphic to $\mathbb{Z}^2$) and it is in fact a model of the lowest possible dimension.  The existence of the map $i$ follows from the universal property of the classifying space $\underline{\underline{E}}G$. To ensure that $i$ is injective, we replace it with an inclusion into the mapping cylinder (cf.\ Corollary~\ref{commenough}). 
Finally, we have that $G/M$ has dimension $0$. Therefore applying the above pushout to these models gives us a model for $E_{\mathcal{VAB}}G$ of dimension $\mathrm{max}\{0,4,d\}$.
\end{proof}

It remains to prove Lemma~\ref{lemmanm}. The main tool that we use in the proof is the systolic Flat Torus Theorem of T.\ Elsner. Before stating the theorem we need to recall some terminology. For details we refer the reader to \cite{E1}.

\begin{de}\label{def:flat}Let $\mathbb{E}_{\Delta}^2$ denote the equilaterally triangulated Euclidean plane. A \emph{flat} in a systolic complex $X$ is a simplicial map $F \colon \mathbb{E}_{\Delta}^2 \to X$ which is an isometric embedding. We will identify $F$ with its image and treat it as a subcomplex of $X$.
\end{de}

We say that two flats are \emph{equivalent} if they are at finite Hausdorff distance. This gives an equivalence relation on the set of all flats which we call a \emph{flat equivalence}.
Let $Th(F)$ denote the subcomplex of $X$ spanned by all the flats that are equivalent to $F$. We call $Th(F)$ the \emph{thickening} of $F$. Any two equivalent flats are in fact at Hausdorff distance $1$ \cite[Theorem 5.4]{E1}. Therefore for any $F'$ that is equivalent to $F$, the inclusion $F' \hookrightarrow Th(F)$ is a quasi-isometry.

If $H \subset G$ is a free abelian subgroup of a systolic group $G$ we define minimal displacement set of $H$ as follows \[\minset(H)= \bigcap_{h\in H \setminus \{e\}} \minset(h).\]
		
\begin{tw}[Flat Torus Theorem]\cite[Theorem 6.1]{E1}\label{ftt}
Let $G$ be a systolic group and let $H \subset G$ be a free abelian subgroup of rank at least $2$. Then:

\begin{enumerate}
\item \label{ftt:rank2} the group $H$ is isomorphic to $\mathbb{Z}^2$,
\item \label{ftt:existence} there exists an $H$--invariant flat $F$, unique up to flat equivalence,
\item \label{ftt:minset} we have $\minset(H)=Th(F)$ for an $H$--invariant flat $F$.
\end{enumerate}
\end{tw}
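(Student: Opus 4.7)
My plan is to mimic the CAT(0) Flat Torus Theorem, using the systolic strip machinery developed by Elsner together with the structural results proved earlier in the paper. Since $G$ acts properly and every nontrivial $h\in H$ has infinite order, each such $h$ is a hyperbolic isometry of $X$; after replacing the generators of $H$ by suitable powers---which is harmless for (ii) and (iii), as both are commensurability-invariant---Proposition~\ref{alltogetherlemma} guarantees invariant axes and $L(h)>3$ for every element of $H$ I need to handle.

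For existence in (ii), I would pick two linearly independent $g_1,g_2\in H$, fix a $g_1$-invariant axis $\gamma$, and note that $\{g_2^n\gamma\}_{n\in\mathbb Z}$ is a family of parallel $g_1$-axes with consecutive members at Hausdorff distance at most $L(g_2)$. The key geometric input is that two parallel $g_1$-axes at bounded distance co-bound a simplicial strip receiving an isometric simplicial map from an $h$-invariant sub-strip of $\mathbb{E}_{\Delta}^2$; this is precisely the triangulated-strip construction of \cite[Theorems~2.6 and 3.5]{E2} already invoked in Lemma~\ref{sysstripes}. Gluing these strips along their common $g_1$-axes across all $n\in\mathbb Z$ produces an isometric simplicial embedding $F\colon\mathbb{E}_{\Delta}^2\hookrightarrow X$ which is $\langle g_1,g_2\rangle$-invariant. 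Uniqueness up to flat equivalence is then immediate: any two $H$-invariant flats are $H$-cocompact (a $\mathbb Z^2$ acting properly on $\mathbb{E}_{\Delta}^2$ is virtually a lattice of translations), so compactness of the quotient together with properness of the $G$-action forces their Hausdorff distance to be finite, which by definition is flat equivalence.

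For $\minset(H)=Th(F)$ in (iii), the inclusion $Th(F)\subseteq\minset(H)$ is immediate because every flat equivalent to $F$ is $H$-invariant and thus minimises the displacement of every element of $H$; conversely, any vertex $v\in\minset(H)$ lies on a $g_1$-axis inside $\minset(g_1)$, and running the strip construction through this axis with $g_2$-translates yields an $H$-invariant flat through $v$ necessarily equivalent to $F$. For the rank bound (i), suppose $\mathbb Z^3=\langle g_1,g_2,g_3\rangle\subseteq H$; applying (ii)--(iii) to $\langle g_1,g_2\rangle$ produces a flat $F$ with $Th(F)$ quasi-isometric to $\mathbb E^2$, and since $g_3$ commutes with $\langle g_1,g_2\rangle$ the flat $g_3F$ is again $\langle g_1,g_2\rangle$-invariant, hence flat-equivalent to $F$, so $g_3$ preserves $Th(F)$. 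Then $\mathbb Z^3$ acts properly and cocompactly on $Th(F)$ (cocompactness is inherited from the $\langle g_1,g_2\rangle$-action), making $\mathbb Z^3$ quasi-isometric to $\mathbb E^2$; comparing growth degrees $3$ and $2$ yields a contradiction. The main obstacle I anticipate is the combinatorial construction of the genuine equilateral-triangulated flat in (ii): the precise fit between neighbouring strips of $g_1$-axes relies crucially on $6$-largeness, and verifying it requires careful bookkeeping of link conditions at the vertices where consecutive strips meet.
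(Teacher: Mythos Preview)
The paper does not prove this theorem at all: it is quoted verbatim from Elsner's article \cite[Theorem 6.1]{E1} and used as a black box in Section~\ref{s:vab2}. So there is no proof in the paper to compare against; what follows is an assessment of your sketch on its own terms.

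The main gap is in your construction of the flat in (ii). The strip machinery you invoke from \cite[Theorems 2.6 and 3.5]{E2}, as recorded in the proof of Lemma~\ref{sysstripes}, produces a simplicial map $f\colon P\to X$ from a strip $P\hookrightarrow\mathbb{E}_{\Delta}^2$ satisfying only
\[
d_P(u,v)-1\leqslant d_X(f(u),f(v))\leqslant d_P(u,v),
\]
which is a quasi-isometric embedding, not an isometric one. Gluing a $\mathbb Z$-family of such strips along common $g_1$-axes therefore does not yield an isometric embedding of $\mathbb{E}_{\Delta}^2$ into $X$; at best you obtain an $H$-cocompact, $2$-quasi-flat subcomplex. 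A flat in the sense of Definition~\ref{def:flat} is an honest simplicial isometric embedding, and producing one requires Elsner's much more delicate minimal-surface analysis in \cite{E1}, not the strip technology of \cite{E2}. You correctly anticipate that the ``precise fit between neighbouring strips'' is the obstacle, but the difficulty is more serious than bookkeeping of link conditions: the individual strips themselves are already not isometrically embedded.

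A secondary issue is the passage to powers. You assert that (ii) and (iii) are commensurability-invariant, but this requires argument. If $H'\subset H$ has finite index and $F$ is $H'$-invariant, then since $H$ is abelian each $hF$ is again $H'$-invariant, hence (granting uniqueness) equivalent to $F$; so $H$ preserves $Th(F)$. But extracting an $H$-invariant flat from $Th(F)$ is not automatic, and similarly $\minset(H')\supseteq\minset(H)$ is clear while the reverse inclusion needs work. Your arguments for (i) and for uniqueness in (ii) are sound in outline, but they both rest on (ii) and (iii) already being established.
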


\begin{proof}[Proof of Lemma~\ref{lemmanm}]
(NM1) First we show that any rank $2$ virtually abelian subgroup $H \subset G$ is contained in a maximal one. This is equivalent to the statement that any ascending chain of rank $2$ virtually abelian subgroups $H_1 \subset H_2 \subset \ldots$ stabilises, i.e.\ we have $H_i=H_{i+1}$ for $i$ sufficiently large.

Suppose $H_1 \subset H_2 \subset \ldots$ is such a chain and let $A$ be a finite-index subgroup of $H_1$ isomorphic to $\mathbb{Z}^2$. Since for every $i$ the group $H_i$ contains a finite-index subgroup isomorphic to $ \mathbb{Z}^2$, it follows that the index of $A$ in $H_i$ is finite. We will show that this index is bounded from above by a constant which is independent of $i$. 

By \cite[Corollary 6.2]{E1} the group $H_i$ preserves the thickening of an $A_i$--invariant flat $F_i$ where $A_i$ is a certain finite-index subgroup of $H_i$.  Since $A$ and $A_i$ are finite-index subgroups of $H_i$, so is their intersection $A \cap A_i$. By Theorem~\ref{ftt}.(\ref{ftt:existence}) there exists an $A$--invariant flat $F$. Note that $A \cap A_i \cong \mathbb{Z}^2$ and both $F$ and $F_i$ are $A \cap A_i$--invariant. Therefore, again by Theorem~\ref{ftt}.(\ref{ftt:existence}), we have $Th(F_i)=Th(F)$. Therefore any $H_i$ preserves $Th(F)$.

Now, since $G$ acts properly and cocompactly, for any integer $R >0$ there exists an integer $N_R$ such that for every vertex $v \in X$ the cardinality of the set $\{g \in G \mid d(gv,v) \leqslant R \}$ is at most $N_R$. Since $A$ acts cocompactly on $F$ and since $Th(F)$ is quasi-isometric to $F$, there is an integer $R >0$ such that for any vertex $w \in F$, the orbit of a combinatorial ball $B_R(w,X)$ under $A$ covers the thickening $Th(F)$. Fix a vertex $v \in F$. For any $h\in H_i$ there exists $a \in A$ such that $d(v, ahv) \leqslant R$. It follows that the index of $A$ in $H_i$ is bounded by $N_R$. \medskip

Now we prove the uniqueness. Assume that $H_1$ and $H_2$ are maximal subgroups in $\mathcal{VAB} \setminus \mathcal{VCY}$ that contain $H$. By \cite[Corollary 6.2.(2)]{E1} there are flats $F_1$ and $F_2$ such that $H_1 =\mathrm{Stab}_G(Th(F_1))$ and $H_2 =\mathrm{Stab}_G(Th(F_2))$.
By \cite[Corollary 6.2.(1)]{E1} there exists a flat $F$, unique up to flat equivalence, such that $H$ preserves $Th(F)$. Since $H$ is contained in both $H_1$ and $H_2$, the thickenings $Th(F_1)$ and $Th(F_2)$ are both $H$--invariant. Hence we have $Th(F_1) = Th(F_2) =Th(F)$ and therefore $H_1=H_2$. \medskip

(NM2) Let $H \in \mathcal{VAB} \setminus \mathcal{VCY}$ be a maximal subgroup and let $A'$ be a finite-index subgroup of $H$ that is isomorphic to $\mathbb{Z}^2$. Define the subgroup $A \subset H$ as the intersection of all subgroups of $H$ of index $[H:A']$. Since $H$ is finitely generated, there is finitely many of subgroups of this kind. Therefore $A \subset H$ is a rank $2$ free abelian subgroup of finite index, and by construction $A$ is a characteristic subgroup of $H$. It~follows that the group $N_G(H)$ normalises $A$, and hence it preserves the subcomplex $\minset(A)$. 

The action of $N_G(H)$ on $\minset(A)$ is proper and therefore the induced action of $N_G(H)/A$ on $\minset(A)/A$ is proper. By Theorem~\ref{ftt}.(\ref{ftt:minset}) we have $\minset(A) = Th(F)$ where $F$ is an $A$--invariant flat. This implies that the action of $A$ on $\minset(A)$ is cocompact. Since the quotient $N_G(H)/A$ acts properly on a compact space $\minset(A)/A$, it follows that $N_G(H)/A$ is a finite group. Therefore $N_G(H)$ is a rank $2$ virtually abelian group and hence we have $N_G(H)=H$ by the maximality of $H$.
\end{proof}

The methods used above apply also to a certain class of $\mathrm{CAT}(0)$ groups, namely the groups acting geometrically on $\mathrm{CAT}(0)$ spaces that do not contain flats of dimension greater than $2$. For details about $\mathrm{CAT}(0)$ spaces and groups we refer the reader to \cite{BrHa}. 

\begin{cor}\label{cat0coro}Let $G$ be a group acting properly and 
cocompactly by isometries on a complete
$\mathrm{CAT}(0)$ space $X$ of topological dimension $d>0$. Furthermore, assume that for $n >2$ there is no isometric embedding $\mathbb{E}^n \to X$ where $\mathbb{E}^n$ is the Euclidean space.
Then there exists a model for $E_{\mathcal{VAB}}G$ of dimension $\mathrm{max} \{4, d+1\}$.
\end{cor}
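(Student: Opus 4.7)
The plan is to run the pushout argument that proves Theorem~\ref{twvab2} verbatim, with the classical $\mathrm{CAT}(0)$ Flat Torus Theorem \cite[Theorem II.7.1]{BrHa} in place of its systolic counterpart, and with the $\mathrm{CAT}(0)$ model for $\underline{\underline{E}}G$ of \cite{Lu09} replacing the systolic model of Theorem~\ref{mainthm}. The hypothesis ruling out isometric embeddings $\mathbb{E}^n \to X$ for $n>2$ bounds the rank of every abelian subgroup: by the Flat Torus Theorem, any $\mathbb{Z}^n \subset G$ acts cocompactly on an isometric copy of $\mathbb{E}^n$ inside $X$, forcing $n \leqslant 2$. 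Since abelian subgroups of geometric $\mathrm{CAT}(0)$ groups are finitely generated, I obtain $\mathcal{VAB} = \mathcal{VAB}_2$, so that, as in the systolic case, only the subfamily $\mathcal{VAB}\setminus\mathcal{VCY}$ of rank-$2$ virtually abelian subgroups needs to be analysed.

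Next I would verify conditions (NM1) and (NM2) for $\mathcal{VAB}\setminus\mathcal{VCY}$ by mimicking the proof of Lemma~\ref{lemmanm}. All the structural ingredients used there have direct $\mathrm{CAT}(0)$ analogues: for $A \cong \mathbb{Z}^2$ the set $\minset(A)$ is non-empty, convex, closed and $N_G(A)$-invariant, splits isometrically as $Y \times \mathbb{E}^2$ with $A$ acting trivially on $Y$ and by a lattice of translations on $\mathbb{E}^2$, and is in particular $A$-cocompact. Given an ascending chain $H_1 \subset H_2 \subset \cdots$ of rank-$2$ virtually abelian subgroups sharing a finite-index $A \cong \mathbb{Z}^2 \subset H_1$, every $H_i$ preserves the same $\minset(A)$; properness and cocompactness of $G$ on $X$, combined with the cocompactness of $A$ on $\minset(A)$, then yield a uniform bound on $[H_i : A]$ via the same orbit-counting argument used in Lemma~\ref{lemmanm}. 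Uniqueness of the maximal ambient subgroup follows because any two maxima containing $H$ must both coincide with $\mathrm{Stab}_G(\minset(A))$. For (NM2), I extract a characteristic finite-index $\mathbb{Z}^2 \subset M$ exactly as in the systolic proof, note that $N_G(M)$ preserves $\minset(A)$, deduce that $N_G(M)/A$ acts properly on the compact quotient $\minset(A)/A$ and hence is finite, and conclude $N_G(M)=M$ from the maximality of $M$.

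With (NM1) and (NM2) established, \cite[Corollary 2.8]{LuWe} presents $E_{\mathcal{VAB}}G$ as a $G$-pushout of $\underline{\underline{E}}G$, $\coprod_M G\times_M \underline{\underline{E}}M$, and $\coprod_M G/M$, where $M$ ranges over representatives of $G$-conjugacy classes of maximal subgroups of $\mathcal{VAB}\setminus\mathcal{VCY}$. Lück's construction \cite{Lu09} provides a $(d+1)$-dimensional model for $\underline{\underline{E}}G$ out of the $d$-dimensional $\mathrm{CAT}(0)$ space $X$, and since each $M$ is virtually $\mathbb{Z}^2$, \cite[Theorem 5.13(iii)]{LuWe} gives a $3$-dimensional model for $\underline{\underline{E}}M$. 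After replacing the top horizontal map by an inclusion into its mapping cylinder, the resulting pushout has dimension $\max\{d+1,\,3+1,\,0\} = \max\{4,\,d+1\}$, as claimed. The unit discrepancy with Theorem~\ref{twvab2} is accounted for precisely by the extra dimension incurred by the $\mathrm{CAT}(0)$ model for $\underline{\underline{E}}G$ as compared with the systolic one.

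The only step that calls for any care is the uniform index bound in the chain argument for (NM1): I need a single constant bounding $[H_i:A]$ independently of $i$. In the $\mathrm{CAT}(0)$ setting this is essentially immediate from the isometric splitting $\minset(A) \cong Y \times \mathbb{E}^2$ and the cocompactness of the $A$-action on the Euclidean factor, which together reduce the argument to counting orbits of bounded balls under a proper cocompact action. Apart from this point, the transfer of Lemma~\ref{lemmanm} to the $\mathrm{CAT}(0)$ category is routine, and no new difficulties should arise.
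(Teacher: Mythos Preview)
Your proposal is correct and follows the same route as the paper: reduce $\mathcal{VAB}$ to $\mathcal{VAB}_2$ via the Flat Torus Theorem and finite generation of abelian subgroups \cite[II.7.1, II.7.6]{BrHa}, verify (NM1)--(NM2) by the $\mathrm{CAT}(0)$ analogue of Lemma~\ref{lemmanm}, and feed L\"uck's $(d{+}1)$-dimensional model for $\underline{\underline{E}}G$ into the pushout of \cite[Corollary~2.8]{LuWe}. The paper is terser---it invokes \cite[Theorem~II.7.5]{BrHa} directly for the chain condition rather than rerunning the orbit-counting, and cites \cite[Corollary~II.7.2]{BrHa} for uniqueness and (NM2)---and your uniqueness line (``both maxima equal $\mathrm{Stab}_G(\minset(A))$'') is slightly glib since that stabiliser is not a priori virtually abelian, but the repair is immediate from your own (NM2) argument.
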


Among the $\mathrm{CAT}(0)$ spaces satisfying the assumptions of the above corollary there are $\mathrm{CAT}(0)$ spaces of dimension $2$, e.g.\ $\mathrm{CAT}(0)$ square complexes,  and rank--$2$ symmetric spaces. In particular, the corollary applies to lattices in rank--$2$ symmetric spaces, thus answering a special case of a question by J.-F.\ Lafont \cite[Problem 46.7]{Gui}. On the other hand, our approach fails if $X$ contains flats of dimension bigger than $2$. The construction of models for $E_{\mathcal{VAB}}G$ in this case would require techniques significantly different from ours.

\begin{proof}[Proof of Corollary~\ref{cat0coro}.] By \cite{Lu09} there exists a model for $\eeg$ of dimension at most $d+1$, as long as $d\geqslant 3$. Since $X$ does not contain isometrically embedded $\mathbb{E}^n$ for $n>2$ it follows from the Flat Torus Theorem \cite[Theorem II.7.1]{BrHa} that $G$ does not contain free abelian subgroups of rank bigger than $2$. This together with the fact that every virtually abelian subgroup of $G$ is finitely generated \cite[Corollary II.7.6]{BrHa} implies that the family $\mathcal{VAB}$ reduces to $\mathcal{VAB}_2$. It remains to show that conditions (NM1) and (NM2) are satisfied.
The proof of this is analogous to the proof of Lemma~\ref{lemmanm}. The ``existence'' part of (NM1) follows from \cite[Theorem II.7.5]{BrHa}. Both the ``uniqueness'' part of (NM1) and condition (NM2) follow from \cite[Corollary II.7.2]{BrHa}. 
\end{proof}

We finish this section with the aforementioned proposition.

\begin{prop}\label{prop:sysfingen}Let $G$ be a group acting properly and cocompactly on a finite dimensional systolic complex $X$. Then every virtually abelian subgroup of $G$ is finitely generated.
\end{prop}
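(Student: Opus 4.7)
Let $H\subset G$ be virtually abelian and fix an abelian subgroup $A\subset H$ of finite index. It suffices to show that $A$ is finitely generated, and I treat its torsion subgroup $T(A)$ and the torsion-free quotient $\overline{A}=A/T(A)$ separately.

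A basic ingredient is that the orders of finite subgroups of $G$ are uniformly bounded. Indeed, since $X$ is a model for $\underline{E}G$ by Theorem~\ref{modelpiotra}, every finite subgroup of $G$ fixes a simplex of $X$; the cocompactness of the action then bounds the orders of simplex stabilisers by some constant $N$. Consequently $T(A)$ has exponent at most $N$, and were it infinite it would contain subgroups of the form $(\mathbb{Z}/p)^{k}$ of arbitrarily large order for some prime $p\leqslant N$, contradicting the bound. So $T(A)$ is finite.

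By the Flat Torus Theorem (Theorem~\ref{ftt}) any free abelian subgroup of $G$ has rank at most two, hence $\overline{A}$ has $\mathbb{Q}$--rank at most two. In the rank-two case I pick $h_1,h_2\in A$ of infinite order whose images in $\overline{A}$ are independent; the independence guarantees that $A_0=\langle h_1,h_2\rangle$ is torsion-free, hence $A_0\cong\mathbb{Z}^2$. By Theorem~\ref{ftt}.(\ref{ftt:minset}) one has $\minset(A_0)=\thi{F}$ for some flat $F\subset X$, and, as in the proof of condition (NM2) in Lemma~\ref{lemmanm}, the subgroup $A_0$ acts cocompactly on $\minset(A_0)$. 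Any $b\in A$ commutes with $A_0$ and hence preserves $\minset(A_0)$; combining cocompactness of the $A_0$--action with properness of the $G$--action via a standard orbit-map argument (covering $\minset(A_0)$ by $A_0$--translates of a ball around a base vertex) forces $[A:A_0]<\infty$, so $A$ is finitely generated.

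I expect the main obstacle to be the rank-one case, since a rank-one torsion-free abelian group need not be finitely generated. Fix an infinite-order $h\in A$ and suppose for contradiction that $\overline{A}$ is not finitely generated; a divisibility argument in the rank-one torsion-free abelian group $\overline{A}$ then yields $a_k\in A$, $t_k\in T(A)$ and integers $n_k\to\infty$ with $a_k^{n_k}=h\cdot t_k$. Since $h$ and $t_k$ commute and $t_k^{|t_k|}=1$, raising $ht_k$ to the power $|t_k|$ gives $(ht_k)^{|t_k|}=h^{|t_k|}$, and the power law $\trl{g^n}=|n|\trl{g}$ for hyperbolic isometries yields $\trl{ht_k}=\trl{h}$. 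On the other hand $\trl{a_k^{n_k}}=n_k\trl{a_k}\geqslant n_k$, because $a_k$ has infinite order (as $ht_k$ does) and so acts hyperbolically on $X$, giving $\trl{a_k}\geqslant 1$. Combining these two equalities forces $n_k\leqslant\trl{h}$, contradicting $n_k\to\infty$. Therefore $\overline{A}\cong\mathbb{Z}$ and $A$ is finitely generated.
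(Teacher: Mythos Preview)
Your overall strategy matches the paper's: bound the torsion via the uniform bound on finite subgroups, use the Flat Torus Theorem to cap the rank at two, handle the rank-two case via the cocompact action on a thickened flat, and rule out non-finitely-generated rank-one groups by a translation-length argument. The rank-two and torsion parts are fine.

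The gap is in the rank-one case, where you invoke a ``power law'' $\trl{g^n}=|n|\,\trl{g}$ for hyperbolic isometries. This identity is \emph{not} valid in general for systolic complexes: a hyperbolic isometry need not have an invariant geodesic (Proposition~\ref{alltogetherlemma}\ref{mostisom} only guarantees one for some power). When $b$ has no axis, Elsner's work shows it has a ``thick axis'' of thickness $k\leqslant\dim X$, and one only gets $\trl{b^n}\geqslant\lfloor n/k\rfloor\cdot\trl{b}$; see \cite[Theorem~1.3 and Fact~3.7]{E2}. This is exactly the bound the paper uses. Your two applications of the power law---to conclude $\trl{ht_k}=\trl{h}$ and $\trl{a_k^{n_k}}=n_k\trl{a_k}$---are therefore unjustified as written.

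The repair is minor. First, you do not need $\trl{ht_k}=\trl{h}$ at all: since $T(A)$ is finite, the set $\{\trl{ht}\mid t\in T(A)\}$ is finite, hence bounded by some $M$. Second, replacing the exact power law by the thick-axis estimate gives
\[
\trl{a_k^{n_k}}\;\geqslant\;\Big\lfloor \tfrac{n_k}{\dim X}\Big\rfloor\cdot\trl{a_k}\;\geqslant\;\Big\lfloor \tfrac{n_k}{\dim X}\Big\rfloor,
\]
which still tends to infinity and contradicts $\trl{a_k^{n_k}}=\trl{ht_k}\leqslant M$. With this correction your argument coincides with the paper's.
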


\begin{proof}
It is enough to prove that every abelian subgroup $A$ of $G$ is finitely generated. 
Since $G$ acts properly and cocompactly on $X$, there is a uniform bound on the order of finite subgroups of $G$. Therefore the torsion subgroup of $A$ must be finitely generated. Now let $A' \subset A$ be the torsion-free part and let $\mathrm{rk}(A')$ denote its rank. By Theorem~\ref{ftt}.(\ref{ftt:rank2}) we have $\mathrm{rk}(A') \leqslant 2$. 

If $\mathrm{rk}(A')=1$ then we claim that $A' \cong \mathbb{Z}$. To show this, by the classification of torsion-free abelian groups of rank $1$, it is enough to show that for any $a \in A'$ there are only finitely many positive integers $n$, such that there exists $b \in A'$ with $a= nb$. Suppose we have $a= nb$ for some $b$ and $n$. Since both $a$ and $b$ are hyperbolic isometries of $X$, we can compare their translation lengths. If $b$ has an axis, then it is straightforward to see that $\trl{nb} =n \cdot \trl{b}$. If $b$ has no axis, then by \cite[Theorem 1.3]{E2} it has a ``thick axis'' of thickness $k \leqslant \mathrm{dim}X$, and using \cite[Fact 3.7]{E2} one easily checks that  $\trl{nb} \geqslant 
\left \lfloor{\frac{n}{k}}\right \rfloor \cdot \trl{b}$. 
Therefore, in both cases the following holds: \begin{equation}\label{classificationequation}\trl{a}=\trl{nb} \geqslant 
\left \lfloor{\frac{n}{k}}\right \rfloor \cdot \trl{b} \geqslant  \left \lfloor{\frac{n}{k}}\right \rfloor. \smallskip \end{equation} 
Since $k \leqslant \mathrm{dim}X$, for a fixed element $a$ there are only finitely many positive integers $n$ satisfying \eqref{classificationequation}. Therefore we get that $A' \cong \mathbb{Z}$.

If $\mathrm{rk}(A')=2$ then proceeding as in the proof of Lemma~\ref{lemmanm}.(NM2) we obtain that $A'$ acts properly and cocompactly on a thickening of an $A''$--invariant flat where $A'' \subset A'$ is a subgroup isomorphic to $\mathbb{Z}^2$. Therefore $A'/A''$ is finite and thus $A'$ is finitely generated.
\end{proof}

\begin{rem}All results in this section hold under the following weakened assumptions. Instead of a cocompact action we assume that $X$ is uniformly locally finite (which is automatically true if the action is cocompact) and that there is a uniform bound on the order of finite subgroups of $G$. 

In the $\mathrm{CAT(0)}$ case, instead of a cocompact action we assume that $X$ is  proper,
 the action is via semisimple isometries and the set of translation lengths of hyperbolic elements is discrete at $0$.
\end{rem}

\section{Graphical small cancellation complexes}\label{sec:graphical}

In this section we begin the study of graphical small cancellation complexes. 
Our goal is to show that for any group $G$ acting properly on graphical small cancellation complex,
there is a (canonical) systolic complex on which $G$ acts properly, and use the latter to construct low-dimensional models for various classifying spaces for $G$. This requires substantial preparations, including  notation and terminology.

We begin with introducing combinatorial and graphical $2$--complexes. Then we state and prove a version of the so-called Lyndon-van Kampen Lemma, and use it to establish certain combinatorial properties of graphical small cancellation complexes. In Section~\ref{sec:dualisisystolic} we define the dual complex of a graphical small cancellation complex and show that these two are $G$--homotopy equivalent, where $G$ is any group that acts on a graphical small cancellation complex. Finally we give the construction of classifying spaces for families $\mathcal{FIN}, \mathcal{VCY}$ and $ \mathcal{VAB}$ for graphical small cancellation groups.

\subsection{Combinatorial $2$--complexes}\label{sec:comb2}

The purpose of this section is to give the basic definitions and to establish terminology regarding combinatorial $2$--complexes. In our exposition we mainly follow \cite{McWi}.\medskip

A map $X \to Y$ of CW--complexes is \emph{combinatorial} if its restriction to every open cell of $X$ is a homeomorphism onto an open cell of $Y$. A CW--complex is \emph{combinatorial} if the attaching map of every  $n$--cell is combinatorial for a suitable subdivision of the sphere $S^{n-1}$.
An \emph{immersion} is a combinatorial map that is locally injective. \medskip

Unless otherwise stated, all combinatorial CW--complexes that we consider are $2$--dimensional and all the attaching maps are immersions. We will refer to them simply as ``$2$--complexes''. Consequently all the maps between $2$--complexes  are assumed to be combinatorial.\medskip

Notice that according to the above definition, a \emph{graph} may contain loops and multiple edges, as opposed to graphs considered in Sections \ref{sec:preliminaries}--\ref{sec:mainthm}.

\begin{ex}[Presentation complex]
Let $ \langle S | R \rangle$ be a group presentation. The \emph{presentation complex} is a $2$--complex that has a single $0$--cell, a directed labeled $1$--cell for each generator $s \in S$, and a $2$--cell attached along the closed combinatorial path corresponding to each relator $r \in R$.
\end{ex}

 A \emph{polygon} is a $2$--disc with the cell structure that consists of $n$ vertices, 
  $n$ edges 
 and a single $2$--cell. For any $2$--cell $C$ of $2$--complex $X$ there exists a 
 map $R \to X$, where $R$ is a polygon and the attaching map for $C$ factors as $S^{1} \to \partial R \to X$. In the remainder of this section by a \emph{cell} we will mean a 
 map $R \to X$ where $R$ is a polygon. An \emph{open cell} is the image in $ X$ of the single $2$--cell of $R$. \medskip

A \emph{path} in $X$ is a combinatorial
map $P \to X$ where $P$ is either a subdivision of the interval or a single vertex. If $P$ is a vertex, we call path $P\to X$ a \emph{trivial} path. If the target space is clear from the context, we will refer to the path $P \to X$ as ``the path $P$''. The \emph{interior} of the path is the path minus its endpoints. Let $P^{-1}$ denote the path $P$ traversed in the opposite direction.
Given paths $P_1\to X$ and $P_2 \to X$ such that the terminal point of $P_1$ is equal to the initial point of $P_2$, their \emph{concatenation} is an obvious path $P_1P_2 \to X$ whose domain is the union of $P_1$ and $P_2$ along these points. 
A \emph{cycle} is a 
map $C \to X$, where $C$ is a subdivision of the circle $S^1$. The cycle $C\to X$ is \emph{non-trivial} if it does not factor through a map to a tree. Therefore a homotopically non-trivial cycle is non-trivial, but the converse is not necessarily true.
A path or cycle is \emph{simple} if it is injective on vertices. Notice that a simple cycle (of length at least $3$) is non-trivial.
A \emph{length} of a path $P$ or a cycle $C$ denoted by $|P|$ or $|C|$ respectively is the number of $1$--cells in the domain.
A subpath $Q \to X$ of a path $P \to X$ (or a cycle) is a path that factors as $Q \to P \to X$ such that $Q \to P$ is an injective map. Notice that the length of a subpath does not exceed the length of the path.\medskip

A \emph{disc diagram} is a contractible finite $2$--complex $D$ with a specified embedding into the plane. We call $D$ \emph{nonsingular} if it is homeomorphic to the $2$--disc. Otherwise $D$ is called \emph{singular}. The \emph{area} of $D$ is the number of $2$--cells.
The boundary cycle $\partial D$ is the attaching map of the $2$--cell that contains the point $\{\infty\}$, when we regard $S^2= \mathbb{R}^2 \cup \{\infty\}$.
A \emph{boundary path} is any path $P\to D$ that factors as $P\to \partial D \to D$. An \emph{interior path} is a path such that none of its vertices, except for possibly endpoints, lie on the boundary of $D$.

 If $X$ is a $2$--complex a \emph{disc diagram in} $X$ is a  map $D \to X$.\medskip

The following definition is crucial in small cancellation theory.

\begin{de}\label{def:discpiece}A \emph{piece} in a disc diagram $D$ is a path $P \to D$ for which there exist two different lifts to $2$--cells of $D$, i.e. there are $2$--cells $R_i \to D$ and $R_j \to D$ such that $P \to D$ factors both as $P \to R_i \to D$ and $P \to R_j \to D$, but there does not exist a map $R_j \to R_i$ making the diagram
\[
\begin{tikzcd}
 P \arrow{r} \arrow{d} & R_i \arrow{d} \\
         R_j \arrow{ru} \arrow{r} &  D 
\end{tikzcd}
\]
commute. (Note that it might still be that $R_i=R_j$.)
\end{de}

Now we turn to graphical complexes.

\begin{de}\label{def:thickcomplex}
Let ${\bf \Gamma} \to \Theta$ be an immersion of graphs and assume that $\Theta$ is connected. For convenience we will write ${\bf \Gamma}$ as the union of its connected components \[{\bf \Gamma}=\bigsqcup_{i \in I}\Gamma_i,\] and refer to the connected graphs $\Gamma_i$ as \emph{relators}.

 A \emph{thickened graphical complex} $X$ is a $2$--complex with $1$--skeleton $\Theta$ and a $2$--cell attached along every immersed cycle in ${\bf \Gamma}$, i.e.\ if a cycle $C \to {\bf \Gamma}$ is immersed, then in $X$ there is a $2$--cell  attached along the composition $C \to {\bf \Gamma} \to \Theta$.
\end{de}

The term ``thickened'' comes from the fact, that for any connected component $\Gamma_i$, we have a ``thick cell'' $\thi{\Gamma_i}$ which is formed by gluing $2$--cells along all immersed cycles in $\Gamma_i$. As long as $\Gamma_i$ is not a tree, there is infinitely many $2$--cells in $\thi{\Gamma_i}$.
This definition may seem odd, however, it allows us to avoid certain technical complications in the proof of a version of the Lyndon-van Kampen Lemma in Section~\ref{s:lvk}.

 \begin{de}\label{def:graphpiece}Let $X$ be a thickened graphical complex. A \emph{piece} in $X$ is a path $P \to X$ for which there exist two different lifts to ${\bf \Gamma}$, i.e.\ there are two relators $\Gamma_i$ and $\Gamma_j$ such that the path $P \to X$ factors as $P \to \Gamma_i \to X$ and $P \to \Gamma_j \to X$,
but there does not exists a map $\thi{\Gamma_i} \to \thi{\Gamma_j}$ such that the diagram 
 \[
\begin{tikzcd}
 P \arrow{r} \arrow{d} & \thi{\Gamma_i} \arrow{d} \\
         \thi{\Gamma_j} \arrow{ru} \arrow{r} &  X 
\end{tikzcd}
\]
commutes.
\end{de}

\subsection{The Lyndon-van Kampen Lemma}\label{s:lvk}

\begin{de}Let $X$ be a thickened graphical complex. A disc diagram $D \to X$ is \emph{reduced} if for every piece  $P \to D$ the composition  $P \to D \to X$ is a piece in $X$.\end{de}

Observe that the definitions of a \emph{piece} in $D$ and in $X$ are different (cf.\ Definition~\ref{def:discpiece} and Definition~\ref{def:graphpiece}). We use the same name as it will always be clear out of context what piece we consider.

\begin{lem}[Lyndon-van Kampen Lemma]\label{l:vank}
Let $X$ be a thickened graphical complex and let $C \to X$ be a closed homotopically trivial path. Then
\begin{enumerate} 

\item \label{l:vk1} there exists a (possibly singular) disc diagram $D\to X$ such that the path $C$ factors as $C \to \partial D \to X$, and $C \to \partial D$ is an isomorphism,

\item \label{l:vk2} if a diagram $D \to X$ is not reduced, then there exists a diagram $D_1 \to X$ with smaller area and the same boundary cycle in the sense that there is a commutative diagram:
 \[\begin{tikzcd}
 \partial D_1 \arrow{rd} \arrow{r}{\cong}   & \partial D \arrow{d}\\  &X
                          \end{tikzcd}
                          \]
                          
\item \label{l:vk3} any minimal area diagram $D \to X$ such that $C$ factors as $C \xrightarrow{\cong} \partial D \to X$ is reduced.
\end{enumerate}
\end{lem}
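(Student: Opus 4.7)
The plan is to prove the three parts in order, with part (3) being an immediate consequence of part (2).

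For (1), the plan is the standard combinatorial van Kampen construction. Since $C \to X$ is null-homotopic, the corresponding element in $\pi_1(\Theta)$ lies in the normal closure of the attaching maps of the $2$--cells of $X$, so it can be written as a product $\prod_{l=1}^{n} \alpha_l^{-1} w_l \alpha_l$ with each $w_l$ an attaching cycle of some $2$--cell and each $\alpha_l$ a path in $\Theta$. Building a ``tree of polygons'', one $|w_l|$--gon $R_l$ for each $l$ connected via arcs realising $\alpha_l$ to a common basepoint, and traversing the boundary, yields a contractible, planar, combinatorial $2$--complex $D$ together with a combinatorial map $D \to X$ whose boundary factors $C$ as required. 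In general $D$ is singular, as reflected by the ``tree'' of connecting arcs.

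For (2), suppose $D \to X$ is not reduced; pick a piece $P \to D$ with distinct lifts $P \to R_i \to D$ and $P \to R_j \to D$ such that $P \to D \to X$ is not a piece in $X$. Failure of the piece condition in $X$ yields a thick cell $\thi{\Gamma_k}$ through which both induced lifts to $\mathbf{\Gamma}$ factor; in particular, the $2$--cells $R_i, R_j$, viewed in $X$, are $2$--cells of $\thi{\Gamma_k}$, with attaching maps lifting to immersed cycles $c_i, c_j \to \Gamma_k$ that agree along the lift of $P$. Form the subcomplex $E = R_i \cup_P R_j$ of $D$ (if $R_i = R_j$ in $D$ or if the two occurrences of $P$ sit on the same cell, proceed analogously with the two copies of $P$ on $\partial R_i$). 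Using planarity of $D$, $E$ is embedded in $D$ and its boundary $\partial E$ bounds a subdisc of $D$; its boundary cycle lifts to a cycle $c' \to \Gamma_k$ obtained by concatenating $c_i$ and $c_j^{\pm 1}$ with the shared $P$--lifts cancelled. Fold $c'$ in $\Gamma_k$ (iteratively erase backtracking) to get either a trivial loop or an immersed cycle $c''$. In the immersed case, $c''$ bounds a single $2$--cell of $\thi{\Gamma_k}$ by Definition~\ref{def:thickcomplex}, so replace $E$ in $D$ by this single $2$--cell glued to $D \setminus \mathrm{int}(E)$ along $\partial E$; in the trivial case, collapse $E$ onto the tree given by the folding. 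In either case the resulting $D_1$ is a planar, contractible, combinatorial $2$--complex with $\partial D_1 = \partial D$ in $X$ and with at least one fewer $2$--cell.

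For (3), any minimal area diagram $D \to X$ must be reduced: otherwise, applying (2) would produce a diagram of strictly smaller area with the same boundary cycle, contradicting minimality.

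The main obstacle is making the reduction step in (2) precise. In particular, one must carefully identify the subdiagram $E \subset D$ as a planar subdisc (straightforward when $D$ is nonsingular, but requiring a bit of care when $D$ is singular or when the two cells coincide), verify that the cycle $c'$ obtained by gluing the lifts $c_i, c_j$ along $P$'s lift is well defined and that its folding in $\Gamma_k$ can be realised combinatorially inside $\thi{\Gamma_k}$, and check that the cut--and--paste operation preserves planarity and contractibility. Once this local surgery is justified, parts (1)--(3) assemble as outlined.
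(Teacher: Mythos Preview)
Your outline for (1) and (3) matches the paper. For (2) you have the correct idea---both cells attach via cycles in the same relator $\Gamma_k$, so two cells can be traded for one---and this is also the paper's strategy. However, the surgery as you describe it does not quite work, and the gap is more than a routine detail.

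The new cell you produce has boundary $c''$ (the folded cycle), whereas the hole left by removing $\mathrm{Int}(E)$ has boundary $c'=\partial E$; these do not match, so ``glue along $\partial E$'' is not well defined. One could instead fill the hole with a singular diagram (the cell for $c''$ together with the folding spurs), but that requires $\partial E$ to be a simple cycle in $D$, and your claim that $E=R_i\cup_P R_j$ is an embedded subdisc fails in general: the two cells may meet along more than $P$ in $D$, and when $R_i=R_j$ there is an entire subdiagram of $D$ trapped between the two occurrences of $P$ on $\partial R_i$. The paper resolves this by taking $P$ maximal, treating the cases $R_1=R_2$ and $R_1\neq R_2$ separately, writing the complementary arcs as $J_1S_1'T_1$ and $J_2S_2'T_2$ with $J_1,J_2$ (resp.\ $T_1,T_2$) having equal lifts to $\mathbf{\Gamma}$, and then \emph{folding these arcs inside $D$}---identifying $J_1$ with $J_2$ and $T_1$ with $T_2$---while removing any subdiagrams bounded by loops that this identification creates, so that planarity and contractibility survive. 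Only after this does one glue a single cell along the now-immersed cycle $S_1'(S_2')^{-1}$. Your closing paragraph flags exactly these points, but they are the substance of the argument; without carrying them out, (2) is not established.
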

\begin{proof}(1) Since $C$ is null-homotopic, there exists a disc diagram $D\to X$ such that the map $C \to X$ factors as $C \to D\to X$ and  $C \to D$ is the boundary cycle of $D$ (see~\cite[Section 2.2]{ECHL} for a proof). 

(2) Since $D \to X$ is not reduced,  there is a piece $P \to D$ such that $P \to D \to X$ is not a piece. Let $R_1 \to D$ and $R_2 \to D$ be the $2$--cells such that $P$ factors through both of them. 

We will first treat the case when $R_1=R_2$. Let $p_1,$ $ p_2 \colon P \to R_1$ denote the two different maps. 
 Since $P\to D \to X$ is not a piece, the map 
$\partial R_1/(p_1 \sim p_2) \to X$ lifts to ${\bf \Gamma}$ (the graph $\partial R_1/(p_1 \sim p_2)$ is the quotient of the boundary of $R_1$, obtained by identifying the images $p_1(P)$ and $p_2(P)$ pointwise). 
Assume that $P$ is maximal, i.e.\ it is not a proper subpath of a piece $P' \to D$. The attaching map for $R_1$ can be written as the concatenation $PS_1P^{-1}S_2 \to D$, such that $S_1$ and $S_2$ are closed paths, see Figure~\ref{fig:vkamp}. Either $S_1$ or $S_2$ bounds a (possibly singular) subdiagram $D'$ of $D$, assume that it is $S_1$. Remove from $D$ the open cell $R_1$ together with the path $P$ (retaining its initial vertex) and the subdiagram $D'$ bounded by $S_1$. Call the resulting complex $D''$ (formally $D''$ is not a diagram as it is not contractible). Observe that $D''$ has a hole, whose boundary cycle is precisely $S_2$.

The lift $S_2 \to {\bf \Gamma}$ (given by $S_2 \to \partial R_1/(p_1 \sim p_2) \to {\bf \Gamma}$) is immersed everywhere, except for possibly at its initial vertex. Write $S_2 \to D$ as $Q_1SQ_2  \to D$ where $Q_1$ and $Q_2$ are the maximal paths such that lifts $Q_1 \to {\bf \Gamma}$ and $Q_2^{-1} \to {\bf \Gamma}$ are the same.

First assume that $Q_1 \to D$ and $Q_2^{-1} \to D$ do not meet at any vertex except for the initial one and consider the quotient $\tilde{D}$ of $D''$ obtained by identifying the domains of $Q_1$ and $Q_2^{-1}$. The boundary cycle of a hole is now equal to $S$, and by construction $S$ lifts to an immersed cycle $S \to {\bf \Gamma}$. Therefore we can glue to $\tilde{D}$ a $2$--cell $\tilde{R}$ determined by $S \to {\bf \Gamma}$. The area of the resulting diagram $\tilde{D} \cup \tilde{R}$ is smaller than the one of $D$.

If $Q_1 \to D$ and $Q_2^{-1} \to D$ have some common vertices, write $Q_1 =U_1V_1$ and $Q_2 =V_2U_2$ such that $U_1$ and $U_2^{-1}$ have the same termial vertex and $V_1$ and $V_2^{-1}$ have no common vertices except for the initial one. Remove from $D''$ the subdiagram bounded by the closed loop $U_1U_2$ together with paths $U_1$ and $U_2$, retaining the terminal vertex of $U_1$. The boundary cycle of the hole is now equal to $V_1SV_2$. Since $V_1$ and $V_2^{-1}$ do not have common vertices other than the initial one, we can identify their domains and glue to the resulting diagram a $2$--cell determined by the immersion $S \to {\bf \Gamma}$. This gives a lower area diagram and thus finishes the proof of the case where $R_1=R_2$.

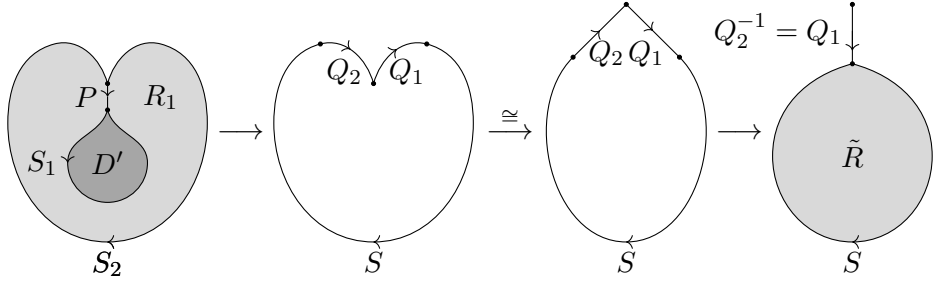
\begin{figure}[!h]
\centering
\begin{tikzpicture}[scale=0.35]
\definecolor{lgray}{rgb} {0.850,0.850,0.850}
\definecolor{dgray}{rgb} {0.650,0.650,0.650}

\node [below] at (0,-4.5) {$S_2$};
\draw [<-] (-0.001,-4.5) to (.001,-4.5);

\draw [fill=lgray]  (0,1.5) [out=105, in=15]  to (-2,3) [out=195, in=90]  to (-3.75,0) [out=270, in=180] to (0,-4.5) [out=0, in=270] to (3.75,0)[out=90, in=-15] to (-2+4,3) [out=165, in=75] to (-4+4,1.5);

\node [below] at (0,-4.5) {$S_2$};
\draw [<-] (-0.001,-4.5) to (.001,-4.5);


\node  at (2,1) {$R_1$};


\node [left] at (0,1) {$P$};

\draw [ ->] (0,1.5) -- (0,1);

\draw (0,1)--(0, 0.5);

\draw[fill] (0,1.5)  circle [radius=0.075]; 

\draw[fill] (0,0.5)  circle [radius=0.075]; 


\draw [fill=dgray] (0,0.5) [out=255, in=90] to (-1.5, -1.5) [out=270, in=180] to (0,-3) [out=0, in=270] to (1.5, -1.5) [out=90, in=285] to (0, 0.5);

\node [ left] at (-1.5,-1.5) {$S_1$};
\draw [->] (-1.5,-1.459) to (-1.5,-1.501);

\node  at (0,-1.5) {$D'$};

\begin{scope}[shift={(10,0)}]
\node [above] at (-5,-1) {$\longrightarrow$};

\draw[fill] (-2,3)  circle [radius=0.075]; 
\draw[fill] (2,3)  circle [radius=0.075];

\draw[fill] (0,1.5)  circle [radius=0.075]; 

\node [below] at (0,-4.5) {$S$};
\draw [<-] (-0.001,-4.5) to (.001,-4.5);

\node [below] at (-1.1, 2.8) {$Q_2$};
\draw [->] (-0.875, 2.8) to (-0.874, 2.7993) ;

\node [below] at (1.25, 2.8) {$Q_1$};
\draw [<-] (0.875, 2.8) to (0.874, 2.7993) ;

\draw  (0,1.5) [out=105, in=15]  to (-2,3) [out=195, in=90]  to (-3.75,0) [out=270, in=180] to (0,-4.5) [out=0, in=270] to (3.75,0)[out=90, in=-15] to (-2+4,3) [out=165, in=75] to (-4+4,1.5);

\end{scope}

\begin{scope}[shift={(19.5,0)}]

\node [above] at (-4.375,-1) {$\overset{\cong}{\longrightarrow}$};

\draw[fill] (-2,2.5)  circle [radius=0.075]; 
\draw[fill] (2,2.5)  circle [radius=0.075];

\draw[fill] (0,4.5)  circle [radius=0.075]; 

\node [below] at (0,-4.5) {$S$};
\draw [<-] (-0.001,-4.5) to (.001,-4.5);

\node [below] at (-0.75,3.5) {$Q_2$};
\draw [->] (-1,3.5) to (-0.99,3.51);

\node [below] at (0.75,3.5) {$Q_1$};
\draw [<-] (1,3.5) to (0.99,3.51);

\draw  (0,4.5) [out=225, in=45,]  to (-2,2.5) [out= 225, in=180]  to (0,-4.5) [out=0, in=-45]  to (2, 2.5) [out=135, in=-45] to (-4+4,4.5);

\end{scope}

\begin{scope}[shift={(28,0)}]
\node [above] at (-4.25,-1) {$\longrightarrow$};

\draw [fill=lgray]  (0,4.5) to (0,2.25) [out=195, in=90]  to (-3,-1.25) [out=270, in=180] to (0,-4.5) [out=0, in=270] to (3,-1.25) [out=90, in=-15] to  (-4+4,2.25);

\draw[fill] (0,4.5)  circle [radius=0.075]; 
\draw[fill] (0,2.25) circle [radius=0.075]; 

\node [below] at (0,-4.5) {$S$};
\draw [<-] (-0.001,-4.5) to (.001,-4.5);

\node [left] at (0,3.5) {$Q_2^{-1}=Q_1$};

\draw [->] (0,3.5) to (0,2.75);
\node [below] at (0,-0.25) {$\tilde{R}$};

\end{scope}

\end{tikzpicture}
\caption{Replacing the open cell $R_1$ and the subdiagram $D' \cup \mathrm{Int}P$ with the $2$--cell determined by the cycle $S$.}\label{fig:vkamp}
\end{figure}

Now suppose that $P\to D$ factors through two distinct cells $R_1 \to D$ and $R_2 \to D$. Assume that $P$ is maximal and consider the lift $ \partial R_1 \cup_P \partial R_2 \to {\bf \Gamma}$. Let $S_1 \to D$  and $S_2 \to D$ be paths such that the concatenations $PS_1 \to D$ and $PS_2 \to D$ are attaching maps for $R_1$ and $R_2$ respectively. Consider the lift $S_1S_2^{-1} \to {\bf \Gamma}$ of the closed path $S_1S_2^{-1}\to D$. 
If the paths $S_1 \to {\bf \Gamma}$ and $S_2 \to {\bf \Gamma}$ are equal, then we cut out from $D$ open cells $R_1$ and $R_2$ together with the interior of the image of the path $P$ and we ``sew up'' the resulting hole. For a proof of this see~\cite[Lemma 2.16]{McWi}.

We may therefore assume that $S_1 \to {\bf \Gamma}$ and $S_2 \to {\bf \Gamma}$ are not equal. Write $S_1 \to {\bf \Gamma}$ as the concatenation $J_1S_1'T_1 \to {\bf \Gamma}$ and $S_2 \to {\bf \Gamma}$ as $J_2S_2'T_2 \to {\bf \Gamma}$, such that the lifts $J_1 \to {\bf \Gamma} $ and $J_2 \to {\bf \Gamma}$ (resp. $T_1^{-1} \to {\bf \Gamma} $ and $T_2^{-1} \to {\bf \Gamma}$) agree, and both pairs are chosen to be maximal among paths having this property, see Figure~\ref{fig:vkamp2}. Similarly to the case where $R_1=R_2$, we can assume that $J_1 \to D$ and $J_2 \to D$ (resp. $T_1^{-1} \to D$ and $T_2^{-1} \to D$) do not have common vertices except for the initial one, by removing subdiagrams bounded by the appropriate subpaths of $J_1 \to D$ and $J_2 \to D$ (resp. $T_1^{-1} \to D$ and $T_2^{-1} \to D$) if necessary.

Now remove from $D$ open cells $R_1$ and $R_2$ together with the interior of the image of the path $P$, and consider the quotient $D'$ of $D$ obtained by identifying domains of paths  $J_1$ and $J_2$ and of paths $T_1$ and $T_2$ respectively. The resulting diagram $D'$ has a hole, whose boundary cycle lifts to the closed immersed path $S_1'{S_2'}^{-1}\to {\bf \Gamma}$. Therefore we can attach a $2$--cell $\tilde{R}$ along this path, thus removing the hole. This establishes (2) as the area of the resulting diagram is smaller than the area of $D$.

\begin{figure}[!h]
\centering
\begin{tikzpicture}[scale=0.35]
\definecolor{lgray}{rgb} {0.850,0.850,0.850}


\draw [fill=lgray]  (0.5,1) [out=105, in=-45]  to (-1, 3.25) [out=135, in= 0]   to (-2.5,4) [out=180, in=90]  to (-5,0) [out=270, in=180] to (-2.5,-4) [out=0, in=225] to  (-1, -3.25)  [out=45, in= 255]   to (0.5,-1) [out=90, in=270] to (0.5,1);

\draw [->] (-0.999, 3.249) to (-1, 3.25);
\draw [<-] (-0.999, -3.249) to (-1, -3.25);

\node [above right] at (-1.15, 3.1) {$J_1$};
\draw[fill] (-2.5,4)  circle [radius=0.075]; 

\node [below right] at (-1.15, -3.1) {$T_1$};
\draw[fill] (-2.5,-4)  circle [radius=0.075]; 

\node [left] at (-1.5,2) {$R_1$};

\draw [->] (-5, 0) to (-5, -0.001);

\node [right] at (-5,0) {$S_1'$};


\node [above] at (8,-0.5) {$\longrightarrow$};
\node [above] at (23,-0.5) {$\longrightarrow$};

\begin{scope}[shift={(1,0)}, rotate=180]
\draw [fill=lgray]  (0.5,1) [out=105, in=-45]  to (-1, 3.25) [out=135, in= 0]   to (-2.5,4) [out=180, in=90]  to (-5,0) [out=270, in=180] to (-2.5,-4) [out=0, in=225] to  (-1, -3.25)  [out=45, in= 255]   to (0.5,-1) [out=90, in=270] to (0.5,1);

\draw [<-] (-0.999, 3.249) to (-1, 3.25);
\draw [->] (-0.999, -3.249) to (-1, -3.25);

\node [below left] at (-1.15, 3.1) {$T_2$};
\draw[fill] (-2.5,4)  circle [radius=0.075]; 

\node [above left] at (-1.15, -3.1) {$J_2$};
\draw[fill] (-2.5,-4)  circle [radius=0.075]; 

\node [right] at (-1.5,-2) {$R_2$};

\draw [<-] (-5, 0) to (-5, -0.001);

\node [left] at (-5,0) {$S_2'$};

\end{scope}


\draw[fill] (0.5,1)  circle [radius=0.075]; 
\draw[fill] (0.5,-1)  circle [radius=0.075]; 

\draw[->] (0.5,0) to (0.5,0.001);

\node [left] at (0.5,0) {$P$};

\begin{scope}[shift={(15,0)}]

\draw   (0.5,1) [out=105, in=-45]  to (-1, 3.25) [out=135, in= 0]   to (-2.5,4) [out=180, in=90]  to (-5,0) [out=270, in=180] to (-2.5,-4) [out=0, in=225] to  (-1, -3.25)  [out=45, in= 255]   to (0.5,-1);

\draw [->] (-0.999, 3.249) to (-1, 3.25);
\draw [<-] (-0.999, -3.249) to (-1, -3.25);

\node [above right] at (-1.15, 3.1) {$J_1$};
\draw[fill] (-2.5,4)  circle [radius=0.075]; 

\node [below right] at (-1.15, -3.1) {$T_1$};
\draw[fill] (-2.5,-4)  circle [radius=0.075]; 


\draw [->] (-5, 0) to (-5, -0.001);

\node [right] at (-5,0) {$S_1'$};


\begin{scope}[shift={(1,0)}, rotate=180]
\draw (0.5,1) [out=105, in=-45]  to (-1, 3.25) [out=135, in= 0]   to (-2.5,4) [out=180, in=90]  to (-5,0) [out=270, in=180] to (-2.5,-4) [out=0, in=225] to  (-1, -3.25)  [out=45, in= 255]   to (0.5,-1);

\draw [<-] (-0.999, 3.249) to (-1, 3.25);
\draw [->] (-0.999, -3.249) to (-1, -3.25);

\node [below left] at (-1.15, 3.1) {$T_2$};
\draw[fill] (-2.5,4)  circle [radius=0.075]; 

\node [above left] at (-1.15, -3.1) {$J_2$};
\draw[fill] (-2.5,-4)  circle [radius=0.075]; 


\draw [<-] (-5, 0) to (-5, -0.001);

\node [left] at (-5,0) {$S_2'$};

\draw[fill] (0.5,1)  circle [radius=0.075]; 
\draw[fill] (0.5,-1)  circle [radius=0.075];

\end{scope}

\end{scope}

\begin{scope}[shift={(30,0)}]

\draw   (-2.5, 6) to (-2.5,3.5) ;

\draw [fill=lgray] (-2.5,3.5) [out=195, in=90]  to (-5,0) [out=270, in=165] to (-2.5,-3.5) [out=15, in=270] to (0,0) [out= 90, in=-15] to (-2.5, 3.5);

\draw (-2.5,-3.5)  [out=270, in=90] to (-2.5, -6) ;

\node at (-2.5, 1.5) {$\tilde{R}$};

\node [left] at (-2.5, 4.75) {$J_1=J_2$};

\draw[->] (-2.5,4.75)  to (-2.5, 4.749);  
\draw[fill] (-2.5,3.5)  circle [radius=0.075];

\node [left] at (-2.5, -4.75) {$T_1=T_2$};
\draw[<-] (-2.5,-4.75)  to (-2.5, -4.749);

\draw[fill] (-2.5,-3.5)  circle [radius=0.075];

\draw [->] (-5, 0) to (-5, -0.001);

\draw[fill] (-2.5,6)  circle [radius=0.075]; 
\draw[fill] (-2.5,-6)  circle [radius=0.075]; 

\node [right] at (-5,0) {$S_1'$};
\begin{scope}[shift={(-5,0)}, rotate=180]

\draw [<-] (-5, 0) to (-5, -0.001);

\node [left] at (-5,0) {$S_2'$};

\end{scope}

\end{scope}

\end{tikzpicture}
\caption{Replacement procedure.}\label{fig:vkamp2}
\end{figure}
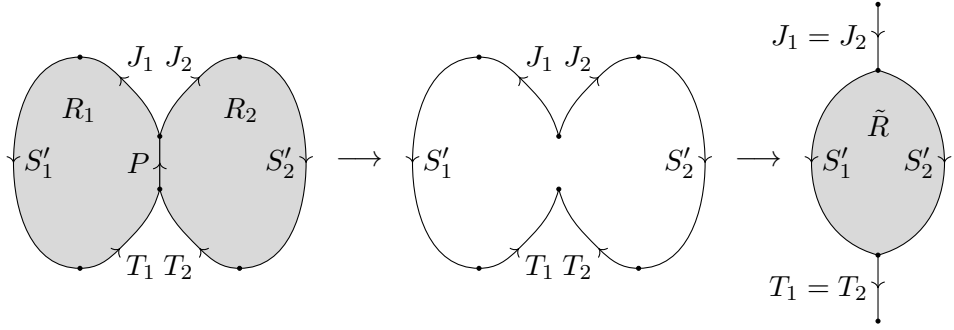

(3) Let $D \to X$ be a minimal area diagram and suppose that it is not reduced. Then applying (2) gives a diagram of lower area and with the same boundary cycle, which contradicts the minimality of $D$.
  \end{proof}

\subsection{Properties of graphical small cancellation complexes}

In this section we define $C(p)$ and $C'(\lambda)$ small cancellation conditions and prove basic results about relators in graphical small cancellation complexes.

\begin{de}\label{de:smallcancgraph} Let $X$ be a thickened graphical complex, and let $p$ be a positive integer and $\lambda$ a positive real number. We say that $X$ satisfies the

\begin{itemize}
\item $C(p)$ \emph{small cancellation condition} if no non-trivial cycle $C \to X$ that factors as $C \to \Gamma_i \to X$
is the concatenation of less than $p$ pieces.

\item $C'(\lambda)$ \emph{small cancellation condition} if for every piece $P \to X$ that is a subpath of a simple cycle $C \to \Gamma_i \to X$ we have $|P| < \lambda \cdot |C|$.

\end{itemize}
We abbreviate the $C(p)$ small cancellation condition to the ``$C(p)$ condition'' and call $X$ a ``$C(p)$ thickened graphical complex'' (we use the same abbreviations in the $C'(\lambda)$ case). Mostly we will be concerned with the $C(p)$ condition for $p \geqslant 6$. Notice that if $p \geqslant q$ then the $C(p)$ condition implies the $C(q)$ condition. Therefore some results will be stated and proven in the $C(6)$ case only.

\end{de}

If $D$ is a disc diagram we define small cancellation conditions in a very similar way, except that a \emph{piece} is understood in the sense of Definition~\ref{def:discpiece}. For clarity we include the definition.

\begin{de} Let $D$ be a disc diagram. We say that $D$ satisfies the

\begin{itemize}
\item 
$C(p)$ small cancellation condition if no boundary cycle $\partial R$ of a $2$--cell $R$ is the concatenation of less than $p$ pieces. 
\item
$C'(\lambda)$ small condition if for every piece $P$ that factors as $P \to \partial R \to D$ for some $2$--cell $R$, we have $|P| < \lambda \cdot |\partial R|$.

\end{itemize}
\end{de}

One can show that the  $C'(\lambda)$ condition implies the $C(\left \lfloor{\frac{1}{\lambda}}\right \rfloor +1)$ condition. This follows from the fact that it is enough to check the $C(p)$ condition on simple cycles.

\begin{prop}\label{reduceddiagram} If $X$ is a $C(p)$ (respectively $C'(\lambda)$) thickened graphical complex and $D \to X$ is a reduced disc diagram, then $D$ is $C(p)$ (respectively $C'(\lambda)$) diagram.
\end{prop}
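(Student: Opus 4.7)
The plan is to argue by contradiction, using the defining property of a reduced diagram to promote a witness of failure in $D$ to a witness of failure in $X$. The key observation is that for any $2$--cell $R$ of $D$, the composition $\partial R \to D \to X$ agrees with the attaching map of the corresponding $2$--cell of $X$, which by Definition~\ref{def:thickcomplex} factors through an immersed cycle $C \to \Gamma_i$ for some component $\Gamma_i$ of ${\bf \Gamma}$. In particular, $|\partial R| = |C|$, and since $C \to \Gamma_i$ is immersed it does not factor through a tree, so $\partial R \to X$ is a non-trivial cycle of $X$ factoring through $\Gamma_i$.

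For the $C(p)$ case, suppose $\partial R$ is expressible in $D$ as a concatenation $P_1 \cdots P_k$ of fewer than $p$ pieces (in the sense of Definition~\ref{def:discpiece}). Because $D \to X$ is reduced, each composition $P_j \to D \to X$ is a piece in $X$ (in the sense of Definition~\ref{def:graphpiece}). Composing with $D \to X$ then exhibits the non-trivial cycle $\partial R \to X$, which factors through $\Gamma_i$, as a concatenation of fewer than $p$ pieces of $X$, directly contradicting the $C(p)$ hypothesis on $X$. This settles the $C(p)$ case.

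For the $C'(\lambda)$ case, suppose toward a contradiction that some piece $P \to \partial R \to D$ satisfies $|P| \geqslant \lambda \cdot |\partial R|$. Again, by the reducedness of $D$, the composition $P \to D \to X$ is a piece of $X$. Lifting along the attaching map gives an embedding of $P$ as a subpath of the immersed cycle $C \to \Gamma_i$ with $|C| = |\partial R|$, so that $|P| \geqslant \lambda|C|$. The final step is to realise this piece as a subpath of a simple cycle in $\Gamma_i$: one factors the immersion $C \to \Gamma_i$ through the simple cycle $C' \to \Gamma_i$ obtained by traversing the image of the subpath $P$ in $\Gamma_i$ and following $C$ only as far as necessary to close it up; since $\Gamma_i$ is a graph and $P$ lifts to a (simple) immersed path, this produces a simple subcycle $C'$ of $C$ of length at most $|C|$ containing the lift of $P$, giving $|P| \geqslant \lambda |C'|$ and contradicting the $C'(\lambda)$ condition on $X$.

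The main obstacle is the technical point at the end of the $C'(\lambda)$ argument: the $C'(\lambda)$ condition for $X$ is phrased in terms of \emph{simple} cycles in $\Gamma_i$, whereas the attaching map of a $2$--cell in $X$ is only an \emph{immersed} cycle, and one must ensure that a long piece of an immersed cycle is genuinely a long piece of a simple subcycle of comparable length. The $C(p)$ case, by contrast, is essentially immediate from the definition of ``reduced'' together with the non-triviality of immersed cycles.
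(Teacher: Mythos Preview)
Your approach matches the paper's, which dispatches the proposition in a single sentence (``follows immediately from the definitions of a reduced map and a piece''). Your $C(p)$ argument is a correct and complete elaboration of this: reducedness transports pieces in $D$ to pieces in $X$, and the attaching map of each $2$--cell of $D$ is, in $X$, a non-trivial cycle factoring through some $\Gamma_i$, so a short decomposition into pieces in $D$ yields one in $X$.

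In the $C'(\lambda)$ case you correctly isolate the genuine subtlety --- the $C'(\lambda)$ hypothesis on $X$ constrains only pieces lying on \emph{simple} cycles in $\Gamma_i$, whereas $\partial R$ maps to a merely \emph{immersed} cycle --- but your resolution does not close. The parenthetical assertion that ``$P$ lifts to a (simple) immersed path'' is unjustified: an immersed path in a graph may revisit vertices without backtracking. Even granting simplicity of the lift of $P$, the construction ``follow $C$ only as far as necessary to close it up'' need not yield a simple cycle, since the complementary arc $C\setminus P$ may hit the interior of $P$ before returning to the initial vertex, and may itself fail to be simple. More seriously, if the lift of $P$ traverses a bridge of $\Gamma_i$ (which is perfectly possible in an immersed cycle --- think of a dumbbell graph), then $P$ is not a subpath of \emph{any} simple cycle in $\Gamma_i$, and no $C'$ of the kind you describe exists. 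The paper's one-line proof does not address this point either; note, however, that only the $C(p)$ case is invoked anywhere downstream in the paper, so the unresolved $C'(\lambda)$ technicality does not affect the subsequent results.
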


\begin{proof} The assertion follows immediately from the definitions of a reduced map and a piece.
\end{proof}

The next lemma is the crucial tool in small cancellation theory. It describes the possible shapes of the $C(6)$ disc diagrams. Before stating the lemma we need the following definition.

Let $D$ denote a disc diagram. A \emph{spur} is an edge of a boundary path of $D$ that has a vertex of valence $1$. In this case the boundary path is not immersed. Let $i \geqslant 0$ be an integer. A $2$--cell $R\to D$ is called an \emph{$i$--shell} if its boundary cycle $\partial R$ is the concatenation $P_1 \cdots P_i Q$, such that every $P_j$ is a simple interior path (and hence a piece), and $Q$ is a simple boundary path of $D$. We call $Q$ the \emph{outer path} of $\partial R$.

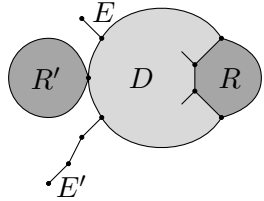
\begin{figure}[!h]
\centering
\begin{tikzpicture}[scale=0.35]
\definecolor{lgray}{rgb} {0.850,0.850,0.850}
\definecolor{dgray}{rgb} {0.650,0.650,0.650}


\draw [fill=lgray] (0.5,1.5) [out= 120, in =60] to (-4,1.5) [out=240, in=90]  to (-4.5,0) [out=270 , in= 120] to (-4,-1.5) [out=300, in= 240] to (0.5,-1.5) [out=90, in=270] to (0.5, 1.5);

\draw [fill=dgray] (0.5,1.5) to (-0.5, 0.5) to (-0.5, -0.5) to (0.5, -1.5) [out=15, in=270] to (2,0) [out=90, in= -15] to (0.5, 1.5);

\draw (-0.5, 0.5) to (-1, 1);

\draw (-0.5, -0.5) to (-1, -1);

\draw  (-4,1.5) to (-4.75, 2.25);

\node [ above right] at (-4.75, 1.75) {$E$};

\node [ right] at (-6.1,-4.1) {$E'$};

\node [right] at (0,0) {$R$};

\draw  (-4,-1.5) to (-4.75, -2.25) to (-5.25, -3.25) to (-6, -4);

\draw[fill] (-4,1.5)  circle [radius=0.075]; 
\draw[fill] (-4.75, 2.25) circle [radius=0.075]; 
\draw[fill] (-4,-1.5)  circle [radius=0.075]; 
\draw[fill] (-4.75, -2.25) circle [radius=0.075]; 
\draw[fill] (-5.25, -3.25)  circle [radius=0.075]; 

\draw[fill] (-6, -4)  circle [radius=0.075];

\draw [fill=dgray] (-4.5,0) [out= 105, in= 0] to (-6,1.5)  [out=180, in= 90] to  (-7.5, 0) [out=270, in =180] to (-6,-1.5) [out=0, in= 255] to (-4.5,0); 

\node [left] at (-5.15,0) {$R'$};
\draw[fill] (-4.5,0)  circle [radius=0.075]; 
\node [left] at (-1.65,0) {$D$};

\draw[fill]  (-0.5, 0.5)  circle [radius=0.075];

\draw[fill]  (-0.5, -0.5)  circle [radius=0.075]; 

\draw[fill]  (0.5,1.5) circle [radius=0.075]; 

 \draw[fill] (0.5,-1.5) circle [radius=0.075]; 

\end{tikzpicture}
\caption{Diagram $D$ with spurs $E$ and $E'$, a $0$--shell $R'$ and a $3$--shell $R$. }\label{fig:spurshell}
\end{figure}

\begin{tw}[Greendlinger's Lemma]\cite[Theorem 9.4]{McWi}
\label{tw:greendlin} Let $D$ be a $C(6)$ disc diagram. Then one of the following holds:

\begin{enumerate}

	\item $D$ is a single $0$--cell or it has exactly one $2$--cell,


	\item $D$ has at least two 
	spurs or/and $i$--shells with $i \leqslant 3$.

\end{enumerate}
\end{tw}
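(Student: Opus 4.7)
The plan is to prove the dichotomy by a combinatorial Gauss--Bonnet (curvature) argument, which is the standard route for classical $C(6)$ diagrams and adapts directly here because, by Proposition~\ref{reduceddiagram}, the reduced diagram $D$ inherits the $C(6)$ condition: no boundary cycle of a $2$--cell of $D$ decomposes as a concatenation of fewer than $6$ pieces. Assume, for contradiction, that $D$ has at least two $2$--cells and at most one feature among spurs or $i$--shells with $i\leq 3$. I would first collapse any interior vertices of valence~$2$, which do not affect pieces, shells or spurs, so that every interior vertex of $D$ has valence at least~$3$.

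Angles are assigned to corners of $2$--cells in the following canonical way: at each corner $c$ of a $2$--cell $R$ located at a vertex $v$ of valence $d(v)$ in $D$, set $\alpha(c) = 2\pi / d(v)$. Define
\begin{equation*}
\kappa(v) = 2\pi - \sum\nolimits_{c\text{ at }v}\alpha(c)\quad\text{(}v\text{ interior)}, \qquad \kappa(v) = \pi - \sum\nolimits_{c\text{ at }v}\alpha(c)\quad\text{(}v\text{ boundary)},
\end{equation*}
and $\kappa(R) = \sum_{c\in R}\alpha(c) - \pi(n(R)-2)$, where $n(R)$ is the number of corners of $R$. A direct bookkeeping using $\chi(D) = 1$ gives the combinatorial Gauss--Bonnet identity
\begin{equation*}
\sum_{v}\kappa(v) \;+\; \sum_{R}\kappa(R) \;=\; 2\pi.
\end{equation*}
By the very choice of $\alpha$, every interior vertex has $\kappa(v)=0$. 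The task is then to show, under the contradiction hypothesis, that every remaining summand is $\leq 0$, save for the single exceptional feature (shell or spur), whose contribution is strictly less than $2\pi$.

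For this I would use the $C(6)$ condition as follows. An interior $2$--cell has every corner at a vertex of valence $\geq 3$ and carries at least $6$ distinct pieces along its boundary; consolidating each piece into a single ``effective corner'' (by again collapsing intermediate degree--$2$ corners, which are internal to a piece), one finds $\sum_{c\in R}\alpha(c) \leq \sum 2\pi/3 \leq (2/3)\pi n_{\mathrm{eff}}(R)$ and $n_{\mathrm{eff}}(R) \geq 6$, whence $\kappa(R)\leq 0$. A boundary $2$--cell which is not a $j$--shell for $j\leq 3$ has at least $4$ interior pieces and at least one additional piece on its outer path, and the same computation together with the boundary vertex contribution yields non-positive total. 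The contributions of spurs and of $i$--shells with $i\leq 3$ are bounded individually by a quantity strictly smaller than $2\pi$. If at most one such exceptional feature is present, the right-hand side of Gauss--Bonnet cannot be attained, yielding the contradiction.

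The main obstacle is the careful calibration of ``effective corners'' along boundary $2$--cells and the sharp bookkeeping of boundary vertex curvatures: pieces may share vertices in intricate ways, and one must ensure that the curvature saved at a boundary vertex matches the curvature spent on the adjacent $2$--cells. The delicate inequality is verifying that a non-shell boundary $2$--cell, together with its boundary vertices, contributes $\leq 0$ to the total; this is where the $C(6)$ hypothesis is fully used, since $4$ interior pieces plus the outer path must be compared against the Euclidean polygon angle sum $\pi(n(R)-2)$. Once this per-cell inequality is in place, the global contradiction is immediate.
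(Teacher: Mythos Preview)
The paper does not give its own proof of this statement: Theorem~\ref{tw:greendlin} is quoted verbatim from \cite[Theorem 9.4]{McWi} and used as a black box (the paper even remarks that the quoted version is weaker than the original). So there is nothing in the paper to compare your argument against.

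That said, your combinatorial Gauss--Bonnet outline is the standard route to Greendlinger's Lemma and is essentially correct in spirit. Two small corrections. First, the appeal to Proposition~\ref{reduceddiagram} is misplaced: the hypothesis of the theorem is already that $D$ is a $C(6)$ disc diagram, not that $D$ is a reduced diagram in some ambient complex, so no transfer of the $C(6)$ property is needed. Second, the claim that a single spur or $i$--shell with $i\leq 3$ contributes ``strictly less than $2\pi$'' needs the sharp bookkeeping you flag as delicate: with your angle assignment a spur tip contributes exactly $\pi$, and an $i$--shell together with the boundary vertices on its outer path contributes at most $\pi$ (this is where one uses that the outer path, not being a concatenation of fewer than $6-i\geq 3$ pieces by $C(6)$, forces enough corners). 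One exceptional feature therefore accounts for at most $\pi$, not $2\pi$, and the contradiction follows. McCammond--Wise organize the same curvature deficit differently, via their ``fans and ladders'' structure theory, which yields a finer classification than the dichotomy stated here.
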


The statement of Theorem~\ref{tw:greendlin} is actually weaker than the quoted Theorem 9.4 of~\cite{McWi}, which distinguishes two further subcases of case (2). We present the simplified statement for the sake of clarity, as it is sufficient for our purposes.

\begin{lem}\label{lem:intersections}Let $X$ be a simply connected $C(6)$ thickened graphical complex.
Then the following hold:

\begin{enumerate}[label=(\roman*)]

\item \label{1intersect} For every relator $\Gamma_i$, the map $\Gamma_i \to X$ is an embedding.
\item \label{2intersect} The intersection of (the images of) any two relators is either empty or it is a finite tree.
\item \label{3intersect} If three relators pairwise intersect then they triply intersect and the intersection is a finite tree.

\end{enumerate}

\end{lem}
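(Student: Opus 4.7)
Plan: The three assertions are proved simultaneously by the same template. In each case a failure of the claim is witnessed by a closed immersed null-homotopic loop $C \to X$. By Lemma~\ref{l:vank} we pick a minimal-area reduced disc diagram $D \to X$ with $\partial D \cong C$; by Proposition~\ref{reduceddiagram}, $D$ is itself $C(6)$, so Greendlinger's Lemma (Theorem~\ref{tw:greendlin}) yields either a spur or an $i$-shell with $i \leqslant 3$. Spurs are excluded because $C$ is immersed. For an $i$-shell $R$ with outer path $Q$ lying on $\partial D$, the boundary cycle $\partial R = P_1 \cdots P_i Q$ factors through the attaching relator $\Gamma_k$ of $R$; whenever we can argue that $Q$ is itself a piece in $X$, the cycle $\partial R$ decomposes into at most $4$ pieces, contradicting the $C(6)$ condition.

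For~\ref{1intersect}, suppose $\Gamma_i \to X$ fails to be injective, and take a shortest path $\alpha \to \Gamma_i$ whose endpoints are identified in $X$; set $C$ to be the image of $\alpha$, which is immersed since $\alpha$ is simple and $\Gamma_i \to X$ is an immersion. The outer path $Q \subseteq C$ then factors through $\Gamma_i$ via $\alpha$ and through $\Gamma_k$ via $R$, so $Q$ is a piece as long as $\Gamma_k \neq \Gamma_i$; in the exceptional case it turns out that one can replace the subpath $Q$ of $\alpha$ by the segment of $\Gamma_i$ corresponding to the other lift of $\partial R$, producing a strictly shorter $\alpha$ and contradicting minimality. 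For~\ref{2intersect}, if $\Gamma_i \cap \Gamma_j$ is disconnected, pick vertices $a$, $b$ in distinct components and shortest paths $\beta_i \to \Gamma_i$, $\beta_j \to \Gamma_j$ from $a$ to $b$; the two paths have distinct images in $X$ (otherwise the common image would lie in the intersection and connect $a$ to $b$), so $C = \beta_i \beta_j^{-1}$ is a non-trivial immersed null-homotopic loop, and the shell analysis applies because the outer path of any shell of $D$ lies on only one of $\beta_i, \beta_j$, so at least one of $\{\Gamma_i,\Gamma_j\}$ differs from $\Gamma_k$. An essential cycle in $\Gamma_i \cap \Gamma_j$ is dealt with analogously: it immerses in $X$ as a null-homotopic loop, and any shell's outer path factors through both $\Gamma_i$ and $\Gamma_j$ simultaneously, so it is a piece with respect to whichever of the two differs from $\Gamma_k$. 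Finiteness of the resulting tree follows from an analogous shortest-path argument applied to a hypothetical unbounded ray. For~\ref{3intersect}, select a vertex in each of the three pairwise intersections (which are non-empty trees by \ref{2intersect}) and join consecutive vertices by shortest paths inside the corresponding three relators to form a triangular immersed loop; the shell produced by Greendlinger forces two of the three geodesic sides to meet at a vertex, which must then lie in all three relators, and the tree property of the triple intersection is inherited from~\ref{2intersect}.

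The main obstacle is the subtle clause in Definition~\ref{def:graphpiece} permitting a combinatorial map $\thi{\Gamma_a} \to \thi{\Gamma_b}$ that makes the two lifts of $Q$ compatible: in that exceptional case $Q$ need not be a piece even though it admits two lifts. We handle it by combining the reducedness of $D$ (via Lemma~\ref{l:vank}\ref{l:vk2}) with the minimality of the distinguished shortest paths chosen in each construction; the compatibility map would allow us to reroute the witnessing loop through the ``bigger'' thickened relator and thereby produce a strictly smaller counterexample, contradicting our minimal choice. This bookkeeping, rather than the Greendlinger step itself, is the delicate part of the proof.
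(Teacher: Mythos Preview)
Your template (reduced minimal diagram $\Rightarrow$ Greendlinger $\Rightarrow$ shell analysis) is exactly the paper's, but the minimization you run is the wrong one. You minimise the \emph{length} of the witnessing paths ($\alpha$, $\beta_i$, $\beta_j$, \ldots) and then, in the ``exceptional'' case where the outer path $Q$ fails to be a piece, you claim that rerouting through the compatible relator produces a \emph{strictly shorter} path. In the non-metric $C(6)$ setting there is no bound on the length of a piece, so replacing $Q$ by the inner arc $P_1\cdots P_i$ of the shell need not shorten anything; your reduction only works under $C'(1/6)$. The paper instead minimises the \emph{area of the disc diagram} over \emph{all} witnessing configurations (all paths $P$ of the given type in \ref{1intersect}; all pairs of relators together with a generator of $\pi_1$ of the glued thick cells in \ref{2intersect}; all triples in \ref{3intersect}). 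Pushing $Q$ through the shell always drops the area by one, so this minimality genuinely bites. Note that the paper's minimisation is not over diagrams for a fixed boundary: when one pushes through, the witnessing relators themselves may change (e.g.\ from $(\Gamma_1,\Gamma_2)$ to $(\Gamma_1,\Gamma_3)$), and the argument must accommodate that.

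For \ref{2intersect} and \ref{3intersect} there is a second gap: you assert that ``the outer path of any shell of $D$ lies on only one of $\beta_i,\beta_j$'', but this is neither proved nor true in general---with two shells and two corner vertices it is perfectly possible that each outer path straddles one corner. The paper does not try to avoid the corners. Instead it uses area-minimality to show that \emph{every edge} of $Q$ is a piece, so $Q$ is a concatenation of $\geqslant 3$ pieces, which forces $Q$ to pass through two distinct components of the intersection (say $U$ then $V$), giving a subpath $U'WV'$. A further case split (is $U'WV'$ closed or not?) then either yields a cycle covered by $\leqslant 3$ pieces, or replaces the original pair $(\Gamma_1,\Gamma_2)$ by a new pair $(\Gamma_1,\Gamma_3)$ with a smaller-area witness. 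The analogous manoeuvre in \ref{3intersect} replaces the triple. Your sentence ``the shell \ldots\ forces two of the three geodesic sides to meet at a vertex, which must then lie in all three relators'' skips exactly this analysis, which is the real content of the lemma.
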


\begin{proof}

 (i) Assume conversely that $\Gamma_i \to X$ is not an embedding. Therefore there exist two distinct vertices of $\Gamma_i$ which are mapped to a single vertex of $X$. Let $P \to \Gamma_i$ be a path joining these vertices. We can assume that $P$ is an immersion. By construction $P$ is non-closed and the projection $P \to \Gamma_i \to X$ is closed. Since $X$ is simply connected, the path $P\to X$ is homotopically trivial. By Lemma~\ref{l:vank}.(\ref{l:vk1}) there exists a disc diagram  $D \to X$ with the boundary cycle $P \to X$. Assume that $D$ is chosen such that the area of $D$ is minimal among all examples of paths $P$ of this type. Hence by Lemma~\ref{l:vank}.(\ref{l:vk3}) diagram $D$ is reduced, and therefore by Proposition~\ref{reduceddiagram} it satisfies $C(6)$ condition.

Thus one of the assertions of Theorem~\ref{tw:greendlin} applies to $D$. Clearly $D$ cannot be trivial as in that case the path $P$ would be trivial. It also does not contain spurs since $P \to X$ is an immersion. Thus 
it consists of either a single $2$--cell or it contains at least one $i$--shell $R$ with $i \leqslant 3$ and outer path $Q$, such that the endpoint of $P \to X$ is not contained in the interior of $Q$.

In the case when $D$ consists of a single $2$--cell, its boundary path $P\to D \to X$ lifts to a closed path in some $\Gamma_j$. 
This lift cannot be equal to the path $P \to \Gamma_i$ we started with, since by the assumption $P\to \Gamma_i$ is not a closed path. Hence $P\to X$ is a piece and since it is a non-trivial closed path, this violates the $C(6)$ hypothesis.

Now suppose $R$ is an $i$--shell with $i \leqslant 3$ and the interior of its outer path $Q \to X$ avoids the endpoint of $P \to X$, see Figure~\ref{fig:1intersect}. We claim that $Q \to X$ is a piece. If it is not the case, then the lift $Q \to \Gamma_i$ determined by the path $P \to \Gamma_i$ extends to a lift $\partial R \to \Gamma_i$.

\begin{figure}[!h]
\centering
\begin{tikzpicture}[scale=0.35]
\definecolor{lgray}{rgb} {0.850,0.850,0.850}
\definecolor{dgray}{rgb} {0.650,0.650,0.650}


\draw   (0,1.5) [out=105, in=15]  to (-2,3) [out=195, in=90]  to (-5,-1) [out=270, in=180] to (0,-5.5) [out=0, in=270] to (5,-1)[out=90, in=-15] to (-2+4,3) [out=165, in=75] to (-4+4,1.5);

\draw[fill] (0,1.5)  circle [radius=0.075]; 


\draw [fill=lgray] (0,1.5) [out=249, in=60] to (-2.5, -0.5) [out=240, in =135] to (-2.25, -3) [out=315, in=180] to (0,-4) [out=0, in=225] to (2.25, -3) [out=90, in=270] to (2.5, -0.5) [out=135, in=300] to (0, 1.5);

\node [left] at (-2.5, -0.5) {$P$};
\draw[->] (-2.5, -0.5) to (-2.501, -0.502)  ; 

\draw [fill=dgray] (2.5, -0.5) to (1, -1) to (1,-2) to (2.25, -3) [out=45, in=270] to (2.9, -1.5) [out=90, in=-45] to (2.5, -0.5); 

\draw[fill] (2.5, -0.5) circle [radius=0.075]; 

\draw[fill]  (2.25, -3) circle [radius=0.075];

\node [right] at (2.9, -1.5) {$Q$};
\draw[->] (2.9, -1.5) to (2.9, -1.499);

\draw (1,-1) to (0.5, -0.5);
\draw(1,-2) to (0.5, -2.5);
\node [right] at (1+0.15,-1.5 -0.15) {$R$};

\node [right] at (1.25, 1.5) {$\Gamma_i$};

\node [left] at (0,-1.5) {$D$};

\end{tikzpicture}
\caption{Diagram $D$ with an $i$--shell $R$.}\label{fig:1intersect}
\end{figure}
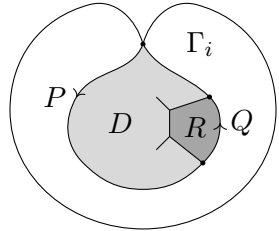

Thus we can remove from $D$ the open cell $R$ together with the interior of the path $Q$, and obtain a lower area diagram $D'$ whose boundary path $P'$ lifts to a non-closed path in $\Gamma_i$. If the resulting path $P' \to \Gamma_i$ is not immersed, we can fold the boundary of $D'$ until all back-tracks are removed. The obtained diagram $D'$ contradicts the minimality of $D$ and hence proves the claim.

Given that $Q\to X$ is a piece,  observe that the cycle $\partial R \to X$ is the concatenation of at most $4$ pieces as $R\to D$ is an $i$-shell with $i \leqslant 3$. This contradicts the $C(6)$ hypothesis and hence establishes \ref{1intersect}.

\vspace{0.25cm}
(ii) Given a relator $\Gamma_i$ recall that a thick cell $Th(\Gamma_i)$ is a $2$--complex obtained by gluing $2$--cells along all immersed cycles in $\Gamma_i$. We shall argue by contradiction.
Let $\Gamma_1$ and $\Gamma_2$ be two relators that meet along maximal disjoint connected subgraphs $U$ and $V$ and let \[C= Th(\Gamma_1) \cup_{U \sqcup V} Th(\Gamma_2) \smallskip \]
be a $2$--complex obtained by gluing $Th(\Gamma_1)$ and $Th(\Gamma_2)$ along $U$ and $V$. Note that there is an immersion $C \to X$ and consider the closed immersed path $P \to C \to X$ such that $P \to C$ is a generator for the fundamental group $\pi_1(C) \cong \mathbb{Z}$. Let $D \to X$ be a disc diagram whose boundary cycle is $P$ and assume that the area of $D$ is minimal among all examples of this type (i.e.\ among all possible pairs $\Gamma_1$ and $\Gamma_2$ and paths $P$ as above). Hence $D$ is a non-trivial diagram without spurs and the map $D \to X$ is reduced. By Theorem~\ref{tw:greendlin} there is an $i$--shell $R$ in $D$ with $i \leqslant 3$ (if $D$ consists of a single $2$--cell we treat this cell as a $0$--shell). Let $Q$ denote the outer path of $R$ in $D$. 

We claim that any edge of $Q$ is a piece in $X$. To show this assume the contrary, that there is an edge $E \to Q$ that is not a piece. Without loss of generality we can assume that the image of $E$ in $C$ (determined by the path $P \to C$) is contained in the relator $\Gamma_1$. Since $E \to X$ is not a piece, there exists a lift of the boundary $\partial R$ to $\Gamma_i$ extending the lift $E \to \Gamma_1$. Therefore as in the case \ref{1intersect} above, we can remove from $D$ the open cell $R$ together with the interior $\mathrm{Int}(Q)$ and obtain a lower area diagram $D'$ whose boundary path $P'$ is 
obtained from $P$ by pushing the subpath $Q$ through $R$. The paths $P$ and $P'$ are homotopic in $C$ and therefore $P'$ is a generator for $\pi_1(C)$.
Thus $D'$ is a lower area counterexample which contradicts the minimality of $D$ and hence proves the claim (if $D$ consists of a single $2$--cell $R$, then $Q$ is equal to the entire boundary $\partial R$ and therefore pushing $Q$ through $R$ collapses $D$ to a trivial cycle, hence contradicting the fact that $U$ and $V$ are disjoint).

Hence the path $Q\to X$ is the concatenation of $n$ pieces, where $n$ is a positive integer. Since $R$ is an $i$--shell with $i \leqslant 3$, the $C(6)$ hypothesis implies that $n \geqslant 3$. The only situation when this can happen (up to changing roles of $\Gamma_1$ and $\Gamma_2$) is when the path $Q \to X$ travels in $\Gamma_1$ then passes to $\Gamma_2$ through the subgraph $U$ and it comes back to $\Gamma_1$ through the subgraph $V$. More precisely the path $Q$ has a subpath that is the concatenation $U'WV'$ where $U'$ and $V'$ are paths in $\Gamma_1$ which are not entirely contained in $\Gamma_1 \cap \Gamma_2$ and $W$ is path in $\Gamma_2$ such that its initial vertex belongs to the subgraph $U$ and its terminal vertex belongs to the subgraph $V$, see Figure~\ref{fig:2intersect}.

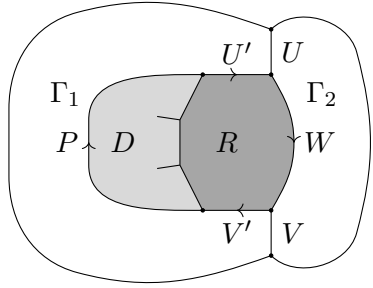
\begin{figure}[!h]
\centering
\begin{tikzpicture}[scale=0.3]
\definecolor{lgray}{rgb} {0.850,0.850,0.850}
\definecolor{dgray}{rgb} {0.650,0.650,0.650}


\draw (0,3) to (0,5) [out= 45, in= 105] to (3.75,4) [out= 285, in= 75] to (3.75,-4) [out=-105, in= 315] to (0,-5) [out=90, in=270] to (0,-3) to (0.5,0) to (0,3);

\node [right] at (0, 4) {$U$};
\node [right] at (0,-4) {$V$};


\draw [fill=lgray] (-3,3) [out= 180, in=90] to (-8,1) [out=270, in=90] to (-8,-1) [out=270, in=180] to (-3,-3) [out=90, in=270] to (-3,3);

\draw[<-] (-8,0) to (-8,-0.001);
\node [left] at (-8,0) {$P$};

\draw [fill=dgray] (0,3) to (-3, 3) to (-4, 1) to (-4, -1) to (-3,-3) to (0,-3) [out=60, in= 270] to (1,0) [out=90, in=300] to (0,3);

\draw[->] (-1.5,3) to (-1.499,3);
\node [above] at (-1.5,3) {$U'$};

\draw[<-] (-1.5,-3) to (-1.499,-3);
\node [below] at (-1.5,-3) {$V'$};

\node [right] at (1,0) {$W$};
\draw[->] (1,0) to (1,-0.001);

\draw (-4,1) to (-5, 1.15);
\draw (-4,-1) to (-5,-1.15);


\draw (0,5) [out=160, in=15] to (-8.5,5.75) [out=195, in=90] to (-11.5,1.5) [out= 270, in=90] to (-11.5,-1.5) [out=270, in=165] to (-8.5,-5.75) [out=-15, in=200] to (0,-5);


\draw[fill]  (0, 3) circle [radius=0.075];

\draw[fill]  (0, 5) circle [radius=0.075];

\draw[fill]  (0, -3) circle [radius=0.075];

\draw[fill]  (0, -5) circle [radius=0.075];

\draw[fill]  (-3, -3) circle [radius=0.075];
\draw[fill]  (-3, 3) circle [radius=0.075];

\node [right] at (1.1, 2.15) {$\Gamma_2$};
\node [right] at (-10.15, 2.15) {$\Gamma_1$};

\node [left] at (-5.5,0) {$D$};

\node [left] at (-1,0) {$R$};

\end{tikzpicture}
\caption{The outer path of the $i$--shell $R$ is the concatenation $U'WV'$.}\label{fig:2intersect}
\end{figure}

Notice that if both endpoints of $W$ belong to one component of $\Gamma_1 \cap \Gamma_2$, say $U$ (but $W$ is not entirely contained in $U$), then we have a contradiction, as taking any path $W' \to U$ connecting endpoints of $W$ gives a cycle $WW' \to X$ that is a concatenation of $2$ pieces. 

Hence assume that we are in the situation shown in Figure~\ref{fig:2intersect}. We have two cases to consider:

\begin{itemize}
	\item[a)] The path  $U'WV' \to X$ is not closed. Let $\Gamma_3$ denote the relator containing a lift of the cycle $\partial R$ and let $\overline{U}$ and $\overline{V}$ be the maximal connected components of $\Gamma_1 \cap \Gamma_3$ that contain paths $U'$ and $V'$ respectively. We claim that the intersection $\overline{U} \cap \overline{V}$ is empty. Assume it is not the case and pick a path $T \to \overline{U} \cup \overline{V} \subset \Gamma_1 \cap \Gamma_3$ joining the endpoint of $V'$ to the origin of $U'$. The concatenation $U'WV'T \to X$ is then a closed path that is the concatenation of two pieces: $W$ (lifts to $\Gamma_2$ and $\Gamma_3$)  and $V'TU'$ (lifts to $\Gamma_1$ and $\Gamma_3$). This is a contradiction provided that the cycle $U'WV'T \to X$ is non-trivial, i.e.\ it does not factor through a map to a tree. However, if it was trivial then $T$ would be equal to $(U'WV')^{-1}$ and hence there would be a path in $\Gamma_1 \cap \Gamma_2$ joining subgraphs $U$ and $V$, contradicting the assumption that $U$ and $V$ are disjoint. Consequently, the intersection $\overline{U} \cap \overline{V}$ is empty.

Thus we can replace $C$ with $C'= Th(\Gamma_1) \cup_{ \overline{U} \sqcup \overline{V}} Th(\Gamma_3)$ and $D$ with $D'=D \setminus (\mathrm{Int}(R) \cup \mathrm{Int}(Q))$ and the path $P$ with the path $P'$ obtained by pushing the subpath $Q$ through $R$. After removing possible back-tracks (in order for $P'$ to be an immersion), we get a lower area counterexample.

	\item[b)] The path  $U'WV' \to X$ is closed. Let $\Gamma_3$ be the same as in case a) above. Then we get a contradiction as $U'WV' \to X$  is the concatenation of two pieces: $V'U'$ and $W$. Notice that the terminal vertex of $V'$ and the initial vertex of $U'$ 
	lift to the same vertex of $\Gamma_3$ for otherwise $\Gamma_3 \to X$ is not an embedding what contradicts (i). 
\end{itemize}

This shows that the intersection $\Gamma_1 \cap \Gamma_2$ is connected. Notice that there is no simple cycles in $\Gamma_1 \cap \Gamma_2$ as any simple cycle $C \to \Gamma_1 \cap \Gamma_2$ would be a piece itself. Therefore $\Gamma_1 \cap \Gamma_2$ is a tree.

\vspace{0.15cm}

(iii) Let $\Gamma_1$, $\Gamma_2$ and $\Gamma_3$ be relators that pairwise intersect but do not triply intersect, and let $U_{ij}= \Gamma_i \cap \Gamma_j$ for $i,j \in \{1,2,3\}$. Notice that the $U_{ij}$'s are disjoint from each other as otherwise there would be a triple intersection. Let $C$ denote the union of $Th(\Gamma_1)$, $Th(\Gamma_2)$ and $Th(\Gamma_3)$ along subgraphs $U_{12}$, $U_{13}$ and $U_{23}$ and let $P \to C \to X$ be an immersed path such that $P\to C$ is a generator for $\pi_1(C)$. Let $D\to X$ be a disc diagram for $P$ and suppose that the area of $D$ is minimal among all examples of triples $\Gamma_1$, $\Gamma_2$ and $\Gamma_3$ and paths $P$ as above. Thus $D \to X$ is reduced. Proceeding as in the proof of (ii) we conclude that $D$ contains an $i$--shell $R$ with $i\leqslant 3$, with outer path $Q$, such that every edge of $Q$ is a piece. Since $R$ is an $i$--shell with $i\leqslant 3$, the $C(6)$ hypothesis implies that $Q$ has a subpath $V_1V_2V_3$ such that $V_1$, $V_2$ and $V_3$ are non-trivial paths in $\Gamma_1$, $\Gamma_2$ and $\Gamma_3$ respectively, neither of them being contained entirely in an appropriate double intersection, and such that the origin of $V_2$ belongs to $U_{12}$ and the endpoint of $V_2$ belongs to $U_{23}$, see Figure~\ref{fig:3intersect}.

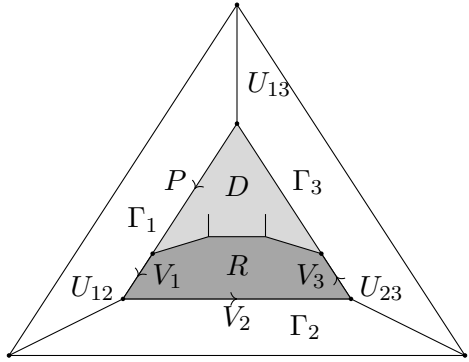
\begin{figure}[!h]
\centering
\begin{tikzpicture}[scale=0.3]
\definecolor{lgray}{rgb} {0.850,0.850,0.850}
\definecolor{dgray}{rgb} {0.650,0.650,0.650}


\draw   [fill=lgray] (-5,-3) to (5, -3) to (0,4.75) to (-5,-3);

\draw[->] (-1.85, 1.875) to (-1.855,1.867);

\node [left] at (-1.75, 2.25) {$P$};

\draw [fill=dgray]  (-3.7,-1) to (-1.25, -0.25) to (1.25, -0.25)  to (3.7,-1) to (5,-3) to (-5,-3) to (-3.7, -1);

\draw[fill] (3.7,-1) circle [radius=0.075];
\draw[fill] (-3.7,-1) circle [radius=0.075];

\draw[<-]  (4.35, -2) to (4.356, -2.01);

\node [left] at (4.35, -2) {$V_3$};
\node [right] at (-4.15, -2) {$V_1$};
\node  at (0, -3.85) {$V_2$};

\draw[->]  (0, -3) to (0.001, -3);

\draw[->]  (-4.35, -2) to (-4.356, -2.01);

\draw  (-1.25, -0.25) to (-1.25, 0.75);

\draw  (1.25, -0.25) to (1.25, 0.75);

\draw (-5,-3) to (-10, -5.5);

\draw[fill]  (-5,-3)  circle [radius=0.075];

\draw[fill]  (-10, -5.5)  circle [radius=0.075];

\node [above right] at (-7.75, -3.5) {$U_{12}$};

\node [above left] at (7.75, -3.5) {$U_{23}$};

\node [right] at (0, 6.5) {$U_{13}$};

\draw[fill]  (5,-3)  circle [radius=0.075];
\draw[fill]  (10, -5.5)  circle [radius=0.075];

\draw (5,-3) to (10, -5.5);

\draw[fill]  (0, 4.75)  circle [radius=0.075];
\draw[fill]  (0, 10)  circle [radius=0.075];

\draw (0,4.75) to (0, 10);

\draw (-10,-5.5) to (10, -5.5) to (0,10) to (-10,-5.5);

\node [left] at (4.25, 2.15) {$\Gamma_3$};
\node [right] at (-5.25, 0.5) {$\Gamma_1$};
\node  at (3, -4.25) {$\Gamma_2$};

\node at (0, 2) {$D$};

\node  at (0,-1.5) {$R$};

\end{tikzpicture}
\caption{The outer path of the $i$--shell $R$ is the concatenation $V_1V_2V_3$.}\label{fig:3intersect}
\end{figure}

Similarly as in (ii) we consider two cases:
\begin{itemize}
\item[a)] The path $V_1V_2V_3 \to X$ is not closed. Let $\Gamma_4$ denote the relator containing a lift of the cycle $\partial R$ and let $U_{14}=\Gamma_1 \cap \Gamma_4$ and $U_{34}=\Gamma_3 \cap \Gamma_4$. We claim that the triple intersection $\Gamma_1 \cap \Gamma_3 \cap \Gamma_4$ is empty. Assume conversely that there exists a vertex $v \in \Gamma_1 \cap \Gamma_3 \cap \Gamma_4$. Choose paths $T_1 \to U_{14} \to X$ joining $v$ to the initial vertex of $V_1$ and $T_2 \to U_{34} \to X$ joining the terminal vertex of $V_3$ to $v$. These paths exist because by (ii) the intersections $U_{14}$ and $U_{34}$ are connected. The concatenation $T_1V_1V_2V_3T_2 \to X$ is a non-trivial closed path that is the concatenation of three pieces: $T_1V_1$, $V_2$ and $V_3T_2$. This contradicts the $C(6)$ hypothesis and hence proves the claim.

Now replace $C$ with $C'$ which is the union of $Th(\Gamma_1)$, $Th(\Gamma_3)$ and $Th(\Gamma_4)$ along the subgraphs $U_{13}$, $U_{14}$ and $U_{34}$, replace $D$ with $D'=D \setminus (\mathrm{Int}(R) \cup \mathrm{Int}(Q))$ and replace the path $P$ with the path $P'$ obtained by pushing the subpath $Q$ through $R$. This gives a lower area counterexample.

\item[b)] The path $V_1V_2V_3 \to X$ is closed. Then it is the concatenation of $3$ pieces, hence we get a contradiction with the $C(6)$ hypothesis.
\end{itemize}

It remains to show that the intersection $\Gamma_1 \cap \Gamma_2 \cap \Gamma_3$ is a tree. First we show that it is connected. Assume the converse and let $u$ and $v$ be vertices lying in different connected components of  $\Gamma_1 \cap \Gamma_2 \cap \Gamma_3$. Since double intersections are connected we can pick paths $P \to \Gamma_1 \cap \Gamma_2$ and $Q \to \Gamma_1 \cap \Gamma_3$ both joining $u$ to $v$. The concatenation $PQ^{-1}$ is then a closed path which is non-trivial since $u$ and $v$ lie in different connected components of $\Gamma_1 \cap \Gamma_2 \cap \Gamma_3$. Since $PQ^{-1}$ is the concatenation of two pieces we get a contradiction. Therefore $\Gamma_1 \cap \Gamma_2 \cap \Gamma_3$ is connected. The proof that it is a tree is the same as in case \ref{2intersect} above. 
\end{proof}

\begin{lem}\label{lem:flag} Let $X$ be a simply connected $C(6)$ thickened graphical complex and consider a finite collection of relators $\{\Gamma_i \to X\}_{i \in \{0, \ldots, n\}}$. If for every $i,j \in \{0,\ldots, n\}$ the intersection $\Gamma_i \cap \Gamma_j$ is non-empty then the intersection $\bigcap_{i \in \{0, \ldots, n\}} \Gamma_i$ is a non-empty tree.
\end{lem}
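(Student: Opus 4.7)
My proposal is to argue by induction on $n \geq 1$, with base cases $n = 1$ and $n = 2$ given by parts (ii) and (iii) of Lemma~\ref{lem:intersections}. The heart of the argument will be Helly's theorem for subtrees of a tree: a finite family of subtrees has a common point precisely when its members pairwise intersect, and when this happens the intersection is again a subtree.

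For the inductive step with $n \geq 3$, I will fix the tree $T := \Gamma_0 \cap \Gamma_1$ (a tree by (ii)) and consider, for each $i \in \{2, \ldots, n\}$, the subcomplex $S_i := T \cap \Gamma_i = \Gamma_0 \cap \Gamma_1 \cap \Gamma_i$, which is a non-empty subtree of $T$ by (iii). Since $\bigcap_{i=2}^n S_i$ coincides with the full intersection $\bigcap_{i=0}^n \Gamma_i$, it suffices by Helly to verify that the $S_i$'s pairwise intersect; the conclusion that the total intersection is a tree will then come for free, as an intersection of subtrees of $T$. Now $S_i \cap S_j = \Gamma_0 \cap \Gamma_1 \cap \Gamma_i \cap \Gamma_j$ is the intersection of a four-element subcollection of pairwise-intersecting relators; when $n \geq 4$ this subcollection has fewer than $n+1$ members, so the induction hypothesis applies and yields non-emptiness.

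This leaves the four-relator case $n = 3$ as the only situation requiring a direct geometric argument, since Helly is vacuous for just two subtrees of $T$. My plan here is to mimic the proof of Lemma~\ref{lem:intersections}(iii): assume $\bigcap_{i=0}^3 \Gamma_i = \emptyset$; by (iii) all four triples $T_{ijk}$ are non-empty trees, and any two of them that live in a common pairwise-intersection tree must be disjoint subtrees of it. Connecting selected vertices of the $T_{ijk}$'s by the unique geodesics in the six trees $\Gamma_i \cap \Gamma_j$ produces a non-trivial closed path $P$ that lies entirely in a single relator, say $\Gamma_0$, and whose every edge is a piece in $X$ (since each edge lies in some $\Gamma_0 \cap \Gamma_k$). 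Applying Greendlinger's Lemma~\ref{tw:greendlin} to a minimal disc diagram for $P$ (which is $C(6)$ by Lemma~\ref{l:vank}\ref{l:vk3} and Proposition~\ref{reduceddiagram}) yields an $i$-shell with $i \leq 3$ whose outer path is a concatenation of pieces; its boundary cycle therefore decomposes into fewer than six pieces, contradicting the $C(6)$ hypothesis.

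The hard part will be the base case $n = 3$: selecting the loop $P$ so that it is genuinely non-trivial in $X$ (any degeneracy would itself exhibit a point of the fourfold intersection, completing the argument for free) and tracking carefully which relator each segment of $P$'s image in the disc diagram visits, so that the Greendlinger analysis terminates exactly as it does in the proof of (iii). Once $n = 3$ is established the Helly step proceeds uniformly for all larger $n$, and the inductive loop closes.
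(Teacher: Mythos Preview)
Your inductive scheme is sound and the $n=3$ case can indeed be established, but the argument you sketch for it is more roundabout than necessary, and one step is not correct as written. Once your loop $P$ sits inside a single relator $\Gamma_0$ and is the concatenation of three paths $P_k\subset\Gamma_0\cap\Gamma_k$ ($k=1,2,3$), you do not need Greendlinger or disc diagrams at all: $P$ is a cycle that factors through $\Gamma_0$ and is a concatenation of three pieces, so if it is non-trivial the $C(6)$ condition (Definition~\ref{de:smallcancgraph}) yields the contradiction directly. Your Greendlinger sentence (``outer path is a concatenation of pieces; its boundary cycle therefore decomposes into fewer than six pieces'') is not justified as stated---knowing each edge of the outer path is a piece does not by itself bound the number of pieces covering $\partial R$. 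You also refer to ``the six trees $\Gamma_i\cap\Gamma_j$'', but a path using $\Gamma_1\cap\Gamma_2$ would leave $\Gamma_0$; only the three trees $\Gamma_0\cap\Gamma_k$ are relevant. The residual work is to check that if the cycle $P_1P_3P_2$ is trivial then some vertex lies in all four relators; this follows by tracking the cancellations, since an edge cancelled at the junction $v_{01k}$ lies in $T_{01k}$, and full cancellation forces two such triple intersections to meet.

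The paper takes a different route that avoids both the induction and the separate $n=3$ analysis. It proves directly, for arbitrary $n$, that the graph $\Gamma_0\cap(\Gamma_1\cup\cdots\cup\Gamma_n)$ is a tree: any simple cycle in it is covered by subpaths in the various $\Gamma_0\cap\Gamma_k$, and using a path through a suitable triple intersection (available by Lemma~\ref{lem:intersections}\ref{3intersect}) one produces a non-trivial cycle in $\Gamma_0$ covered by at most three pieces. Helly then applies \emph{once}, inside this tree, to the pairwise-intersecting subtrees $\Gamma_0\cap\Gamma_i$, whose pairwise intersections are already known to be non-empty by \ref{3intersect}. So the paper's trade-off is: prove the ambient graph is a tree (one geometric argument, uniform in $n$), after which Helly needs only triple intersections. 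Your trade-off is: use a known ambient tree $\Gamma_0\cap\Gamma_1$, but then Helly needs four-fold intersections, forcing a dedicated $n=3$ argument and an induction on top. Both work; the paper's version is shorter.
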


\begin{proof}

Consider first the intersection $\Gamma_0 \cap (\Gamma_1 \cup \ldots \cup \Gamma_n)$. This intersection is connected by Lemma~\ref{lem:intersections}.\ref{2intersect}\ref{3intersect}. We claim that $\Gamma_0 \cap (\Gamma_1 \cup \ldots \cup \Gamma_n)$
 is a tree. 

Assuming the claim we proceed with the proof of the lemma.\ By Lemma~\ref{lem:intersections}.\ref{2intersect}\ref{3intersect} all intersections $\{\Gamma_0 \cap \Gamma_i\}_{i\in \{1,\ldots, n\}}$ are pairwise intersecting, non-empty subtrees of a tree $\Gamma_0 \cap (\Gamma_1 \cup \ldots \cup \Gamma_n)$. Therefore by the Helly property of trees the intersection $\bigcap_{i \in \{0, \ldots, n\}} \Gamma_i$ is a non-empty tree.

It remains to prove the claim. Assume conversely that there is a non-trivial simple cycle $C \to \Gamma_0 \cap (\Gamma_1 \cup \ldots \cup \Gamma_n)$. Let $\Gamma_j$ be any relator different from $\Gamma_0$ through which $C$ passes and let $P_j$ be a maximal subpath of $C$ that lifts to $\Gamma_j$. Choose $P_i$ and $P_k$ to be the paths that lift to $\Gamma_i$ and $\Gamma_k$ respectively, such that the concatenation $P_iP_jP_k$ is a maximal subpath $C$ with these properties (for $\Gamma_j$ fixed), see Figure~\ref{fig:flag}. If $P_{i}P_jP_{k}=C$ then we get a contradiction with the fact that $X$ is a $C(6)$ complex (the same happens if already $P_{i}P_j$ or $P_{j}P_k$ is equal to $C$).

\begin{figure}[!h]
\centering
\begin{tikzpicture}[scale=0.3]
\definecolor{lgray}{rgb} {0.850,0.850,0.850}
\definecolor{dgray}{rgb} {0.650,0.650,0.650}

\draw (0,2.5) ellipse (2.25cm and 3.5cm);
\draw (0,-2.5) ellipse (2.25cm and 3.5cm);

\begin{scope}[shift={(0,0)}, rotate=180]
\draw [domain=-60:0] plot ({2.25*cos(\x)}, {2.25*sin(\x)});

\draw [domain=-60:0] plot ({5.5*cos(\x)}, {5.5*sin(\x)});

\draw  ({2.25*cos(0)}, {2.25*sin(0)}) to ({5.5*cos(0)}, {5.5*sin(0)});
\draw  ({2.25*cos(-60)}, {2.25*sin(-60)}) to ({5.5*cos(-60)}, {5.5*sin(-60)});
\end{scope}

\draw [domain=-60:60] plot ({2.25*cos(\x)}, {2.25*sin(\x)});

\draw [domain=-60:60] plot ({5.5*cos(\x)}, {5.5*sin(\x)});

\draw  ({2.25*cos(60)}, {2.25*sin(60)}) to ({5.5*cos(60)}, {5.5*sin(60)});
\draw  ({2.25*cos(-60)}, {2.25*sin(-60)}) to ({5.5*cos(-60)}, {5.5*sin(-60)});

\draw [fill=white] (0, 0)  circle [radius=3];
\draw [gray,  thick] (0, 0)  circle [radius=3];

\begin{scope}[shift={(0,0)}, rotate=180]
\draw [dashed, domain=-60:0] plot ({2.25*cos(\x)}, {2.25*sin(\x)});

\draw [dashed, domain=-60:0] plot ({5.5*cos(\x)}, {5.5*sin(\x)});

\draw [dashed] ({2.25*cos(0)}, {2.25*sin(0)}) to ({5.5*cos(0)}, {5.5*sin(0)});
\draw [dashed] ({2.25*cos(-60)}, {2.25*sin(-60)}) to ({5.5*cos(-60)}, {5.5*sin(-60)});
\end{scope}

\draw [dashed, domain=-60:60] plot ({2.25*cos(\x)}, {2.25*sin(\x)});

\draw [dashed, domain=-60:60] plot ({5.5*cos(\x)}, {5.5*sin(\x)});

\draw [dashed] ({2.25*cos(60)}, {2.25*sin(60)}) to ({5.5*cos(60)}, {5.5*sin(60)});
\draw [dashed] ({2.25*cos(-60)}, {2.25*sin(-60)}) to ({5.5*cos(-60)}, {5.5*sin(-60)});

\draw [dashed] (0,2.5) ellipse (2.25cm and 3.5cm);
\draw [dashed] (0,-2.5) ellipse (2.25cm and 3.5cm);


\draw [ thick, domain=42:138] plot ({3*cos(\x)}, {3*sin(\x)});

\draw [ thick,  domain=138:180] plot ({3*cos(\x)}, {3*sin(\x)});

\draw [thick,  domain=-60:42] plot ({3*cos(\x)}, {3*sin(\x)});
\draw [thick,  domain=-138:-60] plot ({3*cos(\x)}, {3*sin(\x)});

\draw[fill]  ({3*cos(138)}, {3*sin(138)})  circle [radius=0.15];
\draw[fill]  ({3*cos(180)}, {3*sin(180)})  circle [radius=0.15];
\draw[fill]  ({3*cos(-60)}, {3*sin(-60)})  circle [radius=0.15];
\draw[fill]  ({3*cos(42)}, {3*sin(42)})  circle [radius=0.15];
\draw[fill]  ({3*cos(-138)}, {3*sin(-138)})  circle [radius=0.15];

\node  at ({3.7*cos(200)}, {3.7*sin(200)})  {$C$};
\draw (0,3) [out=255, in= 75] to (0,-3);
\draw[fill=white] (0,3) circle [radius=0.15];
\draw[fill=white] (0,-3) circle [radius=0.15];

\node  at (0.6,0) {$Q$};

\node [above] at (-0.5,3) {$v_j$};
\node [below ] at (-.5,-3) {$v_l$};

\node [right] at (-0.25,4.5) {$\Gamma_j$};
\node at (4.25,0) {$\Gamma_k$};
\node at (-3.6,2.25) {$\Gamma_i$};
\node [right] at (-0.25,-4.5) {$\Gamma_l$};

\end{tikzpicture}
\caption{Cycle $C$ partially covered by relators.}\label{fig:flag}
\end{figure}
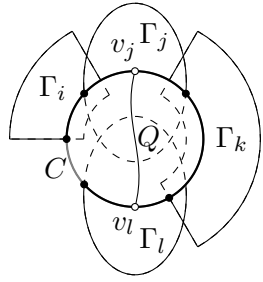

 Hence assume that it is not the case and let $P_l$ be the subpath of $C$ that lifts to $\Gamma_l$ and appears right after $P_k$. Choose vertices $v_j \in P_j$ and $v_l \in P_l$, such that $v_l \notin P_k \cup P_i$. 

 Since the intersection $\Gamma_j \cap \Gamma_0 \cap \Gamma_l$ is non-empty, there is a path 
 $Q \to \Gamma_0 \cap (\Gamma_j \cup \Gamma_l)$ joining $v_j$ to $v_l$. If $Q$ is equal to the subpath of $C$ from $v_j$ to $v_l$ which contains $P_k$ then we get a contradiction with the choice of $P_k$, as in such case $P_l$ appears right after $P_j$ and covers a larger portion of $C$. Similarly if $Q$ is equal to the subpath of $C$ from $v_j$ to $v_l$ which contains $P_i$ then we get a contradiction with the choice of $P_i$. Otherwise the concatenation of the subpath of $C$ from $v_j$ to $v_l$ containing $P_k$ with $Q$ is a non-trivial cycle in $\Gamma_0$ that is covered by the images of three relators $\Gamma_j, \Gamma_k$ and $\Gamma_l$. This contradicts the $C(6)$ condition and therefore finishes the proof of the claim.
\end{proof}

\section{Dual of a $C(p)$ complex is $p$--systolic}\label{sec:dualisisystolic}

Let $X$ be a simply connected $C(p)$ thickened graphical complex $X$ and suppose that $p \geqslant 6$. The purpose of this section is to construct a $p$--systolic simplicial complex $W(X)$ such that any group acting on $X$, acts naturally on $W(X)$.
Furthermore, after replacing $X$ with a ``non-thickened'' graphical complex $X'$ we show that $X'$ and $W(X)$ are $G$--homotopy equivalent. This replacement is necessary, as in general the thickened complex contains non-trivial $2$--spheres, whereas systolic complexes are contractible.
Roughly speaking, the non-thickened graphical complex has the same $1$--skeleton as the thickened one, but instead of thick cells, it has topological cones glued along relators. The non-thickened complex, combinatorially being equivalent to the thickened one, has better topological properties (in particular it is contractible).

\subsection{Equivariant nerve theorem}
Our main tool in showing that $X$ and $W(X)$ are $G$--homotopy equivalent is the Equivariant Nerve Theorem. This theorem is formulated in the abstract language of $G$--posets, therefore we begin by recalling some terminology.\medskip

A $G$--poset is a partially ordered set with an order-preserving action of a group $G$. A \emph{geometric realisation} of a poset $X$ is a simplicial complex  $|X|$  whose $n$--simplices are chains $x_0 \leqslant x_1 \leqslant \ldots \leqslant x_n$ in $X$.
If $X$ is a $G$--poset then its geometric realisation $|X|$ is naturally a $G$--simplicial complex. 
In the realm of $G$--posets and $G$--simplicial complexes, natural morphisms to consider are
\emph{$G$--maps}, that is, $G$--equivariant maps. Using $G$--maps one defines \emph{$G$--homotopy equivalences} and \emph{$G$--contractibility} the same way as for non-equivariant versions. 
For an element $y$ of a $G$--poset (a simplex $\sigma$ of a $G$--simplicial complex) its stabilizer is denoted by $G_y$ (resp.\ $G_{\sigma}$).
All topological notions applied to a poset $X$ are to be understood as corresponding notions applied to its geometric realisation $|X|$.
For an element $y$ of a poset $Y$ define the subposet \[Y_{\leqslant y} = \{x \in Y \mid x \leqslant y\}.\]

The following theorem is an equivariant analogue of the celebrated Quillen's ``Theorem A''.

\begin{tw}[{\cite[Theorem 1]{TW}}]
\label{t:A}
Let $G$ be a group and let $f\colon X\to Y$ be a $G$--map between $G$--posets $X$ and $Y$. If for every $y\in Y$, the preimage $f^{-1}(Y_{\leqslant y})$ is $G_y$--contractible then $f$ is a $G$--homotopy equivalence.
\end{tw}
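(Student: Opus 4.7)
The plan is to reduce the equivariant statement to the classical (non-equivariant) Quillen Theorem A by examining the restriction of $f$ to fixed-point subposets. For any subgroup $H\leqslant G$, the $G$-equivariance of $f$ yields an order-preserving map $f^H\colon X^H\to Y^H$. Since the $G$-action on a chain in a poset is order-preserving, any group element fixing a chain setwise must fix it pointwise; thus the realisations $|X|$ and $|Y|$ carry natural $G$--CW structures with $|X|^H=|X^H|$ and $|Y|^H=|Y^H|$. A standard criterion (see e.g.\ tom Dieck, \emph{Transformation Groups}) then asserts that $f$ is a $G$--homotopy equivalence as soon as $f^H$ is an ordinary homotopy equivalence for every subgroup $H\leqslant G$.

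To establish this, I would apply the non-equivariant Theorem A to each $f^H$. This requires showing that the preimage $(f^H)^{-1}(Y^H_{\leqslant y})$ is contractible for every $y\in Y^H$. The key identity is
\[
(f^H)^{-1}\bigl(Y^H_{\leqslant y}\bigr)\;=\;\bigl(f^{-1}(Y_{\leqslant y})\bigr)^H,
\]
which holds because $G$-equivariance of $f$ forces $f(X^H)\subseteq Y^H$, so the condition $f(x)\leqslant y$ for $x\in X^H$ automatically places $f(x)$ in $Y^H$. Since $y\in Y^H$ we have $H\subseteq G_y$, and by hypothesis $f^{-1}(Y_{\leqslant y})$ is $G_y$--contractible, hence in particular $H$--contractible. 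Taking $H$-fixed points of an $H$-equivariant contraction produces a genuine contraction of $(f^{-1}(Y_{\leqslant y}))^H$.

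The main subtlety is that ``$G_y$--contractibility'' must be unpacked as the existence of a $G_y$--equivariant homotopy between the identity and a constant map at some $G_y$-fixed vertex, so that restricting to the $H$--fixed subcomplex yields a usable contraction; this is routine for $G$--CW complexes and uses the admissibility observed above. Once this point is granted, applying the classical Quillen Theorem A to each $f^H$ gives an ordinary homotopy equivalence at every fixed-point level, and the $G$--CW fixed-point criterion promotes this to a $G$--homotopy equivalence, completing the proof.
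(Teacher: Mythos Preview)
The paper does not prove this statement; it is quoted directly from Th\'evenaz--Webb \cite{TW} and used as a black box in the proof of the Equivariant Nerve Theorem. Your sketch is essentially the argument given in that reference: reduce to the non-equivariant Quillen Theorem~A on each fixed-point subposet $X^H\to Y^H$, using the admissibility of the $G$--action on the order complex (so that $|X|^H=|X^H|$) together with the tom Dieck/Bredon fixed-point criterion for $G$--homotopy equivalences between $G$--CW complexes. The identification $(f^H)^{-1}(Y^H_{\leqslant y})=(f^{-1}(Y_{\leqslant y}))^H$ and the passage from $G_y$--contractibility to contractibility of $H$--fixed points for $H\leqslant G_y$ are both correct as you describe them, so the argument goes through.
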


Let $(X,\leqslant)$ be a poset. We say that a subset $U \subset X$ is closed with respect to $\leqslant$, if for any $x\in U$ and any $y$ such that $y \leqslant x$, we have $y\in U$.
A \emph{cover} of a poset $(X,\leqslant)$ is a family $\mathcal{U}$ of subsets of $X$, such that every $U \in \mathcal{U}$ is closed with respect to $\leqslant$, and  $\bigcup_{U \in \mathcal{U}}U=X$.

The  \emph{nerve} $N(\mathcal{U})$ of a cover $\mathcal{U}$ is a simplicial complex whose vertex set is $\mathcal{U}$, and vertices $U_0, \ldots, U_n$ span an $n$--simplex of $N(\mathcal{U})$  if and only if $\bigcap_{0 \leqslant i \leqslant n}U_i \neq \emptyset$. If $G$ acts on $X$ and for any element $U \in \mathcal{U}$ and any $g \in G$ we have $gU \in \mathcal{U}$ then we say that $\mathcal{U}$ is \emph{$G$--cover}. In this case the $G$--action on $X$ induces the $G$--action on $N(\mathcal{U})$. In particular, element $g\in G$ stabilises a simplex $\sigma$ of $N(\mathcal{U})$ if and only if
$g$ leaves the intersection $\bigcap_{U\in \sigma} U \subset X$ invariant.

\begin{de}
A $G$--cover $\mathcal{U}$ of a $G$--poset $X$ is \emph{$G$--contractible} if for any simplex $\sigma$ of $N(\mathcal{U})$, the subposet $\bigcap_{U\in \sigma} U$ is a $G_{\sigma}$--contractible subposet of $X$, where $G_{\sigma}$ denotes the $G$--stabiliser of $\sigma$.
\end{de}

The following result and its proof are immediate equivariant analogues of \cite[Theorem 4.5.2]{Sm}. To the best of our knowledge there is no proof of this theorem in the literature.

\begin{tw}[Equivariant Nerve Theorem]
\label{t:nerve}
Let $G$ be a group and let $X$ be a $G$--poset. Let $\mathcal{U}$ be a $G$--contractible cover of $X$. Then $N(\mathcal{U})$ is $G$--homotopy equivalent to $|X|$. 
\end{tw}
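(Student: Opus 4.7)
The plan is to introduce a Grothendieck-type auxiliary $G$-poset $P$ and establish $G$-homotopy equivalences $|X|\simeq_G|P|\simeq_G|N(\mathcal{U})|$ by applying Theorem~\ref{t:A} to two natural projections out of $P$. Concretely, I would set
\[
P=\bigl\{(\sigma,x) : \sigma\in N(\mathcal{U}),\ x\in\textstyle\bigcap_{U\in\sigma} U\bigr\},
\]
with diagonal $G$-action and the order $(\sigma,x)\leqslant(\sigma',x')$ iff $x\leqslant x'$ in $X$ and $\sigma\supseteq\sigma'$ as subsets of $\mathcal{U}$. The two projections
\[
\pi_N\colon P\to N(\mathcal{U})^{op},\ (\sigma,x)\mapsto\sigma,\qquad \pi_X\colon P\to X,\ (\sigma,x)\mapsto x
\]
are then order-preserving $G$-maps. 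Since $|N(\mathcal{U})^{op}|$ is canonically $G$-homeomorphic to the barycentric subdivision of $|N(\mathcal{U})|$, it suffices to show that both projections are $G$-homotopy equivalences.

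For $\pi_N$ I would apply Theorem~\ref{t:A} directly. For a simplex $\sigma_0\in N(\mathcal{U})$, the preimage $\pi_N^{-1}(N(\mathcal{U})^{op}_{\leqslant\sigma_0})=\{(\sigma,x):\sigma\supseteq\sigma_0\}$ admits the $G_{\sigma_0}$-equivariant, order-preserving retraction $r(\sigma,x)=(\sigma_0,x)$ (well-defined because $x\in\bigcap\sigma\subseteq\bigcap\sigma_0$), satisfying $\iota\circ r\geqslant\mathrm{id}$ pointwise. This yields a $G_{\sigma_0}$-equivariant deformation retraction of the preimage onto $\{\sigma_0\}\times\bigcap\sigma_0\cong\bigcap\sigma_0$, which is $G_{\sigma_0}$-contractible by the definition of a $G$-contractible cover. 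For $\pi_X$ I would use the dual form of Theorem~\ref{t:A}, obtained by passing to opposite posets $\pi_X^{op}\colon P^{op}\to X^{op}$ (noting $|Q|=|Q^{op}|$ canonically as $G$-complexes); equivalently, it suffices to show that $\pi_X^{-1}(X_{\geqslant x_0})$ is $G_{x_0}$-contractible for each $x_0\in X$. The key observation is that the down-set hypothesis on $\mathcal{U}$ forces $\sigma\subseteq\mathcal{U}_{x_0}:=\{U\in\mathcal{U}:x_0\in U\}$ for every $(\sigma,x)$ in this preimage: indeed $U\in\sigma$ gives $x\in U$, and combining with $x_0\leqslant x$ and down-closedness of $U$ yields $x_0\in U$. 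The retraction $(\sigma,x)\mapsto(\sigma,x_0)$, order-preserving, $G_{x_0}$-equivariant, with $\iota\circ r\leqslant\mathrm{id}$, then deformation retracts the preimage onto the face poset of the full abstract simplex on the $G_{x_0}$-set $\mathcal{U}_{x_0}$.

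The main obstacle will be the final $G_{x_0}$-contractibility claim in its strong sense: that for each subgroup $H\leqslant G_{x_0}$ the $H$-fixed-point set of $|\Delta(\mathcal{U}_{x_0})|$ is contractible. This holds provided every such $H$ admits a finite orbit in $\mathcal{U}_{x_0}$, since the barycentre of any such orbit is an $H$-fixed interior point around which the simplex admits an $H$-equivariant straight-line contraction. In the intended applications to proper actions in Section~\ref{sec:dualisisystolic} the stabilisers $G_{x_0}$ are finite so the orbit condition is automatic; this is the place where further hypotheses (or a more delicate argument) might be needed if aiming at a fully general statement. Once this $G_{x_0}$-contractibility is in place, Theorem~\ref{t:A} yields the two $G$-homotopy equivalences and the result follows.
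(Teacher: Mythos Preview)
Your approach is correct but takes a different, more elaborate route than the paper. The paper bypasses the auxiliary poset $P$ entirely: it works with the face poset $N'(\mathcal{U})$ of $N(\mathcal{U})$ under reverse inclusion and defines a single $G$--map of posets $f\colon X\to N'(\mathcal{U})$ by $f(x)=\{U\in\mathcal{U}\mid x\in U\}$. Closedness of each $U$ makes $f$ order-preserving, and one computes directly that $f^{-1}(N'(\mathcal{U})_{\leqslant\sigma})=\bigcap_{U\in\sigma}U$, which is $G_\sigma$--contractible by hypothesis; a single application of Theorem~\ref{t:A} finishes. Your Grothendieck-construction argument with two projections is standard and valid, but the paper's direct map is more economical. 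Interestingly, the obstacle you flag --- the $G_{x_0}$--contractibility of the full simplex on $\mathcal{U}_{x_0}$ --- has an exact counterpart in the paper's proof: for $f$ to land in $N'(\mathcal{U})$ one needs each $\mathcal{U}_{x_0}$ to be finite, which the paper leaves implicit (it holds in the intended application to locally finite complexes). Your version is arguably slightly more robust, since it only needs finite $H$--orbits in $\mathcal{U}_{x_0}$ rather than finiteness of $\mathcal{U}_{x_0}$ itself; but in either case the issue disappears under the hypotheses actually used in Section~\ref{sec:dualisisystolic}.
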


\begin{proof}

We work with the face poset $N'(\mathcal{U})$ of $N(\mathcal{U})$, with the reversed inclusion order. More precisely, the elements of $N'(\mathcal{U})$ are simplices of $N(\mathcal{U})$, i.e.\  tuples $\{U_i\}_{i \in I}$ such that $\bigcap_{i \in I}U_i \neq \emptyset $ and $\{U_i\}_{i\in I} \leqslant \{U_j\}_{j \in J}$ in $N'(\mathcal{U})$  if and only if $J \subseteq I$.

The geometric realisation of $N'(\mathcal{U})$ is homeomorphic to $N(\mathcal{U})$ and the $G$--action on $N(\mathcal{U})$ induces a $G$--action on $N'(\mathcal{U})$. We define the map $f\colon X \to N'(\mathcal{U})$ as \[f(x)=\{ U\in \mathcal{U} \mid x\in U \}.\] This is a map of posets since
if $y\leqslant x$ then $y\in U$ whenever $x\in U$, by closedness of $U$. It is straightforward to check that $f$ is a $G$--map.

Let $U_I =  \{U_i\}_{i \in I}$ be an element of $N'(\mathcal{U})$. If $x\in f^{-1}(N'(\mathcal{U})_{\leqslant U_I })$ then $x\in U$, for every $U\in U_I $. Therefore 
\[f^{-1}(N'(\mathcal{U})_{\leqslant U_I })=\bigcap_{U\in U_I }U= \bigcap_{i\in I}U_i.\]
Since the cover $\mathcal{U}$ is $G$--contractible, each preimage $f^{-1}(N'(\mathcal{U})_{\leqslant U_I })$ is $G_{U_I }$--con\-tract\-ible.
Therefore, by Theorem~\ref{t:A}, the map $f$ is a $G$--homotopy equivalence.
\end{proof}

In the remainder of this section we show how to apply Theorem~\ref{t:nerve} to the case of $C(p)$ graphical complexes. For this we need to introduce the ``non-thickened'' graphical complex.\medskip 

Let $\Gamma$ be a finite graph. A \emph{cone} on $\Gamma$ is the quotient space \[ C(\Gamma) =\Gamma \times [0,1]/ \Gamma \times \{1\}.\] 

\begin{de}\label{def:graphcomplex}Let $ \varphi \colon {\bf \Gamma} \to \Theta$ be an immersion of graphs and assume that $\Theta$ is connected. Write ${\bf \Gamma}$ as the union of its connected components ${\bf \Gamma}= \bigsqcup \Gamma_i$ and let $\varphi_i$ denote the composition $\Gamma_i \to {\bf \Gamma} \to \Theta$. Therefore $\varphi= \sqcup \varphi_i$.

A \emph{graphical complex} $X$ is a $2$--complex 
obtained by gluing a cone $C(\Gamma_i)$ along each $ \varphi_i \colon \Gamma_i \to \Theta$:
\[X= \Theta \cup_{\varphi} \bigsqcup_{i\in I} C(\Gamma_i).\] 
For a map $\Gamma_i \to X$ a \emph{cone-cell} is the corresponding map $C(\Gamma_i) \to X$.
\end{de}
Notice that $X$ is not a $2$--complex in the sense of Section~\ref{sec:comb2}. However, one can put a structure of a combinatorial $2$--complex on $X$ (or even a simplicial complex) by appropriately subdividing every cone. For most of our purposes though, it will be enough to treat entire cone-cells as ``$2$--cells''. Consequently we would like to treat the graph $\Theta$ as the $1$--skeleton of $X$. In particular, any path $P \to X$ necessarily factors as $P \to \Theta \to X$.

\begin{rem}To an immersion of graphs $\varphi \colon {\bf \Gamma} \to \Theta$ we assigned two complexes: a thickened graphical complex (see Definition~\ref{def:thickcomplex}) and a graphical complex (see Definition~\ref{def:graphcomplex}). Let us denote them by $Th(X)$ and $X$ respectively. We emphasise that both constructions depend only on the map $\varphi \colon {\bf \Gamma} \to \Theta$ and therefore one construction determines another.

Moreover, notice that the fundamental groups of $Th(X)$ and $X$ are isomorphic. Indeed one can construct a map $Th(X) \to X$ which is the identity on $1$--skeleton, and which sends $2$--cells of $Th(X)$ to the cone-cells of $X$. After a suitable subdivision this map becomes combinatorial, and one can easily show that it induces an isomorphism on fundamental groups.
\end{rem}

We now proceed with the definitions of small cancellation conditions for a graphical complex. Notice that both definitions of a piece (Definition~\ref{def:graphpiece}) and of small cancellation conditions (Definition~\ref{de:smallcancgraph}) for a thickened graphical complex depend only on the map ${\bf \Gamma} \to \Theta$. Therefore we can use the exact same definitions for a graphical complex. For the sake of completeness we include the following (tautological) definition.

\begin{de}\label{truegraphicalpieces} Let $X$ be a graphical complex and let $Th(X)$ denote the corresponding thickened graphical complex. 
A path $P\to X$ is a \emph{piece} if the corresponding path $P \to Th(X)$ is a piece. 
Consequently we say that $X$ satisfies $C(p)$ or $C'(\lambda)$ condition if $Th(X)$ does so.
\end{de}

\begin{rem} Definition~\ref{truegraphicalpieces} together with the fact that $\pi_1(Th(X)) \cong \pi_1(X)$ implies that Lemma~\ref{lem:intersections} and Lemma~\ref{lem:flag} hold for a $C(6)$ graphical complex $X$ as well.
\end{rem}

From now on a \emph{graphical complex} will always be understood in the sense of Definition~\ref{def:graphcomplex}. We proceed with the definition of the aforementioned simplicial complex $W(X)$.

\begin{de}\label{def:unionofconecells} Let $X$ be a simply connected $C(p)$ graphical complex for $p \geqslant 6$. Assume that $X$ is the union of its cone-cells,  i.e.\ that every edge and vertex of $X$ is in the image of $C(\Gamma_i) \to X$ for some relator $\Gamma_i$.
Notice that by Lemma~\ref{lem:intersections}.\ref{1intersect} every map $\Gamma_i \to X$ is an embedding, and therefore we can identify a cone-cell $C(\Gamma_i)\to X$ with its image.
Let \[{\bf U}= \{C(\Gamma_i) \mid \Gamma_i \subset {\bf \Gamma}\}\]
be the covering of $X$ by its cone-cells.  Define the simplicial complex $W(X)$ to be the nerve of the covering ${\bf U}$. 
This complex was introduced by D.\ Wise in the classical $C(p)$ setting \cite{Wise}, therefore we will refer to $W(X)$ as the \emph{Wise complex}. 
\end{de}

Notice that any cellular $G$--action on $X$ (i.e.\ cellular on $1$--skeleton and maps cone-cells to cone-cells) induces a simplicial $G$--action on $W(X)$. Our goal is to show that in fact $X$ and $W(X)$ are $G$--homotopy equivalent. To show this, we will present $X$ as a realisation of a certain $G$--poset, and we will find a $G$--cover of this poset whose nerve will be isomorphic to $W(X)$. The claim will then follow from Theorem~\ref{t:nerve}. 

\begin{rem}
We remark that the assumption in Definition~\ref{def:unionofconecells} is not very restrictive. Indeed, if $X$ contains such ``free edges'', i.e.\ edges not contained in any cone-cell, one can consider a new complex $X'$ obtained by gluing to $X$ a cone over every free edge (this cone is homeomorphic to the triangle in this case). The complex $X'$ satisfies the assumptions of Definition~\ref{def:unionofconecells} and any cellular $G$--action on $X$ induces a cellular $G$--action on $X'$. It is straightforward to check that the quotient map $X' \to X$ which retracts every cone over the free edge onto this edge is a $G$--homotopy equivalence.
\end{rem}

Let $X$ be as in Definition~\ref{def:unionofconecells}. We define an associated poset $\mathcal{X}$ as follows. Elements of
$\mathcal{X}$ are cone-cells, edges, and vertices of $X$ ordered by inclusion. The geometric realisation $|\mathcal{X}|$ of the poset $\mathcal{X}$ is homeomorphic to $X$, and if $G$ acts on $X$ then there is an induced action on $\mathcal{X}$, and the homeomorphism is equivariant.

Let $\mathcal{U}$ be the cover of $\mathcal{X}$ given by \[\mathcal{U}= \{ \mathcal{X}_{\leqslant c} \mid c \text{ is a cone-cell of } X \} .\] By construction the cover $\mathcal{U}$ is closed with respect to $\leqslant$ and it is straightforward to check that it is a $G$--cover of $\mathcal{X}$. Observe that the geometric realisation of any element $\mathcal{X}_{\leqslant c}$ of $\mathcal{U}$ is homeomorphic to the cone-cell $c$. Therefore the nerve $N(\mathcal{U})$ is isomorphic to the complex $\wis{X}$. 

\begin{lem}\label{lem:contractocover}
The $G$--cover $\mathcal{U}$ is $G$--contractible.
\end{lem}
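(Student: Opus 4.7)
The plan is to interpret $\bigcap_{U \in \sigma} U$ geometrically as the barycentric subdivision of an intersection of relators, and then exploit the tree structure coming from the $C(6)$ condition. First I would unpack the intersection: a simplex $\sigma$ of $N(\mathcal{U})$ is specified by a finite family of cone-cells $C(\Gamma_0), \ldots, C(\Gamma_n)$ whose corresponding subposets $\mathcal{X}_{\leqslant C(\Gamma_i)}$ have non-empty common intersection. Since cone-cells are maximal in $\mathcal{X}$, the intersection $\bigcap_{i} \mathcal{X}_{\leqslant C(\Gamma_i)}$ is the subposet consisting of all vertices and edges of $X$ that lie in every $\Gamma_i$, ordered by inclusion; equivalently it is the face poset of the subgraph $T := \bigcap_{i} \Gamma_i$ of $\Theta$. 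Its geometric realisation is canonically the barycentric subdivision of $T$.

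Next I would show that $T$ is a finite tree. Non-emptiness of $\bigcap_{i} \mathcal{X}_{\leqslant C(\Gamma_i)}$ forces every pairwise intersection $\Gamma_i \cap \Gamma_j$ to be non-empty (any common element witnesses this), so Lemma~\ref{lem:flag} applies and yields that $T$ is a non-empty tree. Finiteness follows from Lemma~\ref{lem:intersections}\ref{2intersect}, since $T \subseteq \Gamma_0 \cap \Gamma_1$ and the latter is already a finite tree. Consequently $|\bigcap_{U \in \sigma} U|$ is contractible as a topological space; what remains is to upgrade this to $G_\sigma$-contractibility.

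For the equivariant statement I would argue as follows. The stabiliser $G_\sigma$ permutes the finite set $\{C(\Gamma_0), \ldots, C(\Gamma_n)\}$ and hence preserves $T$; the induced action on $T$ factors through the finite group $\mathrm{Aut}(T)$. Passing to the barycentric subdivision (which is exactly $|\bigcap_{U \in \sigma} U|$) automatically removes any inversions, because $1$-simplices of a barycentric subdivision connect vertices of different poset-degree. For any subgroup $H \leqslant G_\sigma$, the image of $H$ in $\mathrm{Aut}(T)$ is still a finite group acting without inversion on a tree, so by the classical Bruhat--Tits fixed point theorem for finite group actions on trees, the fixed-point set $T^H$ is a non-empty subtree of $T$, hence contractible. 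Since having contractible fixed-point sets for every subgroup of $G_\sigma$ is equivalent to $G_\sigma$-contractibility of a $G_\sigma$-CW-complex, this verifies the contractibility hypothesis for every simplex $\sigma$ of $N(\mathcal{U})$, and the cover $\mathcal{U}$ is $G$-contractible.

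The main point to be careful about is the identification of the poset intersection with the face poset of $T$ (so that its realisation really is the subdivided tree and no extra simplices appear from comparing elements of different cone-cells), together with the observation that barycentric subdivision automatically rules out inversions; once these are in place, the rest is a direct application of Lemmas~\ref{lem:intersections} and~\ref{lem:flag} combined with the standard finite-group-on-a-tree fact.
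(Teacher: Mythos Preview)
Your argument is correct and follows the same route as the paper: identify the poset intersection with (the face poset of) the tree $T=\bigcap_i \Gamma_i$ via Lemma~\ref{lem:flag}, and conclude $G_\sigma$--contractibility. The paper compresses the last step into a single clause (``hence it is $G_\sigma$--contractible''), whereas you spell it out via fixed-point sets and the finite-group-on-a-tree argument; one small omission is the vertex case $\sigma=\{C(\Gamma_0)\}$, where the intersection also contains the cone-cell itself and its realisation is the cone $C(\Gamma_0)$ rather than a tree, but this is $G_\sigma$--contractible to its apex and is easily handled separately.
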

\begin{proof}
For any $\sigma \in N(\mathcal{U})$ the geometric realisation of the intersection $\bigcap_{U\in \sigma} U$ is a tree by Lemma~\ref{lem:flag}, hence it is $G_{\sigma}$--contractible.
\end{proof}

\noindent The above discussion together with Lemma~\ref{lem:contractocover}  and Theorem~\ref{t:nerve} gives the following.

\begin{tw}
\label{t:Ghe}
Let $X$ be simply connected $C(6)$ graphical $G$--complex satisfying the assumptions of Definition~\ref{def:unionofconecells}. Then $X$ is $G$--homotopy equivalent to the simplicial complex $\wis{X}$.
\end{tw}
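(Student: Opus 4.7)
The plan is to obtain the theorem as a direct application of the Equivariant Nerve Theorem (Theorem~\ref{t:nerve}) to the $G$--poset $\mathcal{X}$ and $G$--cover $\mathcal{U}$ constructed in the discussion preceding the statement. In outline, I would verify three things: (a) the geometric realisation $|\mathcal{X}|$ is $G$--homeomorphic to $X$; (b) the nerve $N(\mathcal{U})$ is $G$--isomorphic to $\wis{X}$; (c) the cover $\mathcal{U}$ is $G$--contractible, so that Theorem~\ref{t:nerve} applies and yields a $G$--homotopy equivalence $X \simeq_G \wis{X}$.

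First I would check (a) by observing that $\mathcal{X}$ is essentially the face poset of the natural cell structure on $X$ (with $1$--cells given by edges of $\Theta$, and $2$--cells given by cone-cells). The standard fact that geometric realisation of the face poset of a regular CW--complex recovers its barycentric subdivision yields the $G$--equivariant homeomorphism $|\mathcal{X}| \cong X$. Here the hypothesis from Definition~\ref{def:unionofconecells} that every vertex and edge lies in some cone-cell is what guarantees that $\mathcal{U}$ indeed covers $\mathcal{X}$. The $G$--equivariance is automatic since $G$ acts cellularly.

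Next I would check (b) by unravelling the definition: a finite tuple $\{c_0, \ldots, c_n\}$ of cone-cells spans a simplex of $N(\mathcal{U})$ iff $\bigcap_i \mathcal{X}_{\leqslant c_i} \neq \emptyset$, i.e.\ some vertex (or edge) of $X$ lies in all $c_i$. By Lemma~\ref{lem:intersections}.\ref{2intersect}, cone-cells intersect precisely in subtrees of their boundary relators, so this is equivalent to the intersection $\bigcap_i c_i$ being non-empty, which is the defining condition for a simplex of $\wis{X}$. This identification is $G$--equivariant since the $G$--action permutes cone-cells.

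For (c), I would invoke Lemma~\ref{lem:flag} directly: for any simplex $\sigma = \{c_0, \ldots, c_n\}$ of $N(\mathcal{U})$ whose elements have pairwise non-empty intersection (guaranteed since $\sigma$ contains all its faces), the intersection $\bigcap_{i} \Gamma_{c_i}$ of the boundary relators is a non-empty finite tree. The geometric realisation of $\bigcap_i \mathcal{X}_{\leqslant c_i}$ is (the barycentric subdivision of) this tree, and the stabiliser $G_\sigma$ acts simplicially on it, hence $G_\sigma$--contractibly since any simplicial action on a tree is homotopy equivalent to its fixed point set, which is non-empty. This is the heart of the argument and the place where the $C(6)$ hypothesis is crucially used, via Lemma~\ref{lem:flag}. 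With (a)--(c) in hand, Theorem~\ref{t:nerve} yields $X \cong |\mathcal{X}| \simeq_G N(\mathcal{U}) \cong \wis{X}$, completing the proof. The main potential subtlety is ensuring the $G_\sigma$--contractibility claim in step (c); the cleanest way is to note that a simplicial $G_\sigma$--action on a (non-empty) tree always admits a fixed point (a vertex, an edge midpoint, or an end), so the realisation is indeed $G_\sigma$--contractible.
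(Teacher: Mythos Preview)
Your proposal is correct and follows essentially the same route as the paper: set up the face poset $\mathcal{X}$ and the cover $\mathcal{U}$ by cone-cells, identify $N(\mathcal{U})\cong \wis{X}$, invoke Lemma~\ref{lem:flag} to see that each $\bigcap_{U\in\sigma}U$ realises as a (finite) tree and is therefore $G_\sigma$--contractible, and then apply the Equivariant Nerve Theorem. The only superfluous remark is the appeal to a fixed ``end'' in step~(c): the intersection trees here are finite (being subtrees of the finite relators, cf.\ Lemma~\ref{lem:intersections}), so a fixed vertex or edge-midpoint always exists and the end case never arises.
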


\subsection{Graphical small cancellation groups are systolic}

In this section we show that if $X$ satisfies the $C(p)$ small cancellation condition then the complex $W(X)$ is $p$--systolic, and we use the latter to construct models for the classifying spaces $\eeg$ and $E_{\mathcal{VAB}}G$
for a group $G$ acting properly on $X$. 

\begin{tw}\label{t:ss}Suppose $p \geqslant 6$ and let $X$ be a simply connected $C(p)$ graphical complex. Then its Wise complex $\wis{X}$ is $p$--systolic.
\end{tw}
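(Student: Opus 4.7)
The plan is to verify the three defining conditions of $p$-systolicity for $W(X)$: simple connectivity, flagness of every link (including $W(X)$ itself, realised as the link of the empty simplex), and the absence of diagonal-free cycles of length $4\leqslant k<p$ in any link.

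For simple connectivity, I would apply the Equivariant Nerve Theorem (Theorem~\ref{t:nerve}) with trivial group action to the covering of $X$ by its cone-cells $\{C(\Gamma_i)\}$. Each cone-cell is contractible, and Lemma~\ref{lem:flag} guarantees that every non-empty intersection of finitely many cone-cells is a tree, hence contractible, so the cover is contractible in the required sense. Thus $W(X)\simeq X$ and simple connectivity of $X$ passes to $W(X)$. For flagness of the link of a simplex $\sigma$, set $T_\sigma=\bigcap_{\Gamma\in\sigma}\Gamma$ (with the convention $T_\emptyset = X$): a collection $C(\Gamma_1'),\ldots,C(\Gamma_n')$ of pairwise-adjacent vertices in $\mathrm{Lk}(\sigma)$ corresponds to relators in $\sigma\cup\{\Gamma_1',\ldots,\Gamma_n'\}$ that pairwise intersect, so Lemma~\ref{lem:flag} produces a common intersection, which is precisely a simplex of $W(X)$ above $\sigma$.

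For the $p$-large property, suppose for contradiction that $\mathrm{Lk}(\sigma)$ contains a full cycle of length $k$ with $4\leqslant k<p$, realised by cone-cells $C(\Gamma_0),\ldots,C(\Gamma_{k-1})$ with $\Gamma_i\cap\Gamma_{i+1}\cap T_\sigma\neq\emptyset$ (indices mod $k$) and $\Gamma_i\cap\Gamma_j\cap T_\sigma=\emptyset$ for all non-consecutive $i,j$. I would pick vertices $v_i\in\Gamma_i\cap\Gamma_{i+1}\cap T_\sigma$ and simple paths $\beta_i\subseteq\Gamma_i$ from $v_{i-1}$ to $v_i$; after removing back-tracks if necessary, the concatenation $\beta=\beta_0\beta_1\cdots\beta_{k-1}$ is an immersed closed path in $X$. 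Since $X$ is simply connected, $\beta$ bounds a reduced minimal-area disc diagram $D\to Th(X)$ by Lemma~\ref{l:vank}, and $D$ is $C(p)$ by Proposition~\ref{reduceddiagram}, so Greendlinger's Lemma (Theorem~\ref{tw:greendlin}) applies. If $D$ consists of a single $2$-cell attached along a relator $\Gamma$, either $\Gamma=\Gamma_j$ for some $j$, forcing every $\beta_i\subseteq\Gamma_j$ and hence $v_{i-1}\in\Gamma_i\cap\Gamma_j\cap T_\sigma$; selecting $i\notin\{j-1,j,j+1\}$ (available since $k\geqslant 4$) produces the forbidden diagonal. Otherwise $\Gamma$ is distinct from every $\Gamma_j$, each $\beta_i$ has two distinct lifts (to $\Gamma_i$ and to $\Gamma$) and is therefore a piece, so $\beta$ is a non-trivial cycle in $\Gamma$ decomposed into $k<p$ pieces, contradicting $C(p)$. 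If $D$ has several $2$-cells, Greendlinger's Lemma provides at least two $j$-shells with $j\leqslant 3$; for each such shell, minimality of $D$ together with the reduction procedure in the proof of Lemma~\ref{l:vank} forces the relator of the shell to be distinct from every $\Gamma_i$ whose segment meets the shell's outer path. Consequently each outer path decomposes as one piece per crossed $\beta_i$, so $C(p)$ applied to the shell boundary bounds the span of the outer path from below by $p-3$; the total span of two such outer paths on $\partial D=\beta$ is bounded above by $k$ (up to a controlled overlap at the junction vertices $v_i$), and for $p\geqslant 6$ this yields a numerical contradiction with $k<p$.

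The main obstacle I anticipate is the combinatorial bookkeeping in the multi-cell sub-case: the inequalities are tightest when $k$ is close to $p$, and a careful control of how outer paths of distinct shells can overlap at the junction points $v_i$, as well as a judicious initial choice of $\beta_i$ to eliminate spurs, is required. A further delicate point is confirming that the cycle $\beta$ is genuinely non-trivial (rather than factoring through a tree), which is essential in the single-cell sub-case with a new relator $\Gamma$ so that the $C(p)$ condition may legitimately be invoked.
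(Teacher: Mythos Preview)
Your treatment of simple connectivity and flagness coincides with the paper's. The divergence is in the $p$--largeness argument, where you attempt a disc-diagram/Greendlinger approach while the paper gives a direct tree argument entirely avoiding diagrams.

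The paper's key move is to choose the connecting paths \emph{inside the central relator}: for a cycle $\Gamma_1,\ldots,\Gamma_n$ in the link of the vertex corresponding to $\Gamma$, one picks $u_i\in\Gamma\cap\Gamma_i\cap\Gamma_{i+1}$ and takes $P_i\subseteq\Gamma\cap\Gamma_i$ (not merely $P_i\subseteq\Gamma_i$ as you do). Since $n<p$, the $C(p)$ condition forces $\Gamma\cap(\Gamma_1\cup\cdots\cup\Gamma_n)$ to be a tree, so the closed walk $P_1\cdots P_n$ lives in that tree, and an elementary argument produces nonconsecutive $P_j,P_k$ sharing a vertex --- automatically lying in $\Gamma$, hence a diagonal in the link. (It suffices to treat vertex links, since flagness passes $p$--largeness to higher links.)

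Your approach has genuine gaps. First, by taking $\beta_i\subseteq\Gamma_i$ rather than $\beta_i\subseteq T_\sigma\cap\Gamma_i$ you lose control over where overlaps occur: even when you locate a vertex $w\in\beta_i\cap\beta_j$ (e.g.\ in the trivial--$\beta$ sub-case you flag as delicate), there is no reason that $w\in T_\sigma$, so this does not give a diagonal in $\mathrm{Lk}(\sigma)$. Second, the multi-cell counting does not close for $p=6$ with the simplified Greendlinger Lemma quoted in the paper: two $i$--shells with $i\leqslant 3$ yield outer paths each spanning at least $p-3$ of the segments $\beta_i$, but with the unavoidable overlap of up to one segment per gap one gets only $2(p-3)\leqslant k+2$, i.e.\ $k\geqslant 2p-8$, which for $p=6$ says merely $k\geqslant 4$ --- no contradiction with $k\leqslant 5$. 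To rescue this you would need the full McCammond--Wise dichotomy (ladder with two $i\leqslant 1$ shells versus at least three $i\leqslant 3$ shells), and your claim that minimality forces the shell relator to differ from each $\Gamma_i$ meeting its outer path requires minimising over all choices of the paths $\beta_i$ (as in the proof of Lemma~\ref{lem:intersections}), not merely over diagrams for a fixed $\beta$.
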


\begin{proof}
The idea as well as the strategy of the proof come from D.\ Wise who proved this theorem for classical $C(6)$ complexes (cf.\ Theorem 10.6 in~\cite{Wise}). We need to show that $\wis{X}$ is simply connected, flag and that links of vertices of $\wis{X}$ are $p$--large. 

Simple connectedness of $\wis{X}$ follows from Theorem~\ref{t:Ghe}. To show that $\wis{X}$ is flag, suppose that $v_0,\ldots,v_n$ are vertices of $\wis{X}$ which are pairwise adjacent. We claim that these vertices span an $n$--simplex of $\wis{X}$. Let $C(\Gamma_0),\ldots, C(\Gamma_n)$ be the corresponding cone-cells in $X$. By our assumption we have $\Gamma_i \cap \Gamma_j \neq \emptyset$  for all $0 \leqslant i,j\leqslant n$ (cone-cells can intersect only at the relators). Thus by Lemma~\ref{lem:flag} the intersection $\bigcap_{i=1}^n \Gamma_i$ is non-empty, and therefore the vertices $v_0, \ldots, v_n$ span an $n$--simplex of $\wis{X}$.

It remains to show that for any vertex $v \in \wis{X}$ the link $\wis{X}_v$ is $p$--large. Let $(v_1,\ldots,v_n)$ be a cycle in $\wis{X}_v$ of length less than $p$. This corresponds to a sequence $C(\Gamma_1),\ldots, C(\Gamma_n)$ of cone-cells such that $\Gamma_i \cap \Gamma_{i+1} \neq \emptyset$ and all $\Gamma_i$ intersect a fixed relator $\Gamma$.

The intersection $\Gamma \cap (\Gamma_1 \cup \ldots \cup \Gamma_n)$ is a connected graph (cf.\ proof of Lemma~\ref{lem:flag}). We claim that it is a tree. Indeed, any non-trivial cycle in $\Gamma \cap (\Gamma_1 \cup \ldots \cup \Gamma_n)$ is a concatenation of at most $n$ pieces, which contradicts the $C(p)$ hypothesis as $n <p$. Now choose vertices $u_i \in \Gamma \cap \Gamma_i \cap \Gamma_{i+1}$ and let $P_{i} \to \Gamma \cap \Gamma_{i}$ be a non-backtracking path joining $u_{i-1} $ to $u_{i}$. The concatenation of paths $ P_1P_2\cdots P_n$ is a cycle in the tree $\Gamma \cap (\Gamma_1 \cup \ldots \cup \Gamma_n)$. It is straightforward to check that there are two nonconsecutive paths $P_j$ and $P_k$ that intersect, see Figure~\ref{fig:validconfig} on the right. Therefore the cone-cells $C(\Gamma_j)$ and $C(\Gamma_k)$ intersect and this gives a diagonal $(v_j,v_k)$ in a cycle $(v_1,\ldots,v_n)$ in $\wis{X}_v$.\end{proof}

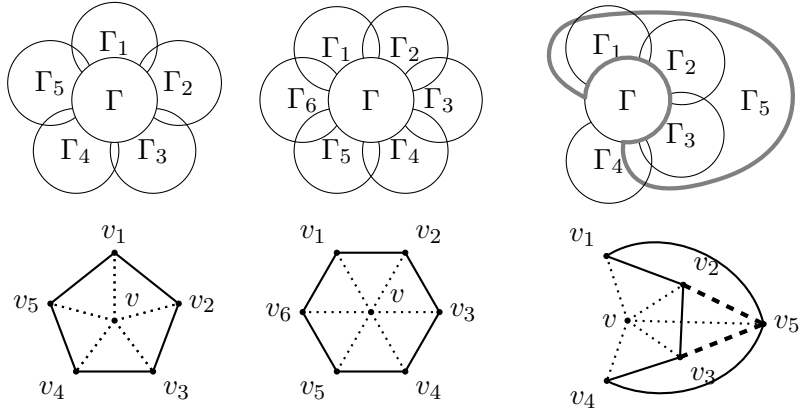
\begin{figure}[!h]
\centering
\begin{tikzpicture}[scale=0.225]
\definecolor{lgray}{rgb} {0.850,0.850,0.850}
\definecolor{dgray}{rgb} {0.650,0.650,0.650}


\draw (2,3) circle [radius=2.5];
\node  at (2,3) {$\Gamma_2$};

\draw (-2,3) circle [radius=2.5];
\node  at (-2,3) {$\Gamma_1$};

\draw (2,-3) circle [radius=2.5];
\node  at (2,-3) {$\Gamma_4$};

\draw (-2,-3) circle [radius=2.5];]
\node  at (-2,-3) {$\Gamma_5$};

\draw (4,0) circle [radius=2.5];
\node  at (4,0) {$\Gamma_3$};

\draw (-4,0) circle [radius=2.5];
\node  at (-4,0) {$\Gamma_6$};

\draw[fill=white] (0,0) circle [radius=2.5];
\node  at (0,0) {$\Gamma$};


\begin{scope}[shift={(0,-13)}]

\draw [thick] (-2,4) -- (2,4) -- (4,0.5) -- (2,-3) -- (-2,-3) -- (-4,0.5)-- (-2,4); 

\draw[fill] (-2,4)   circle [radius=0.15];

\draw[fill] (2,4)   circle [radius=0.15];

\draw[fill]  (4,0.5) circle [radius=0.15];

\draw[fill]  (2,-3) circle [radius=0.15];

\draw[fill] (-2,-3)  circle [radius=0.15];

 \draw[fill] (-4,0.5)   circle [radius=0.15];
  \draw[fill] (0,0.5)   circle [radius=0.15];

 \draw [thick, dotted] (0,0.5) -- (-2,4) ;

 \draw [thick, dotted] (0,0.5)  -- (2,4) ;
 \draw [thick, dotted] (0,0.5)-- (4,0.5) ;
 \draw [thick, dotted] (0,0.5)  -- (2,-3);
 \draw [thick, dotted] (0,0.5) -- (-2,-3) ;

 \draw [thick, dotted] (0,0.5) -- (-4,0.5);

\node  [above left] at (-2,4) {$v_1$};
\node [above right] at (0.5,0.5) {$v$};

\node  [above right] at (2,4) {$v_2$};
\node  [right] at (4,0.5) {$v_3$};
\node [below right] at  (2,-3) {$v_4$};
\node [below left] at (-2,-3) {$v_5$};
\node [left] at (-4,0.5) {$v_6$};

\end{scope}


\begin{scope}[shift={(-15,0)}]

\draw (0,3.25) circle [radius=2.5];
\node  at (0,3.35) {$\Gamma_1$};

\draw (2.25,-3) circle [radius=2.5];
\node  at (2.25,-3) {$\Gamma_3$};

\draw (-2.25,-3) circle [radius=2.5];]
\node  at (-2.25,-3) {$\Gamma_4$};

\draw (3.75,1) circle [radius=2.5];
\node  at (3.75,1) {$\Gamma_2$};

\draw (-3.75,1) circle [radius=2.5];
\node  at (-3.75,1) {$\Gamma_5$};

\draw[fill=white] (0,0) circle [radius=2.5];
\node  at (0,0) {$\Gamma$};

\end{scope}

\begin{scope}[shift={(-15,-13)}]

\draw [thick] (0,4) -- (3.75,1) -- (2.25,-3) -- (-2.25,-3) -- (-3.75,1)-- (0,4); 

\draw[fill] (0,4)   circle [radius=0.15];

\draw[fill]  (3.75,1) circle [radius=0.15];

\draw[fill]  (2.25,-3) circle [radius=0.15];

\draw[fill] (-2.25,-3)  circle [radius=0.15];

 \draw[fill] (-3.75,1)   circle [radius=0.15];

  \draw[fill] (0,0)   circle [radius=0.15];

 \draw [thick, dotted]  (0,0)  -- (0,4) ;
 \draw [thick, dotted] (0,0) -- (3.75,1) ;
 \draw [thick, dotted] (0,0)  -- (2.25,-3);
 \draw [thick, dotted] (0,0) -- (-2.25,-3) ;

 \draw [thick, dotted] (0,0) -- (-3.75,1);

 \draw [thick, dotted]  (0,0)  -- (0,4) ;
 \draw [thick, dotted] (0,0) -- (3.75,1) ;
 \draw [thick, dotted] (0,0)  -- (2.25,-3);
 \draw [thick, dotted] (0,0) -- (-2.25,-3) ;

\node [above right] at (0,0.25) {$v$};
\node  [above] at (0,4) {$v_1$};
\node  [right] at (3.75,1) {$v_2$};
\node [below right] at  (2.25,-3) {$v_3$};
\node [below left] at (-2.25,-3) {$v_4$};
\node [left] at (-3.75,1) {$v_5$};

 \draw [thick, dotted] (0,0) -- (-3.75,1);

\end{scope}


\begin{scope}[shift={(15,0)}, rotate=20]

\begin{scope}[shift={(0,-0.15)}]

\begin{scope}[ rotate=74, scale=1.75]

\node  at (-0.3,-4.25) {$\Gamma_5$};
\draw [ultra thick, gray]  (0,-1) [out=120, in=15]  to (-2,0.25) [out=195, in=105]  to (-2.75,-3) [out=285, in=180] to (0,-5.5) [out=0, in=270] to (3,0)[out=90, in=-15] to (2,2.5) [out=165, in=60] to (-4+4,1);
\end{scope}
\end{scope}

\draw (0,3.25) circle [radius=2.5];
\node  at (0,3.35) {$\Gamma_1$};

\draw (2.25,-3) circle [radius=2.5];
\node  at (2.25,-3) {$\Gamma_3$};

\draw (-2.25,-3) circle [radius=2.5];
\node  at (-2.25,-3) {$\Gamma_4$};

\draw (3.75,1) circle [radius=2.5];
\node  at (3.75,1) {$\Gamma_2$};

\draw [fill=white]
 (0,0) circle [radius=2.5];
\node  at (0,0) {$\Gamma$};

\draw [ultra thick, gray, domain=-118.25:158.75] plot ({2.5*cos(\x)}, {2.5*sin(\x)});

\end{scope}

\begin{scope}[shift={(15,-13)},rotate=18]

\draw [thick] (0,4) -- (3.75,1) -- (2.25,-3) -- (-2.25,-3) [out=-45, in=235] to (7.5,-2.65) [out=85, in=15] to (0,4); 

\draw[fill] (0,4)   circle [radius=0.15];

\draw[fill]  (3.75,1) circle [radius=0.15];

\draw[fill]  (2.25,-3) circle [radius=0.15];

\draw[fill] (-2.25,-3)  circle [radius=0.15];

 \draw[fill] (7.5,-2.65)   circle [radius=0.15];

  \draw[fill] (0,0)   circle [radius=0.15];

 \draw [thick, dotted] (0,0)  -- (0,4) ;
 \draw [thick, dotted] (0,0) -- (3.75,1) ;
 \draw [thick, dotted] (0,0)  -- (2.25,-3);
 \draw [thick, dotted] (0,0) -- (-2.25,-3) ;

 \draw [thick, dotted] (0,0) -- (7.5,-2.65);

 \draw [ultra thick, dashed] (3.75,1) -- (7.5,-2.65);
  \draw [ultra thick, dashed] (2.25,-3) -- (7.5,-2.65);

  \node [left] at (0,0) {$v$};
\node  [above left] at (0,4) {$v_1$};
\node  [above right] at (3.75,1) {$v_2$};
\node [below right] at  (2.25,-3) {$v_3$};
\node [below left] at (-2.25,-3) {$v_4$};
\node [right] at (7.5,-2.65) {$v_5$};

\end{scope}

\end{tikzpicture}
\caption{Vertex links of $W(X)$ and the corresponding subcomplexes of a $C(6)$ complex $X$. On the left there is an illegal configuration leading to a cycle of length $5$ without diagonals. In the middle and on the right legal configurations are shown.}

\label{fig:validconfig}
\end{figure}

\begin{de}\label{de:graphufl}We say that a graphical complex $X$ is (uniformly) locally finite, if after subdividing each cone-cell $C(\Gamma_i)$ into triangles spanned by the edges of $\Gamma_i$ and the apex of the cone $C(\Gamma_i)$, the resulting complex is a (uniformly) locally finite simplicial complex. 
\end{de}

It follows directly from the construction that $X$ is uniformly locally finite if and only if $W(X)$ is so. Consequently, since $X$ and $W(X)$ are $G$--homotopy equivalent, the $G$--action on $X$ is proper if and only if the $G$--action on $W(X)$ is proper. Finally, the $G$--action on $X$ is cocompact if and only if the $G$--action on $W(X)$ is so.
These observations lead to the following corollary, which is interesting in its own right. 

\begin{cor}\label{c:gscsys} Let $G$ be a group acting properly and cocompactly on a simply connected 
$C(p)$ graphical complex for $p\geqslant 6$. Then $G$ acts properly and cocompactly on a 
systolic complex, i.e.\ $G$ is a systolic group.
\end{cor}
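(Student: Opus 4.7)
The plan is to invoke Theorem~\ref{t:ss} together with the observations recorded immediately before the corollary, after first reducing to the setting where the Wise complex construction applies. Concretely, I would first check the hypothesis of Definition~\ref{def:unionofconecells}: if $X$ has edges or vertices not covered by any cone-cell, I would replace $X$ with the complex $X'$ obtained by coning off each such free edge as indicated in the remark following Definition~\ref{def:unionofconecells}. This $X'$ is $G$--homotopy equivalent to $X$ via a $G$--equivariant retraction that collapses each new triangular cone, inherits a cellular, proper, cocompact $G$--action from $X$, still satisfies the $C(p)$ condition (the added triangular cone-cells contribute no new pieces since their boundary loops each appear on a unique relator), and by construction is the union of its cone-cells.

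Next I would form the Wise complex $\wis{X'}$. By Theorem~\ref{t:ss}, $\wis{X'}$ is $p$--systolic and hence systolic, since $p\geqslant 6$. The induced cellular $G$--action on $X'$ permutes cone-cells and therefore induces a simplicial $G$--action on the nerve $\wis{X'}$, as noted after Definition~\ref{def:unionofconecells}.

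It then remains to verify that this action is proper and cocompact. Since $G$ acts properly and cocompactly on $X$, the complex $X$ is uniformly locally finite, and so is $X'$; by the discussion following Theorem~\ref{t:ss}, uniform local finiteness of $X'$ passes to $\wis{X'}$. Properness of the $G$--action on $\wis{X'}$ can be read off directly: the stabiliser of a simplex $\{C(\Gamma_{i_0}),\dots, C(\Gamma_{i_n})\}$ of $\wis{X'}$ permutes finitely many cone-cells and stabilises their common intersection, a non-empty tree in $X'$ by Lemma~\ref{lem:flag}, hence fixes a vertex of $X'$ up to finite index and is finite by properness of the action on $X'$. Cocompactness transfers similarly: a compact fundamental domain $K$ for $G$ on $X'$ meets only finitely many cone-cells (by uniform local finiteness), so there are only finitely many $G$--orbits of cone-cells, and hence only finitely many $G$--orbits of simplices in $\wis{X'}$, giving a cocompact action.

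Essentially all the work has already been done in Theorems~\ref{t:ss} and~\ref{t:Ghe}; the only minor obstacle is the initial reduction to the case where every edge of the complex is covered by a cone-cell, which is needed to apply the Wise complex construction formally but is handled by the remark after Definition~\ref{def:unionofconecells}.
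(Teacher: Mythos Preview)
Your proposal is correct and follows essentially the same route as the paper: invoke Theorem~\ref{t:ss} for systolicity of $\wis{X}$, then transfer properness and cocompactness of the $G$--action from $X$ to $\wis{X}$. The paper's argument is slightly more terse, deducing the transfer of properness and cocompactness from the $G$--homotopy equivalence of Theorem~\ref{t:Ghe} together with the equivalence of uniform local finiteness for $X$ and $\wis{X}$, whereas you argue properness and cocompactness directly via stabilisers and orbit-counting; both are valid and amount to the same thing. Your explicit reduction to the case where $X$ is a union of cone-cells is a nice touch (the paper leaves this implicit via the remark after Definition~\ref{def:unionofconecells}), though note that the reason the new cone-cells over free edges do not disturb the $C(p)$ condition is simply that their relators are single edges and hence contain no non-trivial cycles at all, rather than anything about ``boundary loops''.
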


Being systolic implies many properties including e.g.\ biautomaticity \cite[Theorem 13.1]{JS2}. For further
results see e.g.\ \cites{JS2,JS3,PP,Osge,OS} and references therein.

We now state and prove the main theorem of this section.

\begin{tw}\label{twclasssmall} Let a group $G$ act properly on a simply connected uniformly locally finite $C(6)$ graphical complex $X$
. Then: \begin{enumerate}
\item the complex $X$ is a model for $\eg$, 
\item there exists a $3$--dimensional model for $\eeg$,
\item there exists a $4$--dimensional model for $E_{\mathcal{VAB}}G$, provided the action is additionally cocompact.
\end{enumerate}

\end{tw}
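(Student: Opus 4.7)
The overall strategy is to leverage the $G$--homotopy equivalence between $X$ and its Wise complex $W(X)$ (Theorem~\ref{t:Ghe}), using $X$ as a sharp $2$-dimensional model for $\eg$ and $W(X)$ as the systolic complex to which the machinery developed in Sections~\ref{sec:mainthm} and~\ref{s:vab2} applies. Part (1) gives the base model, part (2) uses the pushout construction of Corollary~\ref{commenough} with $X$ substituted for $W(X)$ in the appropriate places, and part (3) follows the pattern of the proof of Theorem~\ref{twvab2}.

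For (1), since the action is proper, it suffices to verify that $X^H$ is contractible for every finite subgroup $H\leqslant G$. By Theorem~\ref{t:ss} the complex $W(X)$ is systolic, so by Theorem~\ref{modelpiotra} it is a model for $\eg$ under the induced $G$--action, and hence each $W(X)^H$ is contractible. The equivariant homotopy equivalence $X\simeq_G W(X)$ restricts to a homotopy equivalence $X^H \simeq W(X)^H$, so $X^H$ is contractible as well. Together with the $2$-dimensionality of $X$ (after the triangulation of Definition~\ref{de:graphufl}), this establishes (1).

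For (2), I would apply Corollary~\ref{commenough} to the proper $G$--action on $W(X)$. The results of Section~\ref{sec:mainthm} apply unchanged to this action, so $G$ satisfies Condition~(C) and Lemma~\ref{commmodels}(i) supplies, for each class $[H]\in[\mathcal{VCY}\setminus\mathcal{FIN}]$, a $2$-dimensional model for $E_{\mathcal{G}[H]}N_G[H]$. As in the $d\leqslant 3$ case of the proof of Theorem~\ref{mainthm}, we take $X$ itself as the model for $\eg$ and, with restricted action, for $\underline{E}N_G[H]$; both then have dimension~$2$. Substituting these into Corollary~\ref{commenough} gives $\dim\eeg\leqslant\max\{2,\,2+1,\,2\}=3$.

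For (3), the cocompactness hypothesis on the action on $X$ transfers to a proper cocompact action of $G$ on $W(X)$ (its cells correspond to finite tuples of pairwise intersecting relators, of which there are finitely many $G$--orbits), so $G$ is a systolic group by Corollary~\ref{c:gscsys}. Consequently, Lemma~\ref{lemmanm} establishes conditions (NM1) and (NM2) for the pair $\mathcal{VCY}\subset\mathcal{VAB}$, and the argument of Theorem~\ref{twvab2} applies verbatim: the cellular $G$--pushout assembled from the $3$-dimensional $\eeg$ of part (2), the $3$-dimensional models $\underline{\underline{E}}M$ for each representative of the maximal conjugacy classes of rank--$2$ virtually abelian subgroups, and the discrete spaces $G/M$ yields an $E_{\mathcal{VAB}}G$ of dimension $\max\{3,4,0\}=4$. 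The main subtlety to handle carefully is the parallel use of the two complexes: $X$ is needed for the sharp $2$-dimensional models of $\eg$ and $\underline{E}N_G[H]$ (without which the bound in (2) would not drop below $4$), while $W(X)$ is the only systolic complex available, and hence the only one supporting the commensurator analysis of Section~\ref{sec:mainthm} and the Flat Torus Theorem underlying Lemma~\ref{lemmanm}.
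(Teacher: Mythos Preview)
Your proposal is correct and follows essentially the same route as the paper: part (1) via the $G$--homotopy equivalence with the systolic $W(X)$ and Theorem~\ref{modelpiotra}; part (2) via Corollary~\ref{commenough} using $X$ as the $2$--dimensional $\underline{E}N_G[H]$ and Lemma~\ref{commmodels}(i) for $E_{\mathcal{G}[H]}N_G[H]$; part (3) via Lemma~\ref{lemmanm} and the pushout of Theorem~\ref{twvab2}, with the $3$--dimensional $\eeg$ from (2) in place of the higher-dimensional systolic model. Your closing remark about the dual role of $X$ and $W(X)$ captures exactly the point of the argument.
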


\begin{proof}
(1) By Theorem~\ref{t:Ghe} the group $G$ acts properly on a uniformly locally finite systolic complex $W(X)$,
and hence by Theorem~\ref{modelpiotra} the complex $W(X)$ is a model for $\eg$. Therefore $X$ is a model for $\eg$ as well, since $X$ and $W(X)$ are $G$--homotopy equivalent. 

(2) By Corollary~\ref{commenough} it is enough to find 
for every $[H] \in [\mathcal{VCY} \setminus \mathcal{FIN}]$ a $2$--dimensional models for $\underline{E}N_G[H]$ and $E_{\mathcal{G}[H]} N_G[H]$. By (1) the complex $X$ may serve as a model for $\underline{E}N_G[H].$  Notice that $G$ acts properly on a systolic complex $\wis{X}$, hence by Lemma~\ref{commmodels}.\ref{jedencommmodels} there exists a $2$--dimensional model for $E_{\mathcal{G}[H]} N_G[H]$.

(3) Since $G$ acts properly and cocompactly on a systolic complex, by Lemma~\ref{lemmanm} it satisfies conditions (NM1) and (NM2) (cf.\ Section~\ref{s:vab2}). Therefore proceeding exactly as in the proof of Theorem~\ref{twvab2}, we obtain a model for $E_{\mathcal{VAB}}G$ of dimension $\mathrm{max}\{4,d \}$ where $d$ is the dimension of a model for $\eeg$. By (2) the latter can be chosen to be at most $3$, hence the claim.
\end{proof}

\section{Examples}
\label{sec:ex}

In this section we provide few classes of examples of groups to which our theory applies.
When relevant, we mention that our constructions give new bounds on dimensions
of classifying spaces.

\subsection{Graphical small cancellation presentations}
A \emph{graphical presentation} $\mathcal P=\langle S \; | \; \varphi \rangle$ is a graph
\[{\bf \Gamma}=\bigsqcup_{i \in I}\Gamma_i,\] and an immersion \[\varphi \colon {\bf \Gamma} \to R_S,\] where
every $\Gamma_i$ is finite and connected, and $R_S$ is a rose, i.e.\ a wedge of circles with edges (cycles) labelled by a set $S$. Alternatively, the map $\varphi \colon {\bf \Gamma} \to R_S$, called a \emph{labelling}, may be thought of
as an assignment: to every edge of ${\bf \Gamma}$ we assign a direction (orientation) and an
element of $S$. 

A graphical presentation $\mathcal P$ defines a group \[G= G(\mathcal{P})= \pi_1(R_S)/\left<\left< 
\varphi_{\ast}(\pi_1 (\Gamma_i))_{i\in I} \right>\right>.\] In other words $G$ is the quotient of the 
free group $F(S)$ by the normal closure of the group generated by all words (over $S\cup S^{-1}$) read along cycles in ${\bf \Gamma}$ (where an oriented edge labelled by $s\in S$ is identified with the edge of the opposite
orientation and the label $s^{-1}$).
A \emph{piece} is a path $P$ labelled by $S$ such that there exist two immersions $p_1 \colon P \to {\bf \Gamma}$ and $p_2\colon P \to {\bf \Gamma}$, and there is no automorphism $\Phi \colon {\bf \Gamma}
\to {\bf \Gamma}$ such that $p_1=\Phi \circ p_2$. The presentation $\mathcal P$ satisfies the \emph{$C(p)$ small
	cancellation condition}, for $p\geqslant 6$, if no cycle in ${\bf \Gamma}$ is covered by less than $p$ pieces; see eg.\ \cite{Gru} for a systematic treatment.

Consider the following  graphical complex (see Definition~\ref{def:graphcomplex}):
\[
X=R_S \cup_{\varphi} \bigsqcup_{i \in I} C(\Gamma_i). \]
The fundamental group of $X$ is isomorphic to $G$. In the universal cover $\widetilde X$ of $X$ there might be
multiple copies of cones $C(\Gamma_i)$ whose attaching maps differ by lifts of Aut$(\Gamma_i)$.
After identifying all such copies, we obtain the complex $\widetilde X^{\ast}$. The group
$G$ acts geometrically, but not necessarily freely on $\widetilde X^{\ast}$.
If $\mathcal P$ is a $C(p)$ graphical small cancellation presentation then the complex
$\widetilde X^{\ast}$ is a $C(p)$ small cancellation complex. Moreover, the complex $\widetilde X^{\ast}$ satisfies the assumptions of Definition~\ref{def:unionofconecells} as long as the map $\varphi \colon {\bf \Gamma} \to R_S$ is surjective. This happens precisely when the presentation $\mathcal{P}$ has no free generators.

Graphical small cancellation presentations provide a powerful tool for constructing groups with often unexpected properties, see e.g.\ \cite{O-sc}. 
For such groups with torsion our result concerning the model for $\eg$ is new.
If a $C(6)$ graphical small cancellation group is torsion-free then it admits a model for
$\eeg$ of dimension at most three, by the work of D.\ Degrijse \cite[Corollary 3]{Die15}. There are however
$C(6)$ graphical small cancellation groups to which Degrijse's result does not apply. 
In such cases our constructions of low-dimensional $\eeg$ and $E_{\mathcal{VAB}}G$ are the only general tools available.

\subsection{Groups acting on $\mathcal{VH}$--complexes} Not all groups acting geometrically on graphical
small cancellation complexes possess graphical small cancellation presentations. The simplest example is $\mathbb{Z}^2$.
It acts simply transitively on a tessellation of the plane by regular hexagons (a simple example of a $C(6)$ complex), but possesses no graphical $C(6)$ presentation. The following class of
examples is more interesting.

The notion of \emph{$\mathcal{VH}$--complexes} was introduced by D.\ Wise \cite{WiseVH}. Recall that a square
complex $X$, i.e.\ a combinatorial $2$--complex whose cells are squares, is a $\mathcal{VH}$--complex if the following holds. The edges of $X$
can be partitioned into two classes $\mathcal{V}$ and $\mathcal{H}$ called \emph{vertical} and \emph{horizontal} edges respectively, such that every square has two opposite vertical and two opposite horizontal edges. 

Let $X$ be a simply connected $\mathcal{VH}$--complex that is a $\mathrm{CAT}(0)$ space with respect to the standard piecewise Euclidean metric. We now show how to turn $X$ into a simply connected $C(6)$ graphical small cancellation complex. Subdivide every square into $24$
triangles as shown in Figure~\ref{f:VH} on the left. 
\begin{figure}[h!]
	\centering
	\includegraphics[width=0.9\textwidth]{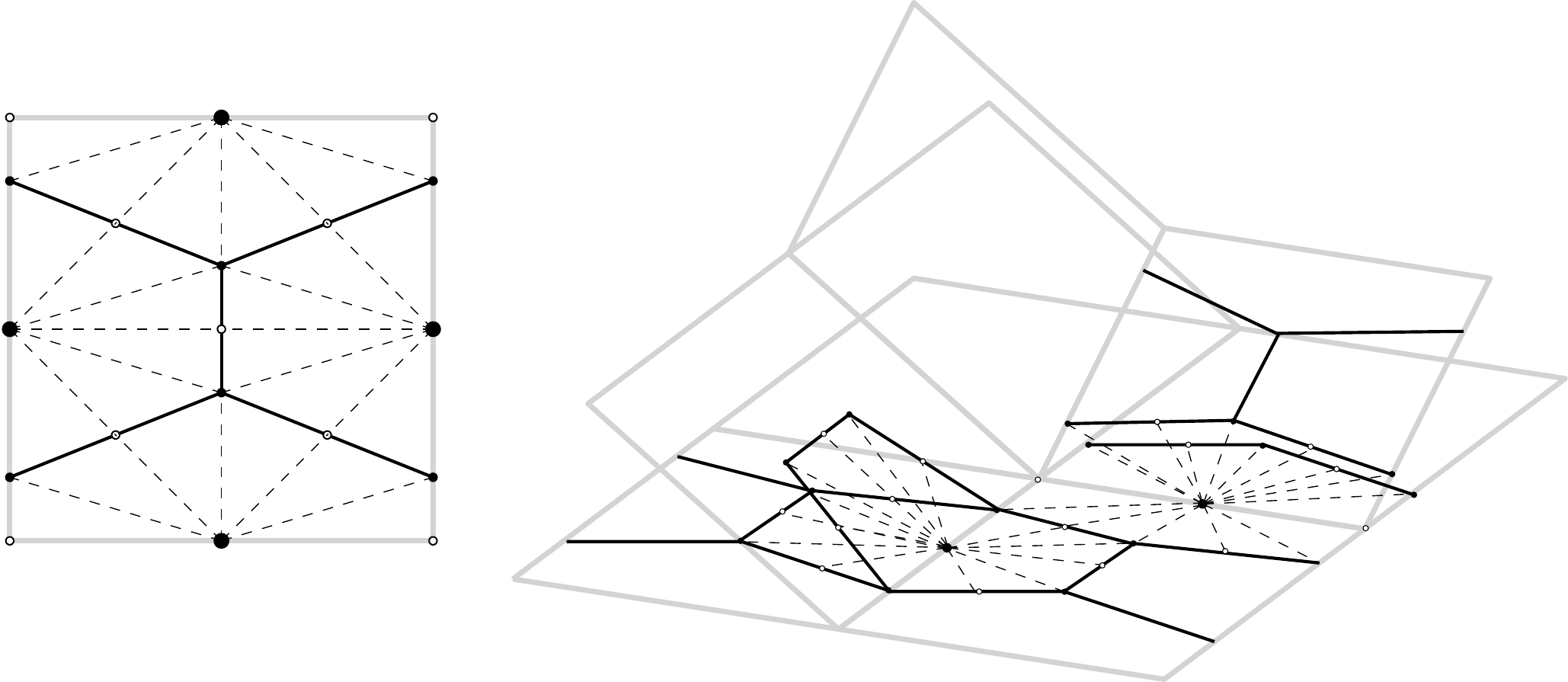}
	\caption{The subdivision of a $\mathcal{VH}$--square into $24$ triangles (left), and the $C(6)$ graphical small cancellation complex structure on a CAT(0) $\mathcal{VH}$--complex. Two
		cones on relators are highlighted: one with a vertical and one with a horizontal apex.}
	\label{f:VH}
\end{figure}
More precisely, the subdivision is invariant with respect to $\mathcal{VH}$--isometries of the square,
the vertical edges are subdivided into four sub-edges each, and the horizontal edges are subdivided into
two sub-edges each. This defines a triangulation of $X$. Call the vertices of this triangulation, being
mid-points of vertical and horizontal edges \emph{vertical} and \emph{horizontal apexes}, respectively.
Consider links of apexes. Such a link is a graph of girth $12$. Two such links intersect in a subgraph
(possibly empty) of diameter at most $2$; see Figure~\ref{f:VH} on the right. Therefore, the complex $X$ 
has a structure of the union of cones on links of apexes (relators). This defines the $C(6)$ graphical small 
cancellation complex $X^{\ast}$. It is clear that every $\mathcal{VH}$--automorphism, i.e.\ an automorphism respecting types of edges of $X$ induces an automorphism of $X^{\ast}$. 

\begin{tw}\label{t:VH}
	Let $X$ be a simply connected $\mathcal{VH}$--complex. Then the complex $X^{\ast}$ is a $C(6)$ graphical small cancellation complex. In particular, every group of $\mathcal{VH}$--automorphisms of $X$ acts
	by automorphisms on $X^{\ast}$. One action is proper and/or cocompact if and only if the other is so.
\end{tw}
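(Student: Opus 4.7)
The plan is to verify in order: (i) that the construction yields an instance of a graphical complex in the sense of Definition~\ref{def:graphcomplex}; (ii) that this complex is $C(6)$ and simply connected; (iii) that every group of $\mathcal{VH}$--automorphisms of $X$ lifts to an automorphism group of $X^{\ast}$ with unchanged dynamical properties. For step (i), I would let $\Theta$ denote the subgraph of the $1$--skeleton of the triangulated $X$ consisting of all non-apex vertices together with the sub-edges joining them, and for each apex $v$ of the triangulation let $\Gamma_v$ be its link in the triangulation, viewed naturally as a subgraph of $\Theta$. The closed star of $v$ in the triangulation is then canonically the cone $C(\Gamma_v)$ with cone point $v$, and since closed stars of apexes cover all triangles one obtains
\[
X^{\ast} \;=\; \Theta \cup_{\varphi} \bigsqcup_{v} C(\Gamma_v),
\]
where $\varphi$ is the disjoint union of the inclusions $\Gamma_v \hookrightarrow \Theta$. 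Simple connectedness of $X^{\ast}$ is then immediate from simple connectedness of $X$, since the two share the same underlying topological space.

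The heart of the argument is step (ii). The two geometric facts that follow from the $24$-triangle subdivision and the $\mathrm{CAT}(0)$ hypothesis are those already quoted before the theorem: every $\Gamma_v$ has girth at least $12$, and for distinct apexes $v,w$ the intersection $\Gamma_v \cap \Gamma_w$ is a subgraph of $\Theta$ of diameter at most $2$. Granting these, any piece $P \to X^{\ast}$ admits by definition two lifts to distinct relators and therefore maps into some $\Gamma_v \cap \Gamma_w$, giving $|P| \leqslant 2$. A non-trivial cycle $C \to \Gamma_v \to X^{\ast}$ has length at least $12$ by the girth bound and hence cannot be written as a concatenation of fewer than $6$ pieces, establishing the $C(6)$ condition.

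For step (iii), a $\mathcal{VH}$--automorphism of $X$ permutes squares while preserving the partition of edges into $\mathcal{V}$ and $\mathcal{H}$, so the $24$-triangle subdivision is canonically equivariant; the induced map on the triangulated $X$ sends vertical apexes to vertical apexes and horizontal apexes to horizontal apexes, and is therefore an automorphism of the graphical structure on $X^{\ast}$. Since $X^{\ast}$ and $X$ coincide as $G$--spaces (the former is just a refinement of the cell structure of the latter), properness and cocompactness of one action are equivalent to those of the other. The main obstacle in the whole argument is the combinatorial verification of the girth $12$ and the diameter $2$ claims, which requires a careful case analysis of how stars of vertical and horizontal apexes overlap across adjacent squares, and which is precisely where the specific count of $24$ triangles in the subdivision is used.
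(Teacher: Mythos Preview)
Your proposal is correct and follows essentially the same approach as the paper. In fact the paper does not give a separate proof of this theorem at all: the argument is the paragraph of discussion preceding the statement, which asserts the girth-$12$ and diameter-$2$ facts (with reference to Figure~\ref{f:VH}) and observes that $\mathcal{VH}$--automorphisms preserve the subdivision; your write-up simply makes explicit the deduction of $C(6)$ from these two facts and the equivariance consequences, exactly as intended.
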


T.\ Elsner and P.\ Przytycki \cite{EP} showed that a group acting properly or geometrically on a 
simply connected $\mathcal{VH}$--complex acts, respectively, properly or geometrically on a $3$--dimensional
systolic complex. Theorem~\ref{t:VH} together with Theorem~\ref{t:ss} provide a higher dimensional systolic complex in such a case. Nevertheless, the theorem above equips $\mathcal{VH}$--groups with a new $2$--dimensional structure, extending in a way the Elsner-Przytycki result. 

Of course, $\mathcal{VH}$--complexes carry a natural CAT(0) metric so that constructions of the corresponding
low-dimensional models for $\eg$ and $\eeg$ are available by \cite{Lu09}. Our results provide a $4$--dimensional 
model for $E_{\mathcal{VAB}}G$ for groups acting geometrically on such complexes.

\subsection{Lattices in \texorpdfstring{$\widetilde{A}_2$--}{}buildings} Here we present another example of a group acting
properly on a graphical small cancellation complex.
An $\widetilde A_2$--building is a building with apartments isomorphic to the equilaterally triangulated plane $\mathbb{E}_{\Delta}^2$, see Definition~\ref{def:flat}. 

Consider such a building $Y$. Let $Y'$ be its barycentric subdivision. Define a \emph{dual graph} $\Theta$ of $Y$ as follows.
Vertices of $\Theta$ are edges of $Y$ and triangles of $Y$. There is an edge in $\Theta$ between
every edge of $Y$ and a triangle of $Y$ containing this edge; see Figure~\ref{f:A2}.

The link in $Y'$ of any vertex of $Y$ is a $12$--large graph (a subdivision of a spherical building)
that may be considered as a subgraph of $\Theta$. The complex $Y'$ is thus obtained by attaching
cones on such links 
to the graph $\Theta$.
Two such cones intersect in a set of diameter at most $2$. Therefore $Y'$ may be seen
as a $C(6)$ graphical small cancellation complex. Lattices in $\mathrm{Isom}(Y)$ act naturally
on $Y'$. Notice that such lattices may be very different from groups in the previous example because
they may have Kazhdan's property (T).

Each $\widetilde A_2$--building possesses a natural structure of a systolic $2$--dimensional complex 
or even a CAT(0) complex. Our results provide a $4$--dimensional 
model for $E_{\mathcal{VAB}}G$ for lattices in its isometry group.

In fact, by exactly the same construction as above one equips any $2$--dimensional $p$--systolic
complex with a structure of a $C(p)$ graphical complex.

\begin{figure}[h!]
	\centering
	\includegraphics[width=0.57\textwidth]{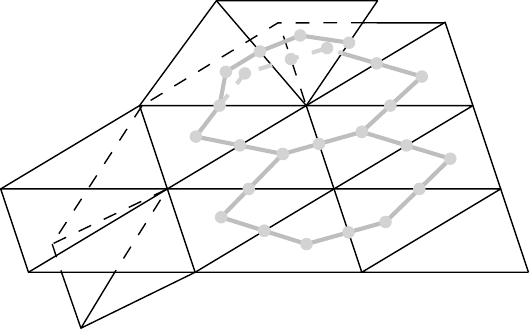}
	\caption{A part of an $\widetilde{A}_2$--building together with a part of its dual graph (thick gray).}
	\label{f:A2}
\end{figure}

\subsection{A $3$--dimensional systolic example} As the last example we present a non-hyperbolic group $G$ acting geometrically
on a $3$--dimensional systolic pseudomanifold $X$ which does not admit a $G$--invariant $\mathrm{CAT}(0)$
metric. 

We start with a simplex of groups $\mathcal G_{54}$, introduced by J.\ \' Swi\polhk atkowski in \cite{Sw} (for some details
on complexes of groups we refer the reader to \cite{JS2,Sw}).
Let $T_{54}$ be a $6$--large triangulation of the flat $2$--torus consisting of $54$ equilateral triangles;
see Figure~\ref{f:g54} on the right (with the opposite sides of the hexagon and the appropriate vertices identified). Let $G_{54}$ be a group of automorphisms of $T_{54}$ generated by reflections
with respect to edges of triangles. For the $G_{54}$--action on $T_{54}$ the stabilisers of triangles
are trivial, the stabilisers of edges are isomorphic to $\mathbb Z_2$, and the stabilisers of vertices are isomorphic to the dihedral group $D_3$. 
The quotient $T_{54}/G_{54}$ is a single triangle.

The $3$--simplex of groups $\mathcal G_{54}$ is defined as follows. The group of the $3$--simplex is trivial, the triangle groups are $\mathbb Z_2$, the edge groups are $D_3$, the vertex groups are $G_{54}$ and the inclusion maps correspond to inclusions
of respective stabilisers in the $G_{54}$--action on $T_{54}$; see Figure~\ref{f:g54} on the left.

\begin{figure}[h!]
	\centering
	\includegraphics[width=0.82\textwidth]{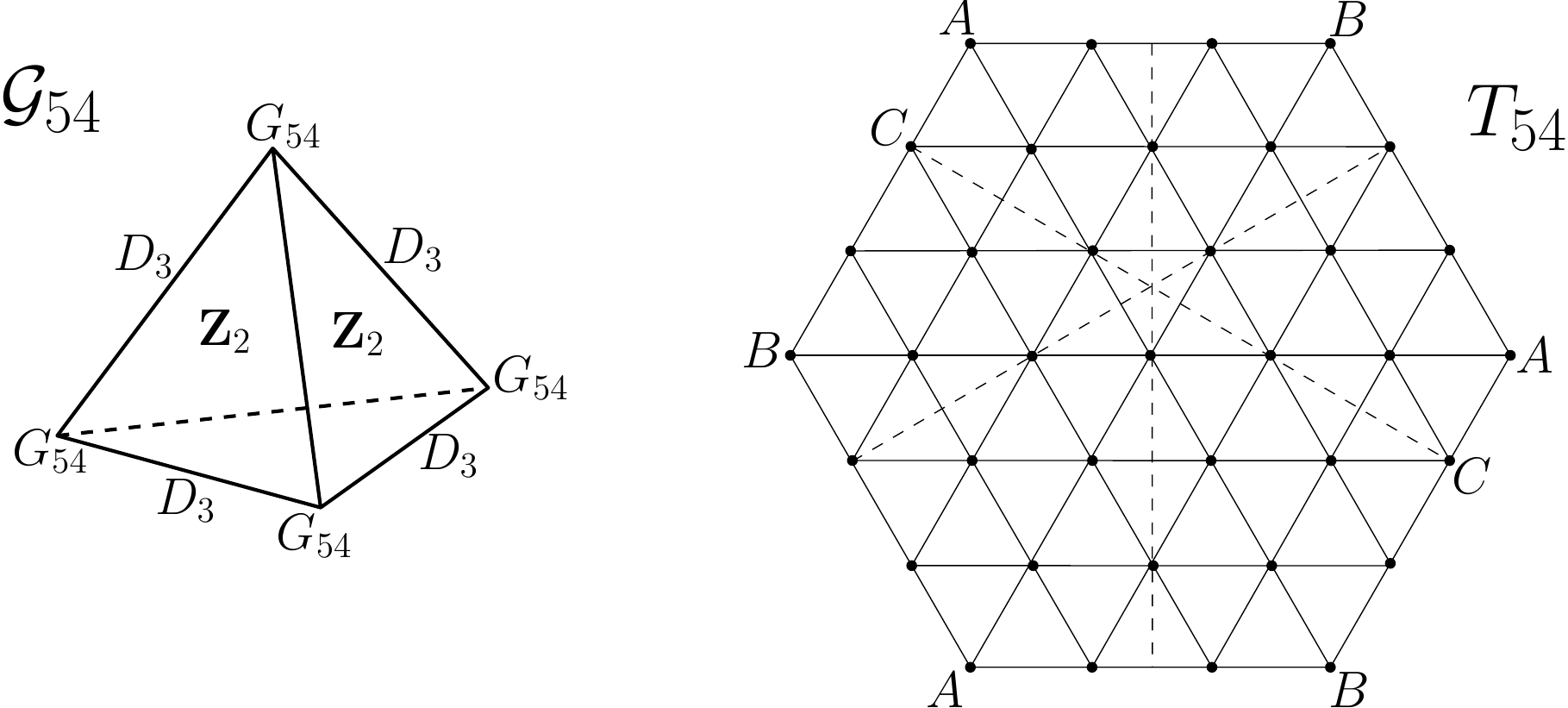}
	\caption{The simplex of groups $\mathcal G_{54}$ (left), and the local development $T_{54}$ (the development of the triangle of groups in the link of a vertex).}
	\label{f:g54}
\end{figure}

Since $\mathcal G_{54}$ is a locally $6$--large simplex of groups (see \cite[Section 6]{JS2}) it is developable by \cite[Theorem 6.1]{JS2}.
Its fundamental group $\overline G=\pi_1(\mathcal G_{54})$ acts geometrically (with the corresponding stabilisers of faces) on an
infinite $3$--dimensional systolic pseudomanifold $X$, whose vertex links are all isomorphic to 
the torus $T_{54}$. The quotient of this action is a $3$--simplex. 

Using \' Swi\polhk atkowski's construction we now define a new simplex of groups $\mathcal{G}_{54}^{\ast}$,
whose fundamental group acts on the barycentric subdivision $X'$ of the pseudomanifold ${X}$, transitively on $3$--simplices. It is obtained by assigning appropriate groups
to faces of a simplex of the barycentric subdivision of the $3$--simplex underlying $\mathcal{G}_{54}$. Let ${G}_{54}^{\ast}$ be a group of isometries of the barycentric subdivision 
$T_{54}'$ of the torus $T_{54}$ generated by reflections with respect to all edges. That is,
besides the elements of $G_{54}$ we consider also reflections with respect to lines like, for example, the dashed ones
in Figure~\ref{f:g54}. Observe that in this case the stabiliser of a triangle in $T_{54}'$ is trivial,
stabilisers of edges are $\mathbb Z_2$, and the stabilisers of vertices are as follows:
the stabiliser of a barycentre of a triangle of $T_{54}$ is $D_3$; the stabiliser of 
a barycentre of an edge of $T_{54}$ is $\mathbb Z_2^2$; the stabiliser of a vertex of 
$T_{54}$ is $D_6$. The $3$--simplex of groups $\mathcal{G}_{54}^{\ast}$ is now defined 
as follows. We consider a $3$--simplex in the barycentric subdivision
of a tetrahedron $P$ underlying $\mathcal G_{54}$, see Figure~\ref{f:g54*}.
The $3$--simplex group is trivial.  The triangle faces groups are $\mathbb Z_2$.
The assignment of the edge and vertex groups is shown in Figure~\ref{f:g54*}.

\begin{figure}[h!]
	\centering
	\includegraphics[width=0.65\textwidth]{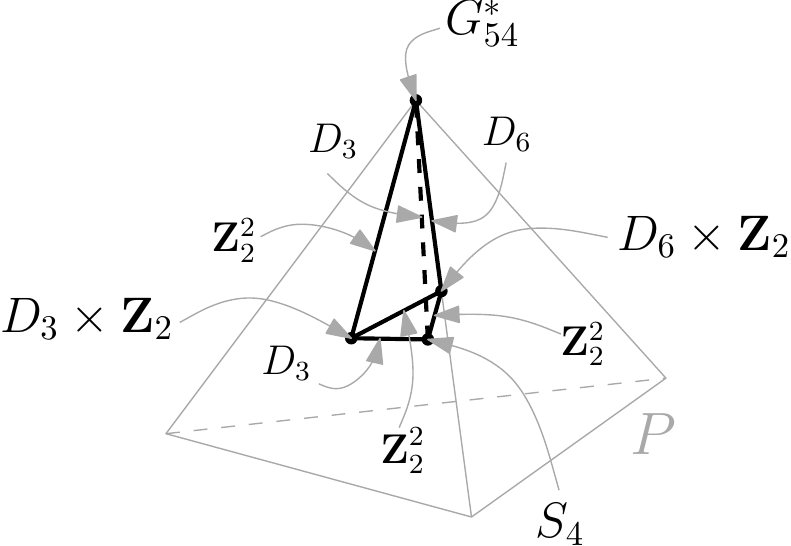}
	\caption{The simplex of groups $\mathcal G_{54}^{\ast}$.}
	\label{f:g54*}
\end{figure}

The fundamental group $G$ of $\mathcal{G}_{54}^{\ast}$ acts on $X'$ with the corresponding
stabilisers of cells and with the quotient being a $3$--simplex in the barycentric subdivision of $X$. 

\begin{prop}The complex $X$ does not admit a $G$--invariant $\mathrm{CAT}(0)$ metric.
\end{prop}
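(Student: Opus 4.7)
My approach would be to proceed by contradiction using Gromov's link condition. Suppose that $X$ admits a $G$--invariant $\mathrm{CAT}(0)$ metric $d$. The first task is to pin down the shape of the $3$--simplices: since $G \cong \pi_1(\mathcal G_{54})$ acts transitively on the set of $3$--simplices of $X$ with trivial stabiliser, $G$--invariance of $d$ forces all $3$--simplices to be pairwise isometric. The $\mathbb Z_2$ face stabilisers act as reflections across the corresponding $2$--faces of any incident tetrahedron, and together with the $D_3$ action on each edge and the vertex stabiliser $G_{54}$ acting on the link, each tetrahedron acquires the full symmetry group of a regular tetrahedron. Since a Euclidean $3$--simplex whose isometry group is $S_4$ must be regular, I conclude that $d$ is the regular piecewise Euclidean metric in which every $3$--simplex is a regular Euclidean tetrahedron.

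With the geometry of the cells fixed, I would invoke Gromov's criterion: a simply connected piecewise Euclidean complex is $\mathrm{CAT}(0)$ if and only if the link of every vertex, endowed with the induced piecewise spherical structure, is $\mathrm{CAT}(1)$. By the construction of $\mathcal G_{54}$, every vertex link of $X$ is combinatorially isomorphic to the torus $T_{54}$, and the regular--tetrahedron hypothesis forces each of its $54$ triangles to be an equilateral spherical triangle with side length $\pi/3$ (the face angle) and vertex angle $\arccos(1/3)$ (the dihedral angle of a regular Euclidean tetrahedron). The entire problem thus reduces to showing that this specific piecewise spherical torus is not $\mathrm{CAT}(1)$.

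The heart of the argument --- and the principal obstacle --- is to produce a closed local geodesic in $(T_{54}, d_{\mathrm{sph}})$ of length strictly less than $2\pi$, contradicting the $\mathrm{CAT}(1)$ systole inequality. The naive candidates, namely the combinatorial $6$--cycles arising as "straight" non-contractible edge paths in $T_{54}$, have spherical length exactly $6 \cdot \pi/3 = 2\pi$, so they do not by themselves give a contradiction. The strategy is instead to use the positive intrinsic curvature of each equilateral spherical triangle to construct a genuine shortcut: develop a chain of spherical triangles running around a non-contractible loop of $T_{54}$ onto $\mathbb S^2$, and produce there a great-circle arc transverse to the chain that closes up, after identification, to a loop in $T_{54}$ of length strictly less than $2\pi$. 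The technical challenge is threefold: one must select the chain so that the developed geodesic closes up consistently with the torus identifications of $T_{54}$; verify that at every vertex it crosses, both sides subtend an angle at least $\pi$ so that it is genuinely a local geodesic; and carry out the spherical trigonometry (using the side $\pi/3$ and angle $\arccos(1/3)$) to show that the closed-up length is indeed less than $2\pi$. Once such a loop is exhibited, $T_{54}$ cannot be $\mathrm{CAT}(1)$, the link condition fails at every vertex of $X$, and the assumed $G$--invariant $\mathrm{CAT}(0)$ metric cannot exist.
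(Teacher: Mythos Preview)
Your proposal contains a genuine gap at the reduction step. From the assumption that $X$ carries a $G$--invariant $\mathrm{CAT}(0)$ metric $d$ you deduce that each tetrahedron inherits an $S_4$--symmetry and then write: ``Since a Euclidean $3$--simplex whose isometry group is $S_4$ must be regular, I conclude that $d$ is the regular piecewise Euclidean metric.'' This is a non sequitur: nothing in the hypotheses says that $d$ is piecewise Euclidean, and having the full tetrahedral symmetry group on each cell certainly does not force the metric on that cell to be flat. An $S_4$--invariant metric on a $3$--ball can be hyperbolic, spherical, or far wilder. So even if the remainder of your argument were carried out in full, you would only have excluded one particular $G$--invariant metric (the regular Euclidean one), not all of them. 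There is also a smaller slip: the group in the proposition is $G=\pi_1(\mathcal G_{54}^{\ast})$, not $\pi_1(\mathcal G_{54})=\overline G$; this matters because it is precisely the extra flag--transitivity of $G$ that gives edge--transitivity on $X$.

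The paper's proof avoids the reduction entirely. It works with an arbitrary $G$--invariant $\mathrm{CAT}(0)$ metric: edge--transitivity makes all edges of $X$ the same length, so every $2$--face is an equilateral (geodesic) triangle, and the $\mathrm{CAT}(0)$ comparison bounds each Alexandrov angle by $\pi/3$. Thus the vertex link, with its induced angular metric, is a $\mathrm{CAT}(1)$ torus whose edges all have the same length $\leqslant \pi/3$. For the short loop the paper does not develop onto $S^2$ or compute spherical trigonometry; it simply takes a concrete straight segment in the hexagonal fundamental domain of $T_{54}$ joining two identified vertices. This segment crosses finitely many triangles transversally, and in each triangle the $\mathrm{CAT}(1)$ comparison makes the transversal strictly shorter than an edge, yielding a homotopically non--trivial loop of length strictly less than $2\pi$. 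This is both more general than your plan (no piecewise Euclidean hypothesis) and considerably more direct.
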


\begin{proof}Suppose such a metric exists.
	By the high transitivity of the $G$--action every edge of $X$ has the same length.
	It follows that all triangles in $X$ are equilateral. Hence, by the $\mathrm{CAT}(0)$ property,
	angles between edges in triangles are at most $\frac{\pi}{3}$, that is, the angle length of every
	edge in the link of a vertex of $X$ does not exceed $\frac{\pi}{3}$. Every such link is isomorphic to
	the barycentric subdivision $T_{54}'$ of $T_{54}$ and the vertex group $G_{54}^{\ast}$ acts transitively
	on edges. Therefore, all the edges in $T_{54}$ have the same length.
	Consider now the straight line connecting the vertices labeled $C$ in Figure~\ref{f:g54}.
	This is a homotopically non-trivial loop in the link of length strictly less that $2\pi$.
	It follows from the fact that, by the $\mathrm{CAT}(1)$ property of the link, every segment of this line contained in a single triangle has length smaller then the length $\frac{\pi}{3}$ of edges of this triangle. This contradicts the fact that the metric is $\mathrm{CAT}(0)$.
\end{proof}

It is relatively easy to observe that $X$ contains flats and hence the group $G$ is not hyperbolic \cite{Wie}. We believe that $G$ acts geometrically on a high dimensional $\mathrm{CAT}(0)$ cube complex. It seems that methods developed in the current article provide the only way of constructing low-dimensional models for the classifying spaces 
$\eeg$ and $E_{\mathcal{VAB}}G$. There are other examples of non-hyperbolic systolic groups (of high dimension) to which our theory applies.


\begin{bibdiv}

\begin{biblist}
	
\bib{BrHa}{book}{
	author={Bridson, Martin R.},
	author={Haefliger, Andr{\'e}},
	title={Metric spaces of non-positive curvature},
	series={Grundlehren der Mathematischen Wissenschaften [Fundamental
		Principles of Mathematical Sciences]},
	volume={319},
	publisher={Springer-Verlag, Berlin},
	date={1999},
	pages={xxii+643},
	isbn={3-540-64324-9},
	review={\MR{1744486}},
	doi={10.1007/978-3-662-12494-9},
}

\bib{Gui}{collection}{
	label={Cha08},
	title={Guido's book of conjectures},
	series={Monographies de L'Enseignement Math\'ematique [Monographs of
		L'Enseignement Math\'ematique]},
	volume={40},
	note={A gift to Guido Mislin on the occasion of his retirement from ETHZ
		June 2006;
		Collected by Indira Chatterji},
	publisher={L'Enseignement Math\'ematique, Geneva},
	date={2008},
	pages={189},
	isbn={2-940264-07-4},
	review={\MR{2499538}},
}

\bib{Ch4}{article}{
   author={Chepoi, Victor},
   title={Graphs of some ${\rm CAT}(0)$ complexes},
   journal={Adv. in Appl. Math.},
   volume={24},
   date={2000},
   number={2},
   pages={125--179},
   issn={0196-8858},
   review={\MR{1748966 (2001a:57004)}},
}

\bib{OCh}{article}{
   author={Chepoi, Victor},
   author={Osajda, Damian},
   title={Dismantlability of weakly systolic complexes and applications},
   journal={Trans. Amer. Math. Soc.},
   volume={367},
   date={2015},
   number={2},
   pages={1247--1272},
   issn={0002-9947},
   review={\MR{3280043}},
   doi={10.1090/S0002-9947-2014-06137-0},
}

\bib{Die15}{article}{
   author={Degrijse, Dieter},
   title={A cohomological characterization of locally virtually cyclic
   groups},
   journal={Adv. Math.},
   volume={305},
   date={2017},
   pages={935--952},
   issn={0001-8708},
   review={\MR{3570151}},
   doi={10.1016/j.aim.2016.10.014},
}

\bib{DD}{article}{
   author={Degrijse, Dieter},
   author={Petrosyan, Nansen},
   title={Geometric dimension of groups for the family of virtually cyclic
   subgroups},
   journal={J. Topol.},
   volume={7},
   date={2014},
   number={3},
   pages={697--726},
   issn={1753-8416},
   review={\MR{3252961}},
   doi={10.1112/jtopol/jtt045},
}


\bib{E1}{article}{
   author={Elsner, Tomasz},
   title={Flats and the flat torus theorem in systolic spaces},
   journal={Geom. Topol.},
   volume={13},
   date={2009},
   number={2},
   pages={661--698},
   issn={1465-3060},
   review={\MR{2469526 (2009m:20065)}},
   doi={10.2140/gt.2009.13.661},
}

\bib{E2}{article}{
   author={Elsner, Tomasz},
   title={Isometries of systolic spaces},
   journal={Fund. Math.},
   volume={204},
   date={2009},
   number={1},
   pages={39--55},
   issn={0016-2736},
   review={\MR{2507689 (2010g:51005)}},
   doi={10.4064/fm204-1-3},
}

\bib{EP}{article}{
	author={Elsner, Tomasz},
	author={Przytycki, Piotr},
	title={Square complexes and simplicial nonpositive curvature},
	journal={Proc. Amer. Math. Soc.},
	volume={141},
	date={2013},
	number={9},
	pages={2997--3004},
	issn={0002-9939},
	review={\MR{3068952}},
	doi={10.1090/S0002-9939-2013-11568-6},
}

\bib{ECHL}{book}{
   author={Epstein, David B. A.},
   author={Cannon, James W.},
   author={Holt, Derek F.},
   author={Levy, Silvio V. F.},
   author={Paterson, Michael S.},
   author={Thurston, William P.},
   title={Word processing in groups},
   publisher={Jones and Bartlett Publishers, Boston, MA},
   date={1992},
   pages={xii+330},
   isbn={0-86720-244-0},
   review={\MR{1161694}},
}

\bib{FJ}{article}{
   author={Farber, Martin},
   author={Jamison, Robert E.},
   title={On local convexity in graphs},
   journal={Discrete Math.},
   volume={66},
   date={1987},
   number={3},
   pages={231--247},
   issn={0012-365X},
   review={\MR{900046 (89e:05167)}},
   doi={10.1016/0012-365X(87)90099-9},
}

\bib{GeSh}{article}{
   author={Gersten, S. M.},
   author={Short, H. B.},
   title={Rational subgroups of biautomatic groups},
   journal={Ann. of Math. (2)},
   volume={134},
   date={1991},
   number={1},
   pages={125--158},
   issn={0003-486X},
   review={\MR{1114609 (92g:20092)}},
   doi={10.2307/2944334},
}

\bib{GeSh2}{article}{
	author={Gersten, S. M.},
	author={Short, H. B.},
	title={Small cancellation theory and automatic groups. II},
	journal={Invent. Math.},
	volume={105},
	date={1991},
	number={3},
	pages={641--662},
	issn={0020-9910},
	review={\MR{1117155}},
	doi={10.1007/BF01232283},
}

\bib{Gro}{article}{
	author={Gromov, M.},
	title={Random walk in random groups},
	journal={Geom. Funct. Anal.},
	volume={13},
	date={2003},
	number={1},
	pages={73--146},
	issn={1016-443X},
	review={\mbox{1978492 (2004j:20088a)}},
	doi={10.1007/s000390300002},
}

\bib{Gru}{article}{
	author={Gruber, D.},
	title={Groups with graphical $C(6)$ and $C(7)$ small cancellation
		presentations},
	journal={Trans. Amer. Math. Soc.},
	volume={367},
	date={2015},
	number={3},
	pages={2051--2078},
	issn={0002-9947},
	review={\MR{3286508}},
	doi={10.1090/S0002-9947-2014-06198-9},
}

\bib{H}{article}{
    title     ={Complexes simpliciaux hyperboliques
                de grande dimension},
    author    ={Haglund, F.},
    status    ={preprint},
    journal   ={Prepublication Orsay},
    volume    ={71},
    date      ={2003},
    eprint    ={http://www.math.u-psud.fr/~haglund/cpl_hyp_gde_dim.pdf}
}

\bib{JS2}{article}{
   author={Januszkiewicz, Tadeusz},
   author={{\'S}wi{\c{a}}tkowski, Jacek},
   title={Simplicial nonpositive curvature},
   journal={Publ. Math. Inst. Hautes \'Etudes Sci.},
   number={104},
   date={2006},
   pages={1--85},
   issn={0073-8301},
   review={\MR{2264834 (2007j:53044)}},
}

\bib{JS3}{article}{
   author={Januszkiewicz, Tadeusz},
   author={{\'S}wi{\c{a}}tkowski, Jacek},
   title={Filling invariants of systolic complexes and groups},
   journal={Geom. Topol.},
   volume={11},
   date={2007},
   pages={727--758},
   review={\MR{2302501 (2008d:20079)}},
}

\bib{LePi}{article}{
   author={Juan-Pineda, Daniel},
   author={Leary, Ian J.},
   title={On classifying spaces for the family of virtually cyclic
   subgroups},
   conference={
      title={Recent developments in algebraic topology},
   },
   book={
      series={Contemp. Math.},
      volume={407},
      publisher={Amer. Math. Soc., Providence, RI},
   },
   date={2006},
   pages={135--145},
   review={\MR{2248975 (2007d:19001)}},
   doi={10.1090/conm/407/07674},
}

\bib{Karrass}{article}{
   author={Karrass, A.},
   author={Pietrowski, A.},
   author={Solitar, D.},
   title={Finite and infinite cyclic extensions of free groups},
   note={Collection of articles dedicated to the memory of Hanna Neumann,
   IV},
   journal={J. Austral. Math. Soc.},
   volume={16},
   date={1973},
   pages={458--466},
   issn={0263-6115},
   review={\MR{0349850}},
}

\bib{Luck-surv}{article}{
	author={L{\"u}ck, Wolfgang},
	title={Survey on classifying spaces for families of subgroups},
	conference={
		title={Infinite groups: geometric, combinatorial and dynamical
			aspects},
	},
	book={
		series={Progr. Math.},
		volume={248},
		publisher={Birkh\"auser, Basel},
	},
	date={2005},
	pages={269--322},
	review={\MR{2195456}},
}

\bib{Lu09}{article}{
   author={L{\"u}ck, Wolfgang},
   title={On the classifying space of the family of virtually cyclic
subgroups for CAT(0)-groups},
   journal={M{\"u}nster J. of Math.},
   volume={2},
   date={2009},
   pages={201--214},
}

\bib{LuWe}{article}{
   author={L{\"u}ck, Wolfgang},
   author={Weiermann, Michael},
   title={On the classifying space of the family of virtually cyclic
   subgroups},
   journal={Pure Appl. Math. Q.},
   volume={8},
   date={2012},
   number={2},
   pages={497--555},
   issn={1558-8599},
   review={\MR{2900176}},
   doi={10.4310/PAMQ.2012.v8.n2.a6},
}

\bib{MJ}{article}{
   author={Manning, Jason Fox},
   title={Geometry of pseudocharacters},
   journal={Geom. Topol.},
   volume={9},
   date={2005},
   pages={1147--1185 (electronic)},
   issn={1465-3060},
   review={\MR{2174263 (2006j:57002)}},
   doi={10.2140/gt.2005.9.1147},
}

\bib{McWi}{article}{
   author={McCammond, Jonathan P.},
   author={Wise, Daniel T.},
   title={Fans and ladders in small cancellation theory},
   journal={Proc. London Math. Soc. (3)},
   volume={84},
   date={2002},
   number={3},
   pages={599--644},
   issn={0024-6115},
   review={\MR{1888425 (2003b:20047)}},
   doi={10.1112/S0024611502013424},
}

\bib{O-sc}{article}{
	author={Osajda, Damian},
	TITLE = {Small cancellation labellings of some infinite graphs and applications},
	status =   {preprint},
	eprint = {arXiv:1406.5015},
	YEAR = {2014}
}

\bib{Osge}{article}{
    title     ={Normal subgroups of SimpHAtic groups},
    author    ={Osajda, Damian},
    status    ={preprint},
    eprint    ={arXiv:1501.00951},
    date      ={2015},
}

\bib{OS}{article}{
   author={Osajda, Damian},
   author={{\'S}wi{\polhk{a}}tkowski, Jacek},
   title={On asymptotically hereditarily aspherical groups},
   journal={Proc. London Math. Soc.},
   date={2015},
   volume={111},
   number={1},
   pages={93--126},
   doi = {10.1112/plms/pdv021}, 
}

\bib{PP}{article}{
   author={Przytycki, Piotr},
   title={$\underline EG$ for systolic groups},
   journal={Comment. Math. Helv.},
   volume={84},
   date={2009},
   number={1},
   pages={159--169},
   issn={0010-2571},
   review={\MR{2466079 (2009k:20100)}},
   doi={10.4171/CMH/156},
}

\bib{Roe}{book}{
   author={Roe, John},
   title={Lectures on coarse geometry},
   series={University Lecture Series},
   volume={31},
   publisher={American Mathematical Society, Providence, RI},
   date={2003},
   pages={viii+175},
   isbn={0-8218-3332-4},
   review={\MR{2007488 (2004g:53050)}},
   doi={10.1090/ulect/031},
}

\bib{Sm}{book}{
   author={Smith, Stephen D.},
   title={Subgroup complexes},
   series={Mathematical Surveys and Monographs},
   volume={179},
   publisher={American Mathematical Society, Providence, RI},	
   date={2011},
   pages={xii+364},
   isbn={978-0-8218-0501-5},
   review={\MR{2850680 (2012k:20028)}},
   doi={10.1090/surv/179},
}

\bib{SC}{article}{
   author={Soltan, V. P.},
   author={Chepo{\u\i}, V. D.},
   title={Conditions for invariance of set diameters under
   $d$-convexification in a graph},
   language={Russian, with English summary},
   journal={Kibernetika (Kiev)},
   date={1983},
   number={6},
   pages={14--18},
   issn={0023-1274},
   translation={
      journal={Cybernetics},
      volume={19},
      date={1983},
      number={6},
      pages={750--756 (1984)},
      issn={0011-4235},
   },
   review={\MR{765117 (86k:05102)}},
   doi={10.1007/BF01068561},
}

\bib{Sw}{article}{
	title     ={Simplicial nonpositive curvature},
	author    ={{\'S}wi{\polhk{a}}tkowski, Jacek},
	status    ={lecture notes},
	date      ={2006},
	eprint    ={http://www.math.uni.wroc.pl/~swiatkow/montreal/notes.pdf}
}

\bib{TW}{article}{
   author={Th{\'e}venaz, J.},
   author={Webb, P. J.},
   title={Homotopy equivalence of posets with a group action},
   journal={J. Combin. Theory Ser. A},
   volume={56},
   date={1991},
   number={2},
   pages={173--181},
   issn={0097-3165},
   review={\MR{1092846 (92k:20049)}},
   doi={10.1016/0097-3165(91)90030-K},
}

\bib{Wie}{thesis}{
	title     ={Sympleksy grup o torusowych
		grupach wierzcho{\l}kowych},
	language={Polish},
	author    ={Wieszaczewski, Jacek},
	organization={Wroc{\l}aw University},
	date      ={2008},
	type      ={M.Sc. thesis}
}

\bib{WiseVH}{book}{
	author={Wise, Daniel T.},
	title={Non-positively curved squared complexes: Aperiodic tilings and
		non-residually finite groups},
	note={Thesis (Ph.D.)--Princeton University},
	publisher={ProQuest LLC, Ann Arbor, MI},
	date={1996},
	pages={126},
	isbn={978-0591-07506-9},
	review={\MR{2694733}},
}

\bib{Wise}{article}{
    title     ={Sixtolic complexes and their fundamental groups},
    author    ={Wise, Daniel T.},
    status    ={unpublished manuscript},
    date={2003}
}
\end{biblist}
\end{bibdiv}

\end{document}